\newcounter{generators}
\newcounter{relations}
\newcounter{objects}
\newcommand{\comp}[2]{\left({#1}\right)_{#2}}
\newcommand{\catcl}[1]{\mathrm{Cl}({#1})}
\newcommand{\doc}{\mathbf{D}}
\newcommand{\docf}{{D}}
\newcommand{\lffc}[1]{\left[#1\right]^{\mathrm{X}}} 
\newcommand{\lff}[1]{\Phi_{#1}^{\mathrm{X}}}
\newcommand{\nlffc}[1]{\left[#1\right]^{\mathrm{N}}} 
\newcommand{\nlff}[1]{\Phi_{#1}^{\mathrm{N}}}
\newcommand{\pffc}[1]{\left[#1\right]^{\mathrm{P}}} 
\newcommand{\pff}[1]{\Phi_{#1}^{\mathrm{P}}} 
\newcommand{\lsff}[1]{\Phi_{#1}^{\mathrm{LS}}}
\newcommand{\lsffc}[1]{\left[#1\right]^{\mathrm{LS}}}
\newcommand{\fols}[1]{\vert{#1}\vert}
\newcommand{\nlsff}[1]{\Phi_{#1}^{\mathrm{NS}}} 
\newcommand{\nlsffc}[1]{\left[#1\right]^{\mathrm{NS}}}
\newcommand{\fobic}[1]{\vert{#1}\vert} 
\newcommand{\frbic}[1]{\left[{#1}\right]^{\mathrm{B}}}
\newcommand{\mfrbic}[1]{\left[{#1}\right]^{\mathrm{P}}}
\newcommand{\prelimmfrbic}[1]{\left[{#1}\right]^{\mathrm{O}}}
\newcommand{\fmfrbic}[1]{\Phi_{#1}^{\mathrm{P}}}
\newcommand{\fosmc}[1]{\vert{#1}\vert}  
\newcommand{\frsmc}[1]{\left[{#1}\right]^{\mathrm{S}}}
\newcommand{\fosbi}[1]{\vert{#1}\vert}  
\newcommand{\frsbit}[1]{\left[{#1}\right]^{\mathrm{Sh}}}
\newcommand{\shfrsbit}[1]{\left[{#1}\right]^{\mathrm{LSh}}}
\newcommand{\shlff}[1]{H_{#1}^{\mathrm{L}}}
\newcommand{\nshfrsbit}[1]{\left[{#1}\right]^{\mathrm{NSh}}}
\newcommand{\nshlff}[1]{H_{#1}^{\mathrm{N}}}
\newcommand{\gLambda}{G}
\newcommand{\glambda}{X}
\newcommand{\sLambda}{T}
\newcommand{\slambda}{X}
\newcommand{\blacktie}{{ETC}} 
\newcommand{\underlying}{{supporting}}
\newcommand{\punderlying}{{underlying}}
\newcommand{\cspan}[4]{}
\DeclareMathOperator{\oneBicat}{\mathbf{B}icat}
\DeclareMathOperator{\Graphcat}{\mathbf{G}raph}
\DeclareMathOperator{\oneLax}{\mathbf{L}ax}
\DeclareMathOperator{\oneShLax}{\mathbf{L}ax\mathbf{S}h}
\DeclareMathOperator{\oneNSh}{\mathbf{NL}ax\mathbf{S}h}
\DeclareMathOperator{\onenLax}{\mathbf{NL}ax} 
\DeclareMathOperator{\onepLax}{\mathbf{P}seudo} 
\DeclareMathOperator{\oneLMon}{\mathbf{L}ax\mathbf{S}ym\mathbf{M}on}
\DeclareMathOperator{\oneNLMon}{\mathbf{NL}ax\mathbf{S}ym\mathbf{M}on}
\DeclareMathOperator{\oneshBicat}{\mathbf{S}had\mathbf{B}icat}
\DeclareMathOperator{\symcat}{\mathbf{S}ym\mathbf{M}oncat}
\DeclareMathOperator{\setcat}{\mathbf{S}et}
\DeclareMathOperator{\moncat}{\mathbf{M}on}
\theoremstyle{theorem}
\colorlet{mylight}{green!40!white}
\colorlet{mymed}{red!60!white}
\colorlet{mydark}{blue!80!white}
\colorlet{mylightfill}{green!20!white}
\colorlet{mymedfill}{red!30!white}
\colorlet{mydarkfill}{blue!40!white}
\colorlet{mydarkred}{red!80!black}
\colorlet{mydarkblue}{blue!70!black}
\colorlet{mydarkgreen}{green!60!black}
\newcommand{\xto}{\xrightarrow}
\newcommand{\ra}{\longrightarrow}
\def\slashedarrowfill@#1#2#3#4#5{%
  $\m@th\thickmuskip0mu\medmuskip\thickmuskip\thinmuskip\thickmuskip
   \relax#5#1\mkern-7mu%
   \cleaders\hbox{$#5\mkern-2mu#2\mkern-2mu$}\hfill
   \mathclap{#3}\mathclap{#2}%
   \cleaders\hbox{$#5\mkern-2mu#2\mkern-2mu$}\hfill
   \mkern-7mu#4$%
}
\def\rightslashedarrowfill@{%
  \slashedarrowfill@\relbar\relbar\mapstochar\rightarrow}
\newcommand\xslashedrightarrow[2][]{%
  \ext@arrow 0055{\rightslashedarrowfill@}{#1}{#2}}
\renewcommand{\Set}{\ensuremath{\mathbf{Set}}}
\newcommand{\sh}[1]{{\ensuremath{\hspace{1mm}\makebox[-1mm]{$\langle$}\makebox[0mm]{$\langle$}\hspace{1mm}{#1}\makebox[1mm]{$\rangle$}\makebox[0mm]{$\rangle$}}}}
\newcommand{\bigsh}[1]{{\ensuremath{\hspace{1mm}\makebox[-1mm]{$\big\langle$}\makebox[0mm]{$\big\langle$}\hspace{1mm}{#1}\makebox[1mm]{$\big\rangle$}\makebox[0mm]{$\big\rangle$}}}}
\newcommand{\odots}[1]{\odot}
\tikzset{pb/.style={draw,regular polygon,regular polygon sides=3,inner sep=0pt,shape border rotate=180,font=\scriptsize}}
\tikzset{pbflat/.style={draw,regular polygon,regular polygon sides=3,shape border rotate=270,inner sep=1pt,font=\scriptsize, fill=white}}
\newcommand{\commutingcube}[1]{
	\setkeys[PRE]{fam}{#1}
%
%
\begin{tikzcd}[ampersand replacement=\&,column sep={\cmdPRE@fam@wi},row sep={\cmdPRE@fam@he}]
			\& \cmdPRE@fam@B \ar[ddd, near end, "\cmdPRE@fam@b"]\ar[rr, "\cmdPRE@fam@g"]	\&\& \cmdPRE@fam@D \ar[ddd, "\cmdPRE@fam@d"] 
				\\
	\cmdPRE@fam@A \ar[ddd, "\cmdPRE@fam@a"]\ar[rr,near start,  crossing over,  "\cmdPRE@fam@h"]\ar[ru, "\cmdPRE@fam@f"]	\&\& \cmdPRE@fam@C \ar[ru, "\cmdPRE@fam@k"]	
				\\ \\
			{}\& \cmdPRE@fam@F \ar[rr, near start, "\cmdPRE@fam@gamma"]		\&\& \cmdPRE@fam@H
				 \\
	\cmdPRE@fam@E \ar[rr, "\cmdPRE@fam@eta"]\ar[ru, "\cmdPRE@fam@phi"]		\&\& \cmdPRE@fam@G \ar[ru, "\cmdPRE@fam@kappa"]\arrow [from=uuu,  near start,crossing over,  "\cmdPRE@fam@c"]
\end{tikzcd}
}
\newcommand{\commutingsquare}[1]{
	\setkeys[PRE]{fam}{#1}
\begin{tikzcd}[ampersand replacement=\&,column sep={\cmdPRE@fam@wi},row sep={\cmdPRE@fam@he}]
					\& \cmdPRE@fam@B \ar[rr,"\cmdPRE@fam@g"]			\&	\& \cmdPRE@fam@D  \\
		\cmdPRE@fam@A \ar[rr,"\cmdPRE@fam@h"]\ar[ru,"\cmdPRE@fam@f"]	\&\& \cmdPRE@fam@C \ar[ru,"\cmdPRE@fam@k"]	\&
\end{tikzcd}
}
\newcommand{\triangularprism}[1]{
	\setkeys[PRE]{fam}{#1}
%
\begin{tikzcd}[ampersand replacement=\&,column sep={\cmdPRE@fam@wi},row sep={\cmdPRE@fam@he}]
		\& \cmdPRE@fam@B \ar[dd, near start, "\cmdPRE@fam@b"]\ar[rd,"\cmdPRE@fam@g"]
		\\
	\cmdPRE@fam@A \ar[dd,"\cmdPRE@fam@a"]\ar[rr, crossing over, near start,  "\cmdPRE@fam@h"]\ar[ru,"\cmdPRE@fam@f"] 
		\&\& \cmdPRE@fam@C \ar[dd,"\cmdPRE@fam@c"] 
		\\
		\& \cmdPRE@fam@F \ar[rd,"\cmdPRE@fam@gamma"] 
		\\
		\cmdPRE@fam@E \ar[rr,"\cmdPRE@fam@eta"]\ar[ru,"\cmdPRE@fam@phi"] \&\& \cmdPRE@fam@G
\end{tikzcd}
}
\newcommand{\triangularprismtwo}[1]{
	\setkeys[PRE]{fam}{#1}
	
%
\begin{tikzcd}[ampersand replacement=\&,column sep={\cmdPRE@fam@wi},row sep={\cmdPRE@fam@he}]
	\cmdPRE@fam@B \ar[dd,"\cmdPRE@fam@b"']\ar[rr,"\cmdPRE@fam@g"]\ar[rd,"\cmdPRE@fam@h"'] 
		\&\& \cmdPRE@fam@D \ar[dd,"\cmdPRE@fam@d"] 
		\\
	\& \cmdPRE@fam@C \ar[ru, "\cmdPRE@fam@k"']	
		 \\
	\cmdPRE@fam@F \ar[rr, near start, "\cmdPRE@fam@gamma"]\ar[rd,"\cmdPRE@fam@eta"'] 
		\&\& \cmdPRE@fam@H
		 \\
	\& \cmdPRE@fam@G \ar[ru,"\cmdPRE@fam@kappa"']\arrow [from =uu, near start,  crossing over, "\cmdPRE@fam@c"]
\end{tikzcd}
	
}
\subjclass[2020]{18M05,18N10}
\keywords{coherence, monoidal categories, bicategories, bicategories with shadows, lax monoidal functors, lax shadow functors}
\title{Coherence for bicategories, lax functors, and shadows}
\author{Cary Malkiewich} 
\email{malkiewich@math.binghamton.edu}
\address{Binghamton University, PO Box 6000, Binghamton, NY 13902}
\author{Kate Ponto}
\email{kate.ponto@uky.edu}
\address{University of Kentucky, 719 Patterson Office Tower, Lexington, KY 40506}
\date{2nd Sept, 2021}
\begin{document}

\begin{abstract}
Coherence theorems are fundamental to how we think about monoidal categories and their generalizations.  In this paper we revisit Mac Lane's original proof of coherence for monoidal categories using the Grothendieck construction. This perspective makes the approach of Mac Lane's proof very amenable to generalization. We use the technique to give efficient proofs of  many standard coherence theorems and new coherence results for bicategories with shadow and for their functors.
\end{abstract}
 
 \maketitle
 
 \setcounter{tocdepth}{1}
 \tableofcontents

\section{Introduction}
Colloquially, Mac Lane's coherence theorem for monoidal categories says ``all diagrams that should commute do commute''.  For a more formal statement, recall that there is a forgetful functor from the category of monoidal categories and strict monoidal functors to the category of sets
\[\moncat\to\setcat,\]
taking each category to its underlying set of objects.\footnote{As usual, if the categories in question are large then their underlying ``sets'' of objects will be large. This can be resolved either by expanding the universe when defining $\setcat$, or by restricting to small categories. Since our goal is to prove that diagrams commute, this always  reduces to the case of small diagrams.
}
This has a left adjoint free functor.  A diagram in a monoidal category $\sC$ is {\bf formal} if it lifts to the free monoidal category on the underlying set of objects of $\sC$. In other words, if it lifts against the counit of the above adjunction.
\begin{thm}[\cite{maclane_orginial_coherence,maclane}]\label{intro:monoidal_coherence}
All formal diagrams in a monoidal category commute.
\end{thm}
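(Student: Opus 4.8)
The plan is to follow Mac Lane's strategy: reduce the theorem to the assertion that the free monoidal category is \emph{thin} (has at most one morphism between any two objects), and then prove thinness by a normal-form argument that the Grothendieck construction is used to organize. Write $F=F(\mathrm{ob}\,\sC)$ for the free monoidal category on the underlying set of objects of $\sC$, and let $\tilde\iota\colon F\to\sC$ be the counit of the free--forgetful adjunction above. By definition a diagram in $\sC$ is formal precisely when it is the image under $\tilde\iota$ of a diagram in $F$; since a functor sends commutative diagrams to commutative diagrams, it is enough to show that \emph{every} diagram in $F$ commutes, i.e.\ that $F$ is thin. Here $F$ is completely explicit: its objects are fully parenthesized words in $\mathrm{ob}\,\sC$ (with the empty word as unit) and every morphism is a composite of components of $\alpha^{\pm1}$, $\lambda^{\pm1}$, $\rho^{\pm1}$ tensored with identity morphisms.

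First I would flatten: let $\overline M$ be the free monoid on $\mathrm{ob}\,\sC$ viewed as a discrete monoidal category, and $q\colon F\to\overline M$ the strict monoidal functor forgetting parenthesizations. Since $\overline M$ has only identities, every morphism of $F$ lies over an identity, so $F$ is the disjoint union over $w\in\overline M$ of its fibers $F_w$, the full subcategories on the parenthesizations of $w$; thus $F$ is thin if and only if each $F_w$ is thin. This is where the Grothendieck construction enters: a parenthesization of a nonempty word $w$ amounts to a top-level splitting $w=u\cdot v$ together with parenthesizations of $u$ and of $v$, so $F_w$ assembles from strictly shorter fibers; one packages this assembly as a pseudofunctor on a poset of splittings with $F_w$ as its Grothendieck construction, which lets thinness of $F_w$ be deduced by induction on word length from thinness of the smaller fibers.

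To run the induction, fix for each $w$ the left-normalized parenthesization $\widehat w$ and, by the evident recursion (reassociate the outer product to the left, delete units, recurse), a canonical morphism $\kappa_u\colon u\to\widehat w$ for each parenthesization $u$ of $w$. The crux is a single lemma: canonicalization is invariant under the generating moves, i.e.\ $\kappa_{u'}\circ g=\kappa_u$ for every generating morphism $g\colon u\to u'$. Granting it, induction on the length of a factorization of a general $f\colon u\to u'$ into generators gives $\kappa_{u'}\circ f=\kappa_u$, so $f=\kappa_{u'}^{-1}\circ\kappa_u$ is determined by its endpoints; hence $F_w$ is thin, $F$ is thin, and the theorem follows.

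The hard part is exactly that lemma --- that the chosen reassociation-and-unit-deletion morphisms are themselves coherent. Unwinding $\kappa_{u'}\circ g=\kappa_u$ when $g$ is a component of $\alpha^{\pm1}$, after discarding the subwords untouched by $g$, reduces to an instance of the pentagon axiom together with naturality and interchange; when $g$ is a component of $\lambda^{\pm1}$ or $\rho^{\pm1}$ it reduces to the triangle axiom and its standard consequences (that $\lambda$ and $\rho$ agree on the unit, and that the unitors are compatible with $\alpha$ on either side), which must be derived first. Controlling the bookkeeping of which subword a generator acts on --- the step that is genuinely fiddly by hand --- is precisely what the Grothendieck-construction packaging is meant to systematize, and it is the part that must be arranged robustly enough to survive the later passage to bicategories, lax functors, and shadows.
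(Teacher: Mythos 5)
Your argument is correct in outline, but it is Mac Lane's classical normal-form route, not the route this paper takes, and the comparison is worth spelling out. You reduce to thinness of the free monoidal category, split it into fibers $F_w$ over the free monoid, choose a left-normalized representative $\widehat w$ with canonical morphisms $\kappa_u\colon u\to\widehat w$, and derive thinness from the invariance lemma $\kappa_{u'}\circ g=\kappa_u$. The paper never chooses a normal form: it shows each component of the free (bi)category is an abstract clique by exhibiting it as an iterated Grothendieck construction over contractible index categories --- first the associator clique $A_\gLambda(X_1,\ldots,X_n)$ as $\int_{\bI_{n-1}}\doc$ over the subdivided interval, where the pentagon axiom appears only when checking the composite relation $(y_i\leftrightarrow y_j\leftrightarrow y_k)=(y_i\leftrightarrow y_k)$ on a single admissible model (\cref{lem:bicat_assoc_induction}), and then the units via an index category $\prod^{n+1}\sI[\sI^{-1}]$, where the triangle axiom and Kelly's $l=r$ on the unit appear when checking $d^id^i=d^{i+1}d^i$ (\cref{bicat_coherence_diagram_assoc_unit,bicat_coherence}); thinness then follows from \cref{lem:consequences_of_projection_2} because base and fibers are cliques. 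What your approach buys is elementary concreteness in the monoidal case; what the paper's buys is that the same scheme transports verbatim to bicategories, symmetric monoidal categories, shadows, and especially to lax functors, where the relevant index category $(\underline{n}\downarrow\mathbf\Delta)$ is not thin and no normal-form argument of your type is available. Two cautions on your write-up: the Grothendieck construction you invoke over ``a poset of splittings'' does no actual work in your proof --- all the content sits in the canonicalization lemma --- and that lemma is understated when you say the associator case ``reduces to an instance of the pentagon'': proving $\kappa_{u'}\circ\alpha=\kappa_u$ requires its own induction on where the generator acts relative to the canonicalization path (Mac Lane's rank induction), with the pentagon used in the top-level case and naturality/whiskering elsewhere, and the unit cases likewise need Kelly's lemmas derived beforehand, as you note. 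With that lemma proved in full your argument is complete, but as written it leaves precisely the fiddly bookkeeping that the paper's clique formalism is designed to absorb.
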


For certain other kinds of categories and functors the same result holds. 
\begin{thm}\label{intro:simple_coherence}
All formal diagrams in the categorical structures in \cref{table:intro_theorems_coherent} commute.
\end{thm}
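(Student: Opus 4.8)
The plan is to deduce every row of \cref{table:intro_theorems_coherent} from a single observation, which reorganizes Mac Lane's argument around the Grothendieck construction. Each row comes with a free--forgetful adjunction of the general type displayed above, and a diagram is \emph{formal} exactly when it lifts along the counit into the corresponding free structure; so if that free structure is \emph{thin} --- has at most one morphism between any two objects, and in the bicategorical rows at most one $2$-cell between any two parallel $1$-cells --- then every formal diagram commutes, being the image under the counit of a diagram in a structure all of whose diagrams commute. Conversely, taking the target to be the free structure itself shows that thinness is necessary. Thus it suffices, uniformly over the table, to prove that the relevant free object is thin.

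The engine is this: if $\sC$ is a thin category --- or, for the bicategorical rows, a locally thin bicategory --- and $P$ is a pseudofunctor into $\mathbf{Cat}$ (or an appropriately indexed family of such) all of whose values are thin, then the Grothendieck construction $\int_\sC P$ is thin. This is immediate from the description of a morphism of $\int_\sC P$ as a morphism of $\sC$, of which there is at most one, together with a morphism in a fibre of $P$, of which there is then at most one. The monoidal, bicategorical, lax, and shadowed enhancements of the Grothendieck construction that the various rows require --- transporting a monoidal, lax monoidal, lax, or shadow structure on $P$ to the analogous structure on $\int_\sC P$ --- rest on the same point, together with the fact that coherence data valued in thin categories is automatically unique and satisfies every coherence axiom vacuously.

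With this in hand I would realize each free object explicitly as such a Grothendieck construction. The base is the free object for the corresponding \emph{strict} notion: the free monoid for monoidal categories, the free $2$-category on a graph for bicategories, and likewise the strict free functors and strict free shadowed structures for the functor and shadow rows; each such base is thin essentially by inspection, since strictness collapses the coherence cells. Over a cell of the base, the fibre of $P$ is the indiscrete (chaotic) category on the set of ``shapes'' lying over that cell --- bracketings of a word, parenthesizations of a path of $1$-cells, bracketings of the strings of data entering a (lax) functor or shadow constraint --- so the fibres are thin by fiat, and the structure on $P$ needed to form the enhanced Grothendieck construction exists and is coherent for free. It then remains to check that $\int_\sC P$ has the universal property of the free object: one defines the comparison map into an arbitrary target by evaluating shapes on objects and, on morphisms, by inserting a once-and-for-all chosen chain of constraint cells --- for instance routing through the fully left-bracketed shape --- and one checks functoriality, the appropriate (lax) monoidality or naturality, and uniqueness using only the axioms already present in the target.

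I expect the main obstacle to be this last verification in the harder rows --- lax monoidal functors, lax shadow functors, and bicategories with shadow --- where one must pin down the correct shape category and the correct (in places lax or oplax) flavour of the Grothendieck construction so that $\int_\sC P$ really is the free lax functor, respectively the free shadowed bicategory, and must confirm that the universal-property check consumes only the target's own axioms and never the coherence statement being proved. The apparent circularity in ``define the canonical constraint cell'' is dissolved by committing to one fixed recipe for it rather than claiming that it is well defined independently of any recipe. Once the free object has been matched with a Grothendieck construction of thin fibres over a thin base, its thinness --- hence the commutativity of every formal diagram in that row --- is delivered by the engine lemma.
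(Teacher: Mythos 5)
Your overall reduction --- formal diagrams commute iff the free object is thin, and thinness can be propagated through Grothendieck constructions with thin fibres over a thin base --- is the same skeleton the paper uses (\cref{rmk:plan_of_proofs}, \cref{lem:consequences_of_projection_2}). But the specific decomposition you propose breaks on two of the rows of \cref{table:intro_theorems_coherent}. A construction of the form ``free \emph{strict} object as base, indiscrete category of shapes as fibre'' can only ever produce a locally groupoidal free object: strictness collapses the constraints $m$ and $i$ to identities, so the base does not distinguish $F(X_1)\odot F(X_2)$ from $F(X_1\odot X_2)$; that distinction must then live inside a single fibre, and an indiscrete fibre makes the two shapes canonically \emph{isomorphic}. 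For normal lax monoidal functors and normal lax functors --- which are rows of \cref{table:intro_theorems_coherent}, and are exactly the cases not covered by the classical references --- the free structure has $m$ as a non-invertible generator (only $i$ is inverted; see \cref{rmk:explicit_free_lax} and \cref{subsec:normal_coherence}), so its hom-components are thin but are not groupoids: there is no morphism $F(X_1\odot X_2)\to F(X_1)\odot F(X_2)$. Your candidate therefore cannot have the universal property for these rows, and the comparison functor you plan to build into an arbitrary target would require an inverse of $m$ that does not exist. The paper's organization avoids this by swapping the roles: the fibres are cliques of parenthesizations/units, and the directed grouping data is placed in the \emph{base}, the comma category $(\underline{n} \downarrow \mathbf\Delta)[\sI^{-1}]$, whose thinness is proved separately (\cref{localized_comma_cat_is_thin}).

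Even for the rows where all constraints are invertible, the step you defer --- checking that $\int_\sC P$ has the universal property by routing through a chosen normal form and verifying functoriality, monoidality, and uniqueness ``using only the target's axioms'' --- is not a routine verification but is where the entire combinatorial content lives: showing that the chosen chains of constraint cells compose correctly and are compatible with $\otimes$ and $\odot$ in an arbitrary target is precisely the pentagon/triangle induction of Mac Lane, and uniqueness additionally needs a connectivity argument that every morphism of your candidate is a composite of whiskered constraint cells. The paper never quantifies over arbitrary targets: it works with the generators-and-relations presentation of the free object itself, builds each hom-component in stages as Grothendieck constructions with clique fibres over small index categories ($\bI_{n-1}$ for associators in \cref{lem:bicat_assoc_induction}, $\prod^{n+1}\sI[\sI^{-1}]$ for unitors in \cref{bicat_coherence_diagram_assoc_unit}, comma categories over $\mathbf\Delta$ for the functor constraints), and proves an isomorphism of categories by matching generators; the structure's axioms are consumed exactly in verifying the relations of those index categories. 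As written, then, your proposal both omits the inductive core and adopts a decomposition that fails for the normal lax rows.
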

\begin{table}[h]
	\begin{tabular}{rcc}
		{\bf Categorical structure}  & {\bf References} \\\hline
		monoidal categories  & \cite{maclane,maclane_orginial_coherence,power}&\cref{bicat_coherence_formal} \\
		strong monoidal functors & \cite{power} &\cref{strong_coherence}\\
		normal lax monoidal functors  & &\cref{strong_coherence_normal_lax} \\\hline
		bicategories  & \cite{power,joyal_street} &\cref{bicat_coherence_formal}\\
		pseudofunctors  & \cite{power,joyal_street} &\cref{strong_coherence}\\
		normal lax functors  &&\cref{strong_coherence_normal_lax} \\ \hline
	\end{tabular}
	\vspace{1em}
	\caption{Categorical structures for \cref{intro:simple_coherence}}\label{table:intro_theorems_coherent}
\end{table}

As we add symmetry and move to more general kinds of functors, the situation gets more complicated.  
For example, in a symmetric monoidal category, it would be unreasonable to expect a formal diagram to commute if two parallel composites in the diagram induced different permutations on the objects. So we add this to the hypotheses of \cref{intro:monoidal_coherence,intro:simple_coherence}. We say a formal diagram in a symmetric monoidal category is {\bf expected to commute}
 ({\blacktie}) if every pair of parallel composites induces the same  permutation.

For the categorical structures in \cref{table:intro_theorems}, we replace the symmetric group 
by the group (or category) in the middle column, and then define ``{\blacktie}'' similarly.  With that modification, we have the following result.
\begin{thm}\label{intro:black_tie_coherence}
All {\blacktie}  diagrams in the categorical structures in \cref{table:intro_theorems} commute.
\end{thm}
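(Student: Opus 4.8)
The plan is to run, over a nontrivial base, the Grothendieck-construction argument already used for \cref{intro:monoidal_coherence,intro:simple_coherence}. For each row of \cref{table:intro_theorems} let $\mathcal{G}$ denote the group (or category) in its middle column: the symmetric groups $\Sigma_n$ for the symmetric monoidal rows, the cyclic groups (or a suitable cyclic category) for the bicategory-with-shadow rows, and appropriate enlargements of these for the rows about functors. First I would build the corresponding free structure on the underlying set (resp.\ graph) of objects of a given instance $\sC$ --- the free symmetric monoidal category, free bicategory with shadow, free lax/pseudo/normal-lax (shadow) functor, exactly as in the introduction --- and present it, after the usual strictification of monoidal coherence, as a Grothendieck construction $\int_{\mathcal{G}}F$ of a (pseudo)functor $F\colon\mathcal{G}\to\mathbf{Cat}$ (or into $\oneBicat$), arranged so that (i) $\int_{\mathcal{G}}F$ with its induced structure really is the free object, and (ii) each fiber $F(x)$ is \emph{thin}, i.e.\ its hom-sets have at most one element. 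Condition (ii) is where coherence for the structures without symmetry or shadow (\cref{intro:simple_coherence}) enters: the fibers are the corresponding free structures without symmetry or shadow, which are thin by that theorem.

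Given such a presentation, the key step is a one-line fact about Grothendieck constructions. A morphism of $\int_{\mathcal{G}}F$ consists of a morphism $g$ of $\mathcal{G}$ together with a morphism in the fiber over its target, from $F(g)$ of the source to the target; since the fibers are thin, this fiber component is forced by $g$, so two parallel morphisms of $\int_{\mathcal{G}}F$ are equal if and only if they have the same image in $\mathcal{G}$. But having the same image in $\mathcal{G}$ is exactly the {\blacktie} condition --- a formal diagram is {\blacktie} precisely when every pair of parallel composites induces the same element of $\mathcal{G}$ (the same permutation, resp.\ the same cyclic rotation). Hence every {\blacktie} diagram in the free structure commutes.

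Finally, to conclude for an arbitrary instance $\sC$: by definition a formal diagram lifts along the counit $\epsilon$ of the free--forgetful adjunction to a diagram in the free structure; $\epsilon$ is a strict functor of the relevant kind, hence preserves the induced element of $\mathcal{G}$, so the lifted diagram is still {\blacktie}; it commutes upstairs by the previous step, and therefore the original diagram commutes in $\sC$.

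The main obstacle is the first step: producing, for each row of the table, a Grothendieck-type model of the free object with thin fibers, and verifying that it is the free object. For symmetric monoidal categories and bicategories this is largely a repackaging of the familiar strictification by lists-with-permutations. The genuinely new work is twofold. First, for bicategories with shadow one must identify the cyclic indexing category $\mathcal{G}$, check that the shadow on $\int_{\mathcal{G}}F$ is well defined and strictifies, and that cyclic invariance of the shadow is recorded faithfully by $\mathcal{G}$. Second, for the lax, normal-lax and shadow functors the comparison cells are not invertible, so the free object is considerably more intricate than a product of free monoidal categories, and $\mathcal{G}$ must be an honest category rather than a groupoid --- morally, one that records where and in what order the lax structure maps are inserted; one then has to check that the fiber-thinness argument still runs when the comparison cells point only one way, with the normal-lax rows requiring the additional bookkeeping that unit comparison cells are identities.
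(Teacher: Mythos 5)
Your proposal is essentially the paper's own strategy (spelled out in \cref{rmk:plan_of_proofs}): each free structure is presented, one component at a time, as a Grothendieck construction with contractible clique fibers over exactly the index categories you gesture at -- $\sE\Sigma_n$, $\sE C_n$ for the categorical rows, and the comma categories $(\underline{n} \downarrow \mathbf\Delta)$, $(\underline{n} \downarrow \mathbf{Fin})$, $(\underline{n} \downarrow \mathbf\Lambda')$ and their localizations for the lax, normal lax, and strong functor rows -- after which parallel morphisms agree precisely when their images in the base agree, which is the {\blacktie} condition, and the conclusion is transported along the counit just as you say. The work you defer as ``the main obstacle'' (building the diagrams of cliques and checking the relations, reducing to distinct or aperiodic objects by quotienting a free group action, and proving thinness of the localized comma categories in the normal and strong cases) is exactly the content of \cref{sec:coherence_cat,sec:coherence_functor}, with the one cosmetic difference that the paper never invokes strictification but works with the clique fibers directly.
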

\begin{table}[h]
	\begin{tabular}{>{\raggedleft\arraybackslash}p{5.5cm}ccc}
		{\bf Categorical structure} & {\bf Index} & {\bf References} \\\hline
		lax monoidal functors &  $\mathbf\Delta$ & \cite{epstein,lewis_thesis}&\cref{lax_not_coherence} \\\hline
		lax functors of bicategories & $\mathbf\Delta$ & &\cref{lax_not_coherence}\\\hline
		symmetric monoidal categories & $\Sigma_n$& \cite{maclane,maclane_orginial_coherence,joyal_street} &\cref{symcat_coherence_formal}\\
		strong sym. monoidal functors & $\Sigma_n$& \cite{joyal_street} &\cref{thm:coherence_strong_symmetric}\\
		normal lax sym. mon. functors & $\Sigma_n$ &&\cref{thm:coherence_normal_symmetric}  \\
		lax sym. mon. functors & $\Sigma_n$ \& $\mathbf{Fin}$ & \cite{lewis_thesis} &\cref{symmetric_not_coherence}\\\hline
		shadowed bicategories & $C_n$&&\cref{shbicat_coherence_formal}\\
		strong shadow functors & $C_n$&&\cref{thm:coherence_strong_shadow} \\
		normal lax shadow functors & $C_n$&&\cref{thm:coherence_normal_shadow}  \\
		lax shadow functors & $C_n$ \& $\mathbf\Lambda$ &&\cref{shadow_not_coherence} \\\hline
	\end{tabular}
	\vspace{1em}
	\caption{Categorical structures for \cref{intro:black_tie_coherence}}\label{table:intro_theorems}
\end{table}

Our proofs of \cref{intro:simple_coherence,intro:black_tie_coherence} are combinatorial and follow the spirit of Mac Lane's original proof.  They are  closely related to the approaches in   \cite{epstein,kelly_maclane,lewis_thesis}.  (They are  less similar to the strictification results in \cite{power,joyal_street,GPS_tricat} -- these results are far-reaching, but they don't apply to lax and normal lax functors.)  The fundamental insight is that formal diagrams in the categorical structures in \cref{table:intro_theorems_coherent,table:intro_theorems}  can be built as a series of Grothendieck constructions (\cref{defn:Grothendieck_construction}) starting from very small pieces.   As an example, we first build formal diagrams for associators in a bicategory using a Grothendieck construction, then we add in unitor maps with a second Grothendieck construction.  To build formal diagrams in a shadowed bicategory, we use a third Grothendieck construction  to add in rotator maps.

\subsection*{Outline} In \cref{cliques_language} we will recall the definitions of cliques and the Grothendieck construction that are the fundamental building blocks of the proofs of \cref{intro:simple_coherence,intro:black_tie_coherence}.  In \cref{sec:presentations} we recall the combinatorial ``generators and relations'' presentations of the categories in \cref{table:intro_theorems}. In \cref{sec:coherence_cat} we prove the coherence theorems for bicategories, symmetric monoidal categories, and shadowed bicategories.  In \cref{sec:coherence_functor} we prove the corresponding results for functors.

\subsection*{Acknowledgments}
The authors are pleased to acknowledge contributions to this project that emerged from enjoyable conversations with Mike Shulman and Ross Street.

CM was supported by the NSF grants DMS-2005524 and DMS-2052923. KP was supported by NSF grants DMS-1810779 and DMS-2052923, and the Royster research professorship at the University of Kentucky.

\section{Diagrams of cliques}\label{cliques_language}
The proofs of \cref{intro:simple_coherence,intro:black_tie_coherence} follow an identical structure, which we set up in this section.  
\begin{defn}A category $\bC$ is {\bf thin} if, for each ordered pair of objects $a,b$ in $\bC$, the set of morphisms $\bC(a,b)$ contains at most one element.
\end{defn}
In a thin category all diagrams commute.

\begin{defn} A (small) category $\bK$ is an  \textbf{abstract clique} if it satisfies any of the following equivalent conditions:
\begin{itemize}
	\item $\bK$ is a nonempty connected thin groupoid.
	\item $\bK$ is contractible (equivalent to the one-point category).
	\item $\bK$ has nonempty object set, and for each ordered pair of objects $a,b$ in $\bK$, the set $\bK(a,b)$ has precisely one element.
\end{itemize}
For any category $\bC$, a \textbf{clique} in $\bC$ is an abstract clique $\bK$ and a functor $K\colon \bK \to \bC$. If $K$ is the inclusion of a subcategory then we simply say $\bK \subseteq \bC$ is a clique.
\end{defn}
We think of  cliques in $\bC$ as ``thick objects'' -- objects defined up to canonical isomorphism. For a clique $(\bK,K)$ in $\bC$, the objects $K(k)$ for $k\in \bK$ are {\bf models} or {\bf representatives} of $(\bK,K)$.  The maps in the image of $K$ are {\bf canonical isomorphisms}.

\begin{example}\label{ex:define_odot_clique}
	Let $\sB$ be a bicategory. The coherence theorem for bicategories (\cref{bicat_coherence}) implies that each ordered tuple of 1-cells $X_i \in \sB(A_{i-1},A_i)$ defines a clique 
\[\bigodot_{i=1}^nX_i\]
in the category $\sB(A_0,A_n)$.  
The objects 
are pairs consisting of 
\begin{enumerate}
\item an ordered tuple of nonnegative integers $(j_0,j_1,\ldots j_n)$ and 
\item a
parenthesization of the expression
\begin{equation}\label{eq:lots_of_units_and_xs}\underbrace{I\odot I\odot \cdots \odot  I}_{j_0}\odot X_1\odot\underbrace{I\odot I\odot\cdots \odot I}_{j_1}\odot X_2\odot\ldots\odot X_{n-1}\underbrace{I\odot I\odot\cdots \odot I}_{j_{n-1}}\odot X_n\odot \underbrace{I\odot I\odot \cdots \odot I}_{j_n}.
\end{equation}
\end{enumerate}
The morphisms are generated by the associator and unitor maps. Note that there are no maps between the $X_i$.
\end{example}

\begin{example}[Generalization of  \cref{ex:define_odot_clique}]\label{ex:define_odot_clique_F}
Let $F\colon \sB\to \sB'$ be a lax functor of bicategories. An ordered tuple of 1-cells $X_i \in \sB(A_{i-1},A_i)$ and a totally ordered map $\alpha\colon \{1,...,n\} \to \{1,...,k\}$ defines 
a clique we denote
\[ \bigodot_{j \in \underline{k}} F\left( \bigodot_{i \in \alpha^{-1}(j)} X_i \right). \]
The objects are 
\[\ob\left(\bigodot_{j \in \underline{k}}Y_j\right)\times \prod_{j\in\underline{k}}\ob \left( \bigodot_{i \in \alpha^{-1}(j)} X_i \right) \]
(We think of the $Y_j$ as placeholders for the terms $F\left( \bigodot_{i \in \alpha^{-1}(j)} X_i \right)$.)
This defines a 1-cell in $\sB'$, by first adding units and composing to give the desired model for each $\bigodot_{i \in \alpha^{-1}(j)} X_i$, then applying $F$ to each of these, and finally adding units and composing along the model for $\bigodot_{j \in \underline{k}}Y_j$. A typical example of such a 1-cell is
\[ (F((X_1 \odot (I \odot X_2)) \odot I)\odot F(I)) \odot (I \odot F(X_3)). \]

The morphisms are generated by the unit and associator maps for $\sB$ and $\sB'$.
These give well-defined isomorphisms in $\sB'$ since the morphisms for the outside product $\bigodot_{j \in \underline{k}} (-)$ are natural with respect to maps of the inside products. 

Since $ \bigodot_{j \in \underline{k}} F\left( \bigodot_{i \in \alpha^{-1}(j)} X_i \right)$ is a product of cliques, it is a clique.
\end{example}

\begin{rmk}
	Kelly's notion of a \textbf{club} \cite{kelly_clubs} formalizes the constructions present in the previous example, specifically, the way one can form models for a big tensor product by composing models for the tensor products $\bigodot_{i \in \alpha^{-1}(j)} X_i$ with a model for $\bigodot_j Y_j$.
\end{rmk}

\begin{defn}\label{map_of_cliques}
A \textbf{map of cliques} $(\bA,A) \to (\bB,B)$ is a collection of maps 
\[\{A(a) \to B(b) \in \bC\}_{(a,b)\in \ob (\bA\times \bB)}\] 
so that the following square commutes for all maps $f\in \bA$ and $g\in \bB$:
\[\xymatrix @R=1.5em{A(a) \ar[r]\ar[d]^-{A(f)}& B(b)\ar[d]^{B(g)}
\\A(a') \ar[r]& B(b')
}\]
Informally, it maps each object $A(a)$ to each object $B(b)$ in a way that commutes with all of the canonical isomorphisms.
\end{defn}

\begin{rmk}\label{rmk:thick_maps_partially_defined}
Any nonempty collection of pairs $S \subseteq \ob(\bA \times \bB)$ and a collection of maps 
\[\{A(a) \to B(b) \in \bC\}_{(a,b)\in S}\] commuting with the canonical isomorphisms extends in a unique way to a clique map $(\bA,A) \to (\bB,B)$.  If we define a map this way, we call the elements in $S$ the {\bf admissible models} for this map of cliques.

\end{rmk}

\begin{example}\label{ex:tensor_product_maps}
In a monoidal category or bicategory, it is common to define maps between tensor products
\begin{equation}
\label{eq:simple_adm_model_example}
X_1 \otimes X_2 \otimes X_3 \to Y_1 \otimes Y_2
\end{equation}
by defining a collection of maps on smaller products, such as
\[ f\colon X_1 \otimes X_2 \to Y_1, \qquad g\colon X_3 \to Y_2. \]
Formally, the expression $X_1 \otimes X_2 \otimes X_3$ denotes a clique, and that clique has a nonempty subset of models for the product in which $f_1$ and $f_2$ can be applied. In particular, for the model $(I \otimes (X_1 \otimes X_2)) \otimes X_3$ we can define the desired map as
\[ \xymatrix @C=4em{ (I \otimes (X_1 \otimes X_2)) \otimes X_3 \ar[r]^-{(1 \otimes f) \otimes g} & (I \otimes Y_1) \otimes Y_2, } \]
but the model $X_1 \otimes (X_2 \otimes X_3)$ does not admit such an easy definition because $X_1$ and $X_2$ are not grouped together.

The point of \cref{rmk:thick_maps_partially_defined} is that we only have to define the map on \emph{some} models for the product. We define it on those models where an $X_1 \otimes X_2$ somewhere in the word for $X_1 \otimes X_2 \otimes X_3$, mapping to the corresponding model for $Y_1 \otimes Y_2$, as above. We then check it commutes with the canonical isomorphisms \emph{between the admissible models}, which is easy. In summary, we get \eqref{eq:simple_adm_model_example} defined on the entire clique, but we only had to explicitly define it on the models where the definition is easy.
\end{example}

\begin{defn}\label{defn:Grothendieck_construction}
For a category $\bI$ and abstract cliques $\{{\doc}(i)\}_{i\in \ob\bI}$,
the  {\bf Grothendieck construction} on the ${\doc}(i)$, denoted  
$\int_\bI \doc$, is the category with
\begin{itemize}
\item objects the
pairs $(i,x)$ with $i \in \ob \bI$ and a $x \in {\doc}(i)$ and 
\item a morphism $(i,x) \to (j,y)$ for each
morphism $i \to j$ in $\bI$.
\end{itemize}
\end{defn}
Note that each clique $ \doc(i)$ includes into $\int_\bI \doc$ as the objects $x \in  \doc(i)$ and the morphisms $(i,x) \to (i,y)$ corresponding to the identity map $i\to i$.

\begin{lem}\label{lem:consequences_of_projection_1} Forgetting the elements of $ {\doc}(i)$ defines an equivalence of categories \[\pi\colon \int_\bI \doc\to \bI.\] 
\end{lem}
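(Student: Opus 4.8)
The plan is to produce an explicit functor $\sigma\colon \bI \to \int_\bI \doc$ in the other direction and show that $\pi\sigma = \id_\bI$ strictly while $\sigma\pi \cong \id_{\int_\bI \doc}$ via a natural isomorphism whose components are canonical. First I would invoke the fact that each $\doc(i)$ is a nonempty abstract clique to choose, for every object $i \in \ob\bI$, a representative $x_i \in \doc(i)$; this is a choice (using that $\ob \doc(i) \neq \varnothing$), and it makes $\sigma$ well-defined on objects by $\sigma(i) = (i, x_i)$. On a morphism $f\colon i \to j$ in $\bI$, there is by \cref{defn:Grothendieck_construction} exactly one morphism $(i,x_i) \to (j,x_j)$ lying over $f$, so I set $\sigma(f)$ to be that morphism. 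Functoriality of $\sigma$ is immediate because the morphisms of $\int_\bI \doc$ over a fixed pair of objects are in bijection with morphisms of $\bI$, and composition in $\int_\bI \doc$ is computed by composing the underlying morphisms of $\bI$; hence $\sigma$ preserves identities and composites. By construction $\pi\sigma(i) = i$ and $\pi\sigma(f) = f$, so $\pi\sigma = \id_\bI$ on the nose.

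Next I would build the natural isomorphism $\eta\colon \id_{\int_\bI \doc} \Rightarrow \sigma\pi$. Its component at an object $(i,x)$ must be a morphism $(i,x) \to (i, x_i)$; since $\doc(i)$ is a clique, there is a unique such morphism lying over $\id_i$, and this is the morphism I take for $\eta_{(i,x)}$. Uniqueness of this morphism forces naturality: given a morphism $(i,x)\to(j,y)$ over $f\colon i\to j$, both composites around the naturality square are morphisms $(i,x) \to (j, x_j)$ lying over $f$, and there is only one such morphism in $\int_\bI \doc$, so the square commutes automatically. Each $\eta_{(i,x)}$ is invertible because the morphism over $\id_i$ from $(i,x_i)$ back to $(i,x)$ is its two-sided inverse (again by uniqueness of morphisms over $\id_i$ within the clique $\doc(i)$). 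This exhibits $\pi$ and $\sigma$ as an adjoint equivalence — or at least an equivalence — of categories.

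The only genuinely delicate point is bookkeeping about morphisms of $\int_\bI \doc$: I need the observation, already recorded after \cref{defn:Grothendieck_construction}, that a morphism $(i,x)\to(j,y)$ in $\int_\bI \doc$ is determined precisely by a morphism $i\to j$ in $\bI$ (independently of $x$ and $y$), and that identities and composition are inherited from $\bI$. Everything else — functoriality of $\sigma$, naturality and invertibility of $\eta$ — then follows formally from the "at most one morphism over a fixed arrow" property, so there is no real obstacle; the proof is a short verification. I would present it in three beats: define $\sigma$, check $\pi\sigma = \id$, and construct $\eta$ with its naturality and invertibility noted as consequences of the clique condition.
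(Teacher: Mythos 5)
Your proof is correct and is in substance the same as the paper's: the paper simply observes that $\pi$ is surjective on objects (because each $\doc(i)$ is nonempty) and fully faithful (because each $\doc(i)$ is a clique, so there is exactly one morphism $(i,x)\to(j,y)$ over each $f\colon i\to j$), and your explicit section $\sigma$ with the natural isomorphism $\sigma\pi\cong\id$ is just the standard unwinding of that fully-faithful-plus-surjective criterion. The two facts you rely on---nonemptiness to choose representatives, and uniqueness of morphisms over a fixed arrow for functoriality, naturality, and invertibility---are exactly the two facts the paper's one-line proof cites, so there is no gap.
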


\begin{proof}
This functor is surjective since the object sets of ${\doc}(i)$ are nonempty, and fully faithful since each  $ {\doc}(i)$ is a clique.
\end{proof}

\begin{cor}\label{lem:consequences_of_projection_2} $\int_\bI \doc$ is thin or an abstract clique precisely when $\bI$ is thin or an abstract clique, respectively. 
\end{cor}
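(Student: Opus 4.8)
The plan is to deduce both statements from \cref{lem:consequences_of_projection_1}, which supplies an equivalence of categories $\pi\colon \int_\bI \doc \to \bI$, together with the elementary observation that the two properties in question --- thinness and being an abstract clique --- are each invariant under equivalence of categories. So there is really nothing to prove beyond transporting these properties across $\pi$.

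For the abstract clique part I would simply invoke the middle characterization in the definition of an abstract clique: a category is an abstract clique exactly when it is equivalent to the one-point category. Since ``equivalence of categories'' is an equivalence relation on categories (it is transitive, and every equivalence has an inverse equivalence), composing with $\pi$ shows that $\int_\bI \doc$ is equivalent to the one-point category if and only if $\bI$ is. This settles one of the two biconditionals with essentially no computation, and in particular it automatically accounts for the nonemptiness conditions built into the notion of abstract clique (which on the $\int_\bI\doc$ side reflect the nonemptiness of $\bI$ and of the object sets of the $\doc(i)$, as already used in the proof of \cref{lem:consequences_of_projection_1}).

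For thinness I would argue directly from the fact that $\pi$ is fully faithful and essentially surjective. Given objects $(i,x),(j,y)$ of $\int_\bI \doc$, full faithfulness gives a bijection between $\int_\bI \doc\big((i,x),(j,y)\big)$ and $\bI(i,j)$, so if $\bI$ is thin then so is $\int_\bI \doc$. Conversely, given objects $i,j$ of $\bI$, the nonemptiness of the object sets of $\doc(i)$ and $\doc(j)$ produces objects $(i,x),(j,y)$ of $\int_\bI \doc$ lying over them, and full faithfulness again identifies $\bI(i,j)$ with $\int_\bI \doc\big((i,x),(j,y)\big)$; hence if $\int_\bI \doc$ is thin then so is $\bI$. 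There is no genuine obstacle here; the only point worth stating carefully is that thinness is a condition on \emph{every} ordered pair of objects, so transporting it from $\int_\bI \doc$ back to $\bI$ uses essential surjectivity of $\pi$ and not merely its full faithfulness --- and this is available precisely because \cref{lem:consequences_of_projection_1} yields an honest equivalence.
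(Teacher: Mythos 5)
Your proposal is correct and matches the paper's intent: the corollary is stated as an immediate consequence of \cref{lem:consequences_of_projection_1}, with the (unwritten) argument being exactly the transport of thinness and contractibility along the equivalence $\pi$. Your explicit verification of both transports, including the use of nonemptiness of the object sets of the $\doc(i)$ for the converse directions, is precisely what the paper leaves to the reader.
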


Clique maps (\cref{map_of_cliques}) can be composed, and their compositions are equal if and only if they are equal on a single representative.  Cliques and morphisms of cliques in a category $\bC$ form a category we denote $\catcl{\bC}$.  Note that $\catcl{\bC}$ is equivalent to $\bC$.

We call a diagram $\docf\colon \bI \to \catcl{\bC}$  a {\bf diagram of cliques} in $\bC$. The image of each $i\in \bI$ is a pair 
\[({\doc}(i),\docf(i)\colon {\doc}(i) \to \bC)\]
 consisting of an abstract clique ${\doc}(i)$ and a clique in $\bC$, $\docf(i)\colon {\doc}(i) \to \bC$.

\begin{lem}\label{clique_bijection}
	For fixed $\bI$, there is a bijection between diagrams of cliques $\docf\colon \bI \to \catcl{\bC}$  and pairs consisting of a collection of abstract cliques $\{{\doc}(i)\}_{i\in I}$ and a functor $\int_\bI \doc \to \bC$.
\end{lem}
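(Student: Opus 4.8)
The plan is to exhibit both directions of the claimed bijection explicitly; everything will reduce to unwinding \cref{defn:Grothendieck_construction} and \cref{map_of_cliques}, the only real care going into the bookkeeping for clique maps.

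First I would describe the map from diagrams of cliques to pairs. Given $\docf\colon\bI\to\catcl{\bC}$, the collection $\{\doc(i)\}_{i\in\ob\bI}$ of underlying abstract cliques is already part of the data (each $\docf(i)$ is a clique $\docf(i)\colon\doc(i)\to\bC$), so it remains to produce a functor $F\colon\int_\bI\doc\to\bC$. On objects I would set $F(i,x)=\docf(i)(x)$. By \cref{defn:Grothendieck_construction} a morphism $(i,x)\to(j,y)$ of $\int_\bI\doc$ is precisely the datum of a morphism $f\colon i\to j$ of $\bI$; I would send it to the $(x,y)$-component of the clique map $\docf(f)\colon(\doc(i),\docf(i))\to(\doc(j),\docf(j))$. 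To see $F$ is a functor one uses that the identity clique map restricts to the identity on any diagonal pair $(x,x)$, and that composites of clique maps are computed on a single representative (both noted just before \cref{clique_bijection}); with these, functoriality of $F$ reads off from functoriality of $\docf$.

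Next I would describe the inverse. Given abstract cliques $\{\doc(i)\}_{i\in\ob\bI}$ and a functor $F\colon\int_\bI\doc\to\bC$, I would take $\docf(i)$ to be the restriction of $F$ along the inclusion $\doc(i)\hookrightarrow\int_\bI\doc$ noted after \cref{defn:Grothendieck_construction}, which is a clique in $\bC$; and for $f\colon i\to j$ I would let $\docf(f)$ have $(x,y)$-component $F(\tilde f_{x,y})$, where $\tilde f_{x,y}$ denotes the unique morphism of $\int_\bI\doc$ over $f$ from $(i,x)$ to $(j,y)$. The two things to check --- that each $\docf(f)$ really is a map of cliques, i.e.\ commutes with the canonical isomorphisms, and that $\docf$ is functorial in $f$ --- both follow from one observation: a composite of morphisms of $\int_\bI\doc$ lying over a given morphism of $\bI$ is the \emph{unique} morphism lying over that morphism, so $F$ applied to such a composite is forced, and the defining squares of \cref{map_of_cliques} (and the equalities $\docf(\id_i)=\id$, $\docf(g\circ f)=\docf(g)\circ\docf(f)$) become instances of the functoriality of $F$, again invoking that clique-map composites are determined on one representative. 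Finally I would check that the two constructions are mutually inverse, which is immediate once both are written down: each round trip returns the original data on objects and on morphisms essentially by the definitions just given, using that every morphism of $\int_\bI\doc$ is of the form $\tilde f_{x,y}$.

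I expect the main obstacle to be purely organizational: keeping straight the three levels of morphism involved --- morphisms of $\bI$, morphisms of $\int_\bI\doc$, and components of clique maps in $\bC$ --- and invoking the ``determined on a single representative'' principle for clique maps correctly at each step. There is no substantive difficulty; the statement is essentially a reformulation, and the content is carried by \cref{defn:Grothendieck_construction} together with the basic calculus of cliques set up above.
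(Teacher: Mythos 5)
Your proposal is correct and follows essentially the same route as the paper's proof: the same explicit assignments in both directions (sending the morphism of $\int_\bI \doc$ over $f\colon i\to j$ between $(i,x)$ and $(j,y)$ to the $(x,y)$-component of the clique map, and conversely), with well-definedness and functoriality checked via uniqueness of lifts and the ``determined on a single representative'' principle, exactly as the paper does. The extra bookkeeping you spell out (fiber inclusions, mutual inverseness) is implicit in the paper's shorter argument.
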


\begin{proof}
	Given a diagram of cliques, we define $\int_\bI \doc \to \bC$ by sending each morphism $(i,x) \to (j,y)$ to the canonical map $\docf(i)(x) \to \docf(j)(y)$ given by $\docf$. This respects identity and composition since these operations for cliques respect the restriction to one representative.
	
	Conversely, given a diagram $\int_\bI \doc \to \bC$, we define a diagram of cliques by sending each $i \to j$ to the map of cliques $\docf(i) \to \docf(j)$ that for each pair of objects $x,y$ applies the morphism $(i,x) \to (j,y)$. This is well-defined since composing this with isomorphisms $x \cong x'$ and $y \cong y'$ gives the corresponding morphism $(i,x') \to (j,y')$ from our diagram. It respects identity and composition, again by restricting to any one representative in each clique.
\end{proof}

\begin{rmk}\label{rmk:plan_of_proofs}
The main results of this paper all amount showing that some category of interest $\bC$ is equivalent to  an easier to understand category $\bI$.  The technique is:
\begin{enumerate}
\item\label{rmk:plan_of_proofs_1} construct a diagram of cliques $\docf\colon \bI \to \catcl{\bC}$, 
\item\label{rmk:plan_of_proofs_2}  apply \cref{clique_bijection} to define a functor 
\begin{equation}\label{eq:step_in_the_only_plan}
\int_\bI \doc \to \bC,
\end{equation}
\item\label{rmk:plan_of_proofs_3} verify that the map \eqref{eq:step_in_the_only_plan} is an isomorphism of categories, and 
\item\label{rmk:plan_of_proofs_4} use \cref{lem:consequences_of_projection_1} to conclude  there is an equivalence $\bI\to \bC$.
\end{enumerate}
\end{rmk}

\section{Presentations of categories}\label{sec:presentations}
The proofs of \cref{intro:simple_coherence,intro:black_tie_coherence} are combinatorial, so we will need explicit descriptions of the indexing categories from \cref{table:intro_theorems} (which play the role of $\bI$ in \cref{rmk:plan_of_proofs}\ref{rmk:plan_of_proofs_1}).  We give those descriptions in this section.
A reader who is not especially fascinated by presentations of categories is free to skip this section and refer back to it as needed.

A {\bf presentation} of a category consists of a collection of objects $a, b, \ldots$, generating morphisms $a \to b$, and a collection of relations, each of which says that two different words in the generators $a \rightrightarrows c$ are equal to each other. An {\bf invertible generator} $a \leftrightarrow b$ is a pair of generators $a \to b$ and $b \to a$, together with two relations making them into inverses of each other.

\begin{present}\label{present_e_cyclic}
 Let $C_n$ denote the cyclic group of order $n$. Let $\sB C_n$ be the category with a single object with endomorphisms $C_n$. There is a presentation of $\sB C_n$ with generators 
	\begin{enumerate}[start=1,label={\bfseries G\arabic{generators}}]
\item\stepcounter{generators} \label{item:cyclic_generator}  $a_k$ for  $0 \leq k < n $
	\end{enumerate}
and relations	
	\begin{enumerate}[start=1,label={\bfseries R\arabic{relations}}]
\item\stepcounter{relations}\label{item:cyclic_relation}  $\ a_ka_l = a_{k+l}$, indices mod $n$.
\end{enumerate}
The generators could either be taken to be ordinary generators, or invertible generators. If  $k=l=0$, \ref{item:cyclic_relation}  becomes $a_0a_0 = a_0$. This  is equivalent to  $a_0 = 1$, so we use $a_0=1$ instead. 
\end{present}

\begin{present}\label{presentation:symmetric_group}
Let $\Sigma_n$ denote the symmetric group on $n$ letters and $\sB\Sigma_n$ the corresponding one-object category. There is a presentation of $\sB\Sigma_n$ with generators
\begin{enumerate}[start=1,label={\bfseries G\arabic{generators}}]
\item\stepcounter{generators}\label{gen:transposition}  adjacent transpositions 
$\tau_i = (i \ i+1),$ for  $1 \leq i < n$
\end{enumerate}
and relations 
\begin{enumerate}[start=1,label={\bfseries R\arabic{relations}}]
\item\stepcounter{relations} \label{squaretozero}
$\tau_i^2 = 1$ for  $1 \leq i < n$
\item\stepcounter{relations}  \label{farawaycommute}
$\tau_i\tau_j = \tau_j\tau_i$ for $ |i-j| > 1 $, and  
\item\stepcounter{relations}  
\label{threecycle}
$(\tau_i\tau_{i+1})^3 = 1$ for $1 \leq i < n-1$.
\end{enumerate}

\end{present}

Let $\mathbf\Delta$ be a skeleton of the category of finite totally ordered sets. We allow the sets to be empty, and we label their elements starting with 1, so the objects of $\mathbf\Delta$ are
\[ \underline{0} = \emptyset, \quad \underline{1} = \{1\}, \quad \underline{2} = \{1,2\}, \quad \underline{3} = \{1,2,3\}, \quad \textup{etc.} \]
This is the simplex category but with objects relabeled $\underline{n} = [n-1]$ and with an extra object for the empty set.

\begin{present}\label{delta_presentation}
	There is a  presentation of $\Delta$ 
	with generators 
\begin{enumerate}[start=1,label={\bfseries G\arabic{generators}}]
\item\stepcounter{generators}\label{gen:coface} {\bf coface} maps
\[ d^i: \underline{n-1} \ra \underline{n}, \quad 1 \leq i \leq n \]
\[ d^i(j) = \left\{ \begin{array}{ccc} j &\textup{ if }& j < i \\
j+1 &\textup{ if }& j \geq i \end{array} \right. \]
and 
\item\stepcounter{generators}\label{gen:codegeneracy} {\bf codegeneracy} maps
\[ s^i: \underline{n+1} \ra \underline{n}, \quad 1 \leq i \leq n \]
\[ s^i(j) = \left\{ \begin{array}{ccc} j &\textup{ if }& j \leq i \\
j-1 &\textup{ if }& j > i \end{array} \right. \]
\end{enumerate}

\begin{figure}
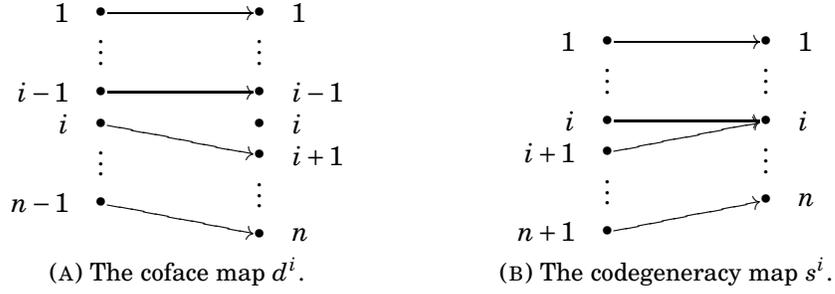

	\hspace{1.5cm}
     \centering
     \begin{subfigure}[b]{0.33\textwidth} 
		\centering $\xy 0;<12pt,0pt>:
	(0,0)*{\bullet}="L1"+(-1,0)*!R{1};
	"L1"+(0,-1)*{\vdots}="Ldots";
	"Ldots"+(0,-1.5)*{\bullet}="Li-1"+(-1,0)*!R{i-1};
	"Li-1"+(0,-1)*{\bullet}="Li"+(-1,0)*!R{i};
	"Li"+(0,-1)*{\vdots}="Ldots2";
	"Ldots2"+(0,-1.5)*{\bullet}="Ln-1"+(-1,0)*!R{n-1};
	(5,0)*{\bullet}="R1"+(1,0)*!L{1};
	"R1"+(0,-1)*{\vdots}="Rdots";
	"Rdots"+(0,-1.5)*{\bullet}="Ri-1"+(1,0)*!L{i-1};
	"Ri-1"+(0,-1)*{\bullet}="Ri"+(1,0)*!L{i};
	"Ri"+(0,-1)*{\bullet}="Ri+1"+(1,0)*!L{i+1};
	"Ri+1"+(0,-1)*{\vdots}="Rdots2";
	"Rdots2"+(0,-1.5)*{\bullet}="Rn"+(1,0)*!L{n};
	"L1";"R1" **\dir{-}; ?>*\dir{>};
	"Li-1";"Ri-1" **\dir{-}; ?>*\dir{>};
	"Li";"Ri+1" **\dir{-}; ?>*\dir{>};
	"Ln-1";"Rn" **\dir{-}; ?>*\dir{>};
	\endxy$
	\caption{The coface map $d^i$.}
     \end{subfigure}
     \hfill
     \begin{subfigure}[b]{0.33\textwidth}
         \centering
	$\xy 0;<12pt,0pt>:
	(0,0)*{\bullet}="L1"+(-1,0)*!R{1};
	"L1"+(0,-1)*{\vdots}="Ldots";
	"Ldots"+(0,-1.5)*{\bullet}="Li"+(-1,0)*!R{i};
	"Li"+(0,-1)*{\bullet}="Li+1"+(-1,0)*!R{i+1};
	"Li+1"+(0,-1)*{\vdots}="Ldots2";
	"Ldots2"+(0,-1.5)*{\bullet}="Ln+1"+(-1,0)*!R{n+1};
	(5,0)*{\bullet}="R1"+(1,0)*!L{1};
	"R1"+(0,-1)*{\vdots}="Rdots";
	"Rdots"+(0,-1.5)*{\bullet}="Ri"+(1,0)*!L{i};
	"Ri"+(0,-1)*{\vdots}="Rdots2";
	"Rdots2"+(0,-1.5)*{\bullet}="Rn"+(1,0)*!L{n};
	"L1";"R1" **\dir{-}; ?>*\dir{>};
	"Li";"Ri" **\dir{-}; ?>*\dir{>};
	"Li+1";"Ri" **\dir{-}; ?>*\dir{>};
	"Ln+1";"Rn" **\dir{-}; ?>*\dir{>};
	\endxy$
	\caption{The codegeneracy map $s^i$.}
     \end{subfigure}
	\hspace{1.5cm}
        \caption{Generators for $\Delta$}
\end{figure}

and relations
\begin{enumerate}[start=1,label={\bfseries R\arabic{relations}}]
\item\stepcounter{relations}  \label{faceface}
$d^i d^j = d^{j+1} d^i $ for $ i \leq j $,
\item\stepcounter{relations}  \label{degdeg}
$s^j s^i = s^i s^{j+1} $ for $ i \leq j$,
\item\stepcounter{relations}  \label{degface1}
$s^j d^i = d^i s^{j-1} $ for $ i < j$,
\item\stepcounter{relations}  \label{degface2}
$s^j d^i = 1 $ for $ i = j,j+1$, and 
\item\stepcounter{relations}  \label{degface3}
$s^j d^i = d^{i-1} s^j $ for $ i > j+1$.
\end{enumerate}
\end{present}

Let $\sI \subseteq \Delta$ be the subcategory of injective totally-ordered maps.

\begin{present}\label{inj_presentation}
	There is a presentation for $\sI$ with generators \ref{gen:coface}  and relations \ref{faceface}.
\end{present}

In a different direction, let $\mathbf{Fin}$ have the same objects as $\Delta$ but all maps of finite sets, not necessarily preserving the total ordering. The automorphism group of each object $\underline{n}$ is the symmetric group $\Sigma_n$. 

\begin{present}\label{fin_presentation}
There is a presentation for $\mathbf{Fin}$ with generators \ref{gen:transposition} to \ref{gen:codegeneracy} and relations \ref{squaretozero} to \ref{degface3},
 
\begin{enumerate}[start=1,label={\bfseries R\arabic{relations}}]
\item\stepcounter{relations}  \label{relation:fin_swap}
	a {\bf swap} relation $\sigma \circ \alpha = \alpha' \circ \sigma'$ for each $\alpha\colon [k] \to [l]$ in $\mathbf\Delta$ and $\sigma \in \Sigma_l$, and
	\item\stepcounter{relations}  \label{relation:fin_coequalizer}  a coequalizer relation $\alpha \circ \sigma = \alpha$ for $\alpha\colon [k] \to [l]$ in $\mathbf\Delta$ and $\sigma \in \Sigma_k$ that is a permutation of each of the sets $\alpha^{-1}(j)$.
\end{enumerate}
\end{present}
In \ref{relation:fin_swap}, the totally ordered map $\alpha'$ is uniquely determined -- there is only one such map that can make the equation true in finite sets. The permutation $\sigma'$, on the other hand, is only determined up to permutations of the fibers of $\alpha$, but the choice doesn't matter in light of \ref{relation:fin_coequalizer}.

\begin{proof}[Proof of \cref{fin_presentation}]
These relations are satisfied by maps of finite sets. To see these relations suffice, take any word in the generators giving a map of finite sets $\alpha\colon [k] \to [l]$. Using \ref{relation:fin_swap}, the word can be simplified to a word in $\Sigma_k$ followed by a word in $\mathbf\Delta$. Using \ref{squaretozero} to \ref{degface3}, these are determined by the resulting pair of morphisms in $\Sigma_k$ and $\mathbf\Delta$. Two such pairs can give the same map of finite sets only when the totally-ordered parts are identical and the permutations differ by a permutation of each of the sets $\alpha^{-1}(j)$. Using \ref{relation:fin_coequalizer}, the word is  uniquely determined by the corresponding map of finite sets.
\end{proof}

Connes' cyclic category $\Lambda$ has the same objects as $\Delta$  but the morphisms are the ``cyclically ordered'' maps. In this paper, we use a bi-augmented variant $\Lambda'$ with an extra initial object $\underline{0}$, corresponding to the empty cyclically ordered set, and an extra terminal object $\underline{*}$.

\begin{present}\label{lambda_presentation}
The objects of $\Lambda'$ are $\{\underline{0},\underline{1},\underline{2},...,\underline{*}\}$.  Generators are 
\ref{gen:coface}, \ref{gen:codegeneracy} and 
\begin{enumerate}[start=1,label={\bfseries G\arabic{generators}}]
\item\stepcounter{generators}  
a {\bf cycle to the left} map $\tau_{(n)}: \underline{n} \to \underline{n}$ for each $n \geq 1$, along with
\item\stepcounter{generators}
a terminal map $t\colon \underline{1} \to \underline{*}$.
\end{enumerate}
The relations are \ref{faceface}-\ref{degface3},
\begin{enumerate}[start=1,label={\bfseries R\arabic{relations}}]
\item\stepcounter{relations}  
\label{cycleface}
$\tau_{(n)} d^i = d^{i-1}\tau_{(n-1)} $ for $ 2 \leq i \leq n $
\item\stepcounter{relations}   
\label{cycleface2}
$\tau_{(n)} d^1 = d^n$
\item\stepcounter{relations}  
\label{cycledeg}
$\tau_{(n)} s^i = s^{i-1}\tau_{(n+1)}$ for $ 2 \leq i \leq n$
\item\stepcounter{relations}   
\label{cycledeg2}
$\tau_{(n)} s^1 = s^{n}(\tau_{(n+1)})^2$,
\item\stepcounter{relations}  
\label{cycletorsion}
$\tau_{(n)}^n = \id$, and
\item\stepcounter{relations}  
\label{terminal1}
$ts^1 = ts^1\tau_{(2)}$.
\end{enumerate}
\end{present}

The full subcategory of $\Lambda$ on the nonempty sets $\{\underline{1},\underline{2},\underline{3},...\}$ agrees with the cyclic category of Connes, see e.g. \cite{connes,bhm}. This is almost a subcategory of $\mathbf{Fin}$, except that there are $n$ different cyclically ordered maps $\underline{n} \to \underline{1}$ but only one map of finite sets. The relation \ref{terminal1} is sufficient to ensure that every object $\underline{n}$ has a unique map to $\underline{*}$, which factors through $\underline{1}$.

In addition to the presentations of specific categories above, we also need presentations for new categories defined in terms of old categories.
Suppose $\bC$ and $\bD$ are categories with given presentations.

\begin{lem}\label{product_presentation}
	The product $\bC \times \bD$ has a presentation with generators 
\begin{enumerate}[start=1,label={\bfseries G\arabic{generators}}]
\item\stepcounter{generators}  \label{product_presentation_1}
			$(a,x) \to (b,x)$ for each generator $a \to b$ in $\bC$ and object $x$ in $\bD$,
		\item\stepcounter{generators}\label{product_presentation_2}
			$(a,x) \to (a,y)$ for each object $a$ in $\bC$ and generator $x \to y$ in $\bD$,
	\end{enumerate}
	and relations 

\begin{enumerate}[start=1,label={\bfseries R\arabic{relations}}]
\item\stepcounter{relations}  \label{product_presentation_3}
			$(a,x) \rightrightarrows (b,x)$ for each relation $a \rightrightarrows b$ in $\bC$ and object $x$ in $\bD$,
		\item\stepcounter{relations}\label{product_presentation_4}
			$(a,x) \rightrightarrows (a,y)$ for each object $a$ in $\bC$ and relation $x \rightrightarrows y$ in $\bD$, and 
		\item\label{stepcounter}\label{product_presentation_5} a {\bf swap} relation giving the commutativity of the square
		\[ \xymatrix{
			(a,x) \ar[d] \ar[r] & (b,x) \ar[d] \\
			(a,y) \ar[r] & (b,y)
		} \]
		for each each generator $a \to b$ in $\bC$ and generator $x \to y$ in $\bD$.
	\end{enumerate}
\end{lem}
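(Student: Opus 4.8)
The plan is to construct an isomorphism of categories between the category $\bP$ presented by the given generators and relations and the actual product $\bC \times \bD$, by exhibiting a functor $\bP \to \bC \times \bD$ and checking it is bijective on objects and hom-sets. First I would note that the objects of $\bP$ are by construction pairs $(a,x)$ with $a \in \ob\bC$, $x \in \ob\bD$, which is exactly $\ob(\bC\times\bD)$, so the object part is the identity. To define the functor on morphisms, send a generator of type \ref{product_presentation_1}, say $(a,x)\to(b,x)$ coming from $f\colon a\to b$ in $\bC$, to $(f,\id_x)$, and a generator of type \ref{product_presentation_2} coming from $g\colon x\to y$ to $(\id_a,g)$. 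One then checks that each relation \ref{product_presentation_3}, \ref{product_presentation_4}, \ref{product_presentation_5} maps to a true equation in $\bC\times\bD$: the first two because the corresponding relations hold in $\bC$ and $\bD$ respectively and $\id$ is functorial, and the swap relation because $(f,\id_y)\circ(\id_a,g) = (f,g) = (\id_b,g)\circ(f,\id_x)$ by the interchange law in a product category. This gives a well-defined functor $P\colon \bP \to \bC\times\bD$.

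Next I would show $P$ is full. Any morphism $(a,x)\to(b,x')$ in $\bC\times\bD$ is a pair $(f,g)$ with $f\colon a\to b$ in $\bC$ and $g\colon x\to x'$ in $\bD$. Writing $f$ as a word in the generators of $\bC$ and $g$ as a word in the generators of $\bD$, the word in $\bP$ obtained by first applying the type-\ref{product_presentation_1} generators for the letters of $f$ (at the fixed second coordinate $x$) and then the type-\ref{product_presentation_2} generators for the letters of $g$ (at the fixed first coordinate $b$) is a morphism $(a,x)\to(b,x')$ in $\bP$ mapping under $P$ to $(f,g)$. Hence $P$ is surjective on each hom-set.

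For faithfulness, the key point is a normal-form argument: using the swap relation \ref{product_presentation_5} repeatedly, every word in the generators representing a morphism $(a,x)\to(b,x')$ in $\bP$ can be rewritten so that all type-\ref{product_presentation_1} generators come first and all type-\ref{product_presentation_2} generators come last, i.e.\ as a $\bC$-word at second coordinate $x$ followed by a $\bD$-word at first coordinate $b$. The $\bC$-part is then determined up to the relations \ref{product_presentation_3} (which are exactly the relations of $\bC$) by its image $f$ in $\bC$, and likewise the $\bD$-part is determined up to \ref{product_presentation_4} by its image $g$ in $\bD$; so the whole word is determined modulo the relations of $\bP$ by the pair $(f,g) = P(\text{word})$. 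Therefore $P$ is injective on each hom-set, hence an isomorphism of categories, proving that the stated data is a presentation of $\bC\times\bD$.

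I expect the main obstacle to be the faithfulness step, specifically making the normal-form rewriting argument rigorous: one must argue that the swap relation lets any interleaving of the two families of generators be sorted into the canonical order without changing the morphism in $\bP$, and that once in this form the two halves can be normalized independently using only the imported relations of $\bC$ and $\bD$. This is the usual argument that a product of presented monoids/categories is presented by the disjoint union of generators plus commuting relations, and the only subtlety is bookkeeping the second-coordinate objects as the $\bC$-generators slide past the $\bD$-generators; everything else (objects, well-definedness, fullness) is essentially immediate.
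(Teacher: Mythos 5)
Your proposal is correct. The paper gives no proof of this lemma at all---it is stated as a standard fact about presentations and simply used later---so there is no argument of the authors' to compare against; your proof (define the evident functor from the presented category to $\bC\times\bD$, check the three families of relations map to true equations, prove fullness by lifting $(f,g)$ as a $\bC$-word at the fixed second coordinate followed by a $\bD$-word at the fixed first coordinate, and prove faithfulness by sorting an arbitrary word into this normal form via the swap relations and then normalizing the two halves independently using the imported relations of $\bC$ and $\bD$) is exactly the standard argument one would supply, and the bookkeeping you flag about the intermediate objects changing as generators slide past one another is genuine but routine.
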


\begin{lem}\label{slice_presentation}
	For each object $c$ in $\bC$, the slice category $(c \downarrow \bC)$ has a presentation with generators 
	\begin{enumerate}[start=1,label={\bfseries G\arabic{generators}}]
\item\stepcounter{generators}   $(c \to a) \to (c \to b)$ for each object  $c \to a$ in  $(c \downarrow \bC)$ and generator $a \to b$ in $\bC$
	\end{enumerate}
and relations
		\begin{enumerate}[start=1,label={\bfseries R\arabic{relations}}]
\item\stepcounter{relations}
$(c \to a) \rightrightarrows (c \to b)$ for each object $c \to a$ in  $(c \downarrow \bC)$ and relation $a \rightrightarrows b$ in $\bC$.
	\end{enumerate}
\end{lem}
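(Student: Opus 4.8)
The plan is to construct the evident identity-on-objects functor $\Phi$ from the presented category, call it $\mathbf P$, to $(c\downarrow\bC)$, and to show it is an isomorphism, using that $\bC$ is controlled by its own presentation. By definition $\mathbf P$ has the same objects as $(c\downarrow\bC)$, namely the morphisms $c\to a$ of $\bC$, and each generator (resp.\ relation) of $\mathbf P$ is a generator (resp.\ relation) of $\bC$ together with a choice of source object $c\to a$; here the target object $c\to b$ of a generator $a\to b$ is the composite $(a\to b)\circ(c\to a)$, and in a relation both decorated words land at the same object precisely because the relations of $\bC$ hold in $\bC$, so that these decorated pairs are indeed parallel. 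Since all the relations of $\mathbf P$ hold in $(c\downarrow\bC)$, sending each generator to the corresponding morphism extends to a functor $\Phi\colon\mathbf P\to(c\downarrow\bC)$ that is the identity on objects; it remains to show $\Phi$ is full and faithful.

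The bookkeeping device is that a word in the generators of $\mathbf P$ with prescribed source $c\to a$ carries exactly the same information as a word $w = g_m\cdots g_1$ in the generators of $\bC$ with source $a$: the intermediate objects of the corresponding $\mathbf P$-word are forced to be $(g_i\cdots g_1)\circ(c\to a)$, and conversely the forgetful functor $(c\downarrow\bC)\to\bC$ sends a $\mathbf P$-word to its underlying $\bC$-word. Write $\widetilde w$ for this lift of $w$. Fullness is then immediate: a morphism $(c\to a)\to(c\to b)$ of $(c\downarrow\bC)$ is a morphism $f\colon a\to b$ of $\bC$ with $f\circ(c\to a)=(c\to b)$; picking a word $w$ representing $f$ (necessarily landing at $b$), the lift $\widetilde w$ is a morphism of $\mathbf P$ with $\Phi(\widetilde w)=f$.

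For faithfulness, suppose two morphisms $(c\to a)\to(c\to b)$ of $\mathbf P$ have the same image $f$; their underlying $\bC$-words $w_1,w_2$ both represent $f$, so by the presentation of $\bC$ they are joined by a finite chain of elementary moves, each replacing a subword $r$ by $r'$ for some relation $r\rightrightarrows r'$ of $\bC$. Each such move lifts to $\mathbf P$: writing $w = u\cdot r\cdot v$ with $v\colon a\to a'$ and $r,r'\colon a'\to b'$, in $\widetilde w = \widetilde u\cdot\widetilde r\cdot\widetilde v$ the middle factor runs from $v\circ(c\to a)$ to its target, and $(v\circ(c\to a))\rightrightarrows(c\to b')$ is exactly a relation of $\mathbf P$; the factors $\widetilde u,\widetilde v$ are unchanged since the decoration depends only on the underlying $\bC$-word read so far (and on the fact that $r,r'$ have equal composite in $\bC$, so their decorated targets agree). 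Lifting the whole chain gives equality in $\mathbf P$, so $\Phi$ is an isomorphism of categories. I expect the only point needing care is this ``decorate-as-you-go'' correspondence between words in $\mathbf P$ and decorated words in $\bC$; once it is in place, fullness, faithfulness, and the lifting of relation-moves are all routine. Conceptually this is the statement that $(c\downarrow\bC)\to\bC$ is the discrete opfibration classified by $\bC(c,-)\colon\bC\to\Set$, which is the reason the presentation has this shape.
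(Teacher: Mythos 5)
The paper offers no proof of this lemma at all: both this statement and the companion product-presentation lemma are stated as routine bookkeeping and left to the reader, so there is no argument of the authors' to compare against. Your write-up is a correct filling-in of that routine argument, and it isolates exactly the two points that actually need care. First, that each relation instance in the proposed presentation really is a pair of \emph{parallel} words in the slice: the two decorated words end at the same object $(c \to b)$ only because the corresponding relation already holds in $\bC$, which you state explicitly. Second, the ``decorate-as-you-go'' bijection between words of generators in the presented category $\mathbf P$ with source $c \to a$ and words of generators of $\bC$ with source $a$, which is what lets you transport fullness from the presentation of $\bC$ and lift each elementary relation move (the prefix and suffix decorations are unchanged precisely because the two sides of a relation have equal composite in $\bC$), giving faithfulness. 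Your closing remark that this is the statement that $(c \downarrow \bC) \to \bC$ is the discrete opfibration classified by $\bC(c,-)$ is also the right conceptual frame, and it matches the spirit of the paper's treatment of presentations of Grothendieck constructions (\cref{groth_presentation}), whose proof likewise proceeds by lifting words along the projection; the only caveat is that the paper's subsequent remark also allows invertible generators, which your argument accommodates with no change since an invertible generator is just two generators plus two relations.
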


\begin{rmk}
	The above two lemmas remain true if the presentations of $\bC$ and $\bD$ contain invertible generators. Of course, in that case we must take each of the corresponding generators in $\bC \times \bD$ and $(c \downarrow \bC)$ to be invertible as well. 
\end{rmk}

\begin{example}\label{present_e_cyclic_2}
Let $\sB C_n$ be as in \cref{present_e_cyclic} and let $\sE C_n \coloneqq (* \downarrow \sB C_n)$. Then \cref{slice_presentation,present_e_cyclic} 
 give a presentation for 
$\sE C_n$ with generators 
	\begin{enumerate}[start=1,label={\bfseries G\arabic{generators}}]
\item\stepcounter{generators}  
pairs  $(\sigma, a_k)$
where $\sigma$ is an element of $C_n$ and $a_k$ is a generator from \ref{item:cyclic_generator} 
\end{enumerate}
and relations
		\begin{enumerate}[start=1,label={\bfseries R\arabic{relations}}]
\item\stepcounter{relations}
 pairs $(\sigma, R)$
where $\sigma$ is an element of $C_n$ and $R$ is one of the relations in \ref{item:cyclic_relation}.
\end{enumerate}
In particular,  $\sE C_n$ has an invertible generator for every pair of objects and a relation for every triple of objects.
\end{example}

\begin{example}\label{present_e_symmetric}
Similarly, if $\sB\Sigma_n$ is as in \cref{presentation:symmetric_group} and $\sE\Sigma_n \coloneqq (* \downarrow \sB\Sigma_n)$, then \cref{slice_presentation,presentation:symmetric_group}
give a presentation for 
$\sE\Sigma_n$ with generators 
	\begin{enumerate}[start=1,label={\bfseries G\arabic{generators}}]
\item\stepcounter{generators}\label{present_e_symmetric_1}   pairs $(\sigma, \tau)$
where $\sigma$ is an element of $\Sigma_n$ and $\tau$ is a transposition
\end{enumerate}
and relations 
		\begin{enumerate}[start=1,label={\bfseries R\arabic{relations}}]
\item\stepcounter{relations}\label{present_e_symmetric_2} pairs $(\sigma, R)$
where $R$ is one of the relations \ref{squaretozero} to \ref{threecycle}.
\end{enumerate}
In the pair $(\sigma,\tau)$ we think of $\tau$ as inducing a map $\tau\colon \sigma \to \tau\sigma$.
In the pair $(\sigma,R)$ we think of $R$ as a  relation in $\Sigma_n$ between words of morphisms starting at $\sigma$.
\end{example}

Finally, suppose that $\bI$ and each of the abstract cliques $\underline{\doc}(i)$ have given presentations.
\begin{lem}\label{groth_presentation} The objects of  $\int_\bI \doc$ from \cref{clique_bijection} are the union over $i \in \ob\ \bI$ of the objects of $\underline{\doc}(i)$.  The morphisms are generated by 
\begin{enumerate}[start=1,label={\bfseries G\arabic{generators}}]
\item\stepcounter{generators}  
	generators for each of the categories ${\doc}(i)$ (vertical generators), and
	\item\stepcounter{generators}   an arrow $f\colon (i,x) \to (j,y)$ for each generator $f\colon i \to j$ in $\bI$ and  pair $(x,y)$ in some nonempty subset of ${\doc}(i) \times {\doc}(j)$ (horizontal generators).
\end{enumerate}
\end{lem}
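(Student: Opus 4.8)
The plan is to run the template of \cref{rmk:plan_of_proofs} by hand, comparing the proposed presented category directly to $\int_\bI \doc$. Write $\mathcal P$ for the category with the stated objects, the generators G1 and G2, and the three families of relations one expects it to carry: the \emph{vertical relations}, namely the relations in each given presentation of $\doc(i)$; the \emph{swap relations}, equating a horizontal generator composed with a vertical generator with the corresponding vertical-then-horizontal composite (this makes sense because in $\int_\bI\doc$ a morphism $(i,x)\to(j,y)$ depends only on the arrow $i\to j$); and the \emph{$\bI$-relations}, one for each relation of $\bI$ read off on a chosen admissible pair of models, with vertical generators spliced in so that the two sides become composable paths. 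There is an evident functor $P\colon \mathcal P \to \int_\bI\doc$ which is the identity on objects, sends a vertical generator to the unique canonical isomorphism of the relevant clique, and sends a horizontal generator $f\colon (i,x)\to(j,y)$ to the morphism of $\int_\bI\doc$ lying over $f\colon i\to j$. These choices satisfy the three families of relations by construction, so $P$ is well defined and is a bijection on objects; it remains to see that $P$ is fully faithful.

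Fullness is the substance of the displayed statement. A morphism $(i_0,x)\to(i_m,y)$ of $\int_\bI\doc$ is determined by the underlying arrow $f\colon i_0\to i_m$ of $\bI$, by the definition of $\int_\bI\doc$ (equivalently by \cref{lem:consequences_of_projection_1}). Factor $f = f_m\circ\cdots\circ f_1$ into generators $f_k\colon i_{k-1}\to i_k$ of $\bI$, and for each $k$ pick an admissible pair $(a_{k-1},b_k)$ from the nonempty subset of $\doc(i_{k-1})\times\doc(i_k)$ attached to $f_k$. Since each $\doc(i_k)$ is a clique, composites of its generators provide vertical morphisms $x\to a_0$, $b_1\to a_1$, $\ldots$, $b_{m-1}\to a_{m-1}$, $b_m\to y$; splicing these alternately with the horizontal generators $f_k\colon (i_{k-1},a_{k-1})\to (i_k,b_k)$ produces a word from $(i_0,x)$ to $(i_m,y)$ whose image under $\pi\colon \int_\bI\doc\to\bI$ is $f$, hence whose image under $P$ is the given morphism. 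So $P$ is full, and in particular G1 and G2 generate all morphisms of $\int_\bI\doc$.

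For faithfulness one shows that two words with the same image under $P$ are related by the three families of relations. The efficient route is \cref{clique_bijection}: unwinding the universal properties of the presentations, a functor $\mathcal P\to\bC$ — that is, an assignment on G1 and G2 satisfying the relations — is exactly a diagram of cliques $\bI\to\catcl{\bC}$. Indeed, the vertical data and relations are precisely a choice of functor $\docf(i)\colon\doc(i)\to\bC$ for each $i$; the horizontal data together with the swap relations promote a partially-defined assignment on $\doc(i)\times\doc(j)$ to an honest map of cliques $\docf(i)\to\docf(j)$ via \cref{rmk:thick_maps_partially_defined}; and the $\bI$-relations say exactly that this assignment of clique maps respects composition in $\bI$. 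Thus $\mathcal P$ and $\int_\bI\doc$ corepresent the same functor on categories, and $P$ is an isomorphism. (One could instead argue directly, reducing every word to a normal form ``vertical part after a string of horizontal generators'' using the swap relations, pinning down the vertical part by uniqueness of morphisms in a clique, and pinning down the horizontal string up to the $\bI$-relations.)

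I expect the main obstacle to be this faithfulness half, and specifically the bookkeeping in the $\bI$-relations: a relation of $\bI$ must be transported into $\mathcal P$ along a chosen representative and padded with vertical correction terms, and one must check that these choices do not matter — equivalently, that the identification of functors out of $\mathcal P$ with diagrams of cliques in the previous paragraph is natural in $\bC$. The fullness half, which is all the displayed statement asks for, is the short lifting argument given above.
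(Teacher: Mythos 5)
Your lifting argument for the generation claim---factor the underlying arrow $i\to j$ into generators of $\bI$, choose for each one an admissible pair from the given nonempty subset, splice in canonical isomorphisms of the cliques $\doc(i)$ (which are composites of the vertical generators by the assumed presentations), and conclude the composite equals the given morphism because it has the same image in $\bI$---is exactly the paper's proof of this lemma, which asserts only generation. The remainder of your proposal (the presented category $\mathcal P$ with swap and $\bI$-relations and the faithfulness argument) goes beyond the statement; the paper deliberately stops short of a full presentation, noting the relations are nontrivial and never needed, and indeed your swap relations are delicate as stated, since the admissible subset attached to a generator of $\bI$ is only assumed nonempty, so the ``corresponding'' horizontal generator after a vertical move need not exist.
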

There are nontrivial relations between these, but they are never needed for our proofs.

\begin{proof}
	Given any morphism $(i,x) \to (j,y)$, factor the map $i \to j$ into generators. For each generator pick a corresponding map $(i',x') \to (i'',x'')$ in the Grothendieck construction. Adding canonical isomorphisms in the fiber categories ${\doc}(i)$ these lifts can be composed.   The resulting composite is the original map $(i,x) \to (j,y)$ because it has the same image in $\bI$. By assumption, the added canonical isomorphisms can be written as composites of the vertical generators. Then the original morphism is a composite of vertical and horizontal generators.
\end{proof}

\section{Coherence for categories}\label{sec:coherence_cat}
In this section we prove the coherence theorems for bicategories, symmetric monoidal categories, and bicategories with shadow. Monoidal categories follow as a special case.

\subsection{Coherence for bicategories}\label{sec:coherence_bicat} Let $\oneBicat$ be the (1-)category whose objects are bicategories and whose morphisms are  strict functors of bicategories.  
In particular, the bicategories can have non-trivial associator and unitor isomorphisms, but they are strictly preserved by the functors.

Let $\Graphcat$ be the category whose objects are \emph{oriented} graphs and morphisms maps of graphs.  (A map of graphs takes vertices to vertices and edges to edges preserving adjacencies and orientations.)  
There is a forgetful functor
\[\fobic{-}\colon \oneBicat\to \Graphcat\]
that takes the underlying 0-cells and 1-cells, and forgets the 2-cells.  
This functor has a left adjoint $\frbic-$ that defines the free bicategory on a graph.  

\begin{present}\label{rmk:explicit_free_bicat} Let $\gLambda$ be a graph.    We will say that an ordered tuple of edges $X_1,\ldots , X_n$ in $\gLambda$  is {\bf composable} if the edges define a directed path in $\gLambda$.

The objects of $\frbic{\gLambda}$ are the  vertices of $\gLambda$. The 1-cells of $\frbic{\gLambda}$ 
 are parenthesizations of
\begin{equation}\label{eq:explicit_free_bicat_1_cell}
\underbrace{I_{A_0}\odot \cdots\odot  I_{A_0}}_{j_0}\odot X_1\odot \underbrace{I_{A_1}\odot \cdots\odot I_{A_1}}_{j_1}\odot X_2\odot  \ldots\odot  X_{n-1}\underbrace{I_{A_{n-1}}\odot \cdots \odot I_{A_{n-1}}}_{j_{n-1}}\odot X_n\odot \underbrace{I_{A_n}\odot \ldots \odot  I_{A_n}}_{j_n}
\end{equation}
for composable edges  $X_1,\ldots ,X_n$ of $\gLambda$. We usually write the units as $I$ without subscript, since the subscript is determined by its position in the expression:
\[ (I \odot (X_1 \odot X_2)) \odot (I \odot X_3) := (I_{A_0} \odot (X_1 \odot X_2)) \odot (I_{A_2} \odot X_3) \]
Equivalently, the 1-cells of $\frbic{\gLambda}$ are binary trees with leaves labeled by edges of $\gLambda$ or by formal units $I$, written in an order that makes them composable.

The 2-cells of  $\frbic{\gLambda}$ are generated by 
	\begin{enumerate}[start=1,label={\bfseries G\arabic{generators}}]
\item\stepcounter{generators}  \label{rmk:explicit_free_bicat_1} formal associator isomorphisms $\alpha\colon A \odot (B \odot C) \cong (A \odot B) \odot C$ and 
	\item\stepcounter{generators}\label{rmk:explicit_free_bicat_2} formal unitor isomorphisms $\ell\colon I \odot A \cong A$ and $r\colon A \odot I \cong A$. 
\end{enumerate}
Here $A$, $B$, and $C$ are any groups of parenthesized terms inside the larger word \eqref{eq:explicit_free_bicat_1_cell}. (In other words, we take all expanded instances of the associator and unitor maps.) Implicit in the word ``isomorphism'' is that we are taking these as invertible generators. So each one actually consists of two generators pointing in opposite directions, plus relations making them into inverses. In addition to these, the relations for $\frbic{\gLambda}$ are
	\begin{enumerate}[start=1,label={\bfseries R\arabic{relations}}]
\item\stepcounter{relations}\label{rmk:explicit_free_bicat_4} the pentagon relation for a bicategory,
\begin{equation}\label{eq:explicit_free_bicat_4}
	\xymatrix{X_1\odot (X_2\odot (X_3\odot X_4))\ar[r]^-\alpha\ar[d]^{1\odot \alpha}
		&(X_1\odot X_2)\odot (X_3\odot X_4)\ar[r]^-\alpha
		&((X_1\odot X_2)\odot X_3)\odot X_4
		\\
		X_1\odot ((X_2\odot X_3)\odot X_4)\ar[rr]^-\alpha
		&&(X_1\odot (X_2\odot X_3))\odot X_4\ar[u]_-{\alpha\odot 1}}
\end{equation}
\item\stepcounter{relations}\label{rmk:explicit_free_bicat_4a}  the triangle relation for a bicategory,
\begin{equation}\label{eq:explicit_free_bicat_4a}
	\xymatrix @R=1.5em{X_1\odot (I\odot X_2)\ar[rr]^-\alpha\ar[dr]_{1\odot l}&&(X_1\odot I)\odot X_2\ar[dl]^-{r\odot 1}
			\\&X_I\odot X_2}
\end{equation}
	\item\stepcounter{relations}\label{rmk:explicit_free_bicat_5} ({\bf whiskering}) 
		any two isomorphisms applied to disjoint regions in \eqref{eq:explicit_free_bicat_1_cell}  commute, and
	\item\stepcounter{relations}\label{rmk:explicit_free_bicat_6} ({\bf naturality}) 
		isomorphisms commute with any other isomorphism applied to the interior of one of its terms.
\end{enumerate}
\ref{rmk:explicit_free_bicat_5} guarantees that $\odot$ is a bifunctor and \ref{rmk:explicit_free_bicat_6} guarantees that the maps $\alpha$, $l$, and $r$ are natural isomorphisms, not just maps.
\end{present}

A {\bf formal diagram} of 2-cells in a bicategory $\sC$ is any diagram that lifts along the counit morphism $\frbic{\fobic{\sC}} \to \sC$. 

\begin{thm}[Coherence for bicategories]
\label{bicat_coherence_formal}
Every formal diagram of 2-cells in a bicategory  $\sC$ commutes.
\end{thm}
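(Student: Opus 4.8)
The plan is to follow the four-step strategy of \cref{rmk:plan_of_proofs}, with $\bC = \frbic{\fobic{\sC}}$ playing the role of the category we want to understand and with $\bI$ a suitable ``easy'' category that we can see directly is a disjoint union of abstract cliques. More precisely, it suffices to prove the ``abstract'' version of the theorem: the free bicategory $\frbic{\gLambda}$ on any graph $\gLambda$ has the property that, for every pair of vertices $A,B$ and every composable tuple $X_1,\dots,X_n$ of edges forming a directed path from $A$ to $B$, the full subcategory of $\frbic{\gLambda}(A,B)$ on the 1-cells \eqref{eq:explicit_free_bicat_1_cell} built from $X_1,\dots,X_n$ (in that order, with arbitrary interspersed units and arbitrary parenthesization) is an abstract clique. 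Once we know this, any formal diagram in $\sC$ is the image under the counit $\frbic{\fobic{\sC}}\to\sC$ of a diagram in $\frbic{\fobic{\sC}}$ all of whose vertices lie in a single such subcategory (a diagram of 2-cells has a fixed source and target 0-cell and its 1-cells all have the same underlying composable string of edges), and in an abstract clique every diagram commutes; applying the counit functor shows the original diagram commutes.

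So the real content is: each such hom-subcategory of $\frbic{\gLambda}$ is a clique, i.e.\ nonempty, connected, and thin. Nonemptiness and connectedness are easy — any two parenthesized words with the same sequence of $X_i$'s and the same units can be connected by associators, and the unitors let us insert or delete units — so the crux is \emph{thinness}: any two parallel 2-cells between such 1-cells are equal. The plan is to prove this by the Grothendieck-construction method. I would build the clique in two stages, exactly as the introduction advertises. \textbf{Stage 1} handles the associators only: fix a model with no units, so a 1-cell is just a parenthesization (binary tree) of $X_1 \odot \cdots \odot X_n$, and show that the category of such parenthesizations with associator-generated morphisms is a clique. For this one takes $\bI$ to be the ``one fewer variable'' version (parenthesizations of $X_1 \odot \cdots \odot X_{n-1}$, say, or an explicitly presented category of trees), defines for each object of $\bI$ the clique $\doc(i)$ of ways to graft $X_n$ into that tree, writes down the functor $\int_\bI \doc \to \frbic{\gLambda}$ via \cref{clique_bijection} by specifying it on convenient admissible models and checking compatibility with the pentagon and naturality relations \ref{rmk:explicit_free_bicat_4}, \ref{rmk:explicit_free_bicat_6}, verifies it is an isomorphism onto the associator-subcategory, and then inducts on $n$ using \cref{lem:consequences_of_projection_2}. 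The base case $n \le 2$ is trivial. \textbf{Stage 2} then adds the units: treat the already-established associator-clique as the target and run a second Grothendieck construction whose fibers are cliques of ``where to attach a new unit $I$,'' now checking compatibility with the triangle relation \ref{rmk:explicit_free_bicat_4a} and the unitor naturality; induct on the total number $j_0 + \cdots + j_n$ of units.

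The main obstacle I anticipate is organizing Stage 1 cleanly — choosing the indexing category $\bI$ and the fibers $\doc(i)$ so that the grafting functor $\int_\bI \doc \to \frbic{\gLambda}$ is visibly an \emph{isomorphism} of categories (step \ref{rmk:plan_of_proofs_3}), not just an equivalence. Surjectivity on objects and morphisms is a bookkeeping exercise with \cref{groth_presentation}, but injectivity on morphisms is where one must actually use that the \emph{only} relations in $\frbic{\gLambda}$ are pentagon, triangle, whiskering, and naturality: one needs to know that every relation of $\frbic{\gLambda}$ among 2-cells in the associator-subcategory is already a consequence of the relations coming from $\bI$, the clique relations in the fibers, and the defining commuting-square (``swap'') relations of the Grothendieck construction. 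Concretely, the pentagon must be recovered from lower-arity data plus naturality — this is precisely the classical observation at the heart of Mac Lane's argument, repackaged. The second potential subtlety is making sure the map of cliques in each fiber is well-defined in the sense of \cref{rmk:thick_maps_partially_defined}: one defines the grafting on the admissible models where it is obvious and must check commutation with the canonical isomorphisms \emph{between those models}, which the whiskering and naturality relations \ref{rmk:explicit_free_bicat_5}, \ref{rmk:explicit_free_bicat_6} are designed to supply. Once these checks are in place, steps \ref{rmk:plan_of_proofs_3} and \ref{rmk:plan_of_proofs_4} and the induction close out the argument, and then the descent along the counit finishes the proof of \cref{bicat_coherence_formal}.
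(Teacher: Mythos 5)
You follow the paper's framework (\cref{rmk:plan_of_proofs}) --- reduce to the free bicategory, prove each hom-component of $\frbic{\gLambda}$ is a clique, and build that clique in two Grothendieck stages, associators then units --- but your decompositions differ from the paper's at both stages. In \cref{lem:bicat_assoc_induction} the paper does not graft $X_n$ over the parenthesizations of $X_1\odot\cdots\odot X_{n-1}$; it indexes over the subdivided interval $\bI_{n-1}$, sending $y_i$ to the subcategory $A_\gLambda^i$ of parenthesizations whose outermost $\odot$ splits after $X_i$, which is a product of two shorter associator cliques and hence a clique by induction, and it sends each generator $y_i\leftrightarrow y_j$ to a single associator applied on admissible models; with this choice the only relation to verify is literally the pentagon \ref{rmk:explicit_free_bicat_4}. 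Your grafting induction is closer to Mac Lane's original argument and can presumably be pushed through, but you would still have to exhibit the fibers (graftings of $X_n$) as cliques and verify the base relations, which is more bookkeeping than the paper's choice. For the units, the paper runs a single further Grothendieck construction over $\prod^{n+1}\sI[\sI^{-1}]$ --- a clique because it has an initial object --- with fibers the full associator-plus-units cliques $U_\gLambda(X_1,\ldots,X_n,j_0,\ldots,j_n)$ (\cref{bicat_coherence_diagram_assoc_unit,bicat_coherence}); your ``induct on $j_0+\cdots+j_n$ with fibers recording where to attach a new unit'' inverts this fiber/base structure and, as stated, is not yet organized as one Grothendieck construction over a clique, which is exactly what makes steps \ref{rmk:plan_of_proofs_3} and \ref{rmk:plan_of_proofs_4} close.

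Two more specific comments. First, the obstacle you single out --- that faithfulness of $\int_{\bI}\doc\to\frbic{\gLambda}$ requires showing every relation of $\frbic{\gLambda}$ in the associator subcategory is a consequence of the Grothendieck relations --- is not where the work lies: once $\bI$ and the fibers are cliques, $\int_{\bI}\doc$ is thin (\cref{lem:consequences_of_projection_2}), so a functor out of it that is bijective on objects is automatically injective on morphisms; what must actually be checked is well-definedness of the diagram of cliques (this is where the pentagon and triangle enter, as you do note) together with bijectivity on objects and surjectivity on morphisms. Second, your unit stage omits a check you will need: the relation saying that inserting two units at the same spot in either order gives the same map ($d^id^i=d^{i+1}d^i$ in \ref{faceface}) does not follow from the triangle axiom \ref{rmk:explicit_free_bicat_4a} and naturality alone; it requires $l=r$ on the unit 1-cell, which the paper supplies via Kelly's classical lemma \cite{kelly_simplified_coherence} (or by adding it as an axiom). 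Neither point invalidates your plan, but both would need to be addressed to complete it.
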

We prove this in stages. The first step is to handle the associators.

For an $n$-tuple of composable edges $X_1, \ldots ,X_n$ in a graph $\gLambda$ let $A_\gLambda (X_1,\ldots , X_n)$ be the subcategory of $\frbic{\gLambda}$ whose objects are all parenthesization of the expression
\begin{equation}\label{eq:lots_of_xs} X_1\odot X_2\odot\ldots\odot X_n
\end{equation}
and morphisms generated by the associator isomorphisms.  
(Since graphs don't have identity edges, none of the $X_i$ are unit 1-cells.)
We define $A_\gLambda (X_1,\ldots , X_n)$ as a subcategory of $\frbic{\gLambda}$, not as the category generated by associator maps with only the pentagon axiom relation between them. A priori, there could be more relations coming from composites of morphisms that pass outside of $A_\gLambda (X_1,\ldots , X_n)$.

\begin{lem}\label{lem:bicat_assoc_induction}
$A_\gLambda (X_1,\ldots , X_n) \subseteq \frbic{\gLambda}$ is a clique in $\frbic{\gLambda}$.
\end{lem}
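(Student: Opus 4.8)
The plan is to realize $A_\gLambda(X_1,\ldots,X_n)$ as the total category of a Grothendieck construction over a \emph{smaller} associator category, namely $A_\gLambda$ on the tuple where the last two edges $X_{n-1},X_n$ have been grouped into a single edge. More precisely, I will induct on $n$, following the recipe in \cref{rmk:plan_of_proofs}. The base cases $n=1,2$ are trivial since the category is a point (only one parenthesization, no associator to apply). For the inductive step, I want to build a diagram of cliques $\docf\colon \bI \to \catcl{\frbic\gLambda}$ where $\bI$ is an associator category on $n-1$ edges (which is a clique by the inductive hypothesis, hence also thin), and whose total category $\int_\bI\doc$ is isomorphic to $A_\gLambda(X_1,\ldots,X_n)$. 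Then \cref{lem:consequences_of_projection_2} gives that $A_\gLambda(X_1,\ldots,X_n)$ is an abstract clique. It remains to see that the inclusion into $\frbic\gLambda$ is faithful on this clique, i.e.\ that the unique abstract-clique isomorphisms really do map to distinct parenthesizations under a functor --- but this is automatic since the parenthesizations are distinct objects of $\frbic\gLambda$, so the subcategory $A_\gLambda(X_1,\ldots,X_n)\subseteq\frbic\gLambda$ has object set of the right size, and being a (nonempty, connected---need to check) thin groupoid it is a clique.

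For the diagram of cliques: the key combinatorial observation is that every parenthesization of $X_1\odot\cdots\odot X_n$ is built from a parenthesization of $X_1\odot\cdots\odot X_{n-2}\odot W$ (where $W$ is a formal symbol standing for the pair $(X_{n-1},X_n)$) by choosing how the slot $W$ sits --- but actually the cleaner decomposition, matching \cref{ex:define_odot_clique_F}-style reasoning, is by the \emph{top-level split}. A parenthesization of $X_1\odot\cdots\odot X_n$ is a choice of $1\le m< n$ together with a parenthesization of $X_1\odot\cdots\odot X_m$ and one of $X_{m+1}\odot\cdots\odot X_n$. I would instead use the simpler "rightmost edge" recursion: index $\bI = A_\gLambda(X_1,\ldots,X_{n-2}, [X_{n-1}\odot X_n])$, treating $Y:=X_{n-1}\odot X_n$ as a single edge; for each object $y\in\bI$ (a parenthesization with $Y$ appearing as a leaf), let $\doc(y)$ be the clique of all parenthesizations of $X_1\odot\cdots\odot X_n$ obtained by expanding that leaf $Y$ into parenthesizations of $X_{n-1}\odot X_n$ --- but since $n=2$ gives only one parenthesization this is a single object, so $\doc(y) \cong *$. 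That collapses $\bI$ to $\bI$, which is circular. So the honest move is the top-level split recursion, where the nontrivial content is that associators let you move the split point.

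Concretely, I take $\bI$ to be the category with one object for each $1\le m<n$ (the split location), and a unique isomorphism between any two --- so $\bI$ is itself an abstract clique. (This is where the pentagon relation does its work, at the next level up.) For object $m$, set $\doc(m) = A_\gLambda(X_1,\ldots,X_m)\times A_\gLambda(X_{m+1},\ldots,X_n)$, which is a clique by the inductive hypothesis and \cref{lem:consequences_of_projection_2} (product of cliques). Then $\int_\bI\doc \to \frbic\gLambda$ sends $(m, (p,q))$ to the parenthesization $p\odot q$, sends vertical generators (associators within $p$ or within $q$) to the corresponding associators, and sends the unique horizontal map from split $m$ to split $m'$ to the canonical associator isomorphism rebracketing $p\odot q \cong p'\odot q'$. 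Functoriality of this assignment, and the fact that it is a bijection on objects and full, are routine; the one real point is \emph{faithfulness}, i.e.\ that the composite rebracketing isomorphisms in $\frbic\gLambda$ are independent of the chosen path in $\int_\bI\doc$ --- but $\int_\bI\doc$ is thin (since $\bI$ and each $\doc(m)$ are), so there is only one path up to the relations we already have, and the functor is automatically faithful once we know it is well-defined. Hence $\int_\bI\doc \xrightarrow{\ \sim\ } A_\gLambda(X_1,\ldots,X_n)$, so by \cref{lem:consequences_of_projection_2} the latter is an abstract clique, i.e.\ a clique in $\frbic\gLambda$.

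**The main obstacle** is establishing that the functor $\int_\bI\doc\to\frbic\gLambda$ is well-defined --- that is, checking the canonical rebracketing associator isomorphisms behave functorially and compatibly with the vertical associators. This is exactly the place where the pentagon axiom \ref{rmk:explicit_free_bicat_4} and naturality \ref{rmk:explicit_free_bicat_6} are used: moving the top-level split from $m$ to $m\pm1$ is a single associator, and the coherence of a chain of such moves with the internal rebracketings is the pentagon together with whiskering. Once well-definedness is in hand, everything else (bijectivity on objects, fullness, and then faithfulness via thinness of the total category) is formal, and the conclusion follows from \cref{lem:consequences_of_projection_2}. I expect the actual write-up to spend almost all its length verifying this one commutativity, reducing it to a finite pentagon/naturality check.
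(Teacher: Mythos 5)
Your proposal is essentially the paper's own argument: induct on $n$, decompose by the location of the top-level split, take the indexing category to be the contractible groupoid on the split locations with fibers $A_\gLambda(X_1,\ldots,X_m)\times A_\gLambda(X_{m+1},\ldots,X_n)$, apply the Grothendieck construction, and reduce the key well-definedness check to the pentagon plus naturality/whiskering. The only point to tighten is that there is no ``canonical'' rebracketing $p\odot q\cong p'\odot q'$ available yet (that is what is being proved); as in the paper, one defines each horizontal clique map only on admissible models where the three blocks are individually parenthesized, so that it is a single instance of $\alpha$, and extends uniquely via \cref{rmk:thick_maps_partially_defined}.
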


\begin{proof}
	The proof is by induction on $n$. There is nothing to check when  $n = 0,1,2$.

When $n\geq 3$, for each $1\leq i<n$ let $A_\gLambda ^i(X_1,\ldots , X_n) \subseteq A_\gLambda(X_1,\ldots , X_n)$ be a subcategory whose objects are the parenthesizations of the form
\[ (X_1 \odot \ldots \odot X_i) \odot (X_{i+1} \odot \ldots \odot X_n). \]
We define its morphisms to be those generated by associator maps in each of the two blocks $X_1 \odot \ldots \odot X_i$ and $X_{i+1} \odot \ldots \odot X_n$. In other words, it is a product category
\[ A_\gLambda ^i(X_1,\ldots , X_n) \cong A_\gLambda (X_1,\ldots , X_i) \times A_\gLambda (X_{i+1},\ldots,X_n). \]
By inductive hypothesis these factors are cliques, hence $A_\gLambda ^i(X_1,\ldots , X_n)$ is a clique.

Alternatively, if we assign each parenthesization to the index of the term to the left of its outermost composition, then $A_\gLambda ^i(X_1,\ldots , X_n)$ is those parenthesizations of index $i$. For example, $(X_1\odot X_2)\odot X_3$ has index 2 and belongs to $A_\gLambda ^2(X_1,X_2,X_3)$, while $X_1\odot (X_2\odot X_3)$ has index 1 and belongs to $A_\gLambda ^1(X_1,X_2,X_3)$. See \cref{fig:bicat_assoc_induction} for a picture of  $A_{\gLambda}(X_1,X_2,X_3,X_4,X_5)$.

\usetikzlibrary{fit}

\begin{figure}
\begin{tikzpicture}[
    mystyle/.style={%
      label={right:\pgfkeysvalueof{/pgf/minimum width}},
    },
   my style/.style={%
     append after command={
       \pgfextra{\node [right] at (\tikzlastnode.mid east) {\tikzlastnode};}
     },
   },
  ]
 \node[draw
	] (N) at (1.3*4, 7){$\bullet(\bullet(\bullet(\bullet\bullet)))$
};
 \node[draw
	] (M) at (1.3*4, 5){$(\bullet\bullet)(\bullet(\bullet\bullet))$
};
 \node[draw
	] (L) at (1.3*2, 7){$\bullet(\bullet((\bullet\bullet)\bullet))$
};
 \node[draw
	] (K) at (1.3*0, 7){$\bullet((\bullet(\bullet\bullet))\bullet)$
};
 \node[draw
	] (J) at (1.3*6, 8){$\bullet((\bullet\bullet)(\bullet\bullet))$
};
 \node[draw
	] (I) at (1.3*2, 5){$(\bullet\bullet)((\bullet\bullet)\bullet)$
};
 \node[draw
	] (H) at (1.3*-2, 8){$\bullet(((\bullet\bullet)\bullet)\bullet)$
};
 \node[draw
] (G) at (1.3*0, 1){$(\bullet(\bullet(\bullet\bullet)))\bullet$
};
\node[draw
] (F) at (1.3*6, 3){$(\bullet(\bullet\bullet))(\bullet\bullet)$
};
\node[draw
] (E) at (1.3*2, 1){$((\bullet\bullet)(\bullet\bullet))\bullet$
};
\node[draw
] (D) at (1.3*-2, 0){$(\bullet((\bullet\bullet)\bullet))\bullet$
};
\node[draw
] (C) at (1.3*6, 0){$((\bullet(\bullet\bullet))\bullet)\bullet$
};
\node[draw
] (B) at (1.3*4, 3){$((\bullet\bullet)\bullet)(\bullet\bullet)$
};
\node[draw
] (A) at (1.3*4, 1){$((((\bullet\bullet)\bullet)\bullet)\bullet$
};
\draw[-](M)--(N);
\draw[-](L)--(N);
\draw[-](K)--(L);
\draw[-](J)--(N);
\draw[-](I)--(L);
\draw[-](I)--(M);
\draw[-](H)--(K);
\draw[-](H)--(J);
\draw[-](G)--(K);
\draw[-](F)--(J);
\draw[-](E)--(G);
\draw[-](E)--(I);
\draw[-](D)--(H);
\draw[-](D)--(G);
\draw[-](C)--(D);
\draw[-](C)--(F);
\draw[-](B)--(M);
\draw[-](B)--(F);
\draw[-](A)--(B);
\draw[-](A)--(C);
\draw[-](A)--(E);
\end{tikzpicture}
\caption{The clique $A_{\gLambda}(X_1,X_2,X_3,X_4,X_5)$. Each horizontal layer (including diagonal maps) is a single sub-clique $A_{\gLambda}^i$.
 Edges are only added for single instances of the associator.}\label{fig:bicat_assoc_induction}
\end{figure}
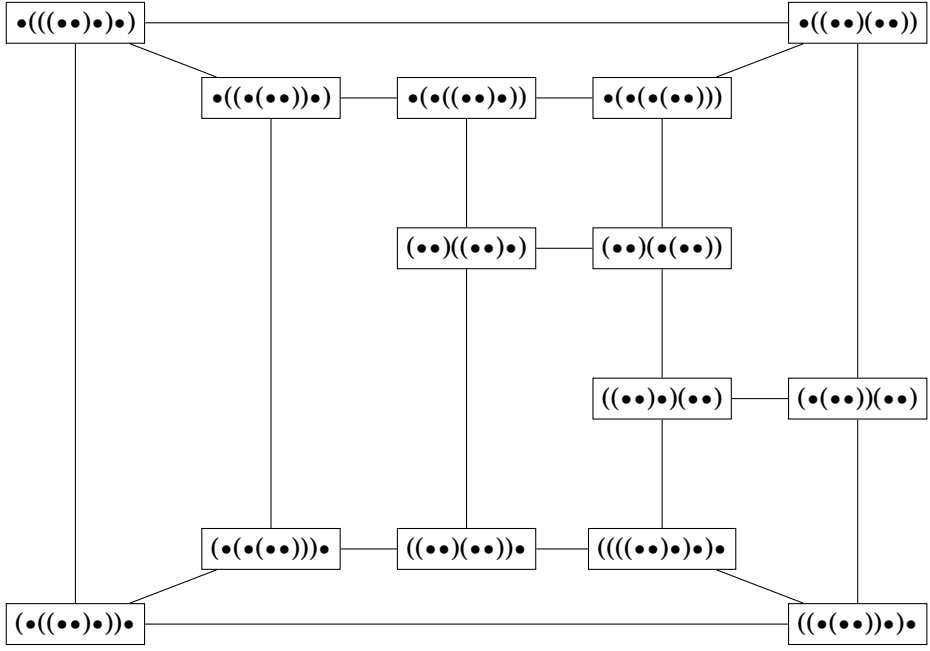

For each pair $i < j$ we define an (invertible) map of cliques
\begin{equation}\label{assoc_map_cliques_2}
A_\gLambda ^{i}(X_1,\ldots , X_n)\leftrightarrow A_\gLambda ^{j}(X_1,\ldots , X_n).
\end{equation}
by taking an admissible model (\cref{rmk:thick_maps_partially_defined}) for each object in the three-fold product
\[ A_\gLambda (X_1,\ldots , X_i) \times A_\gLambda (X_{i+1},\ldots,X_j) \times A_\gLambda (X_{j+1},\ldots,X_n). \]
These three choices of parenthesization define objects in the cliques $A_\gLambda ^{i}$ and $A_\gLambda ^{j}$ as indicated, and we map between them by the associator isomorphism.
\begin{align*}
		(X_1 \odot \ldots \odot X_i) &\odot ((X_{i+1} \odot \ldots \odot X_j) \odot (X_{j+1} \odot \ldots \odot X_n))
	\\
		 &\xleftrightarrow{\alpha} ((X_1 \odot \ldots \odot X_i) \odot (X_{i+1} \odot \ldots \odot X_j)) \odot (X_{j+1} \odot \ldots \odot X_n).
	\end{align*}
These associators commute with the canonical isomorphisms between different models by naturality \eqref{rmk:explicit_free_bicat_6}. We therefore have a well-defined map of cliques \eqref{assoc_map_cliques_2}.

For instance, in \cref{fig:bicat_assoc_induction}, the top horizontal layer (with five objects) forms a clique that maps to the bottom horizontal layer (with five objects), but there are only two admissible models for this map, the ones in the left two columns. Those are the two places where we can jump from one clique to the other by a single associator map.

Now we assemble the categories $A_\gLambda ^i(X_1,\ldots , X_n)$ into a diagram of cliques.  Let $\bI_{n-1}$ be the $(n-1)$-fold subdivided interval category
	\[ \xymatrix{ y_1 \ar@{<->}[r] & y_2 \ar@{<->}[r] & \cdots \ar@{<->}[r] & y_{n-1} } \] and choose the presentation of  $\bI_{n-1}$ with invertible generators $y_i \leftrightarrow y_j$ for each pair $i < j$.  The relations are given by 
\[(y_i \leftrightarrow y_j \leftrightarrow y_k)=(y_i \leftrightarrow y_k)\]
for $i<j<k$.

Define a diagram of cliques $\docf$ in $A_\gLambda (X_1,\ldots , X_n)$ with domain $\bI_{n-1}$  by taking $\doc(i)$ to be $A_\gLambda^i (X_1,\ldots , X_n)$.  The image of the generator  $y_i \leftrightarrow y_j$ is the map in \eqref{assoc_map_cliques_2}.  The condition imposed by the relation 
$(y_i \leftrightarrow y_j \leftrightarrow y_k)=(y_i \leftrightarrow y_k)$ can be checked on a single element. We therefore fix parenthesizations for each of the four blocks
\[ X_1 \odot \ldots \odot X_i, \quad X_{i+1} \odot \ldots \odot X_j, \quad X_{j+1} \odot \ldots \odot X_k, \quad X_{k+1} \odot \ldots \odot X_n \]
and check that the clique maps $A_\gLambda^i \to A_\gLambda^j$ and $A_\gLambda^j \to A_\gLambda^k$ on this model agree (along canonical isomorphisms in $A_\gLambda^i$ and $A_\gLambda^k$) with the clique map $A_\gLambda^i \to A_\gLambda^k$. This becomes exactly the pentagon axiom (\ref{rmk:explicit_free_bicat_4}). For example, the three tall pentagonal regions in \cref{fig:bicat_assoc_induction} all arise this way.

\cref{clique_bijection} defines a functor 
	\begin{equation}\label{functor:lem:bicat_assoc_induction} \int_{\bI_{n-1}} \doc \to A_\gLambda (X_1,\ldots , X_n). \end{equation}
	This functor is a bijection on objects. We check it is surjective on morphisms by writing out the generators from \cref{groth_presentation} and checking that together they hit all of the generators of $A_\gLambda (X_1,\ldots , X_n)$. In particular, every expanded instance of an associator map occurs as some morphism in the image of $\int_{\bI_{n-1}} \doc$. By 
\cref{lem:consequences_of_projection_2}, $\int_{\bI_{n-1}} \doc$ is a clique, so the functor in \eqref{functor:lem:bicat_assoc_induction}  must be faithful. Therefore it is an isomorphism of categories and $A_\gLambda (X_1,\ldots , X_n)$ is a clique. This finishes the induction.
\end{proof}

Recall that $\sI$ is the category of finite totally ordered sets and injective maps from \cref{inj_presentation}. Let $\sI[\sI^{-1}]$ be the localization where every map is made invertible. Note that $\sI[\sI^{-1}]$ has an initial object (the empty set), so it is an abstract clique.

Continue to fix a single $n$-tuple of composable edges $X_1, \ldots ,X_n$ in a graph $\gLambda$. Then for each tuple of non negative integers $j_0,j_1,\ldots j_n$ let 
\[U_\gLambda (X_1,\ldots , X_n,j_0,j_1,\ldots j_n)\]
denote the clique in $\frbic{\gLambda}$
\[A_\gLambda (\underbrace{I,I,\ldots I}_{j_0},X_1,\underbrace{I,I,\ldots I}_{j_1},X_2,\ldots, X_{n-1},\underbrace{I,I,\ldots I}_{j_{n-1}},X_n,\underbrace{I,I,\ldots I}_{j_n}). \] 

\begin{lem}[\cref{rmk:plan_of_proofs}\ref{rmk:plan_of_proofs_1}]\label{bicat_coherence_diagram_assoc_unit}
There is a diagram  of cliques in $\frbic{\gLambda}$
 indexed on $\prod^{n+1} \sI[\sI^{-1}]$ where the image of $(j_0,j_1,\ldots j_n)$ is the clique $U_\gLambda (X_1,\ldots , X_n,j_0,j_1,\ldots j_n)$.
\end{lem}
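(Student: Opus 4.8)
The plan is to carry out step \ref{rmk:plan_of_proofs_1} of \cref{rmk:plan_of_proofs}: build the functor $\docf\colon\bI\to\catcl{\frbic{\gLambda}}$ with $\bI\coloneqq\prod^{n+1}\sI[\sI^{-1}]$ directly from a presentation of $\bI$. First I would record that presentation. Localizing \cref{inj_presentation} (so that the coface maps become invertible generators) presents $\sI[\sI^{-1}]$ by invertible coface generators $d^i$ subject only to the relations \ref{faceface}, so by \cref{product_presentation} the category $\bI$ is presented by invertible coface generators in each of the $n+1$ coordinates, together with the relations \ref{faceface} inside each coordinate and the swap relations \ref{product_presentation_5} across distinct coordinates. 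Since $\sI[\sI^{-1}]$ is a clique, any two cofaces into a fixed object already coincide in it; accordingly I will assign clique maps to the coface generators in a way that does not depend on the index $i$, which makes the relations \ref{faceface} hold automatically.

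On objects $\docf$ sends $(j_0,\dots,j_n)$ to $U_\gLambda(X_1,\dots,X_n,j_0,\dots,j_n)$, which is a clique because the induction proving \cref{lem:bicat_assoc_induction} goes through unchanged when some of its arguments are formal units $I$. A coface generator in coordinate $\ell$ raises $j_\ell$ by one, i.e.\ inserts a new formal unit $I$ into the $\ell$-th block of units in \eqref{eq:explicit_free_bicat_1_cell}; I would define its image, a map of cliques $U_\gLambda(\dots,j_\ell,\dots)\to U_\gLambda(\dots,j_\ell+1,\dots)$, on a single admissible pair: a chosen model of the source and the model of the target obtained from it by inserting the new $I$ inside a subword of the form $I\odot B$ (take $B$ to run from just past the inserted unit out to the enclosing parenthesis -- it is nonempty whenever the word \eqref{eq:explicit_free_bicat_1_cell} is, and at the extreme right end one uses $B\odot I$ instead), with the map itself being the corresponding inverse unitor. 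By \cref{rmk:thick_maps_partially_defined} this single admissible pair determines a map of cliques, and since the unitors are invertible generators the map is invertible with inverse induced by the unitor, so the invertibility relations of the presentation hold.

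It then remains to check the swap relations, and since two composites of clique maps are equal as soon as they agree on one representative, each reduces to an identity of morphisms in $\frbic{\gLambda}$ on a single chosen model. There the two composites insert units into two distinct blocks of \eqref{eq:explicit_free_bicat_1_cell}, so they act on disjoint subwords; up to the canonical associators relating the two intermediate models, they agree by whiskering \ref{rmk:explicit_free_bicat_5} and naturality \ref{rmk:explicit_free_bicat_6}. With all generators and relations of the presentation accounted for, $\docf$ is a well-defined functor $\bI\to\catcl{\frbic{\gLambda}}$, which is exactly the asserted diagram of cliques. The verifications are all routine; the only point that requires care is the bookkeeping in the swap step -- tracking which model of each intermediate clique $U_\gLambda(\dots)$ one lands on after an insertion and realigning it by canonical reparenthesizations so that the disjoint-region argument applies verbatim -- together with the evident small adjustments at the boundary blocks $\ell=0$ and $\ell=n$ (where an inserted unit has content on only one side) and in the degenerate case $n=0$ (where one needs $j_0\geq1$ for \eqref{eq:explicit_free_bicat_1_cell} to name a $1$-cell).
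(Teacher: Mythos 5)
Your index-independent assignment does make the relations \ref{faceface} vacuous, but it only relocates the real difficulty rather than removing it, and it lands exactly on the step you dismiss as routine. Because each of your clique maps is pinned down on a single chosen admissible pair, the two composites in a swap relation \ref{product_presentation_5} are forced to pass through canonical associator isomorphisms connecting your various chosen models, and reconciling a unitor instance across a reparenthesization that touches its own region is not an instance of whiskering \ref{rmk:explicit_free_bicat_5} or naturality \ref{rmk:explicit_free_bicat_6}. Already for $n=1$ your stated recipe gives, for the two orders of inserting one unit on each side of $X_1$, the composites $r^{-1}_{I\odot X_1}\circ l^{-1}_{X_1}$ and $\alpha\circ l^{-1}_{X_1\odot I}\circ r^{-1}_{X_1}$ into $(I\odot X_1)\odot I$; after using naturality of $l$, their equality is precisely the identity $r_{I\odot X_1}\circ\alpha=\id_I\odot r_{X_1}$, a Kelly-type unit/associativity compatibility which follows from the pentagon and triangle axioms but is neither whiskering, nor naturality, nor the triangle \ref{rmk:explicit_free_bicat_4a} itself. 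So the ``only bookkeeping'' claim is where the proof is actually missing: some unit-coherence input must enter, just as it does in the paper, where the triangle axiom reconciles the $l$- versus $r$-insertions and the genuinely hard case $i=j$ of \ref{faceface} is handled by the $(I\odot I)\odot W$ diagram together with Kelly's $l_I=r_I$.

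There is also a structural cost to defining each map on one pair and making it independent of $i$. The paper defines the map for $d^i$ on all models where insertion at position $i$ is realized by a single unitor instance; that breadth is what lets the swap relation be checked on a common model where the two insertions are genuinely disjoint (so whiskering really does suffice), and it is what the proof of \cref{bicat_coherence} relies on when it asserts that the diagram of cliques contains every expanded instance of $l$ and $r$, giving surjectivity of $\int_{\prod^{n+1}\sI[\sI^{-1}]}\doc\to\comp{\frbic{\gLambda}(A_0,A_n)}{W}$ on morphisms. With single chosen pairs, an arbitrary unitor instance lies in the image only if it can be rewritten as associators conjugating your one chosen unitor, which is itself a coherence statement not available at this stage. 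So even after patching the swap check with the required Kelly identities, the construction would have to be broadened to the paper's form to support the theorem it feeds into.
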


\begin{proof}
The generators and relations of  $\prod^{n+1} \sI[\sI^{-1}]$ are given by \cref{product_presentation} and \cref{inj_presentation}.  Each generator is a coface map in one of the factors, say the $i$th coface map in the $m$th factor. We assign it to the clique map
\[ U_\gLambda (X_1,\ldots , X_n,j_0,\ldots j_m, \ldots j_n)
\to U_\gLambda (X_1,\ldots , X_n,j_0,\ldots j_m + 1, \ldots j_n) \]
which inserts a unit between the $(i-1)$st and $i$th terms in the block of $j_m$ copies of $I$. There are two unit maps that we could use to make this insertion, $\ell$ and $r$ from \ref{rmk:explicit_free_bicat_2}, but the triangle axiom \ref{rmk:explicit_free_bicat_4a} implies these two possibilities agree after composing with the associator.  This map is compatible with the canonical isomorphisms by whiskering (\ref{rmk:explicit_free_bicat_5}).  

For the relations, the swap relation (\ref{product_presentation_5})  
follows from whiskering (\ref{rmk:explicit_free_bicat_5}).
	The relations  \ref{product_presentation_3} and \ref{product_presentation_4} become the relation \ref{faceface} within each copy of $\sI$, namely : $d^i d^j = d^{j+1} d^i$ whenever $i \leq j$. When $i < j$, this relation holds by whiskering (\ref{rmk:explicit_free_bicat_5}). When $i = j$, it is the commutativity of the following diagram for words $W$ in $X_1,\ldots, X_n$ and copies of $I$.
	\[ \xymatrix{
		(I \odot I) \odot W \ar@{<->}[dr]^-{l \odot 1} \ar@{=}[r] & (I \odot I) \odot W \ar@{<->}[d]^-{r \odot 1} \ar@{<->}[r]^-\alpha & I \odot (I \odot W) \ar@{<->}[d]^-{1 \odot l} \\
		& I \odot W \ar@{<->}[dr]_-l \ar@{=}[r] & I \odot W \ar@{<->}[d]^-l \\
		& & W
	} \]
	The bottom region  commutes by definition and the square region commutes by  \ref{rmk:explicit_free_bicat_4a}. The left triangle is the assumption that $l = r$ when applied to a unit 1-cell $I$. One either adds this to the list of bicategory axioms, or deduces it from the pentagon and triangle axioms using the classic argument of Kelly \cite[Thms 6 and 7]{kelly_simplified_coherence}. 
\end{proof}

\begin{defn}
If $c$ is an object of a category $\bC$, let $\comp{\bC}{c}$ be the component of $\bC$ that contains $c$.
\end{defn}
As an example, if $X_i\in \sB(A_{i-1},A_i)$ and $W$ is a particular model for the product $X_1 \odot \ldots \odot X_n$, then the clique  $\bigodot_{i=1}^nX_i$ from \cref{ex:define_odot_clique} is the component $\comp{\frbic{\gLambda}(A_0,A_n)}{W}$.

Let $\docf$ be the diagram of cliques in \cref{bicat_coherence_diagram_assoc_unit}.  \cref{clique_bijection} 
defines a functor 
\begin{equation}\label{bicat_coherence_functor_assoc_unit_1}
\int_{\prod^{n+1}\sI[\sI^{-1}]} \doc\to \frbic{\gLambda}(A_0,A_n).
\end{equation}
Since $\int_{\prod^{n+1}\sI[\sI^{-1}]} \doc$ is connected,
\eqref{bicat_coherence_functor_assoc_unit_1} defines a functor 
\begin{equation}\label{bicat_coherence_functor_assoc_unit_2}
\int_{\prod^{n+1}\sI[\sI^{-1}]} \doc\to \comp{\frbic{\gLambda}(A_0,A_n)}{W}
\end{equation}
for any model $W$  from $A_\gLambda (X_1,\ldots , X_n)$. (\cref{rmk:plan_of_proofs}\ref{rmk:plan_of_proofs_2})

\begin{thm}[\cref{rmk:plan_of_proofs}\ref{rmk:plan_of_proofs_3}]\label{bicat_coherence} The functor in \eqref{bicat_coherence_functor_assoc_unit_2}
is an isomorphism of categories.
\end{thm}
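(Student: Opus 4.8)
The plan is to carry out step \ref{rmk:plan_of_proofs_3} of \cref{rmk:plan_of_proofs}, following the same pattern as the proof of \cref{lem:bicat_assoc_induction}. Write $\Phi$ for the functor of \eqref{bicat_coherence_functor_assoc_unit_2}. I would check that $\Phi$ is bijective on objects and surjective on morphisms; faithfulness then comes for free because the source $\int_{\prod^{n+1}\sI[\sI^{-1}]}\doc$ turns out to be an abstract clique.

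For bijectivity on objects, recall from \cref{rmk:explicit_free_bicat} that the $1$-cells of $\frbic{\gLambda}$ are parenthesizations of \eqref{eq:explicit_free_bicat_1_cell}. Since the generating $2$-cells $\alpha$, $\ell$, $r$ only change the parenthesization and the number of unit factors, never the sequence $X_1,\ldots,X_n$, the component $\comp{\frbic{\gLambda}(A_0,A_n)}{W}$ consists of exactly those parenthesizations involving the fixed tuple $X_1,\ldots,X_n$ in order, with an arbitrary tuple $(j_0,\ldots,j_n)$ of nonnegative integers counting the blocks of units. (That all of these really do lie in the one component of $W$ uses the connectedness of $\int_{\prod^{n+1}\sI[\sI^{-1}]}\doc$, noted above \eqref{bicat_coherence_functor_assoc_unit_2}.) On the other side, an object of $\int_{\prod^{n+1}\sI[\sI^{-1}]}\doc$ is precisely a pair consisting of such a tuple $(j_0,\ldots,j_n)$ and a parenthesization of the corresponding word, and $\Phi$ matches the two descriptions, essentially by construction.

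For surjectivity on morphisms, apply \cref{groth_presentation}: the morphisms of $\int_{\prod^{n+1}\sI[\sI^{-1}]}\doc$ are generated by the vertical generators, namely the generators of the cliques $U_\gLambda(X_1,\ldots,X_n,j_0,\ldots,j_n)$, which by \cref{lem:bicat_assoc_induction} are exactly the expanded associator isomorphisms on the word with those units; and by the horizontal generators, which by the construction in \cref{bicat_coherence_diagram_assoc_unit} insert a single unit, realized as $\ell^{-1}$ or $r^{-1}$ after an associator. So the image of $\Phi$ already contains every expanded associator. Given an arbitrary expanded unitor $\ell\colon I \odot A \cong A$ (or $r\colon A \odot I \cong A$) inside some word, one uses associators --- all in the image --- to move the affected unit into a position where a horizontal generator applies literally, and then naturality \ref{rmk:explicit_free_bicat_6}, whiskering \ref{rmk:explicit_free_bicat_5}, and the triangle axiom \ref{rmk:explicit_free_bicat_4a} identify the original unitor with the resulting composite of associators and one horizontal generator. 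Since the $2$-cells of $\frbic{\gLambda}$ are generated by associators and unitors, $\Phi$ is full.

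Finally, each localization $\sI[\sI^{-1}]$ is an abstract clique, hence so is the finite product $\prod^{n+1}\sI[\sI^{-1}]$ (thinness, connectedness, invertibility and nonemptiness are all preserved by finite products), and hence by \cref{lem:consequences_of_projection_2} so is $\int_{\prod^{n+1}\sI[\sI^{-1}]}\doc$. An abstract clique has exactly one morphism between each ordered pair of objects, so once $\Phi$ is known to be bijective on objects, any two morphisms of the source with the same image under $\Phi$ have equal source and target and therefore coincide; together with fullness this shows $\Phi$ is an isomorphism of categories. I expect the main obstacle to be the surjectivity on morphisms, and within it the bookkeeping needed to see that \emph{every} expanded unitor arises, up to associators, from a unit-insertion horizontal generator --- this is where the triangle axiom and the naturality and whiskering relations of the free bicategory do the real work. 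Faithfulness, by contrast, is essentially automatic from the clique structure of the source, just as in the proof of \cref{lem:bicat_assoc_induction}.
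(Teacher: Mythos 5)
Your proposal is correct and follows essentially the same route as the paper: bijectivity on objects by construction, surjectivity on morphisms via the generators of the Grothendieck construction, and faithfulness for free because $\prod^{n+1}\sI[\sI^{-1}]$ (hence the Grothendieck construction, by \cref{lem:consequences_of_projection_2}) is an abstract clique. The only difference is in the surjectivity step: the paper simply observes that the clique maps of \cref{bicat_coherence_diagram_assoc_unit} already contain every expanded instance of $\alpha$, $l$, and $r$ among their components, so the Kelly-style rewriting of a general expanded unitor into associators plus a ``standard'' unit insertion that you sketch is sound but not needed---that work was already absorbed into the construction of the diagram of cliques.
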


\begin{proof}
By definition it is an isomorphism on objects. Since the diagram of cliques  in \cref{bicat_coherence_diagram_assoc_unit} 
includes all possible instance of the maps $\alpha$, $l$, and $r$, the functor \eqref{bicat_coherence_functor_assoc_unit_2} 
 is also surjective on morphisms.

Since $\prod^{n+1} \sI[\sI^{-1}]$ is an abstract clique (it has $\emptyset$ as an initial object), 
\cref{lem:consequences_of_projection_2} and \cref{bicat_coherence_diagram_assoc_unit} imply that
\begin{equation}\label{bicat_coherence_functor_assoc_unit_3}
\int_{\prod^{n+1} \sI[\sI^{-1}]} \doc
\end{equation} is an abstract clique. Therefore the functor \eqref{bicat_coherence_functor_assoc_unit_2}  is faithful, and so it is an isomorphism of categories.   
\end{proof}

\begin{cor}[\cref{rmk:plan_of_proofs}\ref{rmk:plan_of_proofs_4}]
Each component $\comp{\frbic{\gLambda}(A_0,A_n)}{W}$ is a clique. Equivalently, $\frbic{\gLambda}(A_0,A_n)$ is a thin groupoid.
\end{cor}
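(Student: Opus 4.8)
The plan is to carry out step~\ref{rmk:plan_of_proofs_4} of \cref{rmk:plan_of_proofs}, namely to transport structural information across the isomorphism established in \cref{bicat_coherence}. First I would recall, from the proof of \cref{bicat_coherence} (specifically \eqref{bicat_coherence_functor_assoc_unit_3}), that $\int_{\prod^{n+1}\sI[\sI^{-1}]}\doc$ is an abstract clique; this follows from \cref{lem:consequences_of_projection_2} because $\prod^{n+1}\sI[\sI^{-1}]$ has the empty set as an initial object. Since being an abstract clique is an isomorphism-invariant property of categories (it says the category is equivalent to the one-point category), \cref{bicat_coherence} immediately gives that $\comp{\frbic{\gLambda}(A_0,A_n)}{W}$ is an abstract clique, for every model $W$ drawn from $A_\gLambda(X_1,\ldots,X_n)$. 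Being a subcategory of $\frbic{\gLambda}(A_0,A_n)$, it is therefore a clique in $\frbic{\gLambda}(A_0,A_n)$.

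Next I would argue that \emph{every} component of $\frbic{\gLambda}(A_0,A_n)$ has this form. By the explicit presentation \cref{rmk:explicit_free_bicat}, every $1$-cell of $\frbic{\gLambda}$ from $A_0$ to $A_n$ is a parenthesization of an expression of the shape \eqref{eq:explicit_free_bicat_1_cell} for some tuple of composable edges $X_1,\ldots,X_n$ and nonnegative integers $j_0,\ldots,j_n$ (with $n=0$, a pure word of units, allowed when $A_0=A_n$). Such a $1$-cell lies in $U_\gLambda(X_1,\ldots,X_n,j_0,\ldots,j_n)$, one of the cliques occurring in the diagram of \cref{bicat_coherence_diagram_assoc_unit}; and since $\int_{\prod^{n+1}\sI[\sI^{-1}]}\doc$ is connected, that clique lies in the same component of $\frbic{\gLambda}(A_0,A_n)$ as the model $W\in A_\gLambda(X_1,\ldots,X_n)=U_\gLambda(X_1,\ldots,X_n,0,\ldots,0)$. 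Hence every component of $\frbic{\gLambda}(A_0,A_n)$ is a clique. Because the generating $2$-cells $\alpha,\ell,r$ of $\frbic{\gLambda}$ are invertible, $\frbic{\gLambda}(A_0,A_n)$ is a groupoid; being a coproduct of abstract cliques (the empty coproduct occurring exactly when $A_0$ and $A_n$ are not joined by a directed path in $\gLambda$), it is a thin groupoid, as claimed. As an immediate consequence one recovers \cref{bicat_coherence_formal}: a formal diagram of $2$-cells in a bicategory $\sC$ lifts along the counit $\frbic{\fobic{\sC}}\to\sC$ into a hom-category of $\frbic{\fobic{\sC}}$, which is thin, so the lifted diagram commutes, and applying the counit carries the commuting diagram down to $\sC$.

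This is essentially a bookkeeping argument: all of the real work has already been done in \cref{lem:bicat_assoc_induction,bicat_coherence_diagram_assoc_unit,bicat_coherence}. The only place I would take care is in confirming that every $1$-cell between two fixed vertices genuinely lies in one of the cliques $U_\gLambda$ — in particular handling the degenerate cases $n=0,1,2$ and the case of an empty hom-category — but these are all transparent from \cref{rmk:explicit_free_bicat}, so I do not anticipate a genuine obstacle.
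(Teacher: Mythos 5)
Your proof is correct and takes essentially the same route as the paper, which states this corollary without further argument as the immediate consequence of \cref{bicat_coherence}: one transports the abstract-clique property of $\int_{\prod^{n+1}\sI[\sI^{-1}]}\doc$ (from \cref{lem:consequences_of_projection_2}) across the isomorphism \eqref{bicat_coherence_functor_assoc_unit_2}. Your extra paragraph checking that every component of $\frbic{\gLambda}(A_0,A_n)$ contains a model from some $U_\gLambda(X_1,\ldots,X_n,j_0,\ldots,j_n)$, together with the remark about degenerate pure-unit words, is just the bookkeeping the paper leaves implicit, so there is nothing substantive to add.
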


This finishes the proof of coherence for bicategories (\cref{bicat_coherence_formal}), since formal diagrams of 2-cells in $\sC$ are the image of diagrams in $\frbic{\fobic{\sC}}$, and this establishes that all diagrams of 2-cells in $\frbic{\fobic{\sC}}$ commute.

We also have the following consequence of \cref{bicat_coherence} that we will use when proving the coherence theorems for lax functors (\cref{lax_not_coherence}).  Recall the clique defined in \cref{ex:define_odot_clique_F}.  
\begin{cor}\label{built_diagram_of_cliques:lem_coherence_bicat} 
Let $F\colon \sB\to \sB'$ be a lax functor of bicategories. 
Each coface map $d^i\colon \underline{k}\to \underline{k+1}$ (\ref{gen:coface}) defines a map of cliques 
\[ \bigodot_{j \in \underline{k}} F\left( \bigodot_{i \in \alpha^{-1}(j)} X_i \right) \to
\bigodot_{j \in \underline{k+1}} F\left( \bigodot_{i \in \alpha^{-1}(j)} X_i \right). \]
Each coboundary map $s^i\colon \underline{k}\to \underline{k-1}$ (\ref{gen:codegeneracy})  defines a map of cliques 
\[ \bigodot_{j \in \underline{k}} F\left( \bigodot_{i \in \alpha^{-1}(j)} X_i \right) \to
\bigodot_{j \in \underline{k-1}} F\left( \bigodot_{i \in \alpha^{-1}(j)} X_i \right). \]
\end{cor}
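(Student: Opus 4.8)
The plan is to write down each of the two clique maps by hand and then invoke \cref{rmk:thick_maps_partially_defined}. Throughout, the target clique is the clique of \cref{ex:define_odot_clique_F} for the composite $d^i\circ\alpha\colon\underline{n}\to\underline{k+1}$ (respectively $s^i\circ\alpha\colon\underline{n}\to\underline{k-1}$): for the coface map this has an empty fibre over the new index $i$, so the effect should be to insert a factor $F(I)$ into slot $i$; for the codegeneracy map the fibre over $i$ becomes $\alpha^{-1}(i)\cup\alpha^{-1}(i+1)$, so the effect should be to replace the adjacent pair of factors $F(v_i)\odot F(v_{i+1})$ by the single factor $F(v_i\odot v_{i+1})$. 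Note that a map of cliques is already determined by its effect on a single pair of models, so the existence of \emph{some} map is automatic; the real point of the corollary is to single out the natural one, which we do by prescribing the map on a whole family of admissible models.

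For the coface map $d^i$, take as admissible models (\cref{rmk:thick_maps_partially_defined}) all pairs consisting of a model of the source clique whose outer word for $\bigodot_{j\in\underline{k}}Y_j$ carries a standalone unit $I$ of $\sB'$ in the slot-$i$ position, together with the model of the target clique obtained by relabeling the slots, promoting that unit to the slot $Y_i$, modeling the new empty inner product by the single unit $I$ of $\sB$, and leaving the remaining inner words untouched. On such a pair the two $1$-cells of $\sB'$ differ only in that this one factor $I$ has been replaced by $F(I)$, and we assign the $2$-cell which is the lax unit constraint $I\to F(I)$ of $F$ whiskered by identities. For the codegeneracy map $s^i$, proceed dually: take as admissible models all pairs consisting of a source model whose outer word groups the slots $Y_i$ and $Y_{i+1}$ as a consecutive parenthesized pair $Y_i\odot Y_{i+1}$, and the target model obtained by collapsing that pair to the merged slot, modeled by $v_i\odot v_{i+1}$ where $v_i,v_{i+1}$ are the chosen source inner words. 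Now the $1$-cells of $\sB'$ differ only in that the factor $F(v_i)\odot F(v_{i+1})$ has become $F(v_i\odot v_{i+1})$, and we assign the lax composition constraint $F(v_i)\odot F(v_{i+1})\to F(v_i\odot v_{i+1})$ whiskered by identities.

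It remains to verify that these assignments commute with the canonical isomorphisms between admissible models, after which \cref{rmk:thick_maps_partially_defined} forces the clique map. Recall that the canonical isomorphisms of the cliques of \cref{ex:define_odot_clique_F} are generated by the unitors and associators of $\sB'$ acting on the outer word, together with $F$ applied to the unitors and associators of $\sB$ acting inside the slots, and that these genuinely form a clique -- so that in particular $F$ of a canonical isomorphism of $\sB$ is an unambiguous $2$-cell of $\sB'$ -- by coherence for bicategories (\cref{bicat_coherence}) applied to $\sB$ and $\sB'$ and the functoriality of $F$ on $2$-cells. Canonical isomorphisms that only change the inner words, or the part of the outer word away from the modified slot, commute with our assignment by bifunctoriality of $\odot$ in $\sB'$ and by naturality of the lax unit (respectively composition) constraint; canonical isomorphisms that rearrange the outer word around the modified factor commute with it by naturality of the unitors and associators of $\sB'$, using in addition the identity $\ell=r$ on units (Kelly's lemma, as in \cref{bicat_coherence_diagram_assoc_unit}) in the coface case when the inserted unit is adjacent to another unit. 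I expect the main obstacle to be exactly this compatibility bookkeeping: one must choose the family of admissible models large enough that the unique extension agrees with the evident whiskered constraint on \emph{every} model of the cliques, since that is the form in which these maps -- together with their interaction with the lax-functor coherence axioms -- are used in the proof of \cref{lax_not_coherence}.
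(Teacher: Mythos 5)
Your proposal is correct and follows essentially the paper's own argument: in both cases one defines the map only on admissible models where the relevant piece is literally visible (a unit $I$ of $\sB'$ in the new slot for $d^i$, the adjacent pair $F(W_i)\odot F(W_{i+1})$ for $s^i$), applies the whiskered lax constraint $i$ or $m$ there, and then checks compatibility with the canonical isomorphisms via \cref{bicat_coherence} together with naturality and whiskering, so that \cref{rmk:thick_maps_partially_defined} produces the clique map. The only cosmetic difference is that the paper shortcuts the compatibility check by choosing, using \cref{bicat_coherence}, canonical isomorphisms between admissible models that do not disturb the distinguished unit or pair, whereas you verify it generator by generator; both amount to the same verification.
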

The intuition is that coface maps add new points to the codomain and  are sent to 
\[i\colon I_{F(A)} \to F(I_{A}).\] 
Codegeneracy maps that  fold points together are sent to the map
\[m\colon F(X) \odot F(X') \to F(X \odot X').\]

\begin{proof}
Let $W_j$ be a model of $\bigodot_{i \in \alpha^{-1}(j)} X_i$.

For the coface map $d^i$ first consider models that have exactly one unit $I_{A_i}$ in between $W_i$ and $W_{i+1}$. To each model we apply the unit morphism $i$ to the unique $I_{A_i}$. \cref{bicat_coherence} implies that we could broaden our class of admissible models to those with at least one unit $I_{A_i}$ between $W_i$ and $W_{i+1}$ and take our map to be one that applies $i$ to any unit object.  

For each codegeneracy map $s^i$ 
the admissible models are those for which the $k$-fold tensor product places a single tensor between $F(W_i)$ and $F(W_{i+1})$, and no other units or parentheses.  (The model contains the term $F(W_i) \odot F(W_{i+1})$.) To these models we apply the composition morphism $m$. 
By \cref{bicat_coherence}, the canonical isomorphism between any two admissible models can be chosen to be one that does not change $F(W_i) \otimes F(W_{i+1})$.  Then \ref{rmk:explicit_free_bicat_6} demonstrates that the maps on the two models are compatible and gives a well-defined map of cliques.
\end{proof}

\subsection{Symmetric monoidal categories}
In this section we use the cliques in bicategories (and hence monoidal categories) constructed in \cref{sec:coherence_bicat} 
to construct cliques in symmetric monoidal categories.

Let $\symcat$ be the category whose objects are symmetric monoidal categories and morphisms are strict symmetric monoidal functors.  There is a forgetful functor 
\[\fosmc{-}\colon \symcat\to \setcat\]
to the category of sets that takes the set of objects and forgets the morphisms and symmetric monoidal structure. 
Let $\frsmc{-}$ denote the left adjoint of $\fosmc{-}$. 
\begin{present}\label{rmk:explicit_free_symcat}
For a set $\sLambda$, objects of $\frsmc{\sLambda}$ are parenthesiziations of 
\begin{equation}\label{expand_rep_symmetric}
\underbrace{I\otimes I\otimes \cdots\otimes I}_{j_0}\otimes \slambda_1\otimes \underbrace{I\otimes I\otimes \cdots\otimes I}_{j_1}\otimes \slambda_2\otimes \ldots\otimes  \underbrace{I\otimes I\otimes \cdots \otimes I}_{j_{n-1}}\otimes \slambda_n\otimes \underbrace{I\otimes I\otimes \ldots \otimes I}_{j_n}
\end{equation}
for elements  $\slambda_1,\ldots ,\slambda_n$ of $\sLambda$.
Generators for the morphisms in $\frsmc{\sLambda}$ are \ref{rmk:explicit_free_bicat_1}, \ref{rmk:explicit_free_bicat_2} and 
	\begin{enumerate}[start=1,label={\bfseries G\arabic{generators}}]
\item\stepcounter{generators}  \label{expand_rep_symmetric_generator}
expanded instances of the symmetry isomorphisms $\gamma\colon A \otimes B \cong B \otimes A$.
\end{enumerate}
The relations for $\frsmc{\sLambda}$ are \ref{rmk:explicit_free_bicat_4} to \ref{rmk:explicit_free_bicat_6},
		\begin{enumerate}[start=1,label={\bfseries R\arabic{relations}}]
\item\stepcounter{relations}\label{expand_rep_symmetric_relation_1}
$\gamma^2 = 1$
\item\stepcounter{relations}\label{expand_rep_symmetric_relation_2} the triangle relating the symmetry and the unit maps 
\[\xymatrix @R=1.5em{X\otimes I\ar[rd]_r\ar[rr]^-\gamma&&I\otimes X\ar[dl]^l\\
&X}\]
\item\stepcounter{relations}\label{expand_rep_symmetric_relation_3}the hexagon relating the symmetry and associativity
\[\xymatrix{(X\otimes Y)\otimes Z\ar[r]^{\gamma\otimes \id}\ar[d]^\alpha
	&(Y\otimes X)\otimes Z\ar[r]^-\alpha
	&Y\otimes (X\otimes Z)\ar[d]^{\id\otimes \gamma}
	\\
	X\otimes (Y\otimes Z)\ar[r]^-\gamma&(Y\otimes Z)\otimes X\ar[r]^-\alpha&Y\otimes (Z\otimes X)
}\]
\item\stepcounter{relations} \label{expand_rep_symmetric_relation_4} naturality relations that $\gamma$ commutes with morphisms applied to the two smaller words (making $\gamma$ a natural transformation).
\end{enumerate} 
\end{present}

Ignoring the unit elements, a morphism $\phi$ in $\frsmc{\sLambda}$ with domain a parenthesization of  \eqref{expand_rep_symmetric} induces permutation of the $\slambda_i$.  This induces  a permutation $P(\phi)$ of $\{1,\ldots, n\}$.

\begin{lem}\label{lem:underlying_functor_symmetric}
For each component of $\frsmc{\sLambda}$, the assignment $\phi\mapsto P(\phi)$ defines an {\bf {\punderlying} permutation} functor
\begin{equation*}
 P\colon \comp{\frsmc{\sLambda}}{W} \to 
\sB \Sigma_n.
\end{equation*}
\end{lem}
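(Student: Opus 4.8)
The plan is to exhibit $P$ via the presentation of $\frsmc{\sLambda}$ in \cref{rmk:explicit_free_symcat}. First observe that each generating $2$-cell ($\alpha$, $\ell$, $r$, and $\gamma$) preserves the number of non-unit factors of a parenthesized word, so every object of $\comp{\frsmc{\sLambda}}{W}$ has the same number $n$ of non-unit factors and $\sB\Sigma_n$ is the correct target. Every morphism of the component is a composite of generators of $\frsmc{\sLambda}$, all of which again lie in the component, so to define the functor $P$ it suffices to specify it on generators and then verify the relations of \cref{rmk:explicit_free_symcat}. I send every object to the unique object of $\sB\Sigma_n$; I send each expanded associator and unitor to $1\in\Sigma_n$; and I send each expanded symmetry $\gamma\colon A\otimes B\cong B\otimes A$ to the following block transposition: numbering the non-unit factors of the ambient word $1,\dots,n$ from the left, the sub-words $A$ and $B$ occupy consecutive position-intervals $\{a+1,\dots,a+p\}$ and $\{a+p+1,\dots,a+p+q\}$, and $P(\gamma)$ is the permutation that fixes every position outside $\{a+1,\dots,a+p+q\}$ and interchanges these two blocks order-preservingly.

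Next I would verify the relations of \cref{rmk:explicit_free_symcat} in $\sB\Sigma_n$. The relations \ref{rmk:explicit_free_bicat_4}, \ref{rmk:explicit_free_bicat_4a}, and \ref{expand_rep_symmetric_relation_2} hold because the composites involved are built only from $\alpha$, $\ell$, $r$ (and, in \ref{expand_rep_symmetric_relation_2}, a $\gamma$ past a unit block, which contributes the identity permutation), so both sides map to $1$. The whiskering and naturality relations \ref{rmk:explicit_free_bicat_5} and \ref{rmk:explicit_free_bicat_6} that involve a $\gamma$ hold because re-associating or deleting a unit does not change which position-interval a sub-word occupies, so the two block transpositions coincide. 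The remaining, substantive checks are: \ref{expand_rep_symmetric_relation_1} ($\gamma^2=1$), which says swapping two adjacent blocks twice is the identity; the hexagon \ref{expand_rep_symmetric_relation_3}, where the two composite permutations coincide because block transpositions obey the same triangular bookkeeping identity as ordinary adjacent transpositions; and the $\gamma$-naturality \ref{expand_rep_symmetric_relation_4}, where, if $f\colon A\to A'$ and $g\colon B\to B'$ induce permutations $\pi_f,\pi_g$ of the relevant intervals, then ``swap-then-$(\pi_g,\pi_f)$'' and ``$(\pi_f,\pi_g)$-then-swap'' name the same element of $\Sigma_n$. By the universal property of the presentation this data assembles into a functor $P\colon\comp{\frsmc{\sLambda}}{W}\to\sB\Sigma_n$; unwinding the construction shows that $P(\phi)$ is exactly the permutation that $\phi$ effects on the non-unit factors, so functoriality yields in particular that this assignment is well defined and respects composition.

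The only real difficulty is bookkeeping: one must fix, once and for all, the identification of the positions of the sub-words $A$ and $B$ with sub-intervals of $\{1,\dots,n\}$ — taking care that repeated elements of $\sLambda$ and interspersed units create no ambiguity — so that the hexagon and naturality computations reduce to the elementary block-transposition identities asserted above. There is no genuinely hard step; this is the standard ``define on generators, check the relations'' argument.
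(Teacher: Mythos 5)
Your proof is correct and follows the route the paper intends: the paper states this lemma without proof, treating it as immediate from the presentation in \cref{rmk:explicit_free_symcat}, and your generators-and-relations verification (structural maps to the identity, each expanded $\gamma$ to a block transposition, relations checked in $\Sigma_n$) is exactly the argument being left implicit. The only tiny point swept into your ``bookkeeping'' remark is the whiskering relation with two $\gamma$'s on disjoint regions, which holds because block transpositions with disjoint supports commute.
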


As before, a {\bf formal diagram} of morphisms in a symmetric monoidal category $\bC$ is a diagram that lifts against the counit 
\[\frsmc{\fosmc{\bC}}\to \bC.\] 

\begin{defn}
A formal diagram of morphisms in a symmetric monoidal category is {\bf \blacktie} if, for every pair of parallel morphisms, the {\punderlying} permutations of the two composites agree. Equivalently, the diagram commutes after applying the functor from \cref{lem:underlying_functor_symmetric}. (When the $\slambda_i$ are distinct, every formal diagram is {\blacktie}.)
\end{defn}

\begin{thm}[Coherence for symmetric monoidal categories]\label{symcat_coherence_formal}
Every {\blacktie} diagram of morphisms in a symmetric monoidal category commutes.
\end{thm}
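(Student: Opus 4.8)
The plan is to run the strategy of \cref{rmk:plan_of_proofs} one more time, as in the previous subsection but now feeding in the symmetry isomorphisms. Recall from \cref{lem:underlying_functor_symmetric} that a formal diagram is {\blacktie} exactly when its parallel composites agree after applying the {\punderlying} permutation functor $P$, and that a formal diagram in a symmetric monoidal category $\bC$ is the image of a diagram in $\frsmc{\fosmc{\bC}}$. Hence the theorem is equivalent to the statement that, \emph{for every set $\sLambda$, the functor $P$ is faithful on each component of $\frsmc{\sLambda}$}. I would prove this first when $\sLambda$ consists of pairwise distinct elements $\slambda_1,\ldots,\slambda_n$, and then reduce the general case to it.

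\emph{Distinct case.} Fix a component $\comp{\frsmc{\sLambda}}{W}$, of ``type'' $\slambda_1\odot\cdots\odot\slambda_n$. For each $\sigma\in\Sigma_n$ let $\doc(\sigma)\subseteq\frsmc{\sLambda}$ be the subcategory whose objects are the parenthesizations, with units inserted, of the word having $\slambda_i$ in the $\sigma(i)$-th non-unit slot, and whose morphisms are generated by the associators and unitors \ref{rmk:explicit_free_bicat_1}--\ref{rmk:explicit_free_bicat_2}. By \cref{bicat_coherence} applied to a one-object bicategory — with the same care as in \cref{sec:coherence_bicat} about it being a subcategory rather than an abstractly presented category, or equivalently by identifying $\doc(\sigma)$ with the image of a clique under the canonical functor from the free monoidal category on $\sLambda$ — each $\doc(\sigma)$ is a clique in $\frsmc{\sLambda}$. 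Since the $\slambda_i$ are distinct, the $\doc(\sigma)$ are pairwise disjoint and exhaust $\comp{\frsmc{\sLambda}}{W}$. Now assemble them into a diagram of cliques indexed on the translation category $\sE\Sigma_n$, using the presentation of \cref{present_e_symmetric}: the object $\sigma$ goes to $\doc(\sigma)$, and the horizontal generator $(\sigma,\tau_i)\colon\sigma\to\tau_i\sigma$ goes to the map of cliques coming from the symmetry $\gamma$ that interchanges the (adjacent) terms in slots $i$ and $i+1$, defined — as in \cref{rmk:thick_maps_partially_defined} and the proof of \cref{built_diagram_of_cliques:lem_coherence_bicat} — on those admissible models where these two terms sit next to each other, and extended over the clique via the naturality relation \ref{expand_rep_symmetric_relation_4} and the symmetry--unit triangle \ref{expand_rep_symmetric_relation_2} (to slide intervening units aside). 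One then checks the relations $(\sigma,R)$ of \cref{present_e_symmetric}: $\tau_i^2=1$ becomes $\gamma^2=1$ (\ref{expand_rep_symmetric_relation_1}); $\tau_i\tau_j=\tau_j\tau_i$ for $|i-j|>1$ becomes whiskering \ref{rmk:explicit_free_bicat_5}; and the braid relation $(\tau_i\tau_{i+1})^3=1$ becomes exactly the hexagon \ref{expand_rep_symmetric_relation_3}.

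By \cref{clique_bijection} this produces a functor $\int_{\sE\Sigma_n}\doc\to\comp{\frsmc{\sLambda}}{W}$, which is a bijection on objects (disjointness and exhaustiveness) and surjective on morphisms (reading off the generators from \cref{groth_presentation}, its image contains every expanded instance of $\alpha$, $\ell$, $r$, and $\gamma$). Since $\sE\Sigma_n$ is an abstract clique — between any two objects there is exactly one morphism — \cref{lem:consequences_of_projection_2} shows $\int_{\sE\Sigma_n}\doc$ is a clique, so this functor is faithful, hence an isomorphism of categories; thus $\comp{\frsmc{\sLambda}}{W}$ is itself a clique and $P$ is trivially faithful there. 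For the general case, let $f,g\colon X\to Y$ be parallel in some component of $\frsmc{\sLambda}$ with $P(f)=P(g)$, and write $X$ as a parenthesization (with units) of $\slambda_{i_1}\odot\cdots\odot\slambda_{i_n}$. Put $\sLambda^\flat=\{1,\ldots,n\}$, $q\colon\sLambda^\flat\to\sLambda$, $k\mapsto\slambda_{i_k}$, and $q_*=\frsmc{q}$ (a strict symmetric monoidal functor). Relabelling the $k$-th non-unit slot of $X$ by $k$ gives a lift $\ti X$ with $q_*\ti X=X$; since $f$ and $g$ are composites of expanded associator, unitor, and symmetry isomorphisms, each lifts uniquely out of $\ti X$ once its target is lifted, and the required target lift is dictated precisely by how the morphism permutes slots, i.e.\ by $P$. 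Hence $P(f)=P(g)$ forces a common target lift $\ti Y$, giving parallel $\ti f,\ti g\colon\ti X\to\ti Y$ in $\frsmc{\sLambda^\flat}$; the distinct case makes $\comp{\frsmc{\sLambda^\flat}}{\ti X}$ a clique, so $\ti f=\ti g$, and applying $q_*$ gives $f=g$.

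The step I expect to be the main obstacle is the construction of the diagram of cliques in the distinct case: one must check that the symmetry maps are well defined on admissible models and compatible with the canonical isomorphisms — this is where \ref{expand_rep_symmetric_relation_4} and \ref{expand_rep_symmetric_relation_2} are used — and, above all, that the braid relation $(\tau_i\tau_{i+1})^3=1$ really does unwind to the hexagon \ref{expand_rep_symmetric_relation_3} once one picks compatible parenthesizations of the relevant adjacent triple of terms. The reduction to the distinct case is routine, provided one checks that lifting a composite of generators along $q_*$ does not depend on the chosen factorization.
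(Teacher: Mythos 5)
Your proposal is correct, and its core coincides with the paper's proof: the same diagram of cliques over $\sE\Sigma_n$ sending $\sigma$ to the clique of parenthesized, unit-padded words of type $\slambda_{\sigma(1)}\otimes\cdots\otimes\slambda_{\sigma(n)}$, the same verification of the relations \ref{squaretozero}--\ref{threecycle}, and the same conclusion via \cref{clique_bijection} and \cref{lem:consequences_of_projection_2} that the component is a clique when the $\slambda_i$ are distinct. Where you diverge is the non-distinct case: the paper (\cref{cor:sym_coherence_iso}) keeps the same functor out of $\int_{\sE\Sigma_n}\doc$, observes it factors through the quotient by the free action of the block-permutation subgroup $\prod_i\Sigma_{k_i}$, and identifies the component with $\prod_i\sB\Sigma_{k_i}$ via the underlying permutation functor $P$; you instead relabel the letters to make them distinct and lift morphisms along the strict functor $q_*$ induced by $q\colon\sLambda^\flat\to\sLambda$, using $P$ to pin down the lifted target. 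Both are valid; the paper's route yields the sharper structural statement that $P$ induces an equivalence $\comp{\frsmc{\sLambda}}{W}\to\prod_i\sB\Sigma_{k_i}$, whereas your lifting argument delivers exactly the faithfulness of $P$ needed for the theorem and nothing more, at the cost of the (easy but necessary) check that a word of expanded generator instances lifts along $q_*$ with target determined by its underlying permutation. One small imprecision: the braid relation $(\tau_i\tau_{i+1})^3=1$ does not unwind to a single instance of the hexagon \ref{expand_rep_symmetric_relation_3}; as in the paper's proof of \cref{symcat_coherence_diagram_symmetry}, it requires two hexagons glued along a naturality square for $\gamma$ (\ref{expand_rep_symmetric_relation_4}), together with associator rearrangements justified by \cref{bicat_coherence}. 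You correctly flag this as the delicate step, so this is a matter of stating the verification fully rather than a gap.
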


We proceed immediately into the proof. Fix an $n$-tuple of objects $\slambda_1,\ldots ,\slambda_n$ in a set $\sLambda$. Since a monoidal category is a bicategory with a single 0-cell, \cref{bicat_coherence} supplies a clique $\bigotimes_{i=1}^n\slambda_i$ in $\frsmc{\sLambda}$ consisting of associator and unitor maps, but where the ordering of the $\slambda_i$ is never altered.

\begin{lem}[\cref{rmk:plan_of_proofs}\ref{rmk:plan_of_proofs_1}]\label{symcat_coherence_diagram_symmetry}
There is a diagram of cliques 
indexed by $\sE\Sigma_n$ (\cref{present_e_symmetric}) where the image of $\sigma \in \Sigma_n$ is the clique $\bigotimes_{i=1}^n\slambda_{\sigma(i)}$.
\end{lem}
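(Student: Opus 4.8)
The plan is to carry out the first step of the recipe in \cref{rmk:plan_of_proofs}, namely to build the diagram of cliques $\docf\colon \sE\Sigma_n\to\catcl{\frsmc{\sLambda}}$ directly from the presentation of $\sE\Sigma_n$ recorded in \cref{present_e_symmetric}. On objects we are forced to set $\docf(\sigma)=\bigotimes_{i=1}^n\slambda_{\sigma(i)}$, which is a clique by \cref{bicat_coherence} applied to the reordered tuple $\slambda_{\sigma(1)},\dots,\slambda_{\sigma(n)}$. It then remains to attach a map of cliques to each generator $(\sigma,\tau_j)$ and to verify the relations $(\sigma,R)$ for $R$ among \ref{squaretozero}--\ref{threecycle}. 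Since clique maps compose and two composites of clique maps agree as soon as they agree on a single representative, a functor out of the presented category $\sE\Sigma_n$ is exactly this data, with no further coherence to impose.

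For the generators I would use \cref{rmk:thick_maps_partially_defined}. Send $(\sigma,\tau_j)\colon\sigma\to\tau_j\sigma$ to the map of cliques $\bigotimes_i\slambda_{\sigma(i)}\to\bigotimes_i\slambda_{\tau_j\sigma(i)}$ defined on the admissible models consisting of those parenthesizations of \eqref{expand_rep_symmetric} in which the two terms transposed by $\tau_j$ appear as a binary subword $\slambda\otimes\slambda'$, with the units and the remaining terms bracketed arbitrarily; on such a model apply the single symmetry isomorphism $\gamma\colon\slambda\otimes\slambda'\to\slambda'\otimes\slambda$ to that subword, which produces the corresponding admissible model of the target clique. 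The canonical isomorphisms between these admissible models only reorganize brackets and units outside the transposed subword, so compatibility with them is forced by naturality of $\gamma$ (\ref{expand_rep_symmetric_relation_4}) together with whiskering (\ref{rmk:explicit_free_bicat_5}); hence the assignment extends uniquely to a well-defined clique map. This is the cliques reformulation of the usual Mac Lane argument, in which one only ever invokes single expanded instances of $\gamma$.

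Checking the relations is then a matter of verifying each $(\sigma,R)$ on one representative. The relation induced by \ref{squaretozero} reads off, on a model grouping the transposed pair, as $\gamma^2=1$, which is \ref{expand_rep_symmetric_relation_1}. The relation induced by \ref{farawaycommute} (for $|i-j|>1$) reads off, on a model that simultaneously groups the two disjoint transposed pairs, as an instance of whiskering (\ref{rmk:explicit_free_bicat_5}). The relation induced by \ref{threecycle}, $(\tau_j\tau_{j+1})^3=1$, is the only one that genuinely uses the symmetry: starting from a model grouping the three relevant slots, the successive applications of $\gamma$ want different bracketings of that length-three block, one interpolates with canonical associators, and the resulting closed-up equation is the hexagon (\ref{expand_rep_symmetric_relation_3}) combined with $\gamma^2=1$ and naturality.

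I expect the main obstacle to be exactly this interplay: one must pin down the admissible models for $(\sigma,\tau_j)$ precisely enough that every occurrence of $\gamma$ is a single expanded instance — so that the clique maps really are well defined — yet arrange that the consecutive generators appearing in $(\tau_j\tau_{j+1})^3$, which individually prefer incompatible bracketings of the three-term block, can be threaded together through canonical associators and closed up. Once the admissible models are chosen, the \ref{squaretozero} and \ref{farawaycommute} relations are immediate, and \ref{threecycle} is the classical fact that the braid relation holds in any symmetric monoidal category, now packaged through \cref{rmk:thick_maps_partially_defined} so that it need only be verified on a single well-chosen word.
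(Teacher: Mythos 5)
Your proposal is correct and follows essentially the same route as the paper: generators $(\sigma,\tau_j)$ are sent to single instances of $\gamma$ defined on admissible models where the transposed pair is bracketed together, well-definedness is checked via whiskering/naturality, \ref{squaretozero} reduces to $\gamma^2=1$, \ref{farawaycommute} to whiskering, and \ref{threecycle} to the hexagon axiom interleaved with associators, naturality of $\gamma$, and $\gamma^2=1$, exactly as in the paper's explicit diagram. The only cosmetic difference is that you spell out the admissible-model bookkeeping that the paper compresses into ``well-defined by whiskering.''
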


\begin{proof}
Each transposition (\ref{present_e_symmetric_1}) is sent to an instance of $\gamma$ (\ref{expand_rep_symmetric_generator}) and is well-defined by whiskering (\ref{rmk:explicit_free_bicat_5}). This respects the $\tau_i\tau_j$ relations (\ref{farawaycommute}) by whiskering (\ref{rmk:explicit_free_bicat_5}), the $\tau_i^2$ relations (\ref{squaretozero}) by the relation in $\frsmc{ \sLambda}$ that $\gamma^2 = 1$ (\ref{expand_rep_symmetric_relation_1}), and the $(\tau_i\tau_{i+1})^3$ relations (\ref{threecycle}) by the commutativity of the diagram
\[ \xymatrix @R=1.9em{
	 (Y \otimes X) \otimes Z   \ar@{<->}[d]^-{\gamma \otimes 1}\ar[rr]^-\alpha
	&& Y \otimes (X \otimes Z) \ar@{<->}[d]^-{1 \otimes \gamma}   
	\\
	(X\otimes Y)\otimes Z\ar@{<->}[d]^\alpha&&Y\otimes (Z\otimes X)\ar@{<->}[d]^\alpha
	\\
	X \otimes (Y \otimes Z) \ar@{<->}[d]^-{1 \otimes \gamma} \ar@{<->}[rr]^-{\gamma} &&
	(Y \otimes Z) \otimes X \ar@{<->}[d]^-{\gamma \otimes 1} \\
	X \otimes (Z \otimes Y)  \ar@{<->}[rr]^-{\gamma} \ar[d]^\alpha&&
	(Z \otimes Y) \otimes X \ar[d]^\alpha
	\\
	X \otimes (Z \otimes Y) &&Z\otimes (Y\otimes X)
	\\
	 Z \otimes X \otimes Y \ar@{<->}[u]_-{\gamma \otimes 1}\ar[rr]^-\alpha
	&&Z \otimes X \otimes Y \ar@{<->}[u]_-{1 \otimes \gamma}.
} \]
The hexagons commute by \ref{expand_rep_symmetric_relation_3} and the rectangle commutes by naturality of $\gamma$ (\ref{expand_rep_symmetric_relation_4}).
\end{proof}

Let $\docf$ be the diagram of cliques in \cref{symcat_coherence_diagram_symmetry}.
 \cref{clique_bijection} defines a functor 
into one component of the free symmetric monoidal category (\cref{rmk:plan_of_proofs}\ref{rmk:plan_of_proofs_2})
\begin{equation}\label{symmoncat_functor_symmetry_2}
\int_{\sE\Sigma_n} \doc
\to \comp{\frsmc{\sLambda}}{W} 
\end{equation}

\begin{thm}[\cref{rmk:plan_of_proofs}\ref{rmk:plan_of_proofs_3}]\label{thm:sym_coherence_iso}
If all the $\slambda_i$ are distinct then \eqref{symmoncat_functor_symmetry_2} is an isomorphism of categories. 
\end{thm}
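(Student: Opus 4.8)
The plan is to follow the strategy of \cref{rmk:plan_of_proofs}. Steps \ref{rmk:plan_of_proofs_1} and \ref{rmk:plan_of_proofs_2} have already been carried out: \cref{symcat_coherence_diagram_symmetry} builds the diagram of cliques and \cref{clique_bijection} turns it into the functor \eqref{symmoncat_functor_symmetry_2}, so all that remains is step \ref{rmk:plan_of_proofs_3}, namely checking that \eqref{symmoncat_functor_symmetry_2} is an isomorphism of categories. I would do this by verifying that it is bijective on objects, surjective on morphisms, and faithful.

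\emph{Bijective on objects.} By construction the objects of $\int_{\sE\Sigma_n}\doc$ are the disjoint union over $\sigma\in\Sigma_n$ of the objects of the clique $\bigotimes_{i=1}^n\slambda_{\sigma(i)}$; concretely, a pair consisting of an ordering $\sigma$ together with a parenthesized word in the $\slambda_i$ and formal units $I$ in which the $\slambda_i$ appear in the order dictated by $\sigma$. On the other hand the objects of $\comp{\frsmc{\sLambda}}{W}$ are precisely the parenthesized words in the $\slambda_i$ and formal units $I$ in which each $\slambda_i$ occurs exactly once: every such word is connected to $W$ by stripping units with $\ell$, $r$, reassociating with $\alpha$, and permuting the $\slambda_i$ with instances of $\gamma$ (adjacent transpositions generate $\Sigma_n$), while conversely $\alpha$, $\ell$, $r$, $\gamma$ all preserve the multiset of $\slambda_i$'s. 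The functor simply forgets the ordering $\sigma$; since the $\slambda_i$ are \emph{distinct}, $\sigma$ is recovered from a parenthesized word by reading off the left-to-right order in which the $\slambda_i$ occur, so the functor is a bijection on objects. This is the one place where distinctness is used, and it is what upgrades the conclusion from an equivalence to an isomorphism.

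\emph{Surjective on morphisms.} The morphisms of $\comp{\frsmc{\sLambda}}{W}$ are generated by the expanded instances of $\alpha$, $\ell$, $r$ (\ref{rmk:explicit_free_bicat_1}, \ref{rmk:explicit_free_bicat_2}) and of $\gamma$ (\ref{expand_rep_symmetric_generator}). Every expanded instance of $\alpha$, $\ell$, $r$ lies inside some clique $\bigotimes_i\slambda_{\sigma(i)}$ and hence is in the image by \cref{bicat_coherence}, and each horizontal generator of $\int_{\sE\Sigma_n}\doc$ (coming from a pair $(\sigma,\tau)$ as in \cref{present_e_symmetric}) is sent to an instance of $\gamma$ interchanging two adjacent $\slambda_i$'s. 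A general expanded instance $\gamma\colon A\otimes B\cong B\otimes A$ is not of this special form, but iterating the hexagon relation \ref{expand_rep_symmetric_relation_3} (and, for units occurring among the leaves of $A$ or $B$, the unit--symmetry relation \ref{expand_rep_symmetric_relation_2}) expresses it as a composite of associators, unitors, and symmetries between individual leaves $\slambda_p$, $\slambda_q$ — all of which are in the image. Thus the functor is surjective on morphisms, and since it is bijective on objects it is in fact surjective on each hom-set.

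\emph{Faithful, and conclusion.} The indexing category $\sE\Sigma_n=(*\downarrow\sB\Sigma_n)$ is an abstract clique: its objects are the elements of $\Sigma_n$ and there is a unique morphism between any two of them. Each fiber $\doc(\sigma)=\bigotimes_i\slambda_{\sigma(i)}$ is a clique by \cref{bicat_coherence}, so by \cref{lem:consequences_of_projection_2} the category $\int_{\sE\Sigma_n}\doc$ is itself an abstract clique, and in particular any functor out of it is faithful. Combining the three parts, \eqref{symmoncat_functor_symmetry_2} is fully faithful and bijective on objects, hence an isomorphism of categories. The only step with genuine content is the surjectivity on morphisms: one must check that \emph{every} expanded instance of the symmetry is realized, not merely the swaps of adjacent generators built into the horizontal generators of $\sE\Sigma_n$, and this is precisely where the hexagon axiom enters. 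It is the symmetric-monoidal analogue of the routine observation in the proof of \cref{bicat_coherence} that the diagram of cliques there already contains all instances of $\alpha$, $\ell$, and $r$.
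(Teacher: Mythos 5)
Your proof is correct and follows essentially the same route as the paper's: bijectivity on objects using distinctness of the $\slambda_i$, surjectivity on morphisms by checking the generators land in the image, and faithfulness because $\int_{\sE\Sigma_n} \doc$ is an abstract clique (\cref{lem:consequences_of_projection_2}), hence thin. The only difference is that you spell out, via the hexagon \ref{expand_rep_symmetric_relation_3} and the unit--symmetry relation \ref{expand_rep_symmetric_relation_2}, why arbitrary expanded block instances of $\gamma$ (not just the adjacent single-letter swaps furnished by the horizontal generators) lie in the image --- a point the paper compresses into ``each instance of $\alpha$, $l$, $r$, and $\gamma$ has a preimage by construction'' --- so your write-up is a more detailed rendering of the same argument rather than a different one.
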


\begin{proof}
\cref{lem:consequences_of_projection_2} implies 
$\int_{\sE\Sigma_n} \doc
$ is an abstract clique. 

If the objects $X_1,\ldots, X_n$ are distinct, the functor \eqref{symmoncat_functor_symmetry_2} 
 is a bijection on objects and surjective on morphisms (since each instance of $\alpha$, $l$, $r$, and $\gamma$ has a preimage by construction). It is automatically faithful since the source is thin. Therefore it is an isomorphism of categories.
\end{proof}

\begin{cor}[\cref{rmk:plan_of_proofs}\ref{rmk:plan_of_proofs_4}]\label{cor:sym_coherence_iso}
Let $W$ be a model for $\bigotimes_{i=1}^n\slambda_{i}$.
 If the $\slambda_i$ are distinct then
$\comp{\frsmc{\sLambda}}{W}$  is thin. 
More generally, the underlying permutation functor induces an equivalence of categories
\[\comp{\frsmc{\sLambda}}{W} \to \prod_i \sB\Sigma_{k_i} \]
for a certain subgroup $\prod_i \Sigma_{k_i} \leq \Sigma_n$ of block permutations.
\end{cor}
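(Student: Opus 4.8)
The first assertion I would get immediately: by \cref{thm:sym_coherence_iso} the functor \eqref{symmoncat_functor_symmetry_2} is an isomorphism, and $\sE\Sigma_n$ is a contractible groupoid (every object is both initial and terminal; compare \cref{present_e_symmetric}), so \cref{lem:consequences_of_projection_2} makes $\int_{\sE\Sigma_n}\doc$ an abstract clique, hence so is $\comp{\frsmc{\sLambda}}{W}$, and in particular it is thin. For the general statement the plan is to reduce to the distinct case by a relabeling and then identify the underlying permutation functor of \cref{lem:underlying_functor_symmetric} with the projection onto the classifying category of the ``label-preserving'' subgroup of $\Sigma_n$. Concretely, writing $\{1,\dots,n\}=B_1\sqcup\cdots\sqcup B_m$ for the partition by the value of the $\slambda_i$ and $k_l=|B_l|$, this subgroup is $H=\{\sigma\in\Sigma_n:\slambda_{\sigma(i)}=\slambda_i\text{ for all }i\}=\prod_l\Sigma_{B_l}\cong\prod_l\Sigma_{k_l}$, so $\sB H=\prod_l\sB\Sigma_{k_l}$.

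For the relabeling I would take a set $\sLambda^\flat=\{\slambda^\flat_1,\dots,\slambda^\flat_n\}$ of $n$ distinct elements together with $p\colon\sLambda^\flat\to\sLambda$, $\slambda^\flat_i\mapsto\slambda_i$, inducing a strict symmetric monoidal functor $q\colon\frsmc{\sLambda^\flat}\to\frsmc{\sLambda}$ that carries generators to generators. Choosing $W^\flat$ to be the model of $\slambda^\flat_1\otimes\cdots\otimes\slambda^\flat_n$ with the same shape as $W$ gives $q(W^\flat)=W$, so $q$ restricts to $\bar q\colon\comp{\frsmc{\sLambda^\flat}}{W^\flat}\to\comp{\frsmc{\sLambda}}{W}$; this $\bar q$ is surjective on objects and on morphisms (each instance of $\alpha$, $\ell$, $r$, $\gamma$ in the target lifts), it is compatible with the functors $P$ of \cref{lem:underlying_functor_symmetric}, and -- by the distinct case already proved -- its source is a contractible groupoid.

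It then remains to check that $P$ restricts to an equivalence $\comp{\frsmc{\sLambda}}{W}\to\sB H$. Essential surjectivity is just nonemptiness of the component. The image of $P$ lands in $\sB H$ because any endomorphism of a model sends each non-unit leaf to a leaf with the same $\sLambda$-label, so its underlying permutation lies in $H$. For fullness, given $h\in H$ the identities $\slambda_{h(i)}=\slambda_i$ mean that $\slambda_{h(1)}\otimes\cdots\otimes\slambda_{h(n)}$ is literally the same word as $\slambda_1\otimes\cdots\otimes\slambda_n$, so any composite of symmetry isomorphisms realizing $h$ is an endomorphism of $W$ whose underlying permutation is $h$. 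For faithfulness, I would take parallel $\phi,\psi$ in the component with equal underlying permutation, compose with isomorphisms of the (connected) component to reduce to the case that both are endomorphisms of $W$ with common underlying permutation $h\in H$, then lift $\phi$ and $\psi$ along $q$ starting from $W^\flat$ to obtain $\tilde\phi,\tilde\psi\colon W^\flat\to Z$ in $\comp{\frsmc{\sLambda^\flat}}{W^\flat}$ with $q(\tilde\phi)=\phi$ and $q(\tilde\psi)=\psi$; since that category is thin, $\tilde\phi=\tilde\psi$, hence $\phi=\psi$. This gives the desired equivalence $\comp{\frsmc{\sLambda}}{W}\to\sB H=\prod_l\sB\Sigma_{k_l}$.

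The hard part will be the faithfulness step, and specifically justifying that the lifts $\tilde\phi,\tilde\psi$ exist and are \emph{parallel}. Existence is routine: a chosen factorization of $\phi$ into generators lifts term by term once the lift of the source object is fixed. That the two lifts have a common codomain is the real point -- it holds because $q^{-1}(W)$ is a single orbit of words under the relabeling action of $H$, all of the same shape as $W$ and distinguished only by their leaf orders, and the underlying permutation $h$ of the lift determines which element of that orbit occurs. Once that is in place, thinness of the contractible groupoid $\comp{\frsmc{\sLambda^\flat}}{W^\flat}$ supplied by the distinct case closes the argument.
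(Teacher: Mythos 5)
Your proof is correct in substance, but it reaches the block-permutation description by a different packaging than the paper. The paper stays with the Grothendieck construction it has already built: when the $\slambda_i$ repeat, the functor \eqref{symmoncat_functor_symmetry_2} out of the abstract clique $\int_{\sE\Sigma_n}\doc$ is still defined, and it identifies exactly the orbits of the free action of $\prod_i\Sigma_{k_i}$ permuting the repeated letters; since a free quotient of a contractible groupoid is equivalent to $\sB$ of the acting group, the component is equivalent to $\prod_i\sB\Sigma_{k_i}$. You instead build the ``cover'' by hand, as $\comp{\frsmc{\sLambda^\flat}}{W^\flat}$ for a distinct relabeling $\sLambda^\flat$ together with the relabeling functor $\bar q$, and you compute $\mathrm{Aut}(W)$ by lifting factorizations into that thin component. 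These are two implementations of the same covering idea: $\comp{\frsmc{\sLambda^\flat}}{W^\flat}$ is isomorphic to $\int_{\sE\Sigma_n}\doc$, $\bar q$ corresponds to \eqref{symmoncat_functor_symmetry_2}, and your key point---that the target of a lift of an endomorphism of $W$ starting at $W^\flat$ is determined by the underlying permutation, so the two lifts are parallel---is precisely the freeness of the relabeling action that the paper invokes. The paper's version is shorter given the machinery already in place; yours is more elementary and makes explicit the statement that \cref{symcat_coherence_formal} actually needs, namely that parallel formal morphisms with equal underlying permutation coincide.

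One wrinkle to repair in the write-up: $P$ does not literally restrict to a functor $\comp{\frsmc{\sLambda}}{W}\to\sB H$. On a morphism between differently ordered words its value lies only in a coset of $H$, and endomorphisms of a reordered word have underlying permutation in a conjugate of $H$; your claim that ``the image of $P$ lands in $\sB H$'' holds only for endomorphisms of models whose letters occur in the order $\slambda_1,\ldots,\slambda_n$. This looseness is shared by the statement itself (and by the paper's proof), and it is harmless: every generator of $\frsmc{\sLambda}$ is invertible, so the component is a connected groupoid and hence equivalent to its full subcategory on the single object $W$, i.e.\ to $\sB\,\mathrm{Aut}(W)$; your fullness and faithfulness arguments then show that $P\colon\mathrm{Aut}(W)\to H$ is an isomorphism, which yields the asserted equivalence $\comp{\frsmc{\sLambda}}{W}\simeq\prod_i\sB\Sigma_{k_i}$.
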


\begin{proof}
If the elements $\slambda_1,\ldots ,\slambda_n$ are not distinct, then \eqref{symmoncat_functor_symmetry_2} still defines a clique, but it is no longer an isomorphism of categories. Rather, it induces an isomorphism out of the quotient of the clique by the free action of the group $\prod_i \Sigma_{k_i}$, acting by permuting the repeats of each distinct element. 
Therefore $\comp{\frsmc{\gLambda}}{W}$ is equivalent to the category $\prod_i \sB\Sigma_{k_i}$, and the equivalence sends each morphism to its underlying permutation.
\end{proof}

This finishes the proof of coherence for symmetric monoidal categories (\cref{symcat_coherence_formal}).

\begin{example}\label{building_block_clique_symmetric_monoidal_functor}
Let $F\colon \bC\to \bD$ be a lax monoidal functor from a symmetric monoidal category $\bC$ to a monoidal category $\bD$, and let $X_1$, $\ldots$, $X_n$ denote \emph{distinct} objects in $\bC$. For each map of finite sets $\alpha\colon \underline{n} \to \underline{k}$ (not necessarily preserving the ordering!),  the clique from 
\cref{ex:define_odot_clique_F} can be extended to a clique we denote with the same notation
\[ \bigotimes_{j \in [k]} F\left( \bigotimes_{i \in \alpha^{-1}(j)} X_i \right) .\]
It has an object for each ordering of each of the preimages $\alpha^{-1}(j)$ and each model for their tensor product, and the tensor product on the outside. We include the symmetry isomorphisms $\gamma$, but \emph{only} inside the copies of $F$, not on the outside. In other words, we are taking a product of $k$ different cliques from the free symmetric monoidal category $\frsmc{ \sLambda}$ and one clique from the free monoidal category.

\end{example}

\subsection{Shadowed bicategories}\label{sec:shadowed_bicat}
Let $\oneshBicat$ be the category whose objects are bicategories with shadow $(\sC,\sC_{\mathrm{Sh}})$ (see e.g. \cite{p:thesis,ps:bicat}) and whose morphisms are strict maps (see \cref{sec:lax_shadow}).
There is a forgetful functor 
\begin{equation}\label{eq:shad_bicat_forget}
 \oneshBicat\to \Graphcat
\end{equation}
whose value on $(\sC,\sC_{\mathrm{Sh}})$ is $\fobic{\sC}$ (from \cref{sec:coherence_bicat}). Let $(\frbic{-}, \frsbit{-})$ be the left adjoint of the functor in \eqref{eq:shad_bicat_forget}.  
The underlying bicategory $\frbic{-}$ is as in \cref{rmk:explicit_free_bicat} and the shadow category $\frsbit{-}$ has the following presentation.

\begin{present}
For a graph $\gLambda$, let $\frsbit{\gLambda}$ be the category with objects the  set of {\emph endomorphism} 1-cells of $\frbic{\gLambda}$, with a $\sh{-}$ written around them. So for example
\[ \sh{ \ ((X_1 \odot I) \odot X_2) \odot (X_3 \odot X_4) \ }. \]

Generators for the morphisms of $\frsbit{\gLambda}$  are \ref{rmk:explicit_free_bicat_1}, \ref{rmk:explicit_free_bicat_2}, and 
	\begin{enumerate}[start=1,label={\bfseries G\arabic{generators}}]
\item\stepcounter{generators} \label{gen:rotator} 
rotator maps $\theta\colon \sh{A \odot B} \cong \sh{B \odot A}$. 
	\end{enumerate}
 Since $\theta$  can only be applied to the outermost tensor product in a given word, there are no expanded instances of $\theta$.

The relations for morphisms of $\frsbit{\gLambda}$  are  \ref{rmk:explicit_free_bicat_4} to \ref{rmk:explicit_free_bicat_6},
		\begin{enumerate}[start=1,label={\bfseries R\arabic{relations}}]

		\item\stepcounter{relations} \label{fig:shadow_unit_coherence} the diagram relating $\theta$ and the unit isomorphisms 
\[\xymatrix @R=1.5em{
	\sh{X \odot I} \ar@{<->}[dr]_-r \ar@{<->}[rr]^-\theta && \sh{I \odot X} \ar@{<->}[dl]^-l \\
	& \sh{X}. &
}\]
\item\stepcounter{relations} \label{fig:shadow_associativity_coherence} the diagram relating $\theta$ and the associators	\[\xymatrix{
		\sh{(X \odot Y) \odot Z} \ar@{<->}[d]^-\alpha \ar@{<->}[r]^-\theta &
		\sh{Z \odot (X \odot Y)} \ar@{<->}[r]^-\alpha &
		\sh{(Z \odot X) \odot Y} \ar@{<->}[d]^-\theta
		\\
		\sh{X \odot (Y \odot Z)} \ar@{<->}[r]^-\theta &
		\sh{(Y \odot Z) \odot X} \ar@{<->}[r]^-\alpha &
		\sh{Y \odot (Z \odot X)},
	} \]
	and
\item\stepcounter{relations}\label{theta:natural} naturality relations that $\theta$ commutes with morphisms applied to the two smaller words (making $\theta$ a natural transformation).
	\end{enumerate}

The shadow functor for the pair $(\frbic{\gLambda}, \frsbit{\gLambda})$
applies $\sh{-}$ to regard 1-cells in $\frbic{\gLambda}$ as objects of $\frsbit{\gLambda}$.
\end{present}

A morphism $\phi$ in $\frsbit{\gLambda}$  with domain a parenthesization of 
\[ \sh{ \ \underbrace{I\odot I\odot \cdots \odot  I}_{j_0}\odot X_1\odot\underbrace{I\odot I\odot\cdots \odot I}_{j_1}\odot X_2\odot\ldots\odot \underbrace{I\odot I\odot\cdots \odot I}_{j_{n-1}}\odot X_n\odot \underbrace{I\odot I\odot \cdots \odot I}_{j_n}
\ } \]
induces a cyclic permutation $P(\phi)$  of the set $\{1,\ldots, n\}$.
\begin{lem}\label{lem:underlying_functor_shadow}
For each component of $\frsbit{\gLambda}$, the assignment $\phi\mapsto P(\phi)$ defines a {\bf {\punderlying} cyclic permutation} functor 
\begin{equation*}
 P\colon \comp{\frsbit{\gLambda}}{W} \to 
\sB C_n.
\end{equation*}
\end{lem}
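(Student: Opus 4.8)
The plan is to proceed exactly as one does for the underlying permutation functor of a symmetric monoidal category (\cref{lem:underlying_functor_symmetric}): work directly from the presentation of $\frsbit{\gLambda}$, define $P$ on generators, and check that it respects every relation. First I would fix a component and let $n$ be the number of non-unit factors (edges $X_i$) appearing in any one of its objects --- a number constant on the component, since each generator \ref{rmk:explicit_free_bicat_1}, \ref{rmk:explicit_free_bicat_2}, \ref{gen:rotator} preserves the multiset of non-unit factors of a word. Then I would set $P$ to send every associator $\alpha$ and unitor $\ell$, $r$ to the identity of $C_n$, and to send a rotator $\theta\colon \sh{A \odot B} \cong \sh{B \odot A}$ to the rotation of the factor positions $\{1,\dots,n\}$ by $|A|$, where $|A|$ is the number of non-unit factors in the subword $A$ (this is well defined because the top-level splitting of $\sh{A \odot B}$ into $A$ and $B$ is unique; equivalently one may rotate by $-|B|$). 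Each of these is an element of the cyclic subgroup $C_n \leq \Sigma_n$ and a composite of rotations is again a rotation, so the assignment lands in $\sB C_n$; on morphisms out of a chosen model $W$ it recovers the $P(\phi)$ described just before the statement.

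The bulk of the work is then checking compatibility with the relations of $\frsbit{\gLambda}$. The bicategory relations \ref{rmk:explicit_free_bicat_4}--\ref{rmk:explicit_free_bicat_6} involve only $\alpha$, $\ell$, $r$, so both sides become the identity. Relation \ref{fig:shadow_unit_coherence} is handled by noting $|I| = 0$, so the rotator there becomes a rotation by $|X| = n \equiv 0$, matching the unitors. Relation \ref{fig:shadow_associativity_coherence} is the one genuine computation: writing $a$, $b$, $c$ for the numbers of non-unit factors in $X$, $Y$, $Z$ (so $a + b + c = n$), the top composite is a rotation by $|X\odot Y| = a+b$ followed (through associators, which act trivially) by a rotation by $|Z\odot X| = c+a$, i.e. a rotation by $2a+b+c \equiv a \pmod n$, while the bottom composite is a single rotation by $|X| = a$. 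For the naturality relation \ref{theta:natural} I would observe that a morphism applied to a subword of $\sh{A\odot B}$ is built from $\alpha$, $\ell$, $r$ alone --- no expanded instances of $\theta$ occur inside a word --- hence becomes the identity, while the two copies of $\theta$ in the square become the same rotation (reordering a subword preserves its count of non-unit factors); the square therefore maps to a commuting square in $\sB C_n$. Finally the two relations making each $\theta$ invertible are free because $\sB C_n$ is a groupoid. This yields the functor $P\colon \comp{\frsbit{\gLambda}}{W} \to \sB C_n$.

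I expect the only real obstacle to be the verification of \ref{fig:shadow_associativity_coherence} together with keeping the rotation conventions consistent across \ref{fig:shadow_unit_coherence}, \ref{fig:shadow_associativity_coherence}, and \ref{theta:natural}; everything else reduces to ``both composites are the identity'' (in particular \cref{bicat_coherence} is not needed here). One subtlety I would flag in the writeup: when the edges $X_i$ are not all distinct, $P(\phi) \in C_n$ carries data not recoverable from the source and target objects alone --- for example $\theta$ and the identity on $\sh{e \odot e}$ have the same source and target but different images --- which is exactly why $P$ must be defined generator-by-generator rather than read off from the underlying cyclic words.
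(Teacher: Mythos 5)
Your proof is correct, and it supplies exactly the routine verification the paper leaves implicit: the lemma is stated without proof, and the intended argument is precisely your generator-by-generator definition (each rotator $\theta\colon \sh{A\odot B}\to\sh{B\odot A}$ sent to the rotation determined by the size of one side of the outermost $\odot$, all associators and unitors sent to the identity) followed by checking the relations, of which only the hexagon \ref{fig:shadow_associativity_coherence} requires a genuine computation, which you carry out correctly, and your observations that the unitor relations preserve the count of non-unit factors and that no expanded instances of $\theta$ occur are exactly the points that make the naturality check \ref{theta:natural} trivial. The only cosmetic remark is that your convention (rotation by $|A|$ rather than by $|B|$) is the inverse of the other natural choice; since $C_n$ is abelian, either convention defines a functor to $\sB C_n$, so nothing is affected.
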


If $(\sB,\sB_{\mathrm{Sh}})$ is a shadowed bicategory, a {\bf formal diagram} in $\sB_{\mathrm{Sh}}$ is any diagram that lifts along the counit
\[\frsbit{\fosbi{\sB}}\to \sB_{\mathrm{Sh}}. \]

\begin{defn}
A formal diagram in $\sB_{\mathrm{Sh}}$ is {\bf \blacktie} if the {\punderlying} cyclic permutations of any pair of parallel maps agree.
\end{defn}
When the $\glambda_i$ are distinct, or more generally when they are aperiodic (\cref{aperiodic}), every diagram is {\blacktie}. 

\begin{thm}[Coherence for shadowed bicategories]\label{shbicat_coherence_formal}
Every {\blacktie} diagram in a shadowed bicategory commutes.
\end{thm}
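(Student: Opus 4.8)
The plan is to run the template of \cref{rmk:plan_of_proofs} once more, in close parallel with the proof of coherence for symmetric monoidal categories (\cref{symcat_coherence_formal}): the rotator maps $\theta$ will play the role the symmetries $\gamma$ played there, and the indexing category $\sE C_n = (*\downarrow \sB C_n)$ from \cref{present_e_cyclic_2} will replace $\sE\Sigma_n$. Fix a graph $\gLambda$ and an $n$-tuple of composable edges $X_1,\ldots,X_n$ forming a directed loop, so that $\bigodot_{i=1}^n X_i$ is an endomorphism 1-cell of $\frbic{\gLambda}$, hence an object of $\frsbit{\gLambda}$. For each cyclic rotation $\sigma\in C_n$, \cref{bicat_coherence} supplies the bicategory clique $\bigodot_{i=1}^n X_{\sigma(i)}$, built from $\alpha$, $\ell$, and $r$ with the cyclic order of the $X_i$ never changed, and applying $\sh{-}$ places a copy of this clique inside $\frsbit{\gLambda}$, which I write $\sh{\bigodot_{i=1}^n X_{\sigma(i)}}$; the instances of $\theta$ will be the horizontal maps between different choices of $\sigma$.

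The first step is the analogue of \cref{symcat_coherence_diagram_symmetry}: build a diagram of cliques in $\frsbit{\gLambda}$ indexed by $\sE C_n$ with value $\sh{\bigodot_{i=1}^n X_{\sigma(i)}}$ at $\sigma$. A generator $(\sigma,a_k)$ goes to the clique map whose admissible models group $X_{\sigma(1)},\ldots,X_{\sigma(k)}$ into a parenthesized block $A$ and the remaining terms into a block $B$, and which applies a single $\theta\colon\sh{A\odot B}\cong\sh{B\odot A}$; this is well defined by the naturality axiom \ref{theta:natural} together with whiskering \ref{rmk:explicit_free_bicat_5}. Each relation $(\sigma,a_ka_l=a_{k+l})$ need only be checked on one model, and choosing a model that breaks the loop into three blocks of sizes $k$, $l$, and $n-k-l$ turns the required equality of $\theta$-composites into the shadow associativity axiom \ref{fig:shadow_associativity_coherence}, with \ref{fig:shadow_unit_coherence} absorbing blocks built from units only and with \cref{bicat_coherence} and naturality \ref{rmk:explicit_free_bicat_6} controlling the intervening canonical isomorphisms. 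The degenerate relation $a_{n-1}a_1=a_0=\id$ is the statement that rotating all the way around a loop is the identity; I would either include it among the shadow axioms or deduce it from \ref{fig:shadow_unit_coherence} and \ref{fig:shadow_associativity_coherence} by an argument parallel to Kelly's derivation that $\ell=r$ on $I$.

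With this diagram of cliques, \cref{clique_bijection} produces a functor out of $\int_{\sE C_n}\doc$; since that category is connected it restricts, for any chosen model $W$, to
\[ \int_{\sE C_n}\doc \longrightarrow \comp{\frsbit{\gLambda}}{W}. \]
As $\sB C_n$ is a groupoid, $\sE C_n$ has an initial object and is therefore an abstract clique, so \cref{lem:consequences_of_projection_2} makes $\int_{\sE C_n}\doc$ an abstract clique as well. When the $X_i$ are pairwise distinct — more generally aperiodic in the sense of \cref{aperiodic} — this functor is a bijection on objects by construction, is surjective on morphisms since the diagram of cliques includes every expanded instance of $\alpha$, $\ell$, $r$, and $\theta$ (exactly as in the proof of \cref{bicat_coherence}), and is faithful because its source is thin; hence it is an isomorphism of categories and $\comp{\frsbit{\gLambda}}{W}$ is a clique. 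When the $X_i$ repeat with a nontrivial period the functor instead exhibits $\comp{\frsbit{\gLambda}}{W}$ as the quotient of $\int_{\sE C_n}\doc$ by the free action of the cyclic subgroup of $C_n$ that stabilizes the loop, exactly as repeated objects were handled in \cref{cor:sym_coherence_iso}, and the underlying cyclic permutation functor of \cref{lem:underlying_functor_shadow} then identifies $\comp{\frsbit{\gLambda}}{W}$ with the matching quotient of $\sB C_n$. In every case two parallel maps with the same underlying cyclic permutation are forced to be equal, which is exactly the assertion that every {\blacktie} formal diagram in $\frsbit{\fosbi{\sB}}$, and hence every {\blacktie} diagram in $\sB_{\mathrm{Sh}}$, commutes.

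The routine-but-essential obstacle is the relation check in the second step: one must see that the $\theta$-rotations, together with the bicategory coherence isomorphisms already controlled by \cref{bicat_coherence}, satisfy the relations of $C_n$, and this is precisely where the two genuinely new shadow axioms \ref{fig:shadow_unit_coherence} and \ref{fig:shadow_associativity_coherence} are consumed. A secondary nuisance is the bookkeeping for periodic loops, where a rotation by the period carries the clique $\sh{\bigodot X_{\sigma(i)}}$ to itself and one must descend to the appropriate quotient, just as in the symmetric case.
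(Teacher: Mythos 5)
Your proposal is correct and follows essentially the same route as the paper: the same diagram of cliques over $\sE C_n$ (with $a_j$ realized by a single $\theta$ on models split into two blocks by the outermost $\odot$), the same use of \ref{fig:shadow_associativity_coherence} to check the relations $a_ka_l=a_{k+l}$ on one admissible model, and the same passage through \cref{clique_bijection}, thinness of the Grothendieck construction, and a quotient by a free cyclic action for periodic lists. The one point where you hesitate---the relation saying that a full rotation is the identity---requires neither a new axiom nor a Kelly-style derivation: the paper simply notes that $\theta$ applied to a model whose outermost $\odot$ has only formal units on one side is, by \ref{fig:shadow_unit_coherence}, the canonical isomorphism $l^{-1}\circ r$, hence the identity as a self-map of the clique, and the full-rotation relation is then the instance of \ref{fig:shadow_associativity_coherence} in which one of the three blocks consists only of units. (Your alternative of adding it to the shadow axioms would not be legitimate, since the theorem is about shadowed bicategories as already defined.)
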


Once again we proceed immediately into the proof. Fix an ordered list of composable edges $X_1,\ldots , X_n$ in a graph $\gLambda$.  Then \cref{bicat_coherence} defines a clique 
\[\bigodot_{i=1}^n X_i.\]  The image in $\frsbit{\gLambda}$ defines a clique we will denote 
\[\sh{X_1\odot \cdots \odot X_n}.\]

\begin{lem}[\cref{rmk:plan_of_proofs}\ref{rmk:plan_of_proofs_1}]\label{built_diagram_of_cliques_shadow_case}
For each $n$-tuple of composable edges $X_1,\ldots , X_n$ in a graph $\gLambda$, there is a diagram of cliques
indexed by $\sE C_n$ (\cref{present_e_cyclic_2}) where the image of $a_k$ is 
\[\sh{X_{1-k} \odot \ldots \odot X_{n-k}},\ \textup{ indices mod }n.\]
\end{lem}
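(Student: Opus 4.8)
The plan is to run the template of \cref{rmk:plan_of_proofs}\ref{rmk:plan_of_proofs_1}, imitating the symmetric case \cref{symcat_coherence_diagram_symmetry} with the rotator $\theta$ (\ref{gen:rotator}) in place of the symmetry $\gamma$ and the shadow coherences \ref{fig:shadow_unit_coherence}, \ref{fig:shadow_associativity_coherence}, \ref{theta:natural} in place of the symmetric ones. First, to the object $a_k\in C_n$ of $\sE C_n$ I assign the clique $\sh{X_{1-k}\odot\cdots\odot X_{n-k}}$ in $\frsbit{\gLambda}$, that is, the image under the shadow functor $\sh{-}$ of the clique $\bigodot_i X_{i-k}$ of \cref{ex:define_odot_clique} (indices mod $n$, using $A_0=A_n$ so that the rotated tuple is again composable). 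This is a clique because \cref{bicat_coherence} shows $\bigodot_i X_{i-k}$ is one and $\sh{-}$ is a functor.

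Second, I attach maps of cliques to the generators. A generator $(\sigma,a_k)$ of \cref{present_e_cyclic_2}, read as a morphism $\sigma\to a_k\sigma$, should rotate the cyclic word $X_{1-\sigma}\odot\cdots\odot X_{n-\sigma}$ by $k$. For $1\le k\le n-1$ I take as admissible models (\cref{rmk:thick_maps_partially_defined}) the two-block parenthesizations $\sh{(X_{1-\sigma}\odot\cdots\odot X_{n-k-\sigma})\odot(X_{n-k+1-\sigma}\odot\cdots\odot X_{n-\sigma})}$, with arbitrary internal parenthesizations of the two blocks, and send such a model by the single rotator $\theta$ to the correspondingly grouped model of $\sh{X_{1-a_k\sigma}\odot\cdots\odot X_{n-a_k\sigma}}$. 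By \cref{bicat_coherence} any canonical isomorphism between two admissible models may be chosen to act within the two blocks separately, and then \ref{theta:natural} together with whiskering (\ref{rmk:explicit_free_bicat_5}) shows $\theta$ commutes with it; so this extends to a well-defined map of cliques. The generator $a_0=1$ is sent to the identity map of cliques.

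Third, I verify the relations $(\sigma,R)$, where $R$ ranges over $a_ka_l=a_{k+l}$ (indices mod $n$) from \ref{item:cyclic_relation}. By \cref{clique_bijection} it suffices to check each equality of clique maps on a single representative, and by the uniformity of the construction over $\sigma$ it is enough to treat $\sigma=1$. When $k+l\le n$, choosing a three-block model $\sh{(X_1\odot\cdots\odot X_{n-k-l})\odot((X_{n-k-l+1}\odot\cdots\odot X_{n-l})\odot(X_{n-l+1}\odot\cdots\odot X_n))}$ makes the statement ``(rotate by $l$) then (rotate by $k$) equals (rotate by $k+l$)'' into precisely the shadow associativity coherence \ref{fig:shadow_associativity_coherence}, after the intervening reassociations are filled in by \cref{bicat_coherence}. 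The genuinely new case is the wraparound $k+l\equiv 0$, i.e.\ that rotating by $k$ and then by $n-k$ returns to the starting model up to canonical isomorphism, equivalently that a full cyclic sequence of rotators composes to the identity. I would obtain this by iterating \ref{fig:shadow_associativity_coherence} around the cycle and collapsing the resulting polygons of associators via \cref{bicat_coherence}, invoking \ref{fig:shadow_unit_coherence} wherever units intervene; I expect this ``$\theta$ has order $n$ on $n$-fold products'' step to be the main obstacle — it plays the role that $\gamma^2=1$ (\ref{expand_rep_symmetric_relation_1}) played in the symmetric proof, the difference being that here it must be \emph{deduced} from the associativity coherence rather than taken as a relation. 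Once these checks are complete, \cref{clique_bijection} produces the functor $\int_{\sE C_n}\doc\to\frsbit{\gLambda}$ used in the remaining steps of \cref{rmk:plan_of_proofs} toward \cref{shbicat_coherence_formal}.
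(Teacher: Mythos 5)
Your setup is the paper's: the cliques $\sh{X_{1-k}\odot\cdots\odot X_{n-k}}$, the generator maps given by a single $\theta$ on two-block admissible models, well-definedness via naturality \ref{theta:natural} (plus \cref{bicat_coherence}), and the hexagon \ref{fig:shadow_associativity_coherence} for the relations $a_ka_l=a_{k+l}$ when all three resulting segments are nonempty. The genuine gap is exactly the step you flag and leave as an expectation: the wraparound relation $a_k a_{n-k}=a_0=1$. On admissible models this is the two-fold composite $\sh{A\odot B}\xrightarrow{\theta}\sh{B\odot A}\xrightarrow{\theta}\sh{A\odot B}$, and ``iterating \ref{fig:shadow_associativity_coherence} around the cycle and collapsing associators'' cannot make it the identity: with only two nonempty blocks there is no third slot to feed into the hexagon, and $\frsbit{\gLambda}$ has no relation playing the role that $\gamma^2=1$ played in the symmetric case. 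The missing idea is to let a \emph{formal unit} stand in for the empty third segment; your phrase ``invoking \ref{fig:shadow_unit_coherence} wherever units intervene'' gestures at this but units do not intervene unless you deliberately choose models containing one.

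The paper closes this in two short steps. First (its $j=0$ discussion): on models whose outermost $\odot$ has only formal units on one side and all the $X_i$ on the other, applying $\theta$ is the \emph{identity map of cliques}, because \ref{fig:shadow_unit_coherence} says this $\theta$ agrees with the canonical unitor isomorphisms. Second, for $k,l$ nonzero with $k+l=n$, check the relation on a model in which the empty segment is represented by a unit $I$ (this is allowed: the two blocks are cliques as in \cref{ex:define_odot_clique}, so they may contain units). A single instance of \ref{fig:shadow_associativity_coherence} with one of its three slots equal to $I$ then identifies the composite of your two generator maps with the trivial-split $\theta$ of the first step, hence with the identity clique map. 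So no deduction of ``$\theta$ has order $n$'' by going around the whole cycle is needed—one hexagon with a unit slot suffices—and if you add the $j=0$ observation and choose that unit-containing admissible model, your argument becomes complete and coincides with the paper's proof.
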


\begin{proof}
For $1\leq j<n$, the rotator map $\theta$ defines a map of cliques as follows:
\[\bigsh{\left(\bigodot_{i=1}^j X_{i-k}\right)\odot \left(\bigodot_{i=j+1}^n X_{i-k}\right)} \to 
\bigsh{\left(\bigodot_{i=j+1}^n X_{i-k}\right)\odot \left(\bigodot_{i=1}^j X_{i-k}\right)} \]
(We take an admissible model for each model of the tensor products $\bigodot_{i=1}^j X_{i-k}$ and $\bigodot_{i=j+1}^n X_{i-k}$.)
Naturality of $\theta$ \eqref{theta:natural} implies this gives a well-defined map of cliques
\[\sh{X_{1-k} \odot \ldots \odot X_{n-k}}\to \sh{X_{j+1-k} \odot \ldots \odot X_{j-k}}.\]

When $j=0$, our admissible models are models where the outermost $\odot$ has only formal units $I$ on the left (and all the $X_i$ on its right), or only formal units $I$ on its right (and all $X_i$ on its left). Applying $\theta$ to each of these models defines the identity map of cliques by the shadow unit coherence \eqref{fig:shadow_unit_coherence}.
So for $j=0$ the map of cliques is the identity map.

If $k$ or $j$ is zero, the relation  $a_ka_l = a_{k+l}$ holds since $a_0$ is the identity.  If $k$ and $l$ are both nonzero, the generators $a_k$ and $a_j$ split the list in two distinct places, and rotate the resulting three segments around in different orders. Restricting to models where the last two tensor products join these segments together, we get the diagram in \ref{fig:shadow_associativity_coherence}.
\end{proof}

Let $\docf$ be the diagram of cliques in \cref{built_diagram_of_cliques_shadow_case}.  
\cref{clique_bijection} defines a functor 
\begin{equation}\label{shbicat_functor_symmetry} \int_{\sE C_n} \doc \to \comp{\frsbit{\gLambda}}{W}
\end{equation}
where $W$ is a model for $\sh{X_1 \odot \ldots \odot X_n}$. (\cref{rmk:plan_of_proofs}\ref{rmk:plan_of_proofs_2})

\begin{defn}\label{aperiodic}
A list $X_1,\ldots, X_n$ of edges of a graph $\gLambda$  is \textbf{aperiodic} if there is no nontrivial rotation of the terms that returns the same list. 
\end{defn}
Every list of distinct objects is aperiodic, but the list $X_1, X_2, X_2$ is aperiodic as well.

\begin{thm}[\cref{rmk:plan_of_proofs}\ref{rmk:plan_of_proofs_3}]\label{shadowed_bicat_coherence_aperiodic}
If the $X_i$ are aperiodic then \eqref{shbicat_functor_symmetry} is an isomorphism of categories.
\end{thm}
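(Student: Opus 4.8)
The plan is to follow exactly the four-step recipe in \cref{rmk:plan_of_proofs}, with the diagram of cliques already supplied by \cref{built_diagram_of_cliques_shadow_case}. So it remains to verify that the functor \eqref{shbicat_functor_symmetry} is a bijection on objects, surjective on morphisms, and faithful. Since the domain $\int_{\sE C_n} \doc$ is built on the slice category $\sE C_n = (* \downarrow \sB C_n)$, whose objects are the $n$ group elements $a_k \in C_n$, and each clique $\doc(a_k) = \sh{X_{1-k}\odot\cdots\odot X_{n-k}}$ sits inside $\comp{\frsbit{\gLambda}}{W}$, the object set of the Grothendieck construction maps onto the objects of $\comp{\frsbit{\gLambda}}{W}$: every object of the target is a parenthesized word (with formal units) of a cyclic rotation of $X_1,\ldots,X_n$, and each such word lies in exactly one $\doc(a_k)$ \emph{provided} the rotation is recorded unambiguously — this is where aperiodicity enters (see below). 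Surjectivity on morphisms follows as in \cref{bicat_coherence}: the vertical generators (from \cref{groth_presentation}) produce all instances of $\alpha$, $\ell$, $r$ inside each rotated word by \cref{bicat_coherence}, and the horizontal generators coming from the $a_k$ produce every instance of the rotator $\theta$; since $\theta$ has no expanded instances (it only acts on the outermost $\odot$), these exhaust the generators of $\frsbit{\gLambda}$ appearing in $\comp{\frsbit{\gLambda}}{W}$.

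For faithfulness I would argue as in the earlier theorems: by \cref{lem:consequences_of_projection_2}, $\int_{\sE C_n}\doc$ is thin (indeed an abstract clique) precisely when $\sE C_n$ is, and $\sE C_n = (*\downarrow \sB C_n)$ is a contractible groupoid — it has a unique isomorphism between any two of its $n$ objects — hence an abstract clique. A thin source category automatically makes any functor faithful. Combining: the functor is a bijection on objects, surjective on morphisms, and faithful, hence an isomorphism of categories.

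The one genuine subtlety — and the step I expect to be the main obstacle — is the object-level bijection, and this is exactly where the \textbf{aperiodic} hypothesis of \cref{aperiodic} is used. If the list $X_1,\ldots,X_n$ admits a nontrivial rotation fixing it, then a given parenthesized endomorphism word in $\frsbit{\gLambda}$ does not determine which $a_k\in C_n$ it came from: several objects $(a_k, \text{word})$ of $\int_{\sE C_n}\doc$ collapse to the same object of $\comp{\frsbit{\gLambda}}{W}$, so the functor fails to be injective on objects (and one only gets an equivalence onto a quotient, as in \cref{cor:sym_coherence_iso}). Under aperiodicity, by definition no nontrivial rotation returns the same list, so the underlying cyclic permutation functor $P$ of \cref{lem:underlying_functor_shadow} distinguishes the $n$ cliques $\doc(a_k)$ — equivalently, the assignment of a rotated word to its rotation index $k$ is well-defined — and the object map is a genuine bijection. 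I would make this precise by noting that an object of $\comp{\frsbit{\gLambda}}{W}$ together with the value of $P$ on a canonical isomorphism to the base model $W$ recovers the pair $(a_k,\text{word})$, and aperiodicity guarantees $P$ takes $n$ distinct values on the $n$ cliques. Once this bijection is established, the rest of the argument is the now-routine clique bookkeeping, and \cref{lem:consequences_of_projection_1} then yields (in the corollary to come) the equivalence $\sE C_n \to \comp{\frsbit{\gLambda}}{W}$ and hence thinness of $\comp{\frsbit{\gLambda}}{W}$, finishing coherence for shadowed bicategories in the aperiodic case.
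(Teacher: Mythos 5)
Your proposal is correct and follows essentially the same route as the paper's proof: the functor \eqref{shbicat_functor_symmetry} is a bijection on objects (with aperiodicity exactly what guarantees the $n$ rotated cliques $\doc(a_k)$ have disjoint object sets, so the rotation index is recovered from the word) and surjective on morphisms by construction, and faithfulness follows because $\sE C_n$ is contractible, so $\int_{\sE C_n}\doc$ is an abstract clique, hence thin. The paper states this in two lines; your elaboration, including the observation that periodicity would only yield an equivalence after quotienting by a free cyclic action, matches the paper's subsequent corollary.
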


\begin{proof}
By construction \eqref{shbicat_functor_symmetry}  is a bijection on objects and a surjection on morphisms.
Since the source is thin, this implies it is fully faithful and therefore an isomorphism of categories.
\end{proof}

\begin{cor}[\cref{rmk:plan_of_proofs}\ref{rmk:plan_of_proofs_4}]

For each model $W$ of $\bigodot_{i=1}^n X_i$, the underlying cyclic permutation functor factors induces an equivalence of categories 
\[ \comp{\frsbit{\gLambda}}{W} \to \sB C_k, \]
where $k \mid n$ is the order of periodicity of the objects $X_i$.
\end{cor}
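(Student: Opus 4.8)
The plan is to deduce the corollary from \cref{shadowed_bicat_coherence_aperiodic} just as \cref{cor:sym_coherence_iso} was deduced from \cref{thm:sym_coherence_iso}: when the $X_i$ are periodic the functor \eqref{shbicat_functor_symmetry} is no longer an isomorphism, but it collapses a free group action, and passing to the quotient by that action restores the isomorphism. Let $H \le C_n$ be the subgroup of rotations $a_t$ with $X_{i-t} = X_i$ for all $i$ (indices mod $n$); by the definition of the order of periodicity $|H| = k$, so $H \cong C_k$ and $k \mid n$. The translation action of $C_n$ on $\sE C_n = (* \downarrow \sB C_n)$ is free, hence so is its restriction to $H$, and this lifts to a free action of $H$ on $\int_{\sE C_n}\doc$: for $a_t \in H$ the cliques $\doc(a_j) = \sh{X_{1-j}\odot\cdots\odot X_{n-j}}$ and $\doc(a_{j+t})$ are literally the same subcategory of $\frsbit{\gLambda}$, so $a_t$ acts by $(a_j,w)\mapsto(a_{j+t},w)$. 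Since the functor $F$ of \eqref{shbicat_functor_symmetry} is \cref{clique_bijection} applied to a diagram of cliques whose structure maps are inclusions of subcategories, $F(a_j,w) = w$; thus $F$ is $H$-invariant and factors as $F = \bar F \circ q$ through $q\colon \int_{\sE C_n}\doc \to (\int_{\sE C_n}\doc)/H$.

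I would then show $\bar F$ is an isomorphism of categories. As in the proof of \cref{shadowed_bicat_coherence_aperiodic}, $F$ is surjective on objects and morphisms (aperiodicity was not used there), so $\bar F$ is too. If $F(a_j,w) = F(a_{j'},w')$ then $w = w'$ and the sequence of non-unit $1$-cells of $w$ equals both $(X_{1-j},\dots,X_{n-j})$ and $(X_{1-j'},\dots,X_{n-j'})$, forcing $a_{j-j'}\in H$, so $(a_j,w)$ and $(a_{j'},w')$ share an $H$-orbit; hence $\bar F$ is bijective on objects and full. A full functor of groupoids that is bijective on objects is faithful as soon as it is injective on automorphism groups, and this I would get from the underlying cyclic permutation functor $P$ of \cref{lem:underlying_functor_shadow}: an automorphism of $[(a_j,w)]$ is represented by a morphism $(a_j,w)\to(a_{j+t},w)$ lying over $a_t\in C_n$ with $a_t\in H$, and by the construction of \cref{built_diagram_of_cliques_shadow_case} the $F$-image of a morphism lying over $a_t$ is a composite of rotators realizing the cyclic shift by $t$, so $P$ sends it to $a_t$; since $H\hookrightarrow C_n$, distinct elements of $H$ give distinct automorphisms.

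It remains to identify the quotient: $\int_{\sE C_n}\doc$ is an abstract clique by \cref{lem:consequences_of_projection_2} (because $\sE C_n$ is), i.e., a contractible groupoid, and the quotient of a contractible groupoid by a free $C_k$-action is equivalent to $\sB C_k$, the equivalence being realized on automorphism groups by the projection $\int_{\sE C_n}\doc \to \sE C_n \to \sB C_n$, which under the isomorphism $\bar F$ corresponds to $P$. Stringing these together gives $\comp{\frsbit{\gLambda}}{W} \cong (\int_{\sE C_n}\doc)/H \simeq \sB C_k$ with the equivalence induced by the underlying cyclic permutation functor. The step I expect to take the most care is the faithfulness of $\bar F$ — precisely, verifying that the rotators chosen in \cref{built_diagram_of_cliques_shadow_case} have the underlying cyclic permutations claimed, so that $P\circ F$ records the $C_n$-component of each morphism; the rest is bookkeeping parallel to \cref{cor:sym_coherence_iso}.
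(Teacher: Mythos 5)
Your proposal is correct and follows essentially the same route as the paper: quotient $\int_{\sE C_n}\doc$ by the free action of the order-$k$ stabilizer subgroup, observe that the induced functor to $\comp{\frsbit{\gLambda}}{W}$ is an isomorphism, and identify the quotient of a contractible groupoid by a free $C_k$-action with $\sB C_k$. Your faithfulness check via the underlying cyclic permutation functor $P$ is a detail the paper leaves implicit (it only says the map ``becomes an isomorphism once the left-hand side is quotiented''), but it is the natural completion of that sketch and parallels how faithfulness is verified elsewhere in the paper.
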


\begin{proof}
For aperiodic lists this is \cref{lem:consequences_of_projection_2,shadowed_bicat_coherence_aperiodic}.  
For a list with periodicity, the proof of \cref{shadowed_bicat_coherence_aperiodic} 
gives a map 
\[\int_{\sE C_n} \doc \to \comp{\frsbit{\gLambda}}{W}\]
that becomes an isomorphism once the left-hand side is quotiented out by a free action by the cyclic group $C_k$. As in \cref{cor:sym_coherence_iso}, this quotient of an abstract clique by a free $C_k$-action is equivalent to $\sB C_k$, giving the result.
\end{proof}

This finishes the proof of coherence for shadowed bicategories (\cref{shbicat_coherence_formal}).

\section{Coherence for functors}\label{sec:coherence_functor}
In this section we prove the corresponding coherence results for functors.  Since there are additional axioms and more variations in the assumptions, these proofs are elaborations of those in \cref{sec:coherence_cat}.

\subsection{Coherence for  functors of bicategories}\label{subsec:lax}

Let $\oneLax$ be the (1-)category whose objects are lax functors of bicategories $\sC \xto{F} \sD$ and whose morphisms are pairs of strict functors 
forming a strictly commuting square.
There is a forgetful functor 
\begin{equation}\label{eq:lax_forget}
\oneLax\to \Graphcat
\end{equation}
whose value on $\sC \xto{F} \sD$ is $\fobic{\sC}$  (from \cref{sec:coherence_bicat}).
The left adjoint of the functor in \eqref{eq:lax_forget} applied to a graph $\gLambda$ is a lax functor of bicategories
\[\frbic{\gLambda} \xto{\lff{\gLambda}}\lffc{ \gLambda}.\]
Here $\frbic{\gLambda}$ is the free bicategory on $\gLambda$ from \cref{sec:coherence_bicat}, and $\lffc{\gLambda}$ is the bicategory with the following presentation.

\begin{present}\label{rmk:explicit_free_lax}For a graph $\gLambda$, 
the 0-cells of the bicategory $\lffc{ \gLambda}$ are the vertices of $\gLambda$.  The 1-cells of $\lffc{\gLambda}$  are 
parenthesizations of 
\begin{equation}\label{eq:explicit_free_lax_words}W_1\odot \cdots \odot W_\ell
\end{equation}
where each $W_i$ is 
\begin{enumerate}
\item\label{item:explicit_free_lax} a 1-cell of $\frbic{\gLambda}$ written inside $\lff{\gLambda}(...)$, or
\item  a formal unit,
\end{enumerate}
and the total resulting string of edges of $G$ must be composable. A typical such word is
\begin{equation}\label{eq:typical_free_functor}
	(\lff{\gLambda}(X_1 \odot X_2) \odot \lff{\gLambda}(I \odot I)) \odot (\lff{\gLambda}(I \odot X_3) \odot I)
\end{equation}
where $X_1$, $X_2$, and $X_3$ are composable.
At this point $\lff{\gLambda}$ is notation that indicates how terms are grouped.  (Compare to \cref{ex:define_odot_clique_F}.)

The 2-cells of $\lffc{\gLambda}$ are generated by 
	\begin{enumerate}[start=1,label={\bfseries G\arabic{generators}}]
\item\stepcounter{generators} \label{rmk:explicit_free_lax_1}  the associator (\ref{rmk:explicit_free_bicat_1}) and unitor (\ref{rmk:explicit_free_bicat_2}) maps for the tensors and units  in $\frbic{\gLambda}$,
\end{enumerate}
These are applied inside the terms $\lff{\gLambda}(...)$.  There are corresponding generators for grouping these terms with each other:
	\begin{enumerate}[start=1,label={\bfseries G\arabic{generators}}]
	\item\stepcounter{generators}\label{rmk:explicit_free_lax_3}\label{rmk:explicit_free_lax_4}    the associator  (\ref{rmk:explicit_free_bicat_1})  and unitor (\ref{rmk:explicit_free_bicat_2}) maps for the tensors and units  in $\lffc{\gLambda}$.
\end{enumerate}
We add generators from a lax functor 
	\begin{enumerate}[start=1,label={\bfseries G\arabic{generators}}]
	\item\stepcounter{generators}\label{rmk:explicit_free_lax_5}  
	$i\colon I_A=I_{\lff{\gLambda}(A)} 
		\to \lff{\gLambda}(I_{A}),$ and
\item\stepcounter{generators} \label{rmk:explicit_free_lax_6} 
	$m\colon \lff{\gLambda}(W) \otimes \lff{\gLambda}(W') \to \lff{\gLambda}(W \otimes W').$
\end{enumerate}
The relations are: 
		\begin{enumerate}[start=1,label={\bfseries R\arabic{relations}}]
\item\stepcounter{relations}\label{eq:pent_tri_lax_functor_all}  the pentagon and triangle coherence  conditions (\ref{rmk:explicit_free_bicat_4}, \ref{rmk:explicit_free_bicat_4a}) for units and tensors in $\frbic{\gLambda}$ and $\lffc{\gLambda}$,
	\item\stepcounter{relations}\label{eq:unit_lax_functor_all} the coherence conditions of a lax functor relating the unit isomorphisms $i$ and $m$ 
	\[
		\xymatrix{I_A\odot \lff{\gLambda}(W)\ar[d]^\sim\ar[r]^-{i\odot \id}&\lff{\gLambda}(I_{A})\odot  \lff{\gLambda}(W)\ar[d]^{m}
			\\
			 \lff{\gLambda}(W)&\lff{\gLambda}(I_{A}\odot  W)\ar[l]_-\sim}
	\quad \text{and}
	\quad 
		\xymatrix{ \lff{\gLambda}(W)\odot I_A\ar[d]^\sim\ar[r]^-{\id\odot i}& \lff{\gLambda}(W)\odot\lff{\gLambda}(I_{A}) \ar[d]^{m}
			\\
			 \lff{\gLambda}(W)&\lff{\gLambda}(W\odot I_{A} ),\ar[l]_-\sim}
	\]
	\item\stepcounter{relations}\label{eq:hex_lax_functor} the coherence conditions of a lax functor relating the associator and $m$
		\[
			\xymatrix{(\lff{\gLambda}(W_1)\odot \lff{\gLambda}(W_2))\odot \lff{\gLambda}(W_3)\ar[r]^-{m\odot \id}\ar[d]^\sim
				&\lff{\gLambda}(W_1\odot W_2)\odot \lff{\gLambda}(W_3)\ar[r]^-m
				&\lff{\gLambda}((W_1\odot W_2)\odot W_3)\ar[d]^\sim
				\\
				\lff{\gLambda}(W_1)\odot (\lff{\gLambda}(W_2)\odot \lff{\gLambda}(W_3))\ar[r]^-{\id\odot m}
				&\lff{\gLambda}(W_1)\odot \lff{\gLambda}(W_2\odot W_3)\ar[r]^m
				&\lff{\gLambda}(W_1\odot (W_2\odot W_3)),
				}
\]
and
	\item\stepcounter{relations}\label{eq:wisk_nat_lax_functor} whiskering and naturality relations (\ref{rmk:explicit_free_bicat_5} and  \ref{rmk:explicit_free_bicat_6}).
\end{enumerate}
These relations make $\lffc{\gLambda}$  a bicategory and $\lff{\gLambda}\colon \frbic{\gLambda} \to \lffc{\gLambda}$ a lax functor. 
\end{present}

Suppose there are $n$ edges $X_1,\ldots,X_n$  of $G$  in a 1-cell  of $\lffc{\gLambda}$  and $W_{i_1}, \cdots,  W_{i_k}$
are the words of type \ref{item:explicit_free_lax} from \cref{rmk:explicit_free_lax} in the 1-cell.  
Define  a map 
\begin{equation}\label{eq:lax_functor_supporting_objects}
\alpha\colon \underline{n}\to \underline{k}
\end{equation} 
by
$\alpha(\ell)=j$ if $X_\ell\in W_{i_j}$. So for example the 1-cell depicted in \eqref{eq:typical_free_functor} is assigned to the map of totally-ordered sets
\[ \alpha(1) = 1, \quad \alpha(2) = 1, \quad \alpha(3) = 3 \]
as is any other 1-cell that matches the following picture once units and parenthesizations are ignored.
\[ \ldots \ \lff{\gLambda}( \ \ldots\  X_1 \ \ldots\  X_2 \ \ldots\  ) \ \ldots\  \lff{\gLambda}(\ \ldots\ ) \ \ldots\  \lff{\gLambda}(\ \ldots\  X_3 \ \ldots\ ) \ \ldots \]

\begin{lem}\label{eq:lax_functor_supporting_morphism}
The assignment in \eqref{eq:lax_functor_supporting_objects} extends to  a {\bf {\underlying} set} functor 
\begin{equation}\label{eq:lax_functor_underlying}
U\colon \comp{\lffc{\gLambda}(A_0,A_n)}{W}  \to (\underline{n} \downarrow \mathbf\Delta).
\end{equation}
\end{lem}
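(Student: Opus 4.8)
The plan is to build $U$ directly from the presentation of $\lffc{\gLambda}$ in \cref{rmk:explicit_free_lax} together with the presentations of $(\underline{n}\downarrow\mathbf\Delta)$ and $\mathbf\Delta$ from \cref{slice_presentation,delta_presentation}, in the same spirit as \cref{lem:underlying_functor_symmetric,lem:underlying_functor_shadow}. On objects, $U$ sends a $1$-cell of $\lffc{\gLambda}(A_0,A_n)$ to the totally-ordered map $\alpha\colon\underline{n}\to\underline{k}$ of \eqref{eq:lax_functor_supporting_objects}. Every generating $2$-cell of $\lffc{\gLambda}$ — associator, unitor, $i$, or $m$ — preserves the composable string of edges $X_1,\dots,X_n$ and their left-to-right order, so $n$ is constant on a connected component and this target is well defined.

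First I would record the value of $U$ on each generator. An associator or unitor, whether applied inside an $\lff{\gLambda}$-term (\ref{rmk:explicit_free_lax_1}) or to the outer tensor (\ref{rmk:explicit_free_lax_3}), alters neither the list of $\lff{\gLambda}$-terms, nor their order, nor which $X_\ell$ lies in which; $U$ sends it to the identity of $\alpha$. The unit generator $i\colon I_A\to\lff{\gLambda}(I_A)$ (\ref{rmk:explicit_free_lax_5}) turns a formal unit into a new, edge-free $\lff{\gLambda}$-term; if this term comes after $j$ of the existing $\lff{\gLambda}$-terms, the new underlying map is $d^{j+1}\circ\alpha$, and $U$ sends $i$ to the coface generator $d^{j+1}\colon\underline{k}\to\underline{k+1}$, regarded as a morphism $\alpha\to d^{j+1}\alpha$ in the slice. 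The multiplication generator $m\colon\lff{\gLambda}(W)\odot\lff{\gLambda}(W')\to\lff{\gLambda}(W\odot W')$ (\ref{rmk:explicit_free_lax_6}) merges two adjacent $\lff{\gLambda}$-terms, at positions $p,p+1$ say, into one; $U$ sends $m$ to the codegeneracy generator $s^p\colon\underline{k}\to\underline{k-1}$, regarded as a morphism $\alpha\to s^p\alpha$. Since these are all identities or single generators of $(\underline{n}\downarrow\mathbf\Delta)$, no relations of the target are needed at this stage.

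Next I would verify that $U$ respects each relation of \cref{rmk:explicit_free_lax}. The pentagon and triangle relations (\ref{eq:pent_tri_lax_functor_all}) involve only associators and unitors, so both sides go to the identity. In each lax-unit coherence (\ref{eq:unit_lax_functor_all}) one leg is a unitor, hence the identity under $U$; the other leg is $i$ followed by $m$ followed by an $\lff{\gLambda}$-applied unitor, which $U$ sends to $s^p d^p\circ\alpha$ (or $s^p d^{p+1}\circ\alpha$ for the mirrored diagram), equal to $\alpha$ by the simplicial identity \ref{degface2}. In the lax-associativity coherence (\ref{eq:hex_lax_functor}), both legs merge three consecutive $\lff{\gLambda}$-terms into one: the top leg gives $s^p s^p\circ\alpha$ and the bottom leg $s^p s^{p+1}\circ\alpha$, equal by \ref{degdeg}, while the two vertical associators are identities. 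Finally, the whiskering and naturality relations (\ref{eq:wisk_nat_lax_functor}) hold because generators supported on disjoint regions of a word have underlying effects at separated positions — where the relevant composites are governed by the simplicial identities \ref{faceface}--\ref{degface3} of the target — and because an inner modification is the identity under $U$. Together these checks show $U$ is a well-defined functor.

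The main obstacle is the index bookkeeping in the second step: correctly reading off, from each coherence diagram, which coface or codegeneracy each occurrence of $i$ or $m$ produces, and matching the resulting composites to the simplicial identities of \cref{delta_presentation}. The argument is routine once the conventions for $d^i$ and $s^i$ are pinned down, but it must be carried out carefully, since an off-by-one error would spoil the matching with \ref{degface2} and \ref{degdeg}.
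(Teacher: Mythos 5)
Your proposal is correct and takes essentially the same approach as the paper: associators and unitors go to identities, $i$ to a coface, $m$ to a codegeneracy, and the relations are then checked. The paper dispatches the relation-checking in one line by noting that both branches of each relation induce the same map of finite sets (which is exactly what your explicit matching with \ref{degface2}, \ref{degdeg}, and \ref{faceface}--\ref{degface3} verifies, since $\mathbf\Delta$ is concrete), so your extra index bookkeeping, while accurate, is not strictly necessary.
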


\begin{proof}
The components of $\lffc{\gLambda}$ correspond to lists of composable edges $X_1$, $\ldots$, $X_n$, and for each  component $\comp{\lffc{\gLambda}(A_0,A_n)}{W}$ we define $U$ as follows:
\begin{itemize}
	\item The images of the associator and unitor generators \ref{rmk:explicit_free_lax_1}-\ref{rmk:explicit_free_lax_4} are identity maps.
	\item  The image of the unit map generator \ref{rmk:explicit_free_lax_5} applied between groupings $i-1$ and $i$ is  the coface map (\ref{gen:coface})
\[ \xymatrix @R=1.5em{
& \underline{n} \ar[ld] \ar[rd] & \\
\underline{k} \ar[rr]^-{d^i} && \underline{k+1}.
} \]
\item The image of the  composition map generator \ref{rmk:explicit_free_lax_6} applied to groupings $i$ and $i+1$ is  the codegeneracy map (\ref{gen:codegeneracy})
\[ \xymatrix @R=1.5em{
& \underline{n} \ar[ld] \ar[rd] & \\
\underline{k} \ar[rr]^-{s^i} && \underline{k-1}.
} \]
\end{itemize}
For each of the relations \ref{eq:pent_tri_lax_functor_all}-\ref{eq:wisk_nat_lax_functor}  both branches induce the same map of sets, hence $U$ is a well-defined functor.
\end{proof}

For any lax functor $\sC \xto{F} \sD$ there exists a unique strict map of bicategories $\lffc{\fobic{\sC}}\to \sD$ so that the following square commutes.
\[\xymatrix{
	\frbic{\fobic{\sC}} \ar[d]^-{} \ar[r]^-{\lff{\fobic{\sC}}} & \lffc{\fobic{\sC}} \ar@{-->}[d]^-{\exists !} \\
	\sC \ar[r]^-F & \sD
}
\]

A {\bf formal diagram} of a lax functor $F\colon \sC\to \sD$ is any diagram in $\sD$ that lifts against the functor $\lffc{\fobic{\sC}}\to \sD$.

\begin{defn} 
A formal diagram of morphisms for a lax functor is {\bf \blacktie} if for every pair of parallel morphisms, the {\underlying} maps $U(\phi)$ for both composites agree.
\end{defn}

Note that a formal diagram for the lax functor $F$ will be {\blacktie} if every $F(\ldots)$ term contains a nontrivial object $X_i$, and not just formal units. As observed in \cite{kelly_maclane,lewis_thesis}, there is a formal diagram of the form $F(I) \rightrightarrows F(I) \odot F(I)$ that fails to commute in general.

\begin{thm}[Coherence for lax functors]\label{lax_not_coherence}
Every {\blacktie} diagram of morphisms for a lax functor commutes.
\end{thm}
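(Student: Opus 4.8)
The plan is to apply the method of \cref{rmk:plan_of_proofs} with indexing category $\bI = (\underline{n}\downarrow\mathbf\Delta)$ and with $\bC$ equal to a component $\comp{\lffc{\gLambda}(A_0,A_n)}{W}$ of a hom-category of $\lffc{\gLambda}$, using the cliques $\bigodot_{j\in\underline{k}}F\bigl(\bigodot_{i\in\alpha^{-1}(j)}X_i\bigr)$ of \cref{ex:define_odot_clique_F} as building blocks. As in the proofs of \cref{bicat_coherence_formal} and \cref{symcat_coherence_formal}, it is enough to treat $\gLambda = \fobic{\sC}$: a formal diagram in $\sD$ is the image under the counit $\lffc{\fobic{\sC}}\to\sD$ of a diagram in $\lffc{\fobic{\sC}}$, being {\blacktie} is exactly the condition that the two parallel composites become equal after the functor $U$ of \cref{eq:lax_functor_supporting_morphism}, and any two parallel composites lie in a single component $\comp{\lffc{\gLambda}(A_0,A_n)}{W}$. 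So the whole theorem reduces to showing that $U$ is faithful when restricted to each such component.

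First I would fix composable edges $X_1,\dots,X_n$ of $\gLambda$ and build a diagram of cliques $\docf\colon(\underline{n}\downarrow\mathbf\Delta)\to\catcl{\lffc{\gLambda}}$ whose value on an object $\alpha\colon\underline{n}\to\underline{k}$ is $\bigodot_{j\in\underline{k}}F\bigl(\bigodot_{i\in\alpha^{-1}(j)}X_i\bigr)$. By \cref{slice_presentation} and \cref{delta_presentation} it suffices to specify $\docf$ on the generators of $(\underline{n}\downarrow\mathbf\Delta)$, which are the cofaces $d^i$ and codegeneracies $s^i$ of $\mathbf\Delta$ postcomposed with each $\alpha$; for these I would use the maps of cliques produced by \cref{built_diagram_of_cliques:lem_coherence_bicat}, namely insert a new $F(I)$ via the lax map $i\colon I_{F(A)}\to F(I_A)$, and fold two adjacent $F$-terms via $m\colon F(W)\odot F(W')\to F(W\odot W')$. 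Then I would verify that the images of the simplicial identities \ref{faceface}--\ref{degface3} hold: \ref{faceface}, \ref{degface1}, \ref{degface3} and the case $i<j$ of \ref{degdeg} follow from whiskering and naturality (\ref{rmk:explicit_free_bicat_5}, \ref{rmk:explicit_free_bicat_6}); the case $i=j$ of \ref{degdeg}, which folds three consecutive $F$-terms in the two possible orders, is the lax associativity coherence \ref{eq:hex_lax_functor}; and \ref{degface2}, which inserts an $F(I)$ beside an $F$-term and then folds it in, is the lax unit coherence \ref{eq:unit_lax_functor_all}. Since $(\underline{n}\downarrow\mathbf\Delta)$ has an initial object $\mathrm{id}_{\underline{n}}$ it is connected, so applying \cref{clique_bijection} and \cref{lem:consequences_of_projection_1} yields a functor $G\colon\int_{(\underline{n}\downarrow\mathbf\Delta)}\doc\to\comp{\lffc{\gLambda}(A_0,A_n)}{W}$.

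It remains to check that $G$ is an isomorphism of categories. It is a bijection on objects by construction, and it is surjective on morphisms: by \cref{groth_presentation}, the morphisms of $\int\doc$ are generated by the vertical generators of the cliques --- which are precisely all expanded associators and unitors, inside and outside the copies of $F$, that is, the generators \ref{rmk:explicit_free_lax_1} and \ref{rmk:explicit_free_lax_3} of $\lffc{\gLambda}$ --- together with horizontal generators, which map to $i$ and $m$ (generators \ref{rmk:explicit_free_lax_5} and \ref{rmk:explicit_free_lax_6}); so every generator of $\lffc{\gLambda}$ lies in the image. For faithfulness the key point is that $U\circ G$ is the projection $\pi\colon\int_{(\underline{n}\downarrow\mathbf\Delta)}\doc\to(\underline{n}\downarrow\mathbf\Delta)$: on vertical generators both functors are identities since $U$ sends associators and unitors to identities, and on horizontal generators $U$ sends $i$ to a coface and $m$ to a codegeneracy, which is exactly $\pi$. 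By \cref{lem:consequences_of_projection_1} the functor $\pi$ is an equivalence, hence faithful, so $G$ is faithful; combined with the two previous points, $G$ is an isomorphism of categories. Consequently $U$ restricted to $\comp{\lffc{\gLambda}(A_0,A_n)}{W}$ is, via $G$, conjugate to the equivalence $\pi$, hence faithful, which as noted above proves the theorem; the case of lax monoidal functors is the special case of a single $0$-cell.

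I expect the principal obstacle to be the faithfulness of $G$ rather than any single relation check. Unlike in \cref{bicat_coherence}, \cref{thm:sym_coherence_iso}, and \cref{shadowed_bicat_coherence_aperiodic}, the target $\comp{\lffc{\gLambda}(A_0,A_n)}{W}$ is genuinely not thin --- the formal diagram $F(I)\rightrightarrows F(I)\odot F(I)$ really does fail to commute --- so thinness of the source is unavailable and faithfulness has to come from the identity $U\circ G=\pi$. Arranging for that identity to hold on the nose is precisely why the diagram of cliques must be set up so that its horizontal generators map exactly to $i$ and $m$ while its vertical generators map into the part of the structure that $U$ collapses, and this constraint drives the whole construction.
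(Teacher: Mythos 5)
Your proposal is correct and follows essentially the same route as the paper: the same diagram of cliques over $(\underline{n}\downarrow\mathbf\Delta)$ valued in the cliques of \cref{ex:define_odot_clique_F}, the same verification of the simplicial relations via whiskering, \ref{eq:hex_lax_functor}, and \ref{eq:unit_lax_functor_all}, and the same faithfulness argument via the identity $U\circ G=\pi$ with \cref{lem:consequences_of_projection_1}. You also correctly single out the real point of the argument --- that the target component is not thin, so faithfulness must be extracted from the underlying-set functor rather than from thinness of the source.
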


\begin{rmk}\label{oplax}
	This theorem and its generalizations also hold with oplax functors instead of lax functors. The statements and constructions are the same, only the composition maps \eqref{rmk:explicit_free_lax_5} and unit maps \eqref{rmk:explicit_free_lax_6} point the other way, and the category $(\underline{n} \downarrow \mathbf\Delta)$ 
 is replaced by the opposite category $(\underline{n} \downarrow \mathbf\Delta)^\op$.
\end{rmk}

\subsubsection{Proof of  coherence for lax functors (\cref{lax_not_coherence})}\label{proof_lax_not_coherence}

\begin{lem}[\cref{rmk:plan_of_proofs}\ref{rmk:plan_of_proofs_1}]\label{built_diagram_of_cliques} For each tuple of composable edges $X_1,X_2,\ldots ,X_n$ in a graph $\gLambda$ there is a  diagram of cliques  with domain $(\underline{n} \downarrow \mathbf\Delta)$ 
and the image of a totally ordered map $\alpha\colon \underline{n} \to \underline{k}$ is the clique
\[ \bigodot_{j \in \underline{k}} \lff{\gLambda}\left( \bigodot_{i \in \alpha^{-1}(j)} X_i \right). \]
\end{lem}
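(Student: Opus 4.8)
The plan is to carry out step \ref{rmk:plan_of_proofs_1} of \cref{rmk:plan_of_proofs}. Equip $(\underline{n}\downarrow\mathbf\Delta)$ with the presentation that \cref{slice_presentation} produces from the presentation of $\mathbf\Delta$ in \cref{delta_presentation}: its generators are the arrows $(\alpha)\to(d^i\circ\alpha)$ and $(\alpha)\to(s^i\circ\alpha)$ coming from the cofaces \ref{gen:coface} and codegeneracies \ref{gen:codegeneracy}, and its relations are the simplicial identities \ref{faceface}--\ref{degface3}, each post-composed with the relevant structure map out of $\underline{n}$. To the generator $(\alpha)\to(d^i\circ\alpha)$ I assign the map of cliques
\[
\bigodot_{j \in \underline{k}} \lff{\gLambda}\!\left( \bigodot_{i \in \alpha^{-1}(j)} X_i \right) \;\longrightarrow\; \bigodot_{j \in \underline{k+1}} \lff{\gLambda}\!\left( \bigodot_{i \in \alpha^{-1}(j)} X_i \right)
\]
(the target indexed by the composite $\underline{n}\to\underline{k+1}$) that \cref{built_diagram_of_cliques:lem_coherence_bicat} attaches to $d^i$ --- morally, creating a new grouping via the lax unit map $i\colon I_{\lff{\gLambda}(A)}\to\lff{\gLambda}(I_A)$ of \ref{rmk:explicit_free_lax_5}. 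To the generator $(\alpha)\to(s^i\circ\alpha)$ I assign the map of cliques that \cref{built_diagram_of_cliques:lem_coherence_bicat} attaches to $s^i$ --- morally, merging two adjacent groupings via the lax composition map $m\colon\lff{\gLambda}(W)\odot\lff{\gLambda}(W')\to\lff{\gLambda}(W\odot W')$ of \ref{rmk:explicit_free_lax_6}. That each of these is a well-defined map of cliques is exactly the content of \cref{built_diagram_of_cliques:lem_coherence_bicat}, which also records the freedom to choose admissible models and to broaden them using coherence for bicategories (\cref{bicat_coherence}) applied inside $\lffc{\gLambda}$.

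It remains to check that these assignments respect the simplicial identities \ref{faceface}--\ref{degface3}; since two composites of clique maps are equal as soon as they agree on a single representative (\cref{map_of_cliques}), for each identity I evaluate both sides on a conveniently chosen admissible model, moving between admissible models by canonical isomorphisms and transporting verifications with naturality \ref{rmk:explicit_free_bicat_6}. Most of the identities --- \ref{faceface} with $i<j$, \ref{degdeg} with $i<j$, \ref{degface1}, \ref{degface3}, and the ``distance greater than one'' instances throughout --- concern insertions and merges carried out in disjoint blocks of the word, so both composites agree by the whiskering axiom \ref{rmk:explicit_free_bicat_5}. The same is true of the overlapping coface identity \ref{faceface} with $i=j$ (that is, $d^i d^i = d^{i+1}d^i$): evaluated on a source model carrying two nested plain units in the relevant spot, both composites simply apply $i$ to two disjoint units, so they again agree by \ref{rmk:explicit_free_bicat_5} (this is the lax analogue of the unit step in the proof of \cref{bicat_coherence_diagram_assoc_unit}). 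That leaves the two genuinely substantive overlapping cases: the identity \ref{degdeg} with $i=j$ (that is, $s^i s^i = s^i s^{i+1}$) becomes, on a model containing three adjacent groupings $\lff{\gLambda}(W_1)\odot\lff{\gLambda}(W_2)\odot\lff{\gLambda}(W_3)$, precisely the lax-functor associativity coherence \ref{eq:hex_lax_functor}; and the identity \ref{degface2} (that is, $s^j d^i = 1$ for $i=j,j+1$) --- creating a grouping $\lff{\gLambda}(I)$ and then merging it into its neighbour --- becomes the identity by the lax-functor unit coherence \ref{eq:unit_lax_functor_all}.

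Once all the relations are verified, the assignment extends uniquely to a functor $(\underline{n}\downarrow\mathbf\Delta)\to\catcl{\lffc{\gLambda}(A_0,A_n)}$ landing in the component of the word $\bigodot_{i=1}^n\lff{\gLambda}(X_i)$, i.e. to the required diagram of cliques. I expect the main obstacle to be the bookkeeping in the two substantive cases together with the careful choice of admissible models in the coface identity $d^id^i=d^{i+1}d^i$: for each side of an identity one must pin down models of the source and target cliques on which all the pertinent instances of $i$, $m$, $\alpha$, $\ell$, $r$ can be written simultaneously, and then recognise the resulting pasting diagram in $\lffc{\gLambda}$ as a whiskered copy of one of \ref{eq:unit_lax_functor_all}--\ref{eq:hex_lax_functor}; everything else is formal and parallels \cref{bicat_coherence_diagram_assoc_unit}.
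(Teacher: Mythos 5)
Your proposal is correct and follows essentially the same route as the paper: generators are handled by \cref{built_diagram_of_cliques:lem_coherence_bicat}, the relations are checked on a single admissible model, with whiskering \ref{rmk:explicit_free_bicat_5} covering all the disjoint cases (including $d^id^i=d^{i+1}d^i$), the hexagon \ref{eq:hex_lax_functor} covering adjacent codegeneracies in \ref{degdeg}, and the unit coherence \ref{eq:unit_lax_functor_all} covering \ref{degface2}. Your version merely makes the case analysis slightly more explicit than the paper's.
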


This is the clique defined in 
\cref{ex:define_odot_clique_F} applied to the functor $ \lff{\gLambda}$.

\begin{proof}
\cref{built_diagram_of_cliques:lem_coherence_bicat} defines the required maps of cliques for the generators of $(\underline{n} \downarrow \mathbf\Delta)$ (see \cref{slice_presentation} and \ref{gen:coface}-\ref{gen:codegeneracy}).

	Now we check the relations \ref{faceface}-\ref{degface3}. It suffices to check each one on a single model that is admissible for all of the maps in that relation.  \ref{faceface}  follows from whiskering (\ref{rmk:explicit_free_bicat_5}) where we take a  
 model that has units $I_{A_i}$ and $I_{A_j}$ in the appropriate places and observe 
that applying \[i\colon I_{A_k} \to  \lff{\gLambda}(I_{A_k})\] to the chosen units in either order gives the same result. 
	
	\ref{degdeg} also follows from whiskering unless the codegeneracies are adjacent.  In that case we take a model with three adjacent words 
	\[(\lff{\gLambda}(W) \otimes \lff{\gLambda}(W')) \otimes \lff{\gLambda}(W'')\]
		 and apply the canonical isomorphisms and then the two codegeneracy maps.  The desired diagram becomes the hexagon from \ref{eq:hex_lax_functor}. (\cref{bicat_coherence} implies that we can take the unlabeled isomorphisms to be the associator.)
	
	\ref{degface1} to \ref{degface3} follow from  
whiskering unless the unit produced by the coface gets multiplied in by the codegeneracy.  
In this  case the admissible models are those that contain $I\odot \lff{\gLambda}(W)$ or $ \lff{\gLambda}(W)\odot I$ (with no parentheses between them). These relations then follow from \ref{eq:unit_lax_functor_all}. (\cref{bicat_coherence} implies  we can take the unlabeled isomorphisms to be the unitors.)
\end{proof}

Let $\docf$ be the diagram of cliques in \cref{built_diagram_of_cliques}.  
\cref{clique_bijection} defines a functor
\begin{equation}\label{laxfunctor_functor_symmetry}  \int_{(\underline{n} \downarrow \mathbf\Delta)} \doc \to \comp{\lffc{\gLambda}(A_0,A_n)}{W}
\end{equation}
where $W$ is any object in the component of $\bigodot_{i \in \underline{n}} \lff{\gLambda}\left(X_i \right)$. (\cref{rmk:plan_of_proofs}\ref{rmk:plan_of_proofs_2})

\begin{thm}[\cref{rmk:plan_of_proofs}\ref{rmk:plan_of_proofs_3}]\label{same_presentations}
 	The functor in \eqref{laxfunctor_functor_symmetry} is an isomorphism of categories.
\end{thm}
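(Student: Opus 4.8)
The plan is to follow the template of \cref{rmk:plan_of_proofs}\ref{rmk:plan_of_proofs_3}, exactly as in the proofs of \cref{bicat_coherence} and \cref{thm:sym_coherence_iso}, but with one essential difference. The indexing category $(\underline{n} \downarrow \mathbf\Delta)$ is not thin (already $(\underline{0} \downarrow \mathbf\Delta) \cong \mathbf\Delta$), so by \cref{lem:consequences_of_projection_2} the Grothendieck construction $\int_{(\underline{n} \downarrow \mathbf\Delta)} \doc$ is not thin either, and faithfulness of \eqref{laxfunctor_functor_symmetry} does \emph{not} come for free as it did before. The extra ingredient needed is the functor $U$ of \cref{eq:lax_functor_supporting_morphism}.

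Bijectivity on objects is immediate from the constructions of \cref{ex:define_odot_clique_F,built_diagram_of_cliques}: a $1$-cell of $\comp{\lffc{\gLambda}(A_0,A_n)}{W}$ is a parenthesized word in which the edges $X_1, \ldots, X_n$ occur in order, and giving such a word is the same as giving its grouping map $\alpha \colon \underline{n} \to \underline{k}$ (recording which $X_i$ sits inside which $\lff{\gLambda}(\cdots)$) together with an object of the clique $\doc(\alpha) = \bigodot_{j \in \underline{k}} \lff{\gLambda}\bigl( \bigodot_{i \in \alpha^{-1}(j)} X_i \bigr)$, i.e.\ an object of $\int_{(\underline{n} \downarrow \mathbf\Delta)} \doc$. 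For surjectivity on morphisms it suffices that each generator of $\lffc{\gLambda}$ lies in the image. The associators and unitors \ref{rmk:explicit_free_lax_1}, \ref{rmk:explicit_free_lax_3} are canonical isomorphisms inside a single clique $\doc(\alpha)$, hence come from the vertical generators of \cref{groth_presentation}. The maps $i$ and $m$ of \ref{rmk:explicit_free_lax_5}, \ref{rmk:explicit_free_lax_6} are, by \cref{built_diagram_of_cliques:lem_coherence_bicat} (which uses \cref{bicat_coherence} to cover every admissible model), exactly the images of the coface and codegeneracy clique maps appearing in the diagram of cliques of \cref{built_diagram_of_cliques}; every instance of $i$ and $m$ arises this way, so these come from the horizontal generators.

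It remains to prove faithfulness, and the key point is that the composite $U \circ \eqref{laxfunctor_functor_symmetry}$ equals the projection $\pi \colon \int_{(\underline{n} \downarrow \mathbf\Delta)} \doc \to (\underline{n} \downarrow \mathbf\Delta)$ of \cref{lem:consequences_of_projection_1}. Both are functors out of $\int_{(\underline{n} \downarrow \mathbf\Delta)} \doc$, so it is enough to compare them on the generators of \cref{groth_presentation}. On a vertical generator, \eqref{laxfunctor_functor_symmetry} produces a canonical isomorphism, which $U$ sends to an identity, matching $\pi$. On a horizontal generator lying over a coface $d^i$ (respectively a codegeneracy $s^i$), \eqref{laxfunctor_functor_symmetry} produces the map $i$ (respectively $m$), which $U$ sends to $d^i$ (respectively $s^i$) by the definition of $U$ in \cref{eq:lax_functor_supporting_morphism}, again matching $\pi$. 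Granting this, let $f, g$ be parallel morphisms of $\int_{(\underline{n} \downarrow \mathbf\Delta)} \doc$ with the same image under \eqref{laxfunctor_functor_symmetry}; applying $U$ gives $\pi(f) = \pi(g)$, and since $\pi$ is an equivalence and hence faithful, $f = g$. Together with bijectivity on objects and surjectivity on morphisms, this shows \eqref{laxfunctor_functor_symmetry} is an isomorphism of categories.

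The main obstacle is precisely this last step. In the coherence theorems of \cref{sec:coherence_cat} faithfulness was automatic because the source was thin, but for lax functors the source carries genuine morphisms beyond canonical isomorphisms, so faithfulness must instead be detected by the underlying-set functor $U$; once the identity $U \circ \eqref{laxfunctor_functor_symmetry} = \pi$ is in hand, it reduces to faithfulness of the Grothendieck projection.
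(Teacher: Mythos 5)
Your proposal is correct and follows essentially the same route as the paper's proof: bijectivity on objects, fullness by matching the vertical and horizontal generators of \cref{groth_presentation} against the generators of \cref{rmk:explicit_free_lax}, and faithfulness by identifying the composite with the underlying-set functor $U$ as the Grothendieck projection $\pi$ of \cref{lem:consequences_of_projection_1}. Your explicit generator-by-generator verification that $U$ composed with \eqref{laxfunctor_functor_symmetry} equals $\pi$ is a detail the paper leaves implicit, but the argument is the same.
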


\begin{proof}
	By construction, \eqref{laxfunctor_functor_symmetry}  is a bijection onto the objects of $\comp{\lffc{\gLambda}(A_0,A_n)}{W}$.

\cref{groth_presentation,slice_presentation,delta_presentation} give explicit generators for  $\int_{(\underline{n} \downarrow \mathbf\Delta)} \doc$.
There is a generator for each instance of the associator isomorphisms $\alpha$ and unitor isomorphisms $l$ and $r$ applied both inside and outside $\lff{\gLambda}$ (the vertical generators), and a generator for each instance of the composition morphisms $m$ and unit morphisms $i$ (the horizontal generators). These map to all of the generators for $\comp{\lffc{\gLambda}(A_0,A_n)}{W}$ from \cref{rmk:explicit_free_lax}. Therefore \eqref{laxfunctor_functor_symmetry}  is full.
  
The composite functor
	\[ \int_{(\underline{n} \downarrow \mathbf\Delta)} \doc \xto{\eqref{laxfunctor_functor_symmetry} } \comp{\lffc{\gLambda}(A_0,A_n)}{W}\xto{\eqref{eq:lax_functor_underlying}}  (\underline{n} \downarrow \mathbf\Delta) \]
	is the projection $\pi$ to the base category from \cref{lem:consequences_of_projection_1}. Since $\pi$ is an equivalence of categories, \eqref{laxfunctor_functor_symmetry}  is faithful. Since \eqref{laxfunctor_functor_symmetry} is full, faithful, and a bijection on objects, it is an isomorphism of categories.
\end{proof}

\begin{cor}[\cref{rmk:plan_of_proofs}\ref{rmk:plan_of_proofs_4}]
The {\underlying} set functor \eqref{eq:lax_functor_underlying} is an equivalence of categories.
\end{cor}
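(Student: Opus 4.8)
The plan is to recognize this corollary as the final bookkeeping step~\ref{rmk:plan_of_proofs_4} of the general strategy in \cref{rmk:plan_of_proofs}, so that it follows formally from what has already been proved. Concretely, in the proof of \cref{same_presentations} it was observed that the composite
\[ \int_{(\underline{n} \downarrow \mathbf\Delta)} \doc \xto{\eqref{laxfunctor_functor_symmetry}} \comp{\lffc{\gLambda}(A_0,A_n)}{W} \xto{\eqref{eq:lax_functor_underlying}} (\underline{n} \downarrow \mathbf\Delta) \]
is exactly the projection functor $\pi$ of \cref{lem:consequences_of_projection_1}. So the entire content of the corollary is already packaged in that identity together with \cref{same_presentations,lem:consequences_of_projection_1}.

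The argument I would give: \cref{same_presentations} tells us that \eqref{laxfunctor_functor_symmetry} is an \emph{isomorphism} of categories, hence in particular an equivalence; \cref{lem:consequences_of_projection_1} tells us $\pi$ is an equivalence. Since $\pi$ factors as \eqref{eq:lax_functor_underlying} precomposed with the equivalence \eqref{laxfunctor_functor_symmetry}, the two-out-of-three property of equivalences of categories forces the {\underlying} set functor \eqref{eq:lax_functor_underlying} to be an equivalence as well. (Equivalently, one precomposes the displayed identity with the inverse of \eqref{laxfunctor_functor_symmetry} to exhibit \eqref{eq:lax_functor_underlying} directly as $\pi$ composed with that inverse, and a composite of equivalences is an equivalence.)

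I do not expect any genuine obstacle here: all the real work was done in \cref{same_presentations} (constructing the diagram of cliques, applying \cref{clique_bijection}, and checking the Grothendieck construction recovers the free bicategory $\lffc{\gLambda}$ on the nose) and in \cref{lem:consequences_of_projection_1} (the projection off a Grothendieck construction of cliques is an equivalence). The only points that need a word of care are orienting the composite correctly and using that \eqref{laxfunctor_functor_symmetry} is literally invertible rather than merely an equivalence, which is precisely what \cref{same_presentations} supplies and which lets us transport the equivalence property across without any further homotopical input.
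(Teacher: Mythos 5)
Your argument is correct and is exactly the (implicit) argument the paper intends: the corollary is left without proof precisely because the factorization of the projection $\pi$ through the isomorphism \eqref{laxfunctor_functor_symmetry} was already recorded in the proof of \cref{same_presentations}, and two-out-of-three for equivalences does the rest. No differences from the paper's approach.
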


This finishes the proof of coherence for lax functors (\cref{lax_not_coherence}).

\subsubsection{Coherence for normal lax functors}\label{subsec:normal_coherence}
The results and proofs for normal lax functors and pseudofunctors are the same as for lax functors as in \cref{proof_lax_not_coherence}, with a few small differences that we now describe.

Let  $\onenLax$ be the (1-)category whose objects are \emph{normal} lax functors of bicategories
\[\sC \xto{F} \sD\]
 and whose morphisms are pairs of strict functors forming a strictly commuting square. There is a forgetful functor 
\begin{equation}\label{eq:normal_lax_forget}
\onenLax\to \Graphcat
\end{equation}
whose value on $\sC \xto{F} \sD$ is $\fobic{\sC}$  (from \cref{sec:coherence_bicat}).
For a graph $\gLambda$, the left adjoint of the functor in \eqref{eq:normal_lax_forget} applied to $\gLambda$  is
\[\frbic{\gLambda} \xto{\nlff{\gLambda}}\nlffc{ \gLambda}\]
 where $\frbic{\gLambda}$ is as in \cref{sec:coherence_bicat}. The presentation of $\nlffc{ \gLambda}$ is the same as in \cref{rmk:explicit_free_lax}, except the unitor maps \ref{rmk:explicit_free_lax_5} are now invertible generators.

\begin{lem}\label{eq:normal_functor_supporting_morphism}
The assignment in \eqref{eq:lax_functor_supporting_objects} extends to a {\bf {\underlying} set} functor 
\begin{equation}\label{eq:normal_lax_functor_underlying}
U\colon \comp{\nlffc{\gLambda}(A_0,A_n)}{W}  \to (\underline{n} \downarrow \mathbf\Delta)[\sI^{-1}].
\end{equation}
\end{lem}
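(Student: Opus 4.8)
The plan is to copy the construction of the underlying set functor $U$ from the proof of \cref{eq:lax_functor_supporting_morphism} verbatim, changing only the target so as to accommodate the one new feature of $\nlffc{\gLambda}$: the unit generator \ref{rmk:explicit_free_lax_5} is now an invertible generator. On the generators of $\comp{\nlffc{\gLambda}(A_0,A_n)}{W}$ (which are those of \cref{rmk:explicit_free_lax}, with \ref{rmk:explicit_free_lax_5} made invertible) I would set: every associator and unitor generator \ref{rmk:explicit_free_lax_1}--\ref{rmk:explicit_free_lax_4} goes to an identity map; the unit generator $i\colon I_{A_k}\to \nlff{\gLambda}(I_{A_k})$ inserted between groupings $i-1$ and $i$ goes to the coface map $d^i$; and the composition generator $m$ applied to groupings $i$ and $i+1$ goes to the codegeneracy $s^i$. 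The formal inverse of $i$ is then forced to go to $(d^i)^{-1}$, and this morphism is available precisely because the target has been enlarged to the localization $(\underline{n}\downarrow\mathbf\Delta)[\sI^{-1}]$: by \cref{inj_presentation} the coface maps generate $\sI$, so inverting $\sI$ is exactly what is needed to make every $d^i$ invertible. This change of target is the only difference from the lax case, and it is the reason the receiving category is $(\underline{n}\downarrow\mathbf\Delta)[\sI^{-1}]$ rather than $(\underline{n}\downarrow\mathbf\Delta)$.

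It then remains to check that the relations of $\nlffc{\gLambda}$ are respected by $U$. The relations \ref{eq:pent_tri_lax_functor_all}--\ref{eq:wisk_nat_lax_functor} are identical to those of the lax case, and under $U$ they reduce, exactly as in the proof of \cref{eq:lax_functor_supporting_morphism}, to the simplicial identities \ref{faceface}--\ref{degface3} together with trivial identity relations coming from whiskering and naturality; all of these already hold in $(\underline{n}\downarrow\mathbf\Delta)$, hence in its localization. The two new relations, asserting that $i$ and its formal inverse compose to identities in either order, map under $U$ to $d^i\circ (d^i)^{-1}=\id$ and $(d^i)^{-1}\circ d^i=\id$, which hold by construction in $(\underline{n}\downarrow\mathbf\Delta)[\sI^{-1}]$. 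Thus $U$ is well defined on the presented category, and it visibly respects identities and composition.

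I do not expect a genuine obstacle here: the content is bookkeeping essentially identical to \cref{eq:lax_functor_supporting_morphism}. The single point that deserves (minimal) care is the choice of target — one must observe that making $i$ invertible forces the cofaces to be invertible, and that since the cofaces generate the injective totally-ordered maps $\sI$, the correct receiving category is the localization $(\underline{n}\downarrow\mathbf\Delta)[\sI^{-1}]$. Because we only need to map \emph{into} this localization, no structural description of it is required beyond the fact that every injective totally-ordered map, and in particular every coface, becomes invertible there.
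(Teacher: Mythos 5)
Your proposal is correct and takes essentially the same route as the paper: the paper's proof simply says the construction is as in \cref{eq:lax_functor_supporting_morphism} except that the target is localized at $\sI$ so that the (now invertible) unit generators \ref{rmk:explicit_free_lax_5} can be sent to isomorphisms. Your additional observation that the cofaces generate $\sI$ (via \cref{inj_presentation}), so inverting $\sI$ is exactly what is needed, is the right justification and is implicit in the paper's one-line argument.
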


\begin{proof}
The construction is as in \cref{eq:lax_functor_supporting_morphism}, except that the slice category has been localized by inverting the injective maps $\sI$, so that each of the unitor maps \ref{rmk:explicit_free_lax_5} can be sent to an isomorphism.
\end{proof}

For any normal lax functor $\sC \xto{F} \sD$ there exists a unique strict map of bicategories $\nlffc{\fobic{\sC}}\to \sD$ so that the following square commutes.
\[\xymatrix{
	\frbic{\fobic{\sC}} \ar[d]^-{} \ar[r]^-{\nlff{\fobic{\sC}}} & \nlffc{\fobic{\sC}} \ar@{-->}[d]^-{\exists !} \\
	\sC \ar[r]^-F & \sD
} \]

A {\bf formal diagram} of a lax functor $F\colon \sC\to \sD$ is any diagram in $\sD$ that lifts against the functor $\nlffc{\fobic{\sC}}\to \sD$. In contrast to the case of lax functors, we do not need to impose additional conditions on formal diagrams.

\begin{thm}[Coherence for normal (op)lax functors]\label{strong_coherence_normal_lax}
All formal diagrams for a normal (op)lax functor  $F$ commute.
\end{thm}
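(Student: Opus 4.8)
The plan is to run the proof of \cref{lax_not_coherence} (\cref{proof_lax_not_coherence}) essentially unchanged, tracking the one structural difference---that in $\nlffc{\gLambda}$ the unit generators $i\colon I_A \to \nlff{\gLambda}(I_A)$ are now \emph{invertible}---and then to cash this in at the level of indexing categories. Fix composable edges $X_1,\dots,X_n$ in a graph $\gLambda$. First I would redo \cref{built_diagram_of_cliques}: the same recipe, in which each coface $d^i$ inserts an instance of $i$ and each codegeneracy $s^i$ applies $m$, produces maps of cliques, and the relations \ref{faceface}--\ref{degface3} are verified exactly as before using whiskering together with \ref{eq:unit_lax_functor_all} and \ref{eq:hex_lax_functor}. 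The new point is that each coface is now sent to an \emph{invertible} map of cliques, so the resulting diagram $(\underline{n} \downarrow \mathbf\Delta) \to \catcl{\nlffc{\gLambda}}$ inverts every morphism of $\sI$ and therefore, by the universal property of the localization, factors uniquely through a diagram of cliques $\docf$ indexed by $(\underline{n} \downarrow \mathbf\Delta)[\sI^{-1}]$.

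Next I would reproduce \cref{same_presentations}. Applying \cref{clique_bijection} to $\docf$ gives a functor
\[ \int_{(\underline{n} \downarrow \mathbf\Delta)[\sI^{-1}]} \doc \longrightarrow \comp{\nlffc{\gLambda}(A_0,A_n)}{W} \]
which is a bijection on objects by construction, full because the horizontal and vertical generators of the Grothendieck construction (\cref{groth_presentation}) hit all the generators \ref{rmk:explicit_free_lax_1}--\ref{rmk:explicit_free_lax_6} of $\nlffc{\gLambda}$ (now including the inverses of the maps $i$), and faithful because composing it with the functor \eqref{eq:normal_lax_functor_underlying} recovers the projection $\pi$ of \cref{lem:consequences_of_projection_1}, which is an equivalence. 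Hence it is an isomorphism of categories, just as in the lax case.

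It then remains only to show that $(\underline{n} \downarrow \mathbf\Delta)[\sI^{-1}]$ is \emph{thin}, and this is the step I expect to be the real obstacle. Granting it, \cref{lem:consequences_of_projection_2} makes $\int_{(\underline{n} \downarrow \mathbf\Delta)[\sI^{-1}]} \doc$ thin, hence every component $\comp{\nlffc{\gLambda}(A_0,A_n)}{W}$ is thin, hence every diagram of $2$-cells in $\nlffc{\gLambda}$ commutes; since a formal diagram for a normal lax functor $F\colon\sC\to\sD$ is the image of a diagram of $2$-cells in $\nlffc{\fobic{\sC}}$, this proves \cref{strong_coherence_normal_lax}. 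The oplax case follows by replacing $(\underline{n} \downarrow \mathbf\Delta)[\sI^{-1}]$ throughout by the localization of $(\underline{n} \downarrow \mathbf\Delta)^\op$, which is again thin.

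To prove thinness I would identify $(\underline{n} \downarrow \mathbf\Delta)[\sI^{-1}]$ with the poset $\mathbf{P}$ of subsets of $\{1,\dots,n-1\}$ ordered by reverse inclusion. The ``cut set'' functor $(\underline{n} \downarrow \mathbf\Delta) \to \mathbf{P}$, $\alpha \mapsto \{\,i : \alpha(i) < \alpha(i+1)\,\}$, sends every injective map to an identity, so it factors through $(\underline{n} \downarrow \mathbf\Delta)[\sI^{-1}]$. Conversely, the epi--mono factorization $\alpha = \iota_\alpha \circ \pi_\alpha$ in $\mathbf\Delta$ has $\iota_\alpha$ a monomorphism, hence an isomorphism in the localization, so every object becomes canonically isomorphic (via $\iota_\alpha^{-1}$, naturally in $\alpha$ by uniqueness of the factorization) to the surjection $\pi_\alpha$; and the full subcategory of surjections out of $\underline{n}$ is already $\mathbf{P}$, because surjections are epimorphisms, so there is at most one slice morphism between any two of them, and such a morphism exists precisely when the cut sets are nested. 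These two constructions are mutually inverse, exhibiting an equivalence $(\underline{n} \downarrow \mathbf\Delta)[\sI^{-1}] \simeq \mathbf{P}$; since $\mathbf{P}$ is a poset and equivalences are fully faithful, $(\underline{n} \downarrow \mathbf\Delta)[\sI^{-1}]$ is thin, which completes the argument.
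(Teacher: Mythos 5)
Your proposal is correct and follows the paper's proof essentially step for step: the same diagram of cliques over $(\underline{n} \downarrow \mathbf\Delta)[\sI^{-1}]$ with the cofaces now invertible, the same isomorphism with the Grothendieck construction via fullness plus faithfulness over the base, and the same reduction to thinness of the localized comma category. The only divergence is in the proof of \cref{localized_comma_cat_is_thin}, where you build an explicit equivalence with the poset of subsets of $\{1,\dots,n-1\}$ via cut sets, whereas the paper collapses zig-zags directly using the observation that an injective slice map between surjections is bijective; these are the same observation packaged differently, and your version is fine (the naturality of $\iota_\alpha^{-1}$ is most cleanly justified by noting that there is at most one slice morphism out of a surjection, rather than by uniqueness of the epi--mono factorization per se).
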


We can now start reusing ideas from  \cref{proof_lax_not_coherence}.

\begin{lem}[\cref{rmk:plan_of_proofs}\ref{rmk:plan_of_proofs_1}, compare to \cref{built_diagram_of_cliques}]\label{built_diagram_of_cliques_normal} For each tuple of composable edges $X_1,X_2,\ldots ,X_n$ in a graph $\gLambda$ there is a  diagram of cliques  with domain $(\underline{n} \downarrow \mathbf\Delta)[\sI^{-1}]$ 
and the image of a totally ordered map $\alpha\colon \underline{n} \to \underline{k}$ is the clique
\[ \bigodot_{j \in \underline{k}} \nlff{\gLambda}\left( \bigodot_{i \in \alpha^{-1}(j)} X_i \right). \]
\end{lem}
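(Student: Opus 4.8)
The plan is to produce the required diagram of cliques exactly as in the proof of \cref{built_diagram_of_cliques}, but with $\lff{\gLambda}$ replaced by $\nlff{\gLambda}$, and then to extend it across the localization using the universal property of $(\underline{n}\downarrow\mathbf\Delta)[\sI^{-1}]$.

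First I would observe that a normal lax functor is in particular a lax functor, so \cref{built_diagram_of_cliques:lem_coherence_bicat} applies to $\nlff{\gLambda}$ and supplies maps of cliques for the coface and codegeneracy generators of $(\underline{n}\downarrow\mathbf\Delta)$ (see \cref{slice_presentation} and \ref{gen:coface}--\ref{gen:codegeneracy}). The relations \ref{faceface}--\ref{degface3} are then checked verbatim as in the proof of \cref{built_diagram_of_cliques}: the ``separated'' instances hold by whiskering \ref{rmk:explicit_free_bicat_5}, the adjacent-codegeneracy case reduces to the hexagon \ref{eq:hex_lax_functor}, and the degeneracy--coface cases reduce to the unit coherences \ref{eq:unit_lax_functor_all}. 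All of these persist in $\nlffc{\gLambda}$, whose presentation differs from that of $\lffc{\gLambda}$ only in that the unit generator \ref{rmk:explicit_free_lax_5} is now invertible. This yields a diagram of cliques, i.e. a functor $\docf\colon(\underline{n}\downarrow\mathbf\Delta)\to\catcl{\nlffc{\gLambda}}$, whose value on a totally ordered $\alpha\colon\underline{n}\to\underline{k}$ is the stated clique.

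The new point is that in $\nlffc{\gLambda}$ the unit generator $i\colon I_{\nlff{\gLambda}(A)}\to\nlff{\gLambda}(I_A)$ is an isomorphism, so each coface generator $d^i$ of $(\underline{n}\downarrow\mathbf\Delta)$ is sent by $\docf$ to a map of cliques which, on the admissible models used to define it (those with a single unit $I_{A_i}$ between the relevant blocks), is given by $i$ and hence is an isomorphism in $\nlffc{\gLambda}$. A map of cliques that is invertible on one admissible model is invertible as a map of cliques, since clique maps compose and agree as soon as they agree on a single representative (\cref{cliques_language}); thus $\docf$ sends $d^i$ to an isomorphism in $\catcl{\nlffc{\gLambda}}$. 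As $\sI$ is generated by the coface maps (\cref{inj_presentation}), every morphism of $(\underline{n}\downarrow\mathbf\Delta)$ whose underlying morphism in $\mathbf\Delta$ lies in $\sI$ is a composite of the slice versions of these generators, so $\docf$ inverts all of them.

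Finally, by the universal property of the localization $(\underline{n}\downarrow\mathbf\Delta)\to(\underline{n}\downarrow\mathbf\Delta)[\sI^{-1}]$, the functor $\docf$ factors uniquely through $(\underline{n}\downarrow\mathbf\Delta)[\sI^{-1}]$; the resulting functor is the desired diagram of cliques, whose value on a totally ordered $\alpha\colon\underline{n}\to\underline{k}$ is again $\bigodot_{j\in\underline{k}}\nlff{\gLambda}\left(\bigodot_{i\in\alpha^{-1}(j)}X_i\right)$. I do not anticipate a genuine obstacle: the only steps needing care are noting that a clique map invertible on one model is invertible as a clique map (so that the localization property is being applied inside $\catcl{\nlffc{\gLambda}}$, where isomorphisms are detected representative-wise) and the bookkeeping that every slice morphism over $\sI$ is inverted, both immediate from \cref{cliques_language} and \cref{inj_presentation}.
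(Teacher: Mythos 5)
Your proposal is correct and follows essentially the same route as the paper: reuse the construction and relation-checks from \cref{built_diagram_of_cliques}, and observe that the coface generators now land on the unit maps $i$, which are invertible in $\nlffc{\gLambda}$, so the diagram descends to the localization. The only (cosmetic) difference is that you extend across $(\underline{n}\downarrow\mathbf\Delta)[\sI^{-1}]$ via the universal property of localization, whereas the paper instead replaces the presentation of the slice category by one in which the coface maps are invertible generators; your added remark that a clique map invertible on one admissible model is invertible as a clique map is a correct and worthwhile detail that the paper leaves implicit.
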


\begin{proof}
The proof is the same as \cref{built_diagram_of_cliques} with two exceptions. Since the slice category $(\underline{n} \downarrow \mathbf\Delta)$ is localized at the injective maps $\sI$, its presentation is changed -- the coface maps are now invertible generators, while the codegeneracy maps are still ordinary generators. But the coface maps are sent to the unitor maps \ref{rmk:explicit_free_lax_5}, which are isomorphisms because we are now dealing with normal lax functors. The rest of the verification proceeds as in \cref{built_diagram_of_cliques}.
\end{proof}

Let $\docf$ be the diagram of cliques in \cref{built_diagram_of_cliques_normal}.  
\cref{clique_bijection} defines a functor (\cref{rmk:plan_of_proofs}\ref{rmk:plan_of_proofs_2})
\begin{equation}\label{laxfunctor_functor_symmetry_normal}  \int_{(\underline{n} \downarrow \mathbf\Delta)[\sI^{-1}]} \doc \to \comp{\nlffc{\gLambda}(A_0,A_n)}{W}
\end{equation}
where $W$ is a model for $\bigodot_{i=1}^n \lff{\gLambda}\left( X_i \right)$.

\begin{thm}[\cref{rmk:plan_of_proofs}\ref{rmk:plan_of_proofs_3}]\label{same_presentations_normal}
 	The functor in \eqref{laxfunctor_functor_symmetry_normal} is an isomorphism of categories.
\end{thm}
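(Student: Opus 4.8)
The plan is to run the proof of \cref{same_presentations} essentially verbatim, replacing $(\underline{n} \downarrow \mathbf\Delta)$ by its localization $(\underline{n} \downarrow \mathbf\Delta)[\sI^{-1}]$ and the underlying functor \eqref{eq:lax_functor_underlying} by \eqref{eq:normal_lax_functor_underlying}. First, \eqref{laxfunctor_functor_symmetry_normal} is a bijection on objects by construction: the objects of $\int_{(\underline{n} \downarrow \mathbf\Delta)[\sI^{-1}]} \doc$ are exactly the models of the cliques $\bigodot_{j} \nlff{\gLambda}\left(\bigodot_{i \in \alpha^{-1}(j)} X_i\right)$ from \cref{built_diagram_of_cliques_normal}, and these exhaust the 1-cells of the component $\comp{\nlffc{\gLambda}(A_0,A_n)}{W}$.

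Next I would establish fullness using \cref{groth_presentation}, \cref{slice_presentation}, and the presentation of $(\underline{n} \downarrow \mathbf\Delta)[\sI^{-1}]$. Localizing at $\sI$ turns the coface maps \ref{gen:coface} into invertible generators while the codegeneracy maps \ref{gen:codegeneracy} remain ordinary generators, and it introduces no new generators. So the generators of $\int_{(\underline{n} \downarrow \mathbf\Delta)[\sI^{-1}]} \doc$ are the vertical generators --- all expanded instances of the associator $\alpha$ and unitors $l$, $r$, applied both inside and outside $\nlff{\gLambda}$ --- together with the horizontal generators --- all instances of the composition maps $m$ and the now-invertible unit maps $i$. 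These surject onto the full generating set of $\comp{\nlffc{\gLambda}(A_0,A_n)}{W}$ coming from the presentation in \cref{rmk:explicit_free_lax} (as modified for the normal case, where the $i$ maps are invertible), so \eqref{laxfunctor_functor_symmetry_normal} is full.

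Finally, for faithfulness I would observe that the composite
\[ \int_{(\underline{n} \downarrow \mathbf\Delta)[\sI^{-1}]} \doc \xto{\eqref{laxfunctor_functor_symmetry_normal}} \comp{\nlffc{\gLambda}(A_0,A_n)}{W} \xto{\eqref{eq:normal_lax_functor_underlying}} (\underline{n} \downarrow \mathbf\Delta)[\sI^{-1}] \]
is the projection $\pi$ of \cref{lem:consequences_of_projection_1}: by \cref{eq:normal_functor_supporting_morphism} the functor \eqref{eq:normal_lax_functor_underlying} sends every vertical generator to an identity and every horizontal generator to the corresponding coface or codegeneracy map, which is precisely how $\pi$ acts on the generators supplied by \cref{groth_presentation}. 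Since $\pi$ is an equivalence of categories it is faithful, hence so is \eqref{laxfunctor_functor_symmetry_normal}; being full, faithful, and a bijection on objects, \eqref{laxfunctor_functor_symmetry_normal} is an isomorphism of categories. I expect no genuine obstacle here --- every step mirrors \cref{same_presentations} once the two substitutions are made. The only place an error could hide is the compatibility between the invertible coface generators of $(\underline{n} \downarrow \mathbf\Delta)[\sI^{-1}]$ and the invertible unit maps $i$ of $\nlffc{\gLambda}$, and that has already been arranged in \cref{built_diagram_of_cliques_normal}.
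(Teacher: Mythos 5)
Your proposal is correct and follows exactly the route the paper takes: the paper simply states that the proof of \cref{same_presentations_normal} is the same as that of \cref{same_presentations}, and your write-up is that proof with the two substitutions (localizing at $\sI$ and using \eqref{eq:normal_lax_functor_underlying}) carried out explicitly. No discrepancies.
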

The proof of \cref{same_presentations_normal} is the same as the proof of \cref{same_presentations}.

\begin{cor}[\cref{rmk:plan_of_proofs}\ref{rmk:plan_of_proofs_4}]\label{lax_coherence_normal}
The {\underlying} set functor \eqref{eq:normal_lax_functor_underlying} is an equivalence of categories.
 \end{cor}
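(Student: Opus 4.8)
The plan is to obtain this exactly the way \cref{rmk:plan_of_proofs}\ref{rmk:plan_of_proofs_4} is obtained in the lax case, namely by combining \cref{same_presentations_normal} with the projection of \cref{lem:consequences_of_projection_1}. By \cref{same_presentations_normal} the functor \eqref{laxfunctor_functor_symmetry_normal} is an isomorphism of categories, and by \cref{lem:consequences_of_projection_1} the projection
\[ \pi\colon \int_{(\underline{n} \downarrow \mathbf\Delta)[\sI^{-1}]} \doc \longrightarrow (\underline{n} \downarrow \mathbf\Delta)[\sI^{-1}] \]
is an equivalence of categories. So it suffices to check that postcomposing \eqref{laxfunctor_functor_symmetry_normal} with the functor $U$ of \eqref{eq:normal_lax_functor_underlying} recovers $\pi$; granting that, $U$ is the composite of $\pi$ with the inverse of \eqref{laxfunctor_functor_symmetry_normal}, hence a composite of equivalences and so an equivalence, for every choice of model $W$.

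The equality of this composite with $\pi$ is verified on the generators of $\int_{(\underline{n} \downarrow \mathbf\Delta)[\sI^{-1}]} \doc$ listed in \cref{groth_presentation,slice_presentation,delta_presentation}, exactly as in the last paragraph of the proof of \cref{same_presentations}. The vertical generators are the associator and unitor isomorphisms applied inside and outside $\nlff{\gLambda}$; the functor \eqref{laxfunctor_functor_symmetry_normal} carries them to the corresponding $2$-cells of $\nlffc{\gLambda}$, which $U$ sends to identities (\cref{eq:normal_functor_supporting_morphism}), in agreement with $\pi$. The horizontal generator lying over a coface map $d^i$ is carried by \eqref{laxfunctor_functor_symmetry_normal} to an instance of the now-invertible unit morphism $i$ of \ref{rmk:explicit_free_lax_5}, which $U$ returns to $d^i$; the horizontal generator lying over a codegeneracy $s^i$ is carried to an instance of the composition morphism $m$ of \ref{rmk:explicit_free_lax_6}, which $U$ returns to $s^i$. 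Agreement on generators gives agreement of the two functors.

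The one place requiring a moment's care, relative to the lax case, is that the coface maps are now inverted on both sides of \eqref{eq:normal_lax_functor_underlying} -- in $(\underline{n} \downarrow \mathbf\Delta)[\sI^{-1}]$ by construction, and in $\nlffc{\gLambda}$ because for a normal lax functor the unit maps \ref{rmk:explicit_free_lax_5} are invertible generators -- so one should confirm that \eqref{laxfunctor_functor_symmetry_normal} and $U$ are actually well defined on the localized categories and compatible with the inverted generators. But this compatibility is precisely what \cref{built_diagram_of_cliques_normal} and \cref{eq:normal_functor_supporting_morphism} were arranged to provide, so no genuine obstacle remains; the argument is a routine transcription of the lax case.
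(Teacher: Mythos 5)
Your proposal is correct and follows the same route the paper intends: \cref{same_presentations_normal} makes \eqref{laxfunctor_functor_symmetry_normal} an isomorphism, the composite with $U$ is the projection $\pi$ of \cref{lem:consequences_of_projection_1} (checked on generators exactly as in the proof of \cref{same_presentations}), and hence $U$ is an equivalence. The paper leaves this corollary as an immediate consequence, and your write-up supplies precisely the intended details.
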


This shows that formal diagrams are equivalent to the localized comma category $(\underline{n} \downarrow \mathbf\Delta)[\sI^{-1}]$. Our goal is to prove that all formal diagrams commute, so it remains to show:

\begin{lem}\label{localized_comma_cat_is_thin}
	The localization $(\underline{n} \downarrow \mathbf\Delta)[\sI^{-1}]$ is a thin category.
\end{lem}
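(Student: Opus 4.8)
The plan is to exhibit $(\underline{n} \downarrow \mathbf\Delta)[\sI^{-1}]$ as equivalent to a poset, so that thinness is immediate. Let $\mathcal{Q} \subseteq (\underline{n} \downarrow \mathbf\Delta)$ be the full subcategory spanned by the \emph{surjective} maps $\alpha\colon \underline{n} \twoheadrightarrow \underline{k}$. First I would note that $\mathcal{Q}$ is a poset: a morphism $(\alpha) \to (\beta)$ in the slice is a map $f$ in $\mathbf\Delta$ with $f\alpha = \beta$, and since $\alpha$ is an epimorphism such an $f$ is unique. (Concretely $\mathcal{Q}$ is the poset of subsets of an $(n{-}1)$-element set ordered by reverse inclusion, where a subset records which ``gaps'' of $\underline{n}$ are cuts between consecutive blocks.) Since thinness is preserved by equivalences, it therefore suffices to prove $(\underline{n} \downarrow \mathbf\Delta)[\sI^{-1}] \simeq \mathcal{Q}$.

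The structural input is that $\mathcal{Q}$ is a \emph{coreflective} subcategory of $(\underline{n} \downarrow \mathbf\Delta)$. The coreflector $L$ sends an object $\alpha\colon \underline{n} \to \underline{k}$ to the surjective part $\bar\alpha\colon \underline{n} \twoheadrightarrow \underline{k'}$ of the (functorial) epi--mono factorization $\alpha = m_\alpha\bar\alpha$ of $\alpha$ in $\mathbf\Delta$; functoriality of this factorization gives the action of $L$ on morphisms. The monomorphism $m_\alpha\colon \underline{k'} \hookrightarrow \underline{k}$ satisfies $m_\alpha\bar\alpha = \alpha$, hence defines a slice morphism $m_\alpha\colon (\bar\alpha) \to (\alpha)$, and I would check that these maps are the counit of an adjunction $\iota \dashv L$, where $\iota\colon \mathcal{Q} \hookrightarrow (\underline{n} \downarrow \mathbf\Delta)$ is the (fully faithful) inclusion. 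The verification is short: any slice morphism $h\colon (\beta) \to (\alpha)$ out of a surjection $\beta$ has underlying map with image equal to the image of $\alpha$ (because $\beta$ is epi and $h\beta = \alpha$), hence factors uniquely as $h = m_\alpha h'$ with $h'\colon (\beta) \to (\bar\alpha)$ in $\mathcal{Q}$; this is the required natural bijection $\mathrm{Hom}(\iota\beta,\alpha) \cong \mathrm{Hom}(\beta, L\alpha)$.

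Now I would invoke the standard fact about coreflective localizations: the coreflector $L$ exhibits $\mathcal{Q}$ as the localization of $(\underline{n} \downarrow \mathbf\Delta)$ at the class $W = \{\,f : L(f)\text{ is an isomorphism}\,\}$. It then remains to see that $W$ and $\sI$ generate the same localization. On one hand $\sI \subseteq W$: if $f\colon (\alpha) \to (\beta)$ is injective, then $f m_\alpha$ is again a monomorphism, so $(f m_\alpha)\bar\alpha$ is an epi--mono factorization of $\beta$; by uniqueness $\bar\beta = \bar\alpha$ and $L(f) = \mathrm{id}$. On the other hand every $f \in W$ is already invertible in $(\underline{n} \downarrow \mathbf\Delta)[\sI^{-1}]$: since $\mathcal{Q}$ is a poset, $L(f)$ being an isomorphism forces $\bar\alpha = \bar\beta$; cancelling the epimorphism $\bar\alpha$ from $f m_\alpha \bar\alpha = f\alpha = \beta = m_\beta\bar\beta$ gives $f m_\alpha = m_\beta$, so $f = m_\beta m_\alpha^{-1}$ with $m_\alpha, m_\beta \in \sI$. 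By the universal property of localization the two inclusions $\sI \subseteq W$ and ``$W$ inverted in $[\sI^{-1}]$'' give mutually inverse functors between $(\underline{n} \downarrow \mathbf\Delta)[\sI^{-1}]$ and $(\underline{n} \downarrow \mathbf\Delta)[W^{-1}] \simeq \mathcal{Q}$; hence the former is equivalent to $\mathcal{Q}$ and thus thin.

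I expect the main obstacle to be getting the coreflection bookkeeping exactly right --- verifying that the epi--mono factorization is functorial enough to define $L$ on morphisms, that $m_\alpha$ genuinely serves as a counit, and supplying a clean reference (or one-line proof from the universal property) for the ``coreflector = localization at its inverted maps'' principle. The comparison of $W$ with $\sI$ and the poset property of $\mathcal{Q}$ are elementary. I would also remark on the degenerate case $n = 0$: there $(\underline{0} \downarrow \mathbf\Delta) = \mathbf\Delta$, the empty set is initial, $\bar\alpha$ is always the empty set, and the same argument collapses $\mathbf\Delta[\sI^{-1}]$ onto the terminal category --- so no separate treatment is needed.
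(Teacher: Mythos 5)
Your proof is correct, and it reaches the same combinatorial core as the paper's argument --- every object is isomorphic in the localization to its surjective part, and between surjective objects uniqueness of morphisms follows because surjections are epimorphisms --- but it packages that core quite differently. The paper argues directly with zig-zags: it transports any zig-zag to one between surjective objects, observes that the backward (injective) legs are then already bijections in $(\underline{n} \downarrow \mathbf\Delta)$, and collapses the zig-zag to a single morphism, which is unique by epi-cancellation. You instead identify the full subcategory $\mathcal{Q}$ of surjections as a coreflective subcategory via the epi--mono factorization, invoke the general principle that a fully faithful (co)reflection realizes the localization at the maps it inverts, and then check that this class of maps generates the same localization as $\sI$. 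Your route costs a little more overhead (verifying the adjunction and citing or proving the coreflective-localization principle, where the paper needs nothing beyond the definition of localization), but it buys a sharper conclusion: an explicit equivalence of $(\underline{n} \downarrow \mathbf\Delta)[\sI^{-1}]$ with a concrete poset of cut-sets, rather than thinness alone. Both arguments are sound; your handling of the $n=0$ case and of the identity $f m_\alpha = m_\beta$ via cancelling the epimorphism $\bar\alpha$ is exactly right.
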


\begin{proof}
	Each object $f\colon \underline{n} \to \underline{k}$ in the localization is isomorphic to one in which $f$ is surjective. Along this isomorphism, each zig-zag of morphisms in the comma category (with backwards morphisms injective) becomes a zig-zag between objects with $f$ surjective. Between two such objects, the injective maps are bijective, so each zig-zag simplifies to a single morphism. But between objects with $f$ surjective, any ordered pair of such objects admits at most one morphism between them, so the category is thin.
\end{proof}

This finishes the proof of coherence for normal lax functors (\cref{strong_coherence_normal_lax}).

\subsubsection{Coherence for pseudofunctors}
The similarities for coherence for pseudofunctors and normal lax functors is even stronger than those between coherence for normal lax functors and lax functors.  In \cref{subsec:normal_coherence} replace $\onenLax$ by the category $\onepLax$ whose objects are pseudofunctors and whose morphisms are commuting squares of strict functors.   Let 
\[\frbic{\gLambda} \xto{\pff{\gLambda}}\pffc{ \gLambda}\]
be the value of the free pseudofunctor on a graph $\gLambda$.

\begin{thm}[Coherence for pseudofunctors]\label{strong_coherence}
All formal diagrams for a pseudofunctor $F$ commute.
\end{thm}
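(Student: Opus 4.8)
The plan is to run the argument of \cref{subsec:normal_coherence} essentially verbatim, with the single change that for a pseudofunctor \emph{both} the composition maps $m$ (\ref{rmk:explicit_free_lax_6}) and the unit maps $i$ (\ref{rmk:explicit_free_lax_5}) are invertible generators, not just the unit maps. Concretely, I would first record that the presentation of $\pffc{\gLambda}$ is the presentation of \cref{rmk:explicit_free_lax} with both $i$ and $m$ taken to be invertible generators, and that the free pseudofunctor $\frbic{\gLambda}\xto{\pff{\gLambda}}\pffc{\gLambda}$ has the expected universal property: every pseudofunctor $\sC\xto{F}\sD$ factors uniquely through a strict map $\pffc{\fobic{\sC}}\to\sD$ after $\pff{\fobic{\sC}}$, and a formal diagram for $F$ is one that lifts against this strict map.

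Next I would set up the \underlying{} functor and the diagram of cliques. Because the $m$-generators are now invertible and, under the \underlying{} functor, the generators $m$ were sent to codegeneracies while the generators $i$ were sent to cofaces, the natural target of the \underlying{} functor is now $(\underline{n}\downarrow\mathbf\Delta)$ localized at \emph{all} of its morphisms (equivalently, localized both at $\sI$ and at the codegeneracy-maps). So the analogue of \cref{eq:normal_functor_supporting_morphism} gives a functor $U\colon \comp{\pffc{\gLambda}(A_0,A_n)}{W}\to (\underline{n}\downarrow\mathbf\Delta)[(\underline{n}\downarrow\mathbf\Delta)^{-1}]$, and the analogue of \cref{built_diagram_of_cliques_normal} produces a diagram of cliques indexed by $(\underline{n}\downarrow\mathbf\Delta)[(\underline{n}\downarrow\mathbf\Delta)^{-1}]$ sending $\alpha\colon\underline{n}\to\underline{k}$ to $\bigodot_{j\in\underline{k}}\pff{\gLambda}\left(\bigodot_{i\in\alpha^{-1}(j)}X_i\right)$. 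The maps of cliques on generators are supplied by \cref{built_diagram_of_cliques:lem_coherence_bicat}, and the relations \ref{faceface}--\ref{degface3} are checked exactly as in \cref{built_diagram_of_cliques}, now using the pseudofunctor coherence axioms; since both families of generators of $(\underline{n}\downarrow\mathbf\Delta)$ are inverted and both $i$ and $m$ are invertible in $\pffc{\gLambda}$, every map of cliques in sight is invertible.

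Then \cref{clique_bijection} yields a functor $\int \doc \to \comp{\pffc{\gLambda}(A_0,A_n)}{W}$, and the proof of \cref{same_presentations} applies unchanged: it is a bijection on objects, it is full because its image contains all of the generators of $\pffc{\gLambda}$ listed in \cref{rmk:explicit_free_lax}, and it is faithful because composing it with $U$ recovers the projection $\pi$ of \cref{lem:consequences_of_projection_1}, which is an equivalence. Hence $\comp{\pffc{\gLambda}(A_0,A_n)}{W}$ is equivalent to $(\underline{n}\downarrow\mathbf\Delta)$ localized at all its morphisms. Finally — and this is the only genuinely new ingredient — I would observe that this localized category is an abstract clique, which immediately gives that all formal diagrams for a pseudofunctor commute with no further hypotheses (no \blacktie{} condition). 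This step is in fact easier than \cref{localized_comma_cat_is_thin}: the category $(\underline{n}\downarrow\mathbf\Delta)$ has an initial object, namely $\id\colon\underline{n}\to\underline{n}$, and the localization of any category with an initial object at all of its morphisms is a contractible groupoid, since writing $\iota_c$ for the unique map from the initial object to $c$ one has $f\circ\iota_c=\iota_d$ for every $f\colon c\to d$, so after localization $f=\iota_d\circ\iota_c^{-1}$ is determined by its endpoints.

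The bulk of the argument (the diagram of cliques and the isomorphism of categories) is mechanical repetition of the lax and normal-lax cases, so I do not expect any real obstacle there; the one point requiring care is bookkeeping — confirming that inverting the $m$-generators is compatible with the pseudofunctor coherence axioms so that the diagram of cliques is genuinely well defined and its structure maps are genuinely invertible — but since \cref{clique_bijection} and \cref{groth_presentation} absorb all relations automatically, this is not a substantive difficulty. The conceptual payoff, and the only new mathematical content, is the identification of the indexing category as $(\underline{n}\downarrow\mathbf\Delta)$ localized at all morphisms together with the remark that it is an abstract clique.
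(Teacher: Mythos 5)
Your proposal is correct and follows the paper's own route essentially verbatim: run the normal‑lax argument with both $i$ and $m$ as invertible generators, identify the indexing category as $(\underline{n}\downarrow\mathbf\Delta)[(\underline{n}\downarrow\mathbf\Delta)^{-1}]$, and observe that this localization is an abstract clique (the paper phrases this as ``a groupoid with an initial object''; your telescoping argument $f=\iota_d\circ\iota_c^{-1}$ is a slightly more explicit justification of the same fact). No gaps.
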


In \cref{built_diagram_of_cliques_normal} both the associator and unitor maps are isomorphisms and so the indexing category is the localization of the slice category $(\underline{n} \downarrow \mathbf\Delta)$ by all morphisms. This is a clique because it is a groupoid with an initial object. With this modification, the argument in \cref{subsec:normal_coherence} implies 

\begin{cor}[\cref{rmk:plan_of_proofs}\ref{rmk:plan_of_proofs_4}]\label{lax_coherence_pseudo}
For each component of $\pffc{\gLambda}(A_0,A_n)$, the supporting set functor
\[ \comp{\pffc{\gLambda}(A_0,A_n)}{W} \to (\underline{n} \downarrow \mathbf\Delta)[(\underline{n} \downarrow \mathbf\Delta)^{-1}] \]
is an equivalence of categories. Therefore $\pffc{\gLambda}(A_0,A_n)$ is thin.
 \end{cor}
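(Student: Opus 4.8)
The plan is to rerun the four-step template of \cref{rmk:plan_of_proofs} exactly as in \cref{subsec:normal_coherence}, changing only which generators are inverted. First I would record the pseudofunctor analogue of \cref{eq:normal_functor_supporting_morphism}: the assignment \eqref{eq:lax_functor_supporting_objects} extends to a supporting-set functor
\[ U\colon \comp{\pffc{\gLambda}(A_0,A_n)}{W} \to (\underline{n} \downarrow \mathbf\Delta)[(\underline{n} \downarrow \mathbf\Delta)^{-1}], \]
defined on generators exactly as in \cref{eq:lax_functor_supporting_morphism} --- the associator and unitor generators go to identity maps, the unit generator $i$ (\ref{rmk:explicit_free_lax_5}) goes to a coface map $d^i$, and the composition generator $m$ (\ref{rmk:explicit_free_lax_6}) goes to a codegeneracy map $s^i$. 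The one difference from the normal lax case is that for a pseudofunctor $m$ is invertible, so its image must be an isomorphism; this forces us to localize $(\underline{n} \downarrow \mathbf\Delta)$ at \emph{all} of its morphisms rather than only the injective ones, which is why the target of $U$ is the full localization.

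Next I would build the diagram of cliques indexed by $(\underline{n} \downarrow \mathbf\Delta)[(\underline{n} \downarrow \mathbf\Delta)^{-1}]$, sending a map $\alpha\colon \underline{n} \to \underline{k}$ to the clique $\bigodot_{j \in \underline{k}} \pff{\gLambda}\left( \bigodot_{i \in \alpha^{-1}(j)} X_i \right)$ of \cref{ex:define_odot_clique_F} and realizing the generators using \cref{built_diagram_of_cliques:lem_coherence_bicat}. Since $i$ and $m$ are now both isomorphisms, all the coface and codegeneracy generators --- now invertible in the indexing category --- are sent to invertible maps of cliques, and the relations \ref{faceface}--\ref{degface3} are checked on a single admissible model exactly as in \cref{built_diagram_of_cliques} and \cref{built_diagram_of_cliques_normal}, using whiskering, the hexagon \ref{eq:hex_lax_functor}, and the unit coherences \ref{eq:unit_lax_functor_all}. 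Applying \cref{clique_bijection} then produces a functor $\int_{(\underline{n}\downarrow\mathbf\Delta)[(\underline{n}\downarrow\mathbf\Delta)^{-1}]}\doc \to \comp{\pffc{\gLambda}(A_0,A_n)}{W}$, and the argument of \cref{same_presentations} carries over verbatim to show it is an isomorphism of categories: it is a bijection on objects by construction, it is full because the generators produced by \cref{groth_presentation} hit every generator of $\pffc{\gLambda}$ from \cref{rmk:explicit_free_lax}, and it is faithful because postcomposing with $U$ recovers the projection $\pi$ of \cref{lem:consequences_of_projection_1}, which is an equivalence.

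The last step is then immediate. \cref{lem:consequences_of_projection_1} gives that $U$ is an equivalence of categories, which is the first assertion. Since $(\underline{n} \downarrow \mathbf\Delta)$ has an initial object (the identity of $\underline{n}$), its full localization is a groupoid with an initial object, hence a contractible groupoid and in particular thin; therefore each component $\comp{\pffc{\gLambda}(A_0,A_n)}{W}$ is thin, and so $\pffc{\gLambda}(A_0,A_n)$ is thin.

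I do not expect a genuine obstacle here --- every hard point was already settled in the lax and normal lax cases. The only place that wants a moment of care is checking that $(\underline{n} \downarrow \mathbf\Delta)[(\underline{n} \downarrow \mathbf\Delta)^{-1}]$ admits the ``all generators inverted, same relations'' presentation needed to invoke \cref{groth_presentation}, and dually that $U$ is well defined on this localization (every relation of $\pffc{\gLambda}$ still maps to a consequence of the relations of the localized slice). Both follow from the universal property of localization together with the observation that the relations \ref{faceface}--\ref{degface3} already hold in $(\underline{n} \downarrow \mathbf\Delta)$ before anything is inverted.
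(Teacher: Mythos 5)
Your proposal is correct and follows essentially the same route as the paper, which itself only sketches this case as "rerun \cref{subsec:normal_coherence} with the indexing category replaced by the full localization $(\underline{n} \downarrow \mathbf\Delta)[(\underline{n} \downarrow \mathbf\Delta)^{-1}]$, which is a clique because it is a groupoid with an initial object." Your fleshed-out version of steps (1)--(4), including the observation that the identity of $\underline{n}$ is initial in the slice so the full localization is contractible, matches the paper's intended argument.
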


This is enough to prove coherence for pseudofunctors.

\subsubsection{A more general theorem for pseudofunctors}
The coherence theorem for pseudofunctors (\cref{strong_coherence}) also has a more general statement involving the free pseudofunctor on a \emph{map} of graphs, rather than a single graph.

The category $\onepLax$ of pseudofunctors and strict maps between them admits a forgetful functor to the arrow category of graphs. Its left adjoint sends each map of graphs $H\colon \gLambda \to M$ to a pseudofunctor
\[ \frbic{\gLambda} \xto{\fmfrbic{H}} \mfrbic{H} \]
with the following property: for any pseudofunctor $\sC \xto{F} \sD$ and strictly commuting square of graphs in \cref{fig:sgFtoD_1}
there exist unique strict maps of bicategories (the dashed maps) is \cref{fig:sgFtoD_2} making the square in \cref{fig:sgFtoD_2} and all regions in \cref{fig:sgFtoD_3} commute strictly.
\begin{figure}[h]
	\hspace{1cm}
     \centering
     \begin{subfigure}[b]{0.18\textwidth} 
		\centering 
\xymatrix{
	\gLambda \ar[d] \ar[r]^-H &M \ar[d] \\
	\fobic{\sC} \ar[r]^-{\fobic{F}} & \fobic{\sD}
} 
	\caption{Graph maps}\label{fig:sgFtoD_1}
     \end{subfigure}
     \hfill
     \begin{subfigure}[b]{0.2\textwidth}
         \centering
\xymatrix{
	\frbic{\gLambda} \ar@{-->}[d]^-{\exists !} \ar[r]^-{\fmfrbic{H}} & \mfrbic{H} \ar@{-->}[d]^-{\exists !} \\
	\sC \ar[r]^-F & \sD
} 
	\caption{Pseudofunctors}\label{fig:sgFtoD_2}
     \end{subfigure}
     \hfill
     \begin{subfigure}[b]{0.25\textwidth}
         \centering
\xymatrix{
	\gLambda \ar@/_3em/[dd] \ar[d] \ar[r]^-H & M \ar[d] \ar@/^3em/[dd] \\
	\fobic{\frbic{\gLambda}} \ar@{-->}[d] \ar[r]^-{\fobic{\fmfrbic{H}}} & \fobic{\mfrbic{ H}} \ar@{-->}[d] \\
	\fobic{\sC} \ar[r]^-{\fobic{F}} & \fobic{\sD}
} 
	\caption{Compatibility}\label{fig:sgFtoD_3}
     \end{subfigure}
	\hspace{1cm}
        \caption{Definition of the functor $\mfrbic{H} \to \sD$}\label{fig:sgFtoD}
\end{figure}

In particular, there is a functor $\mfrbic{H} \to \sD$ for each pseudofunctor $F\colon \sC\to \sD$. An {\bf extended formal diagram} for $F$ is any diagram in $\sD$ that lifts against the functor $\mfrbic{H} \to \sD$.

\begin{thm}[Coherence for pseudofunctors, relative version]\label{strong_coherence_extended}
All extended formal diagrams for a pseudofunctor $F$ commute.
\end{thm}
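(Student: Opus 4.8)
The plan is to run the argument of \cref{strong_coherence} while accounting for two new features of the codomain $\mfrbic{H}$: it now contains $1$-cells coming from $M$ that lie outside every copy of $\fmfrbic{H}$, and the atomic $1$-cell symbols inside $\fmfrbic{H}$ are identified with their images under $H$. So I would first record the presentation of $\mfrbic{H}$, obtained from its universal property exactly as \cref{rmk:explicit_free_lax} records that of $\pffc{\gLambda}$: the $0$-cells are the vertices of $M$; the $1$-cells are parenthesizations of words $W_1\odot\cdots\odot W_\ell$ in which each $W_i$ is a formal unit, an edge of $M$, or a term $\fmfrbic{H}(V)$ with $V$ a $1$-cell of $\frbic{\gLambda}$, subject to the identification $\fmfrbic{H}(X)=H(X)$ for $X$ an edge of $\gLambda$ and to composability of the resulting string of edges of $M$; the generating $2$-cells are the associators and unitors applied inside the terms $\fmfrbic{H}(-)$ and to the outer word, together with the comparison maps $i\colon I\to\fmfrbic{H}(I)$ and $m\colon\fmfrbic{H}(V)\odot\fmfrbic{H}(V')\to\fmfrbic{H}(V\odot V')$, which are now \emph{invertible}; the relations are the pentagon, triangle, lax-functor unit and hexagon coherences, whiskering and naturality, just as in \cref{rmk:explicit_free_lax}. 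Since every extended formal diagram for $F$ lifts along $\mfrbic{H}\to\sD$, it suffices to show each hom-category $\mfrbic{H}(A,B)$ is thin; being generated by invertible $2$-cells it is a groupoid, so it is enough to show each of its connected components is an abstract clique.

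The combinatorial invariant of a component is its \emph{$M$-skeleton}: given a $1$-cell, delete the formal units and replace each $\fmfrbic{H}(V)$ by the string of edges of $M$ obtained by applying $H$ to the edges of $\gLambda$ occurring in $V$, producing a composable string $Y_1,\ldots,Y_N$ of edges of $M$. Each generating $2$-cell leaves the $M$-skeleton unchanged --- associators and unitors obviously do; $m$ replaces $\fmfrbic{H}(V)\odot\fmfrbic{H}(V')$ by $\fmfrbic{H}(V\odot V')$, merging two adjacent blocks of the skeleton without altering the underlying string; and $i$ introduces an $\fmfrbic{H}(I)$, whose skeleton is empty --- so the $M$-skeleton is constant on components. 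Conversely, using $m^{-1}$ to unwrap every $\fmfrbic{H}(-)$ down to its atomic constituents (each atomic $\fmfrbic{H}(X)$ being the bare edge $H(X)$), together with $i^{-1}$ and the unitors to delete empty wrappers and units, any $1$-cell is connected to the ``fully bare'' word $Y_1\odot\cdots\odot Y_N$ built inside $\frbic{M}$; so the $M$-skeleton determines the component.

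Fix a component $\mathcal{C}$ with $M$-skeleton $Y_1,\ldots,Y_N$. The $Y_t$ not in the image of $H$ can only occur bare and split the skeleton into maximal segments lying in the image of $H$, of lengths $n_1,\ldots,n_q$, with out-of-image edges $e_1,\ldots,e_{q-1}$ interspersed. Put $\bI=\prod_{j=1}^q(\underline{n_j}\downarrow\mathbf\Delta)[(\underline{n_j}\downarrow\mathbf\Delta)^{-1}]$; each factor is the localization of a slice category (which has an initial object) at all of its morphisms, hence a groupoid with an initial object, hence an abstract clique, so $\bI$ is an abstract clique. Choosing for each segment $j$ edges $X^{(j)}_1,\ldots,X^{(j)}_{n_j}$ of $\gLambda$ lying over that segment, I would define a diagram of cliques on $\bI$ whose value at $(\alpha_j\colon\underline{n_j}\to\underline{k_j})_{j}$ is the clique
\[
\bigodot\left( \bigodot_{t\in\underline{k_1}}\fmfrbic{H}\Big(\bigodot_{i\in\alpha_1^{-1}(t)}X^{(1)}_i\Big), e_1, \ldots, e_{q-1}, \bigodot_{t\in\underline{k_q}}\fmfrbic{H}\Big(\bigodot_{i\in\alpha_q^{-1}(t)}X^{(q)}_i\Big) \right),
\]
where the outer $\bigodot$ runs over the displayed blocks (the $e_j$ being single edges); this is a clique by coherence for bicategories (\cref{bicat_coherence}) for the outer word together with \cref{ex:define_odot_clique_F} for each wrapped block. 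A generating coface in the $j$-th factor is sent to an instance of $i$ (inserting an empty wrapper $\fmfrbic{H}(I)$ into segment $j$) and a generating codegeneracy to an instance of $m$ (merging two adjacent wrappers in segment $j$), each well defined by whiskering; the relations \ref{faceface}--\ref{degface3} inside each factor are checked as in \cref{built_diagram_of_cliques} using the lax-functor coherences \ref{eq:unit_lax_functor_all} and \ref{eq:hex_lax_functor}, while the swap relations between distinct factors hold by whiskering since the comparison maps act on disjoint segments.

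By \cref{clique_bijection} this diagram determines a functor $\int_{\bI}\doc\to\mathcal{C}$. It is a bijection on objects --- a point of $\bI$ together with a representative of the clique over it records exactly a segment-wrapping, the internal parenthesizations, and the outer parenthesization, i.e.\ a $1$-cell of $\mathcal{C}$ --- and surjective on morphisms, since the vertical generators hit all associators and unitors (inside and outside $\fmfrbic{H}$) and the horizontal generators hit all instances of $i$ and $m$. As $\bI$ is an abstract clique, so is $\int_{\bI}\doc$ by \cref{lem:consequences_of_projection_2}; the functor is therefore faithful, and being bijective on objects and surjective on morphisms it is an isomorphism. Hence $\mathcal{C}$ is a clique, $\mfrbic{H}(A,B)$ is thin, every diagram of $2$-cells in $\mfrbic{H}$ commutes, and so does every extended formal diagram for $F$. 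The one genuinely new point, and the main obstacle, is pinning down the presentation of $\mfrbic{H}$ together with the bookkeeping forced by the identification $\fmfrbic{H}(X)=H(X)$ and the out-of-image edges of $M$ --- this is exactly what necessitates the $M$-skeleton and the segment decomposition; every verification after that is a line-for-line repetition of \cref{sec:coherence_bicat} and \cref{proof_lax_not_coherence}. (When $H$ is surjective on edges there are no out-of-image edges, $q=1$, and the argument collapses to the proof of \cref{strong_coherence}.)
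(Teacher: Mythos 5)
Your argument correctly reproduces the part of the paper's proof that deals with the \emph{unidentified} version of the construction (the paper calls it $\prelimmfrbic{H}$): fixing a tuple of composable lifts and indexing the wrapped/unwrapped states of each segment by a product of localized slice categories is exactly how the paper shows each component of $\prelimmfrbic{H}$ is a clique. But the claim that your functor $\int_{\bI}\doc\to\mathcal{C}$ is a bijection on objects is false in general, and this is where the real content of the theorem lives. The presentation of $\mfrbic{H}$ identifies the \emph{atomic} term $\fmfrbic{H}(X)$ with $H(X)$ for every edge $X$ of $\gLambda$, but it does \emph{not} identify $\fmfrbic{H}(X\odot X')$ with $\fmfrbic{H}(W\odot W')$ for two different composable lifts $(X,X')$, $(W,W')$ of the same pair of edges of $M$. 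When $H$ is not injective on edges, both of these $1$-cells lie in the component of $Y_1\odot Y_2$ (each is connected to the bare word by $m^{-1}$ followed by the atomic identification), yet only one of them is reachable from your fixed choice of lifts $X^{(j)}_i$. Relatedly, your segment decomposition by ``maximal segments lying in the image of $H$'' ignores that adjacent edges of $M$ may each admit preimages without admitting a \emph{composable} pair of preimages, so even the set of valid lift-data is more complicated than a single product over segments.

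Consequently a component of $\mfrbic{H}$ is not one copy of your Grothendieck construction but an identification of many components of $\prelimmfrbic{H}$ --- one for each subset $S\subseteq\underline{n}$ and each choice of composable lifts $\{X_i\}_{i\in S}$ --- glued along the objects where some $\fmfrbic{H}(X_i)$ appears atomically and can be replaced by $Y_i$. The missing step is to show that this gluing of cliques is still a clique, i.e.\ that a composite passing through one system of lifts agrees with a parallel composite passing through another. The paper does this by putting a partial order on the lift-data, extending it to a total order, and inductively applying a gluing lemma (\cref{lem:induction_subcat}) whose hypothesis --- that the common objects and generators span a \emph{full} subcategory of the piece being attached --- must itself be verified by exhibiting explicit connecting morphisms ($m^{-1}$ and unitors isolating an atomic $\fmfrbic{H}(X_j)$ before substituting $Y_j$). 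None of this appears in your proposal, so as written the proof only establishes coherence for the unidentified bicategory $\prelimmfrbic{H}$, not for $\mfrbic{H}$. (In the special case where $H$ is injective on edges and all adjacent lifts are composable, your argument does essentially go through, which is why it looks like a ``line-for-line repetition''; the general case is not.)
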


\begin{present}\label{rmk:present_prelim_free_lax_map}
Before the presentation for $\mfrbic{H}$, we first describe a closely related bicategory $\prelimmfrbic{H}$ that we need for the proof of \cref{strong_coherence}.

The 0-cells of $\prelimmfrbic{H}$ are vertices of $M$.  The 1-cells of $\prelimmfrbic{H}$  are 
parenthesizations of 
\[W_1\odot \cdots \odot W_\ell\]
as in \cref{rmk:explicit_free_lax}, but where each $W_i$ is 
\begin{enumerate}
\item\label{item:explicit_free_lax_stronger} a 1-cell of $\frbic{\gLambda}$ written inside $\fmfrbic{H}(...)$,
\item\label{item:explicit_free_lax_stronger_3}  a formal unit, \emph{or} 
\item\label{item:explicit_free_lax_stronger_2}   an edge of $M$, 

\end{enumerate}
and the total resulting string of edges must be composable, in the sense that adjacent edges in $\gLambda$ are composable, and when $H$ is applied every edge in $\gLambda$, the resulting edges in $M$ are all composable.

The generators are \ref{rmk:explicit_free_lax_1} to \ref{rmk:explicit_free_lax_6} with the additional assumption that $m$ and $i$ are invertible generators (\ref{rmk:explicit_free_lax_5} and \ref{rmk:explicit_free_lax_6}).  The relations are \ref{eq:pent_tri_lax_functor_all} to \ref{eq:wisk_nat_lax_functor}. 
\end{present}

Each component of $\prelimmfrbic{H}$ is associated to a tuple
\begin{equation}\label{eq:H_0_indexing}
((Y_1,\ldots ,Y_n),S,\{X_i\}_{i\in S})
\end{equation}
satisfying the following conditions.
\begin{enumerate}
\item  $Y_1,\ldots ,Y_n$ are a string of composable edges in $M$, 
\item $S$ is a possibly empty subset of $\{1,\ldots n\}$, and 
\item $X_i$ is a choice of preimage of $Y_i$ for each $i\in S$ such that   adjacent preimages $X_i$, $X_{i+1}$ are composable in $G$.
\end{enumerate}
(There is not necessarily a bijection between the edges $Y_i$ and words $W_i$ as in \cref{rmk:present_prelim_free_lax_map}!)

Fix one such component. Partition $S$ into its maximal consecutive subsets $\underline{n_j} \subset S$, $j \in J$. As these are subsets of $\underline{n}$, each one is a totally ordered set and so the comma category $(\underline{n_j} \downarrow \mathbf\Delta)$ can be defined.

For each collection of totally ordered maps $\{\alpha_j\colon \underline{n_j} \to \underline{k_j}\}_{j \in J}$, we define $\underline{k}$ to be the union of all of the $\underline{k_j}$ and the set $\underline{n} \setminus S$. The set $\underline{k}$ has an total order induced by the orders on $\underline{n}$, $J$, and the $\underline{k_j}$.
Define $\alpha\colon \underline{n} \to \underline{k}$ to be the maps $\alpha_j$ on each subset $\underline{n_j}$, and the identity of $\underline{n} \setminus S$ otherwise.

Fixing one set of maps $\{\alpha_j\}_{j \in J}$, we take all 1-cells in $\prelimmfrbic{H}$ obtained by
\begin{itemize}
	\item taking the edges $Y_i$ with $i \in \underline{n} \setminus S$ (terms of type \ref{item:explicit_free_lax_stronger_2} from \cref{rmk:present_prelim_free_lax_map}),
	\item for each $j \in J$ and $\ell \in \underline{k_j}$, taking a 1-cell of $\frbic{\gLambda}$ on the edges $\{X_i\}_{i \in \alpha_j^{-1}(\ell)}$ and applying $\fmfrbic{H}(...)$ (terms of type \ref{item:explicit_free_lax_stronger} from \cref{rmk:present_prelim_free_lax_map}), and
	\item any finite number of formal units (terms of type \ref{item:explicit_free_lax_stronger_3} from \cref{rmk:present_prelim_free_lax_map}).
\end{itemize}
We arrange the terms to respect the ordering in $
	\underline{n}$ and we include  all parenthesization.
	These form a clique by taking the associators and unitors from $\frbic{\gLambda}$ inside the groupings $\fmfrbic{H}(...)$ and on the total word. In other words, the clique is a product
	\begin{equation}\label{eq:H_0_define_clique}
	\ob\left(\bigodot_{\ell \in \underline{k}}Z_\ell\right)\times \prod_{j\in J}\prod_{\ell\in\underline{k_j}}\ob \left( \bigodot_{i \in \alpha^{-1}(\ell)} X_i \right)
	\end{equation}
	where the $Z_\ell$ are placeholders as in \cref{ex:define_odot_clique_F}.
	
	\begin{example}
Consider a 1-cell of the form
\[\fmfrbic{H}(X_1\odot X_2\odot I\odot X_3)\odot I\odot \fmfrbic{H}( X_4)\odot Y_5\odot \fmfrbic{H}(X_6\odot I\odot X_7)\]
with additional parentheses not drawn. The $X_i$ are edges in $\gLambda$ and $Y_5$ is an edge in $M$. This 1-cell arises from the clique in which
$\underline{n} = \{1,2,3,4,5,6,7\}$,
$S = \{1,2,3,4,6,7\}$,
$J=\{1,2\}$,
$\underline{p_1}=\{1,2,3,4\}$, $\underline{p_2}=\{6,7\}$,
$\underline{k} = \{1,2,3,4\}$,
$\underline{k_1}=\{1,2\}$, $\underline{k_2}=\{4\}$.  The function 
\[ \alpha_1\colon \underline{p_1}\to \underline{k_1}\] is defined by $\alpha_1(1)=\alpha_1(2)=\alpha_1(3)=1$ and $\alpha_1(4)=2$.  The  function 
\[ \alpha_2\colon \underline{p_2}\to \underline{k_2}\] is defined by $\alpha_2(6)=\alpha_2(7)=4$. 
\end{example}

We get such a clique for each object in the product category
\[ \prod_j (\underline{n_j} \downarrow \Delta)[\Delta^{-1}]. \]
We define maps between the cliques in \eqref{eq:H_0_define_clique} by sending the coface and codegeneracy maps in each of the categories $(\underline{n_j} \downarrow \Delta)$ to instances of $i$ and $m$, respectively.

\begin{lem}[\cref{rmk:plan_of_proofs}\ref{rmk:plan_of_proofs_1}]\label{built_diagram_of_cliques_maps_prelim} 
The cliques in \eqref{eq:H_0_define_clique} and the maps induced by  $m$ and $i$ define a diagram of cliques.
\end{lem}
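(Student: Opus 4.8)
The plan is to run exactly the template of \cref{built_diagram_of_cliques}, but over the product indexing category $\prod_{j\in J} (\underline{n_j} \downarrow \mathbf\Delta)[\mathbf\Delta^{-1}]$. By \cref{clique_bijection} it is enough to assign a map among the cliques \eqref{eq:H_0_define_clique} to each generator of this category and to verify each defining relation on one admissible model. Combining \cref{product_presentation} (and the remark following it, for invertible generators) with \cref{slice_presentation} and \cref{delta_presentation}, the generators are the coface and codegeneracy maps -- now both \emph{invertible} -- in each factor $j$, and the relations are: the relations \ref{faceface}-\ref{degface3} together with the inverse relations inside each factor, and one swap relation for every pair of generators in distinct factors $j\ne j'$.

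First I would define the maps of cliques on generators. Because the blocks $\underline{n_j}$ are maximal consecutive subsets of $S$, distinct blocks are separated in every word by at least one edge of $M$ (a term of type \ref{item:explicit_free_lax_stronger_2}), so each factor $j$ controls a disjoint subword built from $\fmfrbic{H}(\cdots)$-groupings and formal units only. On that subword \cref{built_diagram_of_cliques:lem_coherence_bicat} already produces the required maps of cliques: a coface $d^i$ is sent to an instance of $i\colon I_{\fmfrbic{H}(A)} \to \fmfrbic{H}(I_A)$ and a codegeneracy $s^i$ to an instance of $m\colon \fmfrbic{H}(W)\odot\fmfrbic{H}(W') \to \fmfrbic{H}(W\odot W')$. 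Whiskering with the remaining factors, with the interspersed $M$-edges $Y_i$ for $i\in\underline{n}\setminus S$, and with any units turns these into maps between the full cliques \eqref{eq:H_0_define_clique}; that those products are cliques is \cref{bicat_coherence}, as in \cref{ex:define_odot_clique_F}. Since $i$ and $m$ are invertible generators in $\prelimmfrbic{H}$ (\cref{rmk:present_prelim_free_lax_map}), applying $i^{-1}$ and $m^{-1}$ on the corresponding admissible models furnishes inverse maps of cliques, so these generators go to isomorphisms, as the localization demands.

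Next I would check the relations, all on a single model admissible for every map involved. The within-factor relations \ref{faceface}-\ref{degface3} are verbatim the computations in \cref{built_diagram_of_cliques}: whiskering \ref{rmk:explicit_free_bicat_5} when the two operations act in disjoint spots, the hexagon \ref{eq:hex_lax_functor} when two codegeneracies become adjacent (taking the mediating isomorphisms to be associators via \cref{bicat_coherence}), and the unit-composition coherence \ref{eq:unit_lax_functor_all} when a freshly inserted unit is absorbed by a codegeneracy; the inverse relations then follow formally. A swap relation between factors $j\ne j'$ holds by whiskering \ref{rmk:explicit_free_bicat_5}, since the two maps act on disjoint subwords. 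Feeding all of this into \cref{clique_bijection} yields the claimed diagram of cliques, the degenerate cases ($S=\emptyset$, or some $\underline{n_j}=\emptyset$, or $J=\emptyset$) being immediate since the corresponding factors contribute no generators.

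I do not anticipate a real obstacle: every verification has already been carried out in \cref{built_diagram_of_cliques,built_diagram_of_cliques_normal}, and the only genuinely new content is the combinatorial observation that the terms of type \ref{item:explicit_free_lax_stronger_2} and \ref{item:explicit_free_lax_stronger_3} are untouched by $i$ and $m$, so the whole problem localizes to a single tensor factor at a time. The one point requiring a moment's care is confirming that, after inverting $\mathbf\Delta$, both families of generators are sent to isomorphisms -- which is precisely why $\prelimmfrbic{H}$ was defined with $i$ and $m$ invertible.
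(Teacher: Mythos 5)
Your proposal is correct and is exactly the argument the paper intends: the paper's own proof of \cref{built_diagram_of_cliques_maps_prelim} is the single sentence ``The proof is the same as in \cref{built_diagram_of_cliques},'' and you have simply carried out that transfer explicitly, including the only genuinely new points (the product/swap relations between distinct factors $j \ne j'$, handled by whiskering since the blocks are separated by $M$-edges, and the observation that the localized generators land on the invertible $i$ and $m$ of $\prelimmfrbic{H}$). No gaps.
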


The proof is the same as in \cref{built_diagram_of_cliques}.  Let $\docf$ be the diagram of cliques in \cref{built_diagram_of_cliques_maps_prelim}.  
\cref{clique_bijection} defines a functor
\begin{equation}\label{laxfunctor_functor_symmetry_maps_prelim}  \int_{\prod_j(\underline{n_j} \downarrow \mathbf\Delta)[\mathbf\Delta^{-1}]} \doc \to \comp{\prelimmfrbic{H}(A_0,A_n)}{W}.
\end{equation}

\begin{lem}[\cref{rmk:plan_of_proofs}\ref{rmk:plan_of_proofs_3}]\label{same_presentations_maps_prelim}
 	The functor in \eqref{laxfunctor_functor_symmetry_maps_prelim} is an isomorphism of categories and therefore the components of $\prelimmfrbic{H}(A_0,A_n)$ are cliques.
\end{lem}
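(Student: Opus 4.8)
The plan is to follow the scheme of \cref{rmk:plan_of_proofs}\ref{rmk:plan_of_proofs_3}, repeating almost verbatim the argument used for \cref{same_presentations} and its pseudofunctor variant \cref{lax_coherence_pseudo}, but now over the more elaborate indexing category $\prod_j(\underline{n_j}\downarrow\mathbf\Delta)[\mathbf\Delta^{-1}]$. By construction through \cref{clique_bijection}, the functor \eqref{laxfunctor_functor_symmetry_maps_prelim} is a bijection on objects of the component $\comp{\prelimmfrbic{H}(A_0,A_n)}{W}$, so the task reduces to showing it is full and faithful.

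For fullness, I would write out generators of $\int_{\prod_j(\underline{n_j}\downarrow\mathbf\Delta)[\mathbf\Delta^{-1}]}\doc$ using \cref{groth_presentation} together with the presentations of the base supplied by \cref{product_presentation}, \cref{slice_presentation}, and \cref{delta_presentation}. The vertical generators are the associator and unitor isomorphisms of $\frbic{\gLambda}$, applied both inside the groupings $\fmfrbic{H}(\dots)$ and on the outer word; the horizontal generators are the maps $m$ and $i$ — now invertible, since each block $(\underline{n_j}\downarrow\mathbf\Delta)$ has been localized, so its coface and codegeneracy maps are invertible. Comparing with \cref{rmk:present_prelim_free_lax_map}, these are precisely the generators \ref{rmk:explicit_free_lax_1} through \ref{rmk:explicit_free_lax_6} of $\prelimmfrbic{H}$ that can occur within the fixed component; the inert edges of $M$ (terms of type \ref{item:explicit_free_lax_stronger_2}) contribute no generators. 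Hence every generator of $\comp{\prelimmfrbic{H}(A_0,A_n)}{W}$ lies in the image, and the functor is full.

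For faithfulness, the cleanest route is to observe that the source is thin. The base $\prod_j(\underline{n_j}\downarrow\mathbf\Delta)[\mathbf\Delta^{-1}]$ is a finite product of cliques: each factor is the localization, at all of its morphisms, of a comma category possessing an initial object, hence a groupoid with an initial object, hence a clique (the same observation used in the proof of \cref{strong_coherence}), and a finite product of cliques is a clique. Since the fibers $\doc(i)$ are products of $\bigodot$-cliques as in \cref{ex:define_odot_clique_F}, \cref{lem:consequences_of_projection_2} gives that $\int_{\prod_j(\underline{n_j}\downarrow\mathbf\Delta)[\mathbf\Delta^{-1}]}\doc$ is itself an abstract clique, in particular thin, so \eqref{laxfunctor_functor_symmetry_maps_prelim} is automatically faithful. (Alternatively, exactly as in \cref{same_presentations}, one could compose with a supporting functor landing in $\prod_j(\underline{n_j}\downarrow\mathbf\Delta)[\mathbf\Delta^{-1}]$ and invoke \cref{lem:consequences_of_projection_1}.) Being full, faithful, and a bijection on objects, \eqref{laxfunctor_functor_symmetry_maps_prelim} is an isomorphism of categories; since its source is a clique, each component $\comp{\prelimmfrbic{H}(A_0,A_n)}{W}$ is a clique.

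The main obstacle is the fullness bookkeeping: one must confirm that the generator list produced by \cref{groth_presentation} — horizontal generators coming separately from each maximal consecutive block $\underline{n_j}\subseteq S$, together with the vertical generators from the product of comma categories and from the fibers — matches up exactly with the generators of $\prelimmfrbic{H}$ in \cref{rmk:present_prelim_free_lax_map}, and in particular that partitioning $S$ into blocks loses no instance of $m$ (which only ever merges two adjacent $\fmfrbic{H}(\dots)$ terms, necessarily within a single block) or $i$. Once that matching is arranged, everything else is formal; in fact the argument is shorter than that of \cref{same_presentations}, since the localization renders the source thin and so faithfulness is immediate.
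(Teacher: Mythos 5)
Your proposal is correct and follows essentially the same route as the paper: the paper's own proof is the one-line observation that \eqref{laxfunctor_functor_symmetry_maps_prelim} is bijective on objects and surjective on morphisms while its source is an abstract clique (hence thin, giving faithfulness for free), so it is an isomorphism of categories. Your version simply spells out the generator-matching for fullness and the reason the source is a clique in more detail.
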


\begin{proof}
The map \eqref{laxfunctor_functor_symmetry_maps_prelim} is bijective on objects and surjective on morphisms, and the source is an abstract clique, so it is an isomorphism of categories.
\end{proof}

\begin{present}\label{rmk:present_free_lax_map}
The presentation for $\mfrbic{H}$ is the same as \cref{rmk:present_prelim_free_lax_map} with the additional \emph{identification} that 
for any edge $X_i$ in $\gLambda$, any term of $\fmfrbic{H}(X_i)$ in a 1-cell can be substituted for $Y_i = H(X_i)$.  (This is \emph{not} an additional isomorphism.) In addition, each generator that does not combine this term with another using $m$, or insert a new unit inside this particular term using $\ell$ or $r$, is also identified to the corresponding generator after the substitution $\fmfrbic{H}(X_i) = Y_i$ is made.
\end{present}

There is a functor
\begin{equation}\label{glue_along_identifications}
	Q\colon \prelimmfrbic{H}\to \mfrbic{H}
\end{equation}
that is the identity on 0-cells and identifies 1- and 2-cells with their images under the additional identification in \cref{rmk:present_free_lax_map}. Fix a string of composable edges $Y_1$, $\ldots$, $Y_n$ in $M$. The component of $Y_1\odot Y_2\odot \cdots \odot Y_n$ in $\mfrbic{H}(A_0,A_n)$ has as its preimage the components corresponding to all choices of $S \subseteq \underline{n}$ and choices of composable $X_i$ for $i \in S$, as in \eqref{eq:H_0_indexing}.

\begin{lem}\label{lem:partial_order}
There is a partial order on the preimage of $Y_1\odot Y_2\odot \cdots \odot Y_n$ 
where 
\[(T,\{W_i\}_{i\in T})< (S,\{X_i\}_{i\in S})\]
if $T\subset S$ and $W_i=X_i$ for all $i\in T$.
\end{lem}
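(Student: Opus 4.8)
The plan is to verify directly that the displayed relation makes the stated set into a partial order. Recall from the paragraph preceding the lemma that the preimage of the component of $Y_1 \odot \cdots \odot Y_n$ in $\mfrbic{H}(A_0,A_n)$ is, by \cref{same_presentations_maps_prelim}, a disjoint union of cliques indexed by the tuples of the form \eqref{eq:H_0_indexing} with the string $(Y_1,\ldots,Y_n)$ held fixed; that is, by pairs $(S,\{X_i\}_{i\in S})$ with $S \subseteq \underline{n}$ and $H(X_i) = Y_i$, subject only to the requirement that consecutive preimages $X_i,X_{i+1}$ (whenever $i,i+1 \in S$) be composable in $\gLambda$. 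The first point to record is that this index set is closed under restricting the data: if $(S,\{X_i\}_{i\in S})$ appears and $T \subseteq S$, then $(T,\{X_i\}_{i\in T})$ appears as well, since the composability constraint is a condition on consecutive indices only and therefore survives passage to a subset. This is what gives the relation content — the intended "smaller" component genuinely exists — and it lets us define $\le$ on the preimage as "$<$ or equal."

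Then I would check the three axioms, each a short chase of definitions. Reflexivity of $\le$ is immediate. For antisymmetry, suppose $(T,\{W_i\}) \le (S,\{X_i\})$ and $(S,\{X_i\}) \le (T,\{W_i\})$; if neither is an equality then $T \subset S$ and $S \subset T$, which is impossible, so at least one relation is an equality, whence $T = S$ and then $W_i = X_i$ for all $i$, so the two pairs coincide. For transitivity, given $(U,\{V_i\}) \le (T,\{W_i\}) \le (S,\{X_i\})$, the set inclusions compose to $U \subseteq S$, and for every $i \in U$ we have $V_i = W_i$ (as $i \in U \subseteq T$) and $W_i = X_i$ (as $i \in T \subseteq S$), hence $V_i = X_i$; the inclusion $U \subseteq S$ is proper unless both input relations are equalities, in which case so is the composite. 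Thus $\le$ is a partial order whose strict part is exactly the relation displayed in the lemma.

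There is no substantive obstacle here: the lemma is pure bookkeeping, and the only step that deserves a sentence of care is the closure-under-restriction observation, without which the formula for $<$ would not evidently relate two actual elements of the preimage — and even that is immediate once one notes that composability of a string is a constraint on consecutive pairs. I would also remark, since it is what the partial order is used for downstream, that $<$ is well-founded: along any strictly decreasing chain the subset $S$ strictly decreases, and $S$ ranges over subsets of the finite set $\underline{n}$, so chains have length at most $n+1$. This makes the poset a suitable base for an induction in the proof of \cref{strong_coherence_extended}.
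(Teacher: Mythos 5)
Your verification is correct; the paper in fact states this lemma without proof, treating it as routine bookkeeping, and your direct check of reflexivity, antisymmetry, and transitivity (together with the closure-under-restriction and finiteness/well-foundedness remarks, which are exactly what the subsequent induction in the proof of \cref{strong_coherence_extended} relies on) is the intended argument. Nothing is missing.
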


Our goal is to show that the component $\comp{\mfrbic{H}(A_0,A_n)}{Y_1 \odot \cdots \odot Y_n}$ is a clique. Without loss of generality, $H$ consists of edges $Y_1, \cdots, Y_n$ and $G$ has finitely many edges in total. Therefore the partial ordering in \cref{lem:partial_order} is on a finite set. Extend it to a total ordering and let $\bC_{> (S,\{X_i\}_{i\in S})}$ be the category constructed by identifying the presentations of those components of $\prelimmfrbic{H}$ corresponding to tuples higher in the ordering than $(S,\{X_i\}_{i\in S})$. 

Inductively, assume we have shown that $\bC_{> (S,\{X_i\}_{i\in S})}$ is a clique and we wish to show that $\bC_{\geq (S,\{X_i\}_{i\in S})}$ is a clique.

We now formalize the process of adding a component of $\prelimmfrbic{H}$ to $\bC_{> (S,\{X_i\}_{i\in S})}$ giving us $\bC_{\geq (S,\{X_i\}_{i\in S})}$. Let $\bC$ be a category with a presentation, $O_{\bC}$ be a nonempty subset of the objects of $\bC$ and $G_{\bC}$ be a subset of the generators of $\bC$.  Let $(O_{\bC},G_{\bC})$ be subcategory of $\bC$ on the objects of $O_{\bC}$ generated by $G_{\bC}$.   Let $\bD$ be another such category with nonempty subset of objects $O_{\bD}$ and generators $G_{\bD}$.  
Given compatible bijections $\alpha\colon O_{\bC}\to O_{\bD}$ and $\beta\colon G_{\bC}\to G_{\bD}$, define 
a category $G(\alpha,\beta)$ with 
\begin{itemize}
\item objects the pushout of the objects of $\bC$ and those of $\bD$ along the bijection $O_{\bC}\to O_{\bD}$
\item generators the pushout of the morphisms  of $\bC$ and those of $\bD$ along the bijection $G_{\bC}\to G_{\bD}$, and
\item all relations in $\bC$ and $\bD$.
\end{itemize}

\begin{lem}\label{lem:induction_subcat}
For categories with presentations and bijections as above, if 
\begin{enumerate}
\item $\bC$ and $\bD$ are cliques, and
\item $(O_{\bD},G_{\bD})$ generates a full subcategory of $\bD$, 
\end{enumerate}
then $G(\alpha,\beta)$ is a clique.
\end{lem}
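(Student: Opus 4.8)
The plan is to verify directly the three conditions in the definition of an abstract clique. Write $V_{\bC}$ and $V_{\bD}$ for the object sets of $\bC$ and $\bD$ and let $O = O_{\bC} = O_{\bD}$ denote the identified objects, so $\ob G(\alpha,\beta) = V_{\bC}\cup V_{\bD}$ with $V_{\bC}\cap V_{\bD}=O$. Since $\bC$ and $\bD$ are cliques (hence nonempty and connected) and $O$ is nonempty, $G(\alpha,\beta)$ is nonempty and connected. It is a groupoid: any generator $g$ of $\bC$ is invertible \emph{in} $\bC$, its inverse being a word $w$ in the generators of $\bC$ with $wg=\id$ and $gw=\id$; these equations are consequences of the relations of $\bC$, which are imposed on $G(\alpha,\beta)$, so $g$ is invertible there, and symmetrically for generators of $\bD$. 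As every morphism of $G(\alpha,\beta)$ is a word in generators, it is invertible.

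The one substantive ingredient, and the only place hypothesis (2) is used, is the following. For $p,q\in O$ let $c_{pq}$, resp.\ $d_{pq}$, be the image in $G(\alpha,\beta)$ of the unique morphism $p\to q$ in $\bC$, resp.\ in $\bD$; then $c_{pq}=d_{pq}$. Indeed, since $(O_{\bD},G_{\bD})$ is a \emph{full} subcategory of $\bD$ and $p,q\in O_{\bD}$, the unique $\bD$-morphism $p\to q$ is represented by a word $\epsilon$ in the generators $G_{\bD}$; because $G_{\bC}$ and $G_{\bD}$ are identified in $G(\alpha,\beta)$ and $\beta$ respects sources and targets, this same $\epsilon$ is a composable word in $G_{\bC}$ from $p$ to $q$, hence represents the unique $\bC$-morphism $p\to q$. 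Thus $d_{pq}=[\epsilon]=c_{pq}$. If (2) is dropped, these two ``canonical'' morphisms through $\bC$ and through $\bD$ need no longer agree, and the conclusion fails.

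For thinness I would use this to assign to every ordered pair $x,y$ of objects a single morphism $\kappa(x,y)$: the image of the $\bC$-morphism if $x,y\in V_{\bC}$, the image of the $\bD$-morphism if $x,y\in V_{\bD}$ (consistent on the overlap $O$ by the preceding claim), and otherwise --- say $x\in V_{\bC}$, $y\in V_{\bD}$ --- the image of ($\bC$-morphism $x\to p$) followed by ($\bD$-morphism $p\to y$) for any $p\in O$, which is independent of $p$ by the claim and agrees with the first two cases. I would then show every morphism $x\to y$ equals $\kappa(x,y)$. Given a representing word, break it into maximal blocks lying in the generators of a single factor; since a generator of $\bC$ touches only objects of $V_{\bC}$ and one of $\bD$ only objects of $V_{\bD}$, every object at the junction of a $\bC$-block and a $\bD$-block lies in $O$. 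Induct on the number of blocks: a word of at most one block is the image of a $\bC$- or $\bD$-morphism, hence $\kappa(x,y)$ by thinness of $\bC$, $\bD$; for more blocks, split off the last block, a single $\bD$- or $\bC$-morphism $z\to y$ with $z\in O$, apply the inductive hypothesis to the remainder to get $\kappa(x,z)$, and compose; a short case check on the location of $x$, using the claim, identifies $\kappa(x,z)$ followed by the last block with $\kappa(x,y)$. Hence there is exactly one morphism between any two objects, so $G(\alpha,\beta)$ is an abstract clique.

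The hard part is the claim in the second paragraph together with the bookkeeping of the induction: a morphism of $G(\alpha,\beta)$ can a priori oscillate between $\bC$ and $\bD$ arbitrarily often, and one must be able to re-route each excursion through $O$ into the other factor --- which is precisely what fullness of $(O_{\bD},G_{\bD})$ supplies. As a consistency check, the statement also follows homotopically: $G(\alpha,\beta)$ is the pushout $\bC\sqcup_{(O_{\bC},G_{\bC})}\bD$ in $\mathbf{Cat}$; the category $(O_{\bC},G_{\bC})$ is contractible (it is equivalent to the full subcategory $(O_{\bD},G_{\bD})$ of the clique $\bD$), and the functor $(O_{\bC},G_{\bC})\to\bD$ is injective on objects, hence a cofibration in the folk model structure on $\mathbf{Cat}$, so by left properness the pushout of the equivalence $(O_{\bC},G_{\bC})\xto{\sim}\bC$ along it exhibits $\bD\to G(\alpha,\beta)$ as an equivalence, whence $G(\alpha,\beta)$ is contractible.
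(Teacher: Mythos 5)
Your proof is correct and follows essentially the same route as the paper's: decompose a morphism of $G(\alpha,\beta)$ into maximal blocks of generators from one factor, observe that each such block begins and ends in the identified objects $O$, and use fullness of $(O_{\bD},G_{\bD})$ to rewrite each $\bD$-block as a word in $G_{\bD}$ and transport it to $G_{\bC}$, reducing uniqueness to thinness of $\bC$. Your explicit claim $c_{pq}=d_{pq}$ and the $\kappa(x,y)$ bookkeeping, as well as the homotopical consistency check, are elaborations of the same argument rather than a different one.
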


\begin{proof}
	The category $G(\alpha,\beta)$ is a groupoid since every generator has an inverse. Let $X$ be any object in $G(\alpha,\beta)$ lying in the identified sets of objects $O_\bC \cong O_\bD$. It suffices to show that for any other object $Y$ in $G(\alpha,\beta)$ there is a unique morphism from $X$ to $Y$.
	
	Without loss of generality $Y$ is in the object set of $\bC$. Any morphism from $X$ to $Y$ can be written as a product of generators from $\bC$ and $\bD$. Each string of consecutive generators in $\bD$ begins and ends in the identified objects $O_\bC \cong O_\bD$. Therefore it can be written in terms of the generators $G_\bD$ (this is where we use the full subcategory assumption). Replacing those generators by the corresponding ones in $G_{\bC}$, the morphism from $X$ to $Y$ agrees with the unique such morphism in the category $\bC$.
\end{proof}

\begin{lem}
They hypotheses of \cref{lem:induction_subcat} are satisfied in the identification of $\bC_{> (S,\{X_i\}_{i\in S})}$ with $\comp{\mfrbic{H}(A_0,A_n)}{(S,\{X_i\}_{i\in S})}$ to form $\bC_{\geq (S,\{X_i\}_{i\in S})}$.
\end{lem}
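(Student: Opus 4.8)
The plan is to apply \cref{lem:induction_subcat} with $\bC = \bC_{>(S,\{X_i\}_{i\in S})}$ and $\bD = \comp{\prelimmfrbic{H}(A_0,A_n)}{(S,\{X_i\}_{i\in S})}$ — the component being attached at this stage — where $O_\bD$ is the set of 1-cells of $\bD$ that contain at least one \emph{standalone} term $\fmfrbic{H}(X_i)$ (a copy of $\fmfrbic{H}$ applied to a single edge of $\gLambda$, with no formal units inside), and $G_\bD$ is the set of generators of $\bD$ which fix some such standalone term, i.e.\ which neither merge a standalone $\fmfrbic{H}(X_i)$ into a neighbour via $m$ nor insert a unit inside such a term via a unitor. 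The data $O_\bC$, $G_\bC$, $\alpha$, $\beta$ on the $\bC$-side is prescribed by the substitution $\fmfrbic{H}(X_i) = Y_i$ of \cref{rmk:present_free_lax_map}: performing such a substitution produces a tuple strictly below $(S,\{X_i\})$ in the partial order of \cref{lem:partial_order}, hence higher in the chosen total order and already present in $\bC_{>(S,\{X_i\})}$, and the identifications already carried out there render this consistent for 1-cells carrying several standalone terms. Pinning these definitions down precisely — so that $\alpha$ and $\beta$ are well-defined bijections landing in $\bC_{>(S,\{X_i\})}$, and so that every ``decompose away from a chosen sub-term'' move genuinely lies in $G_\bD$ and preserves membership in $O_\bD$ — is the fiddly bookkeeping part and is where I expect the main obstacle to be; it does not, however, affect the two hypotheses of \cref{lem:induction_subcat}, which are what I verify below.

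For hypothesis (1): $\bC = \bC_{>(S,\{X_i\})}$ is a clique by the inductive hypothesis, and $\bD = \comp{\prelimmfrbic{H}(A_0,A_n)}{(S,\{X_i\})}$ is a clique by \cref{same_presentations_maps_prelim}. Also $S \neq \emptyset$ at any stage where a component is genuinely glued on (the $S = \emptyset$ component is the base case and gets attached to nothing), so the ``fully decomposed'' 1-cell $W_{\mathrm{red}}$ of the component — with each edge $X_i$, $i\in S$, in its own box $\fmfrbic{H}(X_i)$, with no formal units and a fixed parenthesization — lies in $O_\bD$, which is therefore nonempty as the setup requires.

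For hypothesis (2) we must show that $(O_\bD,G_\bD)$ is the full subcategory of $\bD$ on $O_\bD$. Since $\bD$ is a clique, hence thin, $(O_\bD,G_\bD)$ is automatically a thin subcategory, so it is enough to show it is connected. Given any $W\in O_\bD$, pick a standalone term $\fmfrbic{H}(X_{i_0})$ occurring in $W$, and connect $W$ to $W_{\mathrm{red}}$ by a sequence of moves none of which touches $\fmfrbic{H}(X_{i_0})$: split every other $\fmfrbic{H}$-box using the composition maps $m$ (invertible generators in $\prelimmfrbic{H}$ by \cref{rmk:present_prelim_free_lax_map}), delete every term $\fmfrbic{H}(I)$ using the inverse of the lax unit map from \ref{rmk:explicit_free_lax_5}, remove the remaining formal units using unitors, and reparenthesize the outer word using associators. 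Every 1-cell along the way still contains the standalone term $\fmfrbic{H}(X_{i_0})$, hence lies in $O_\bD$, and every generator used fixes that term, hence lies in $G_\bD$. So every $W\in O_\bD$ is joined to $W_{\mathrm{red}}$ inside $(O_\bD,G_\bD)$, and consequently any two objects of $O_\bD$ are joined, which is exactly hypothesis (2). Thus both hypotheses hold, modulo the routine set-up of $O_\bD$, $G_\bD$, $\alpha$, and $\beta$ noted above.
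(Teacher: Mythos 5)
Your proof is correct and its skeleton is the paper's: you choose the same common objects (1-cells of the new component containing a standalone term $\fmfrbic{H}(X_i)$), the same common generators (those that neither merge such a term with a neighbour via $m$ nor insert a unit inside it), and you make the same reduction of hypothesis (2) to connectivity of $(O_{\bD},G_{\bD})$ using thinness of the clique $\bD$. Where you genuinely differ is the connectivity step. The paper argues pairwise: given $V$ containing $Y_i$ and $W$ containing $Y_j$, it isolates $X_j$ in $V$ (resp.\ $X_i$ in $W$) without disturbing the distinguished term and then joins the results by the unique isomorphism of the clique $\comp{\prelimmfrbic{H}(A_0,A_n)}{(S\setminus\{i,j\},\{X_k\})}$, which sits in $\bC_{> (S,\{X_i\}_{i\in S})}$ rather than in $\bD$. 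You instead join every object of $O_{\bD}$ to one canonical fully decomposed model $W_{\mathrm{red}}$ by moves fixing a single chosen standalone term, never leaving $\bD$; since hypothesis (2) literally asks for fullness of $(O_{\bD},G_{\bD})$ \emph{in} $\bD$, your route is the more direct one. One small inherited wrinkle: your phrase ``strictly below $(S,\{X_i\})$ in the partial order of \cref{lem:partial_order}, hence higher in the chosen total order'' cannot both hold for a linear extension of that partial order; \cref{lem:partial_order} as stated orders subsets in the direction opposite to what the downward induction on $\bC_{>}$ needs, and your sentence reproduces the paper's mismatch. The substantive claim both you and the paper rely on --- that the component reached by the substitution $\fmfrbic{H}(X_i)=Y_i$ has already been glued in --- is the right one.
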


\begin{proof}
	The common subset of objects consists of those objects in $\comp{\prelimmfrbic{H}(A_0,A_n)}{(S,\{X_j\}_{j \in S})}$ in which there exists a term of the form $\fmfrbic{H}(X_i)$ for some $i$. This is identified to the corresponding object in $\bC_{> (S,\{X_j\}_{j\in S})}$ in which $i$ is removed from the set $S$ and $\fmfrbic{H}(X_i)$ is replaced by $Y_i$. The common generators are those that make sense if $\fmfrbic{H}(X_i)$ is replaced by $Y_i$. 
	
	It suffices to show that two such objects (with possibly different values of $i$) can be connected by a map inside this subcategory. We work inside $\bC_{> (S,\{X_i\}_{i\in S})}$. Suppose $V$ contains $Y_i$ and $W$ contains $Y_j$. If $V$ does not contain $Y_j$, we apply unitor and $m^{-1}$ maps to isolate $X_j$ in a term by itself $\fmfrbic{H}(X_j)$.  Then we replace $\fmfrbic{H}(X_j)$ by $Y_j$.  This composite of maps is in the desired subcategory because we do not change $Y_i$ to do this. After similarly changing $W$ to contain $Y_i$, the two can then be connected by an isomorphism inside the clique $\comp{\prelimmfrbic{H}(A_0,A_n)}{(S \setminus \{i,j\},\{X_k\})}$.
\end{proof}

After finitely many steps of the induction, we conclude that

\begin{cor}
 $\comp{\mfrbic{H}(A_0,A_n)}{Y_1 \odot \cdots \odot Y_n}$ is a clique.
\end{cor}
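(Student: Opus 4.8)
The plan is to assemble into a single finite induction the machinery already assembled just above: a finite poset of components (\cref{lem:partial_order}), the fact that each such component is a clique (\cref{same_presentations_maps_prelim}), and a gluing lemma (\cref{lem:induction_subcat}) whose hypotheses are verified at every step by the lemma immediately preceding this corollary. So the proof is: run the induction along a linear extension of the poset and read off the conclusion once every component has been glued in.

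Concretely, I would first make the finiteness reduction explicit. Since only the component $\comp{\mfrbic{H}(A_0,A_n)}{Y_1 \odot \cdots \odot Y_n}$ is at stake, we may assume $M$ consists of the edges occurring in $Y_1,\ldots,Y_n$ and that $\gLambda$ has only finitely many edges; then the set of tuples $(S,\{X_i\}_{i\in S})$ from \eqref{eq:H_0_indexing} lying over this component under the functor $Q$ of \eqref{glue_along_identifications} is finite, and \cref{lem:partial_order} makes it a finite poset. Fix a linear extension, and observe that $\comp{\mfrbic{H}(A_0,A_n)}{Y_1 \odot \cdots \odot Y_n}$ is obtained from the disjoint union of the cliques $\comp{\prelimmfrbic{H}(A_0,A_n)}{(S,\{X_i\})}$ by imposing the identifications of \cref{rmk:present_free_lax_map} one tuple at a time, in the order of the linear extension. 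The key point that makes this possible is that every identification attached to a tuple $(S,\{X_i\})$ replaces a term $\fmfrbic{H}(X_i)$ by the edge $Y_i$, hence glues an object of $\comp{\prelimmfrbic{H}(A_0,A_n)}{(S,\{X_i\})}$ to an object of the component indexed by $(S\setminus\{i\},\{X_j\}_{j\neq i})$, which sits strictly below $(S,\{X_i\})$ in the poset; so whenever we reach $(S,\{X_i\})$, every component it must be identified with already lies in the category $\bC_{>(S,\{X_i\})}$ built from the components processed earlier.

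Then I would run the induction exactly as set up before the statement. The base case is that $\bC_{\geq(S,\{X_i\})}$ for the first tuple is the single clique $\comp{\prelimmfrbic{H}(A_0,A_n)}{(S,\{X_i\})}$ of \cref{same_presentations_maps_prelim}. For the inductive step, $\bC_{\geq(S,\{X_i\})}$ is obtained from $\bC_{>(S,\{X_i\})}$ by gluing in $\comp{\prelimmfrbic{H}(A_0,A_n)}{(S,\{X_i\})}$ along these identifications; the hypotheses of \cref{lem:induction_subcat} are supplied by the inductive hypothesis (that $\bC_{>(S,\{X_i\})}$ is a clique), by \cref{same_presentations_maps_prelim} (that the new piece is a clique), and by the lemma preceding this corollary (that the identified subcategory is full on the appropriate side), so $\bC_{\geq(S,\{X_i\})}$ is again a clique. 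After the finitely many steps all components have been glued in, and the resulting category is $\comp{\mfrbic{H}(A_0,A_n)}{Y_1 \odot \cdots \odot Y_n}$, which is therefore a clique.

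The only point requiring care — everything else is bookkeeping — is the claim that the identifications can be performed compatibly with the linear extension: that a substitution $\fmfrbic{H}(X_i)=Y_i$ always strictly shrinks $S$, so no identification is ever forced between two components that have both not yet been glued in. This, together with the fullness assertion of the preceding lemma (which lets \cref{lem:induction_subcat} absorb all the seams of a single component in one stroke, including the situation where an edge $Y_i$ has several preimages so that several components end up connected through a common smaller one), is exactly what makes the induction close up and terminate at the desired conclusion. From the corollary one then gets, as usual, that $\mfrbic{H}(A_0,A_n)$ is a thin groupoid, hence that all extended formal diagrams commute (\cref{strong_coherence_extended}).
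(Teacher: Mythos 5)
Your proposal is correct and is essentially the paper's own argument: the corollary is just the conclusion of the finite induction set up in the preceding paragraphs, with the base case a single clique from \cref{same_presentations_maps_prelim} and each step an application of \cref{lem:induction_subcat} whose hypotheses are verified by the lemma immediately before the corollary. Your added observation that each identification strictly shrinks $S$, so the gluing order is compatible with a linear extension of the partial order of \cref{lem:partial_order}, is exactly the (implicit) reason the paper's induction is well-founded.
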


This finishes the proof of \cref{strong_coherence_extended}.

\subsection{Symmetric monoidal functors}
We follow \cref{subsec:lax} and first consider lax symmetric monoidal functors. Then we describe the modifications to the proofs to apply them to normal lax functors and strong symmetric monoidal functors.

Let $\oneLMon$ be the (1-)category whose objects are lax symmetric monoidal functors and 
whose morphisms are pairs of strict functors forming a strictly commuting square. There is a forgetful functor 
\begin{equation}\label{eq:lax_symmetric_forget} 
\oneLMon\to \Set
\end{equation}
whose value on $\bC \xto{F} \bD$ is $\fosmc{\bC}$.
This forgetful functor has a left adjoint and the image of a set $\sLambda$ under this free functor will be denoted $\frsmc{\sLambda} \xto{\lsff{\sLambda}}{\lsffc{\sLambda}}  $.

\begin{present}\label{present:lax_symmetric}The objects of  $ \lsffc{\sLambda} $ are the same as for lax functors (\cref{rmk:explicit_free_lax}).  The generators are \ref{rmk:explicit_free_lax_1}  
to \ref{rmk:explicit_free_lax_5}   and 
	\begin{enumerate}[start=1,label={\bfseries G\arabic{generators}}]
\item\stepcounter{generators}\label{sym_functor_gen_1}
	$\gamma_o\colon \lsff{\sLambda} (W) \otimes \lsff{\sLambda}(W') \to  \lsff{\sLambda}(W') \otimes \lsff{\sLambda}(W) $
	\item\stepcounter{generators}\label{sym_functor_gen_2}
	$\gamma_i\colon  \lsff{\sLambda}(W \otimes W')\to \lsff{\sLambda}(W' \otimes W).$
\end{enumerate}
The relations are
\ref{eq:pent_tri_lax_functor_all} to \ref{eq:wisk_nat_lax_functor}  and 
\begin{enumerate}[start=1,label={\bfseries R\arabic{relations}}]
\item\stepcounter{relations} \label{lax:monoidal_relation}
	\[ \xymatrix{
		\lsff{\sLambda} (W) \otimes \lsff{\sLambda} (W') \ar@{<->}[d]^-{\gamma_o} \ar[r]^-m & \lsff{\sLambda} (W \otimes W') \ar@{<->}[d]^-{\gamma_i} \\
		\lsff{\sLambda} (W') \otimes \lsff{\sLambda} (W) \ar[r]^-m & \lsff{\sLambda} (W' \otimes W).
	} \]
\end{enumerate}

\end{present}

\begin{lem}
The construction in \cref{eq:lax_functor_supporting_morphism} extends to define a  {\bf {\underlying} set} functor 
\begin{equation}\label{eq:symmetric_underlying_functor}
 U\colon \comp{\lsffc{\sLambda}}{W} \to 
(\underline{n} \downarrow \mathbf{Fin}).
\end{equation}
\end{lem}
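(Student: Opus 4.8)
The plan is to adapt the construction in the proof of \cref{eq:lax_functor_supporting_morphism} to the two new symmetry generators of \cref{present:lax_symmetric}. On objects I would use the same recipe: a $1$-cell $V$ in $\comp{\lsffc{\sLambda}}{W}$ carries $n$ copies of elements of $\sLambda$ and some number $k$ of groupings $\lsff{\sLambda}(\cdots)$, and I send $V$ to the function $\alpha\colon \underline{n}\to\underline{k}$ with $\alpha(\ell)=j$ when the $\ell$-th element lies in the $j$-th grouping. Unlike the lax case, $\alpha$ is now recorded only as a map of finite sets, because $\gamma_i$ permutes the elements inside a single grouping and $\gamma_o$ permutes the groupings among themselves, so neither the order on $\underline n$ nor the order on $\underline k$ is preserved; thus $\alpha$ is an object of $(\underline n\downarrow\mathbf{Fin})$ rather than of $(\underline n\downarrow\mathbf\Delta)$.

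On generators I would take $U$ to agree with the proof of \cref{eq:lax_functor_supporting_morphism} on all the data inherited from \cref{rmk:explicit_free_lax}: associators and unitors, whether applied inside a grouping or on the outside, go to identity maps; the unit generator $i$ (\ref{rmk:explicit_free_lax_5}) applied between groupings $i-1$ and $i$ goes to the coface $d^i$ (\ref{gen:coface}); and the composition generator $m$ applied to groupings $i$ and $i+1$ goes to the codegeneracy $s^i$ (\ref{gen:codegeneracy}) --- all of these now read as morphisms in $(\underline n\downarrow\mathbf{Fin})$ via $\mathbf\Delta\subseteq\mathbf{Fin}$. For the new generators: an instance of the outer symmetry $\gamma_o$ (\ref{sym_functor_gen_1}), which exchanges two consecutive groupings $j$ and $j+1$, goes to postcomposition of $\alpha$ with the transposition $(j,j+1)$ in $\Sigma_k\subseteq\mathbf{Fin}(\underline k,\underline k)$; an instance of the inner symmetry $\gamma_i$ (\ref{sym_functor_gen_2}) goes to $\id_{\underline k}$, since rearranging the elements inside a single grouping leaves $\alpha$ unchanged once the order within the fibers is forgotten. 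In each case one checks immediately that the chosen map $\beta\colon\underline k\to\underline{k'}$ satisfies $\beta\alpha=\alpha'$, so $U$ lands in the coslice category.

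It then remains to check that $U$ respects the relations of \cref{present:lax_symmetric}. The relations \ref{eq:pent_tri_lax_functor_all}--\ref{eq:wisk_nat_lax_functor} involve only associators, unitors, $i$, and $m$, so they are respected by exactly the computation in the proof of \cref{eq:lax_functor_supporting_morphism}, transported along the functor $(\underline n\downarrow\mathbf\Delta)\to(\underline n\downarrow\mathbf{Fin})$. The naturality, whiskering, and symmetry relations satisfied by $\gamma_o$ become, after applying $U$, identities among honest permutations and functions of finite sets, hence hold automatically; the analogous relations for $\gamma_i$ are trivial because $U(\gamma_i)=\id$. Finally, the compatibility square \ref{lax:monoidal_relation} is sent by $U$ to the equation $s^j\circ\tau_j=s^j$ in $\mathbf{Fin}$, where $\tau_j=(j,j+1)$; this is precisely the coequalizer relation \ref{relation:fin_coequalizer} of \cref{fin_presentation}, since $\tau_j$ permutes the fiber $(s^j)^{-1}(j)$. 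Hence $U$ is a well-defined functor.

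I do not expect any step to be a genuine obstacle: the argument is a routine elaboration of \cref{eq:lax_functor_supporting_morphism}. The only point with real content --- and the one that identifies $\mathbf{Fin}$ as the correct target --- is the observation that $\gamma_i$ must be sent to the identity (permuting within a fiber is invisible to $\mathbf{Fin}$), after which relation \ref{lax:monoidal_relation} necessarily collapses to the coequalizer relation in $\mathbf{Fin}$ rather than forcing a target that remembers the internal ordering of the fibers.
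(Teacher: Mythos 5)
Your proposal is correct and matches the paper's proof essentially verbatim: the same object assignment, the same images for associators/unitors ($\mathrm{id}$), $i$ and $m$ (coface and codegeneracy), $\gamma_o$ (the transposition in $\Sigma_k$), and $\gamma_i$ ($\mathrm{id}$), with \ref{lax:monoidal_relation} singled out as the one new relation and verified as an equation of maps of finite sets. Your extra observation that this equation is an instance of the coequalizer relation \ref{relation:fin_coequalizer} is a nice touch but not a departure from the paper's argument.
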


\begin{proof}

The components of $\lsffc{\sLambda}$ correspond to lists of elements  $\slambda_1$, $\ldots$, $\slambda_n$.  Then $U$ is defined as follows.
\begin{itemize}
\item The image of a word is the map of finite sets represented by the grouping of the $\slambda_i$ terms inside the $\lsff{\sLambda}(...)$.
\item The  images of  $m$ and $i$  are codegeneracy and coface maps.
\item The image of  $\gamma_o$ from \ref{sym_functor_gen_1} is the corresponding  transposition map $\underline{k} \to \underline{k}$.
\item The image of  $\gamma_i$ in \ref{sym_functor_gen_2} is an identity map. 
\end{itemize}
The only additional relation to check is \ref{lax:monoidal_relation}, which goes to a commuting map of sets.
\end{proof}

\begin{lem}
The {\punderlying} permutation functor from \cref{lem:underlying_functor_symmetric} extends to an {\bf {\punderlying} permutation} functor
\begin{equation}\label{eq:symmetric_permutation_functor}
 P\colon \comp{\lsffc{\sLambda}}{W} \to 
\sB\Sigma_n.
\end{equation}
\end{lem}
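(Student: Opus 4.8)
The plan is to define $P$ directly on the generators of $\lsffc{\sLambda}$ from \cref{present:lax_symmetric} and then check that it respects each relation; since $\sB\Sigma_n$ has a single object, such an assignment is precisely a functor out of $\comp{\lsffc{\sLambda}}{W}$. This follows the pattern of the proof of the {\underlying} set functor \eqref{eq:symmetric_underlying_functor} just above. I would work in the component corresponding to a list $\slambda_1,\ldots,\slambda_n$ of elements of $\sLambda$: every $1$-cell of this component contains these $n$ occurrences of the $\slambda_i$ at distinct positions, read left to right in some order, and every generator of $\lsffc{\sLambda}$ rearranges these positions. I will write ``block transposition'' for a permutation of $\{1,\ldots,n\}$ that slides one consecutive block of indices past an adjacent consecutive block, order-preserving within each.

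The first step is to send every structural generator --- the associators and unitors inside a term $\lsff{\sLambda}(\cdots)$ (\ref{rmk:explicit_free_lax_1}), the associators and unitors of the outer word (\ref{rmk:explicit_free_lax_3}, \ref{rmk:explicit_free_lax_4}), the unit maps $i$ (\ref{rmk:explicit_free_lax_5}), and the multiplications $m$ (\ref{rmk:explicit_free_lax_6}) --- to the identity permutation, since none of them changes the left-to-right order of the $\slambda_i$. The outer symmetry $\gamma_o$ (\ref{sym_functor_gen_1}), which interchanges two adjacent subwords of the outer word, is sent to the block transposition swapping the block of indices carried by the first subword past the block carried by the second; the inner symmetry $\gamma_i$ (\ref{sym_functor_gen_2}), which interchanges two adjacent subwords $W\otimes W'$ inside a single $\lsff{\sLambda}(\cdots)$, is sent to the analogous block transposition of the indices of $W$ against those of $W'$. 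This is compatible with \cref{lem:underlying_functor_symmetric}: because $\lsff{\sLambda}\colon\frsmc{\sLambda}\to\lsffc{\sLambda}$ carries the associators, unitors, and symmetry $\gamma$ of $\frsmc{\sLambda}$ to the inner associators, inner unitors, and $\gamma_i$, the restriction of the new $P$ along $\lsff{\sLambda}$ is the {\punderlying} permutation functor of \cref{lem:underlying_functor_symmetric}; this is the sense in which $P$ extends it.

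The remaining step is to verify the relations. A relation among structural generators only --- the pentagon and triangle relations (\ref{eq:pent_tri_lax_functor_all}), the unit coherence for $i$ and $m$ (\ref{eq:unit_lax_functor_all}), the associativity coherence for $m$ (\ref{eq:hex_lax_functor}), and whiskering and naturality among associators and unitors (\ref{eq:wisk_nat_lax_functor}) --- maps to $1 = 1$. A naturality or whiskering relation containing a single $\gamma_o$ or $\gamma_i$ maps to a true identity in $\Sigma_n$, because sliding one block past an adjacent one commutes in the evident way with permutations applied inside those blocks. The further relations inherited from the symmetric monoidal structures on the outer word and inside each $\lsff{\sLambda}(\cdots)$ --- the involutions $\gamma_o^2 = 1$ and $\gamma_i^2 = 1$ and the hexagon relations for $\gamma_o$ and $\gamma_i$, the latter transported from $\frsmc{\sLambda}$ along $\lsff{\sLambda}$ --- map to identities among block transpositions, which are exactly the identities verified in \cref{lem:underlying_functor_symmetric}, applied to the outer list of blocks and inside a single block respectively. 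The one genuinely new relation, \ref{lax:monoidal_relation}, is the only place that calls for any thought: its two composites $m\circ\gamma_o$ and $\gamma_i\circ m$ both move the block of indices carried by $W$ past the block carried by $W'$ and have no other effect on the $\slambda_i$, so they have equal image under $P$. With \ref{lax:monoidal_relation} checked this way, $P$ is a well-defined functor, which is the claim. I would also note, as in the lax functor case, that $P$ does not factor through the {\underlying} set functor \eqref{eq:symmetric_underlying_functor}, since the image of $\gamma_i$ under the latter is the identity whereas $P(\gamma_i)$ is in general nontrivial.
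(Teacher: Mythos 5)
Your proposal is correct and matches the paper's argument: the paper's proof is the single sentence that $\gamma_o$ and $\gamma_i$ are sent to the corresponding permutations of the $\slambda_i$ and all other generators to the identity, leaving the relation checks implicit. Your explicit verification of the relations, in particular that both composites in \ref{lax:monoidal_relation} induce the same block transposition, is exactly the routine work the paper omits.
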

\begin{proof}
The images of $\gamma_o$ in \ref{sym_functor_gen_1} and $\gamma_i$ in  \ref{sym_functor_gen_2} are the corresponding permutation of the terms $\slambda_1$, $\ldots$, $\slambda_n$, and the images of  all other generators are the identity.
\end{proof}

A {\bf formal diagram} of a lax functor $F\colon \bC\to \bD$ is any diagram in $\bD$ that lifts against the functor $\lsffc{\fols{\bC}}\to \bD$.

\begin{defn} 
A formal diagram of morphisms for a lax functor is {\bf \blacktie} if the {\underlying} maps and {\punderlying} permutations for both composites are the same.
\end{defn}

Note that if the terms $\slambda_1$, $\ldots$, $\slambda_n$ are distinct then any two formal maps with the same source and target must give the same permutation, so $P$ can be safely ignored.

\begin{thm}[Coherence for (op)lax symmetric monoidal functors]\label{symmetric_not_coherence}
Every {\blacktie} diagram of morphisms for a lax symmetric monoidal functor commutes.
\end{thm}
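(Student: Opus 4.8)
The plan is to follow \cref{rmk:plan_of_proofs}, merging the argument for lax functors of bicategories in \cref{proof_lax_not_coherence} with the argument for symmetric monoidal categories in \cref{symcat_coherence_diagram_symmetry,thm:sym_coherence_iso,cor:sym_coherence_iso}. Fix a list $\slambda_1,\ldots,\slambda_n$ of elements of $\sLambda$; I would first treat the case in which the $\slambda_i$ are distinct. For each map of finite sets $\alpha\colon\underline{n}\to\underline{k}$, \cref{building_block_clique_symmetric_monoidal_functor} supplies a clique $\bigotimes_{j\in[k]}\lsff{\sLambda}\bigl(\bigotimes_{i\in\alpha^{-1}(j)}\slambda_i\bigr)$ in $\lsffc{\sLambda}$ that already bundles in the associators and unitors (inside and outside $\lsff{\sLambda}$) and the inner symmetries $\gamma_i$, but not the outer symmetries $\gamma_o$. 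First I would assemble these into a diagram of cliques indexed by $(\underline{n}\downarrow\mathbf{Fin})$, using the presentations from \cref{slice_presentation,fin_presentation}: a coface generator goes to a unit map $i$ that inserts a formal unit between two groupings, a codegeneracy goes to a multiplication map $m$ that merges two adjacent groupings, and a transposition of the codomain goes to an outer symmetry $\gamma_o$ that swaps two adjacent groupings. Each is a well-defined map of cliques by whiskering and naturality, as in \cref{built_diagram_of_cliques,symcat_coherence_diagram_symmetry}.

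Next I would verify the relations of $\mathbf{Fin}$, one admissible model at a time. The relations \ref{faceface}--\ref{degface3} go through exactly as in \cref{built_diagram_of_cliques}, using the lax-functor coherences \ref{eq:unit_lax_functor_all} and \ref{eq:hex_lax_functor} together with \cref{bicat_coherence}; the braiding relations \ref{squaretozero}, \ref{farawaycommute}, \ref{threecycle} for the $\Sigma_k$-part of $\mathbf{Fin}$ go through as in \cref{symcat_coherence_diagram_symmetry}, from naturality and the braiding axioms satisfied by $\gamma_o$; the swap relation \ref{relation:fin_swap} is naturality of $\gamma_o$ with respect to $m$, $i$, and the canonical isomorphisms; and the coequalizer relation \ref{relation:fin_coequalizer} follows from \ref{lax:monoidal_relation}, which says $m\circ\gamma_o=\gamma_i\circ m$, since precomposing $\alpha$ by a permutation that fixes its fibers only reorders terms that $m$ subsequently merges, and that reordering is absorbed by the $\gamma_i$ already present in the building block.

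Then \cref{clique_bijection} produces a functor $\int_{(\underline{n}\downarrow\mathbf{Fin})}\doc\to\comp{\lsffc{\sLambda}}{W}$. For distinct $\slambda_i$ this is a bijection on objects, and it is always surjective on morphisms: by \cref{groth_presentation} its image contains every generator of $\lsffc{\sLambda}$ from \cref{present:lax_symmetric}, since the associators, unitors, and $\gamma_i$ occur as canonical isomorphisms of the building-block cliques while $m$, $i$, and $\gamma_o$ are images of the generators of $(\underline{n}\downarrow\mathbf{Fin})$. Composing with the underlying-set functor $U$ of \eqref{eq:symmetric_underlying_functor} recovers the projection $\pi$ of \cref{lem:consequences_of_projection_1}, an equivalence; so the functor is faithful, hence an isomorphism of categories, and $U$ is an equivalence $\comp{\lsffc{\sLambda}}{W}\to(\underline{n}\downarrow\mathbf{Fin})$ --- exactly as in \cref{same_presentations}. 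In particular $U$ is faithful, so parallel composites in a formal diagram that agree under $U$ are equal, which is the {\blacktie} hypothesis in the distinct case. For a general list, the same diagram of cliques still yields a functor $\int_{(\underline{n}\downarrow\mathbf{Fin})}\doc\to\comp{\lsffc{\sLambda}}{W}$ that, as in \cref{cor:sym_coherence_iso}, induces an isomorphism out of the quotient by the free action of the block-permutation group $\prod_i\Sigma_{k_i}$ permuting repeated elements; since the underlying-permutation functor $P$ of \eqref{eq:symmetric_permutation_functor} records exactly these block permutations, $(U,P)$ is faithful on $\comp{\lsffc{\sLambda}}{W}$, and every {\blacktie} diagram commutes. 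Finally, a {\blacktie} formal diagram for an arbitrary lax symmetric monoidal functor $F\colon\bC\to\bD$ lifts along $\lsffc{\fols{\bC}}\to\bD$, so it commutes.

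I expect the main obstacle to be the coequalizer relation \ref{relation:fin_coequalizer}, and more broadly the interaction of the inner symmetries $\gamma_i$ with the merge maps $m$: this is precisely where \ref{lax:monoidal_relation} is needed, and one must be careful that the symmetry isomorphisms packaged into the clique of \cref{building_block_clique_symmetric_monoidal_functor} really are the $\gamma_i$. The remaining delicate point is the repeated-elements case --- identifying the correct free group action and checking that $(U,P)$ is faithful --- but this should be routine given the analogous step in \cref{cor:sym_coherence_iso}.
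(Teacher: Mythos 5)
Your proposal is correct and follows essentially the same route as the paper: a diagram of cliques on $(\underline{n}\downarrow\mathbf{Fin})$ with the building blocks of \cref{building_block_clique_symmetric_monoidal_functor}, the same assignments of generators (cofaces to $i$, codegeneracies to $m$, transpositions to $\gamma_o$), the same reduction of \ref{relation:fin_coequalizer} to \ref{lax:monoidal_relation}, and faithfulness via composition with the \underlying{} set functor. The only divergence is cosmetic: where you handle repeated elements at the end by quotienting by the free block-permutation action and checking that $(U,P)$ is jointly faithful, the paper dispatches this at the outset by observing that two parallel composites with the same \punderlying{} permutation can be reinterpreted as acting on a list of distinct terms, which avoids the quotient argument entirely.
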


As in \cref{oplax}, the same is true for oplax symmetric monoidal functors, with essentially the same proof.

\subsubsection{Proof of coherence for lax symmetric monoidal functors}\label{sec:proof_coherence_lax_smc_functors}

If the {\punderlying} permutations of two formal composites are the same, then those composites can be interpreted as acting on a list of terms $\slambda_1$, $\ldots$, $\slambda_n$ that are distinct. Hence, without loss of generality, we can ignore \eqref{eq:symmetric_permutation_functor} and focus on the case where the elements $\slambda_i$ are distinct.

\begin{lem}[\cref{rmk:plan_of_proofs}\ref{rmk:plan_of_proofs_1}]\label{built_diagram_of_cliques_symmetric_case}
	There is a  diagram of cliques from $(\underline{n} \downarrow \mathbf{Fin})$ to $\lsffc{\sLambda}$ where the image of $\alpha$ is the clique in \cref{building_block_clique_symmetric_monoidal_functor}.
\end{lem}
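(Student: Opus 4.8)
The plan is to extend the diagram of cliques of \cref{built_diagram_of_cliques} along the inclusion $(\underline{n} \downarrow \mathbf\Delta) \hookrightarrow (\underline{n} \downarrow \mathbf{Fin})$. By \cref{slice_presentation} and \cref{fin_presentation}, $(\underline{n} \downarrow \mathbf{Fin})$ is presented by the sliced coface and codegeneracy generators already present in $(\underline{n} \downarrow \mathbf\Delta)$, together with one new generator $\tau_p\colon (\underline{n} \xto{\alpha} \underline{k}) \to (\underline{n} \xto{\tau_p \alpha} \underline{k})$ for each object $\alpha$ and each adjacent transposition $\tau_p$ of $\underline{k}$. To $\alpha\colon \underline{n} \to \underline{k}$ I assign the clique of \cref{building_block_clique_symmetric_monoidal_functor}: a product of $k$ cliques in $\frsmc{\sLambda}$ — one tensor product $\bigotimes_{i \in \alpha^{-1}(j)} X_i$ for each $j$, with the symmetries $\gamma$ of the entries as internal canonical isomorphisms, a clique by \cref{thm:sym_coherence_iso} — together with one ``outer'' clique coming from the monoidal (not symmetric monoidal) structure of $\lsffc{\sLambda}$. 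Exactly as in \cref{built_diagram_of_cliques}, \cref{built_diagram_of_cliques:lem_coherence_bicat} sends each sliced coface generator to an instance of the unit map $i$ (\ref{rmk:explicit_free_lax_5}) and each sliced codegeneracy generator to an instance of the composition map $m$ (\ref{rmk:explicit_free_lax_6}). I send each new generator $\tau_p$ to the instance of the outer symmetry $\gamma_o$ (\ref{sym_functor_gen_1}) that swaps the two copies of $F$ in positions $p$ and $p+1$; this is a well-defined map of cliques by naturality of $\gamma_o$ (\ref{eq:wisk_nat_lax_functor}).

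It then remains to check the relations of $(\underline{n} \downarrow \mathbf{Fin})$, each on a model admissible for all the maps involved (\cref{rmk:thick_maps_partially_defined}). The relations inherited from $\mathbf\Delta$ — \ref{faceface}, \ref{degdeg}, \ref{degface1}, \ref{degface2}, \ref{degface3} — are verified exactly as in the proof of \cref{built_diagram_of_cliques}, via whiskering, the hexagon \ref{eq:hex_lax_functor}, and the unit coherences \ref{eq:unit_lax_functor_all}. The relations among the transpositions alone — \ref{squaretozero}, \ref{farawaycommute}, \ref{threecycle} — are verified exactly as in the proof of \cref{symcat_coherence_diagram_symmetry}, now with the outer symmetry $\gamma_o$ and the outer associator in the roles played there by $\gamma$ and $\alpha$: \ref{farawaycommute} is whiskering, \ref{squaretozero} is $\gamma_o^2 = 1$, and \ref{threecycle} is the outer hexagon together with naturality of $\gamma_o$, all being coherences of the symmetry $\gamma_o$ of $\lsffc{\sLambda}$.

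The genuinely new content is the pair of ``mixed'' relations \ref{relation:fin_swap} and \ref{relation:fin_coequalizer}. In the slice, a relation $\sigma \circ \alpha = \alpha' \circ \sigma'$ of type \ref{relation:fin_swap} amounts to moving an instance of $\gamma_o$ past an instance of $i$ or of $m$. Moving $\gamma_o$ past $i$, or past an $m$ that folds together two copies of $F$ neither of which $\gamma_o$ touches, is naturality (or whiskering) of $\gamma_o$. The only remaining configuration — $\gamma_o$ swapping precisely the two $F$-terms that the following $m$ folds together — is exactly the lax symmetric monoidal axiom \ref{lax:monoidal_relation}, $m \circ \gamma_o = \gamma_i \circ m$, and there the factor $\gamma_i$ is the image under $\lsff{\sLambda}$ of a symmetry of the entries, hence a canonical isomorphism internal to the target clique of \cref{building_block_clique_symmetric_monoidal_functor}; so the two composites coincide as maps of cliques. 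For \ref{relation:fin_coequalizer}, a permutation $\sigma$ of the fibers of $\alpha$ is sent to a composite of $\gamma_o$'s rearranging the copies of $F$ that lie in a common future fiber; after composing with the corresponding fold maps $m$, the result differs from the direct fold only by $\lsff{\sLambda}$ applied to a permutation of the entries of each $F$-term, that is, by a composite of the internal symmetries $\gamma$. Since the clique of \cref{building_block_clique_symmetric_monoidal_functor} already contains every ordering of each fiber with these internal symmetries as canonical isomorphisms, both composites define the same map of cliques, so \ref{relation:fin_coequalizer} holds. With every generator and relation accounted for, \cref{clique_bijection} assembles this data into the required diagram of cliques from $(\underline{n} \downarrow \mathbf{Fin})$ to $\catcl{\lsffc{\sLambda}}$.

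I expect \ref{relation:fin_coequalizer} (and, equivalently, the $m$-versus-$m$ case of \ref{relation:fin_swap}) to be the main obstacle. The crucial point — and the reason the clique in \cref{building_block_clique_symmetric_monoidal_functor} was set up to include the symmetries $\gamma$ inside each $F$ but not outside — is that re-ordering the entries of a single $F\bigl(\bigotimes_{i \in \alpha^{-1}(j)} X_i\bigr)$ must be an internal canonical isomorphism of the clique rather than a map into a genuinely different clique; once that is granted, checking \ref{relation:fin_coequalizer} reduces to matching the admissible-model bookkeeping of the two composites on a common model and invoking \cref{bicat_coherence} to identify the intervening canonical isomorphisms.
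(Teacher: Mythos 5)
Your proposal is correct and follows essentially the same route as the paper: transpositions go to the outer symmetry $\gamma_o$, the $\mathbf\Delta$-relations are checked as in \cref{built_diagram_of_cliques}, the $\Sigma$-relations by symmetric monoidal coherence, the coequalizer relation \ref{relation:fin_coequalizer} via \ref{lax:monoidal_relation} with $\gamma_i$ absorbed as a canonical isomorphism of the target clique, and the swap relation \ref{relation:fin_swap} via naturality of $\gamma_o$ with respect to $m$. The only quibble is bookkeeping: the configuration where $\gamma_o$ swaps exactly the two $F$-terms that $m$ then folds is an instance of \ref{relation:fin_coequalizer} rather than of \ref{relation:fin_swap}, but since you verify both relations with the right ingredients this does not affect the argument.
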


\begin{proof}

Recall from \cref{fin_presentation} that $\mathbf{Fin}$ is generated by permutations, and coface and codegeneracy maps. By \cref{cor:sym_coherence_iso}, each permutation $\sigma \in \Sigma_k$ gives a map from the clique for $\alpha$ to the clique for $\sigma\circ\alpha$. The coface and codegeneracy maps from $\mathbf\Delta$ also give clique maps by the argument in the proof of \cref{built_diagram_of_cliques}  but now using \cref{cor:sym_coherence_iso}.

	For the relations from \cref{fin_presentation} we check that each word in the relation gives the same map of cliques. For \ref{faceface} to \ref{degface3}, this is by the proof of \cref{built_diagram_of_cliques}. For \ref{squaretozero}  to \ref{threecycle}, this follows from \cref{cor:sym_coherence_iso}.
	
	For  \ref{relation:fin_coequalizer}, it is enough to consider the case where $\sigma$ is a transposition.  We can further reduce to the case where $\alpha$ is a codegeneracy map folding the two transposed points into one. We take a model with two adjacent words $\lsff{\sLambda}(W) \otimes \lsff{\sLambda}(W')$. The relation becomes the square in \ref{lax:monoidal_relation}.
The vertical map on the right is by definition any canonical isomorphism, but we can take it to be $\gamma$.
	
	For \ref{relation:fin_swap}, it is enough to consider the case where $\alpha$ is a coface or codegeneracy map. If $\alpha$ is a coface then it follows by naturality of $\gamma$ (\ref{expand_rep_symmetric_relation_4}). If $\alpha$ is a codegeneracy then it follows by the diagram
	\[\xymatrix @R=1.8em{\lsff{\sLambda}(W) \otimes (\lsff{\sLambda}(W'') \otimes \lsff{\sLambda}(W') )\ar[r]^-\alpha\ar[d]^{1\otimes \gamma}
&(\lsff{\sLambda}(W) \otimes \lsff{\sLambda}(W'')) \otimes \lsff{\sLambda}(W') \ar[d]^{\gamma\otimes 1}
\\
\lsff{\sLambda}(W) \otimes (\lsff{\sLambda}(W') \otimes \lsff{\sLambda}(W'') )\ar[d]^-\alpha
&(\lsff{\sLambda}(W'') \otimes \lsff{\sLambda}(W')) \otimes \lsff{\sLambda}(W') \ar[d]^-\alpha
\\
(\lsff{\sLambda}(W) \otimes \lsff{\sLambda}(W')) \otimes \lsff{\sLambda}(W'') \ar[r]^-\gamma\ar[d]^-{m\otimes 1}
&\lsff{\sLambda}(W'') \otimes (\lsff{\sLambda}(W') \otimes \lsff{\sLambda}(W'))  \ar[d]^{1\otimes m}
\\
\lsff{\sLambda}(W \otimes W') \otimes \lsff{\sLambda}(W'') \ar[r]^-\gamma
&\lsff{\sLambda}(W'') \otimes \lsff{\sLambda}(W' \otimes W') 
}\]
that commutes by \cref{cor:sym_coherence_iso} and  \ref{expand_rep_symmetric_relation_4}.  
\end{proof}

Let $\doc$ be the diagram of cliques in \cref{built_diagram_of_cliques_symmetric_case}.  
\cref{clique_bijection} defines a functor (\cref{rmk:plan_of_proofs}\ref{rmk:plan_of_proofs_2})
\begin{equation}\label{eq:sym_mon_cat_iso}  \int_{(\underline{n} \downarrow \mathbf{Fin})} \doc \to \comp{\lsffc{\sLambda}}{W}
\end{equation}
where $W$ is a model for $\bigotimes_{i=1}^n \lsff{\sLambda}\left( X_i \right)$.

\begin{thm}[\cref{rmk:plan_of_proofs}\ref{rmk:plan_of_proofs_3}]\label{same_presentations_symmetric}
 	The functor in \eqref{eq:sym_mon_cat_iso} is an isomorphism of categories.
\end{thm}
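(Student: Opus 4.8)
The plan is to mimic the proof of \cref{same_presentations} essentially verbatim, with $\mathbf\Delta$ replaced by $\mathbf{Fin}$ and the {\underlying} set functor \eqref{eq:lax_functor_underlying} replaced by \eqref{eq:symmetric_underlying_functor}. By construction the functor \eqref{eq:sym_mon_cat_iso} is a bijection onto the objects of $\comp{\lsffc{\sLambda}}{W}$: the diagram of cliques of \cref{built_diagram_of_cliques_symmetric_case} assigns to each map $\alpha\colon \underline{n} \to \underline{k}$ exactly the clique whose objects are the 1-cells of $\lsffc{\sLambda}$ in the corresponding component, and \cref{clique_bijection} identifies the object set of $\int_{(\underline{n} \downarrow \mathbf{Fin})} \doc$ with the union of these.

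Next I would establish fullness by writing out the generators of $\int_{(\underline{n} \downarrow \mathbf{Fin})} \doc$ using \cref{groth_presentation}, \cref{slice_presentation}, and \cref{fin_presentation}. The vertical generators are the associator and unitor maps applied both inside and outside the $\lsff{\sLambda}(\ldots)$ terms, together with the inner symmetry isomorphisms coming from the free-symmetric-monoidal-category factors of \cref{building_block_clique_symmetric_monoidal_functor}; the horizontal generators are an instance of the unit map $i$ for each coface (\ref{gen:coface}), an instance of the multiplication map $m$ for each codegeneracy (\ref{gen:codegeneracy}), and an instance of $\gamma_o$ (\ref{sym_functor_gen_1}) for each adjacent transposition. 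Under \eqref{eq:sym_mon_cat_iso} these hit every generator of $\comp{\lsffc{\sLambda}}{W}$ listed in \cref{present:lax_symmetric}; in particular the generator $\gamma_i$ (\ref{sym_functor_gen_2}) is the image of the top-level inner symmetry in one of these factors. Hence \eqref{eq:sym_mon_cat_iso} is full.

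For faithfulness I would consider the composite
\[ \int_{(\underline{n} \downarrow \mathbf{Fin})} \doc \xto{\eqref{eq:sym_mon_cat_iso}} \comp{\lsffc{\sLambda}}{W} \xto{\eqref{eq:symmetric_underlying_functor}} (\underline{n} \downarrow \mathbf{Fin}) \]
and check it on generators: the vertical generators (associators, unitors, inner symmetries) map to associators, unitors, or $\gamma_i$, all of which the {\underlying} set functor sends to identities, while the horizontal generators for a coface, a codegeneracy, and a transposition map to $i$, $m$, $\gamma_o$ and then to that same coface, codegeneracy, or transposition. Thus the composite agrees on generators with the projection $\pi$ of \cref{lem:consequences_of_projection_1}, so it equals $\pi$; since $\pi$ is an equivalence of categories it is faithful, and therefore so is \eqref{eq:sym_mon_cat_iso}. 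Being full, faithful, and a bijection on objects, \eqref{eq:sym_mon_cat_iso} is an isomorphism of categories.

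I expect the only delicate point to be the bookkeeping in the fullness step: confirming that the generator $\gamma_i$ of \cref{present:lax_symmetric}, which does not correspond to a generator of $\mathbf{Fin}$ under \cref{slice_presentation}, is nonetheless realized among the vertical generators of the Grothendieck construction as a symmetry internal to a free-symmetric-monoidal factor, and dually that no relation of \cref{fin_presentation} has been overlooked --- but the relation \ref{lax:monoidal_relation} governing $\gamma_i$ was already accounted for through \ref{relation:fin_coequalizer} in \cref{built_diagram_of_cliques_symmetric_case}, so it plays no further role in the present isomorphism statement. Everything else is the formal argument of \cref{same_presentations}.
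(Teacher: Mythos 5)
Your proposal is correct and follows the paper's proof essentially verbatim: bijection on objects, fullness by matching the vertical generators (associators, unitors, and the inner symmetries realizing $\gamma_i$) and horizontal generators (cofaces, codegeneracies, transpositions realizing $i$, $m$, $\gamma_o$) against the presentation of $\comp{\lsffc{\sLambda}}{W}$, and faithfulness by identifying the composite with the underlying set functor \eqref{eq:symmetric_underlying_functor} as the projection $\pi$ of \cref{lem:consequences_of_projection_1}. The only thing worth making explicit is that the bijection on objects uses the standing reduction to distinct $\slambda_i$ from the start of \cref{sec:proof_coherence_lax_smc_functors}, which the paper flags parenthetically.
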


\begin{proof}
	By construction, \eqref{eq:sym_mon_cat_iso}  is a bijection onto the objects of $\comp{\lsffc{\sLambda}}{W}$. (If the elements $\slambda_i$ were not distinct then this claim would fail.)

The generators of $\int_{(\underline{n} \downarrow \mathbf{Fin})} \doc$ given by \cref{groth_presentation,fin_presentation} are the generators  in each clique, together with the cofaces, codegeneracies, and transpositions. These correspond to the generators of $\comp{\lsffc{\sLambda}}{W}$ (the cliques giving all expanded instances of $\alpha$, $l$, $r$, and $\gamma$, save for $\gamma$ on the outside, and the horizontal generators giving $i$, $m$, and the instances of $\gamma$ on the outside).  	Therefore this functor is  surjective on morphisms.
	
The composite functor
\[ \int_{(\underline{n} \downarrow \mathbf{Fin})} \doc \xto{\eqref{eq:sym_mon_cat_iso}}  \comp{\lsffc{\sLambda}}{W}
\xto{\eqref{eq:symmetric_underlying_functor}} (\underline{n} \downarrow \mathbf{Fin})\]
is the projection $\pi$ to the base category from \cref{lem:consequences_of_projection_1}.  By that result, $\pi$ is an equivalence of categories and so \eqref{eq:symmetric_underlying_functor} is faithful.

Since \eqref{eq:symmetric_underlying_functor} is an isomorphism on objects and full and faithful it is an isomorphism.
\end{proof}

\begin{cor}[\cref{rmk:plan_of_proofs}\ref{rmk:plan_of_proofs_4}]\label{symmetric_coherence}
When the elements $\slambda_i$ are distinct, the {\underlying} set functor \eqref{eq:symmetric_underlying_functor} is an equivalence of categories.
 \end{cor}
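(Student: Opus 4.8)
The plan is to deduce this immediately from \cref{same_presentations_symmetric} together with \cref{lem:consequences_of_projection_1}, exactly following \cref{rmk:plan_of_proofs}\ref{rmk:plan_of_proofs_4}. By \cref{same_presentations_symmetric}, the functor in \eqref{eq:sym_mon_cat_iso} is an isomorphism of categories. The proof of that theorem already records that the composite
\[ \int_{(\underline{n} \downarrow \mathbf{Fin})} \doc \xto{\eqref{eq:sym_mon_cat_iso}} \comp{\lsffc{\sLambda}}{W} \xto{\eqref{eq:symmetric_underlying_functor}} (\underline{n} \downarrow \mathbf{Fin}) \]
is the projection $\pi$ of \cref{lem:consequences_of_projection_1}, which is an equivalence of categories.

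First I would write the {\underlying} set functor \eqref{eq:symmetric_underlying_functor} as the composite of $\pi$ with the inverse of \eqref{eq:sym_mon_cat_iso}. Since an isomorphism of categories is in particular an equivalence, and a composite of two equivalences is an equivalence, \eqref{eq:symmetric_underlying_functor} is an equivalence of categories. That is the whole argument; there is essentially no obstacle here, because the real work --- identifying $\comp{\lsffc{\sLambda}}{W}$ with the Grothendieck construction $\int_{(\underline{n} \downarrow \mathbf{Fin})} \doc$ --- was already carried out in \cref{built_diagram_of_cliques_symmetric_case,same_presentations_symmetric}.

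The only point requiring a little care is bookkeeping: one should note that it is precisely the hypothesis that the $\slambda_i$ are distinct which makes \eqref{eq:sym_mon_cat_iso} a bijection on objects, hence an isomorphism of categories rather than merely essentially surjective, so that the cancellation in the previous paragraph is legitimate. When the $\slambda_i$ are not distinct one obtains an equivalence only after a further free quotient, in parallel with \cref{cor:sym_coherence_iso}, but that case lies outside the scope of this corollary.
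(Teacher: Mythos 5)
Your proposal is correct and is exactly the argument the paper intends: the corollary follows immediately from \cref{same_presentations_symmetric} together with the observation (already recorded in that theorem's proof) that $U$ composed with \eqref{eq:sym_mon_cat_iso} is the projection $\pi$ of \cref{lem:consequences_of_projection_1}, so $U = \pi \circ \eqref{eq:sym_mon_cat_iso}^{-1}$ is a composite of equivalences. Your remark about where distinctness of the $\slambda_i$ enters matches the parenthetical in the paper's proof of \cref{same_presentations_symmetric}.
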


This finishes the proof of coherence for lax symmetric monoidal functors (\cref{symmetric_not_coherence}).

\subsubsection{Normal and strong symmetric monoidal functors}
As with functors of bicategories, the cases of normal and strong monoidal functors follow in almost exactly the same way as the case for lax monoidal functors.  

For normal functors, replace the category $\oneLMon$ with the corresponding category for lax normal functors with a forgetful functor 
\[
\oneNLMon\to \Set\]
and let \[\frsmc{\sLambda} \xto{\nlsff{\sLambda}}{\nlsffc{\sLambda}} \] be the result of applying the free functor to a set $\sLambda$.   Then the presentation for $\nlsffc{\sLambda}$ is as in 
\cref{present:lax_symmetric} except that the unit maps $i$ are invertible generators. The {\underlying} set functor  goes from a component of $\nlsffc{\sLambda}$ to $(\underline{n} \downarrow \mathbf{Fin})[\sI^{-1}]$, the comma category of finite sets with injective totally ordered maps (and therefore all injective maps) inverted. The definitions of a formal diagram and a {\blacktie} diagram are the same as above.

\begin{thm}[Coherence for normal (op)lax symmetric monoidal functors]\label{thm:coherence_normal_symmetric}
Every {\blacktie} diagram of morphisms for a normal lax symmetric monoidal functor commutes.
\end{thm}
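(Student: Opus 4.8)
The plan is to run the proof of \cref{symmetric_not_coherence} from \cref{sec:proof_coherence_lax_smc_functors} essentially verbatim, making exactly the modification that turned the proof of \cref{lax_not_coherence} into that of \cref{strong_coherence_normal_lax}: everywhere the comma category $(\underline{n} \downarrow \mathbf{Fin})$ occurred, replace it by the localization $(\underline{n} \downarrow \mathbf{Fin})[\sI^{-1}]$, in which the injective — equivalently, the coface‑generated — maps have been inverted. As a first step I would reduce to the case where $\slambda_1,\ldots,\slambda_n$ are distinct by the same argument as in \cref{sec:proof_coherence_lax_smc_functors}: if two formal composites have the same {\punderlying} permutation they may be reinterpreted as composites acting on a list of distinct elements, so it suffices to treat that case. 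With the $\slambda_i$ distinct the {\punderlying} permutation of a formal morphism is determined by its source and target, so the only thing left to prove is that two parallel formal morphisms with the same image under the {\underlying} set functor $U\colon \comp{\nlsffc{\sLambda}}{W}\to (\underline{n} \downarrow \mathbf{Fin})[\sI^{-1}]$ are equal.

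Next I would record a presentation of $(\underline{n} \downarrow \mathbf{Fin})[\sI^{-1}]$: take the presentation of $(\underline{n} \downarrow \mathbf{Fin})$ coming from \cref{fin_presentation} and \cref{slice_presentation}, and promote the coface generators \ref{gen:coface} to invertible generators, exactly as $(\underline{n} \downarrow \mathbf\Delta)[\sI^{-1}]$ was obtained in \cref{built_diagram_of_cliques_normal}. Then, for a fixed tuple of composable distinct edges, I would build the diagram of cliques indexed by $(\underline{n} \downarrow \mathbf{Fin})[\sI^{-1}]$ whose value on $\alpha$ is the clique of \cref{building_block_clique_symmetric_monoidal_functor}. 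The clique maps and the verification of the relations of $\mathbf{Fin}$ are exactly as in the proof of \cref{built_diagram_of_cliques_symmetric_case} (using \cref{cor:sym_coherence_iso} for the transposition relations and for \ref{relation:fin_swap}, \ref{relation:fin_coequalizer}), with the single addition that the coface generators are now \emph{sent to} the unit maps $i$ rather than merely receiving them; this is legitimate because $i$ is an invertible generator of $\nlsffc{\sLambda}$, and the relations making the cofaces invertible hold because $i$ is invertible — precisely as in the passage from \cref{built_diagram_of_cliques} to \cref{built_diagram_of_cliques_normal}.

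Then \cref{clique_bijection} yields a functor $\int_{(\underline{n} \downarrow \mathbf{Fin})[\sI^{-1}]}\docf \to \comp{\nlsffc{\sLambda}}{W}$, and I would show it is an isomorphism of categories by repeating the argument for \cref{same_presentations_symmetric}: it is a bijection on objects; comparing the generators supplied by \cref{groth_presentation} with the generators of the presentation of $\nlsffc{\sLambda}$ shows it is surjective on morphisms; and it is faithful because its composite with the {\underlying} set functor $U$ is the projection $\pi$ of \cref{lem:consequences_of_projection_1}, which is an equivalence. Hence $U$ itself is an equivalence of categories (for distinct $\slambda_i$), in particular faithful. A {\blacktie} diagram has, for each pair of parallel composites $\phi,\psi$, equal {\underlying} maps $U(\phi)=U(\psi)$, so faithfulness of $U$ forces $\phi=\psi$ and the diagram commutes. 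The oplax case is identical, with $i$, $m$ and the corresponding generators pointing the other way and $(\underline{n} \downarrow \mathbf{Fin})[\sI^{-1}]$ replaced by its opposite, as in \cref{oplax}.

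The only real friction I anticipate is confirming that promoting the coface generators to invertible generators in the presentation of \cref{fin_presentation}, \cref{slice_presentation} genuinely presents the localization $(\underline{n} \downarrow \mathbf{Fin})[\sI^{-1}]$ — in particular that the swap and coequalizer relations \ref{relation:fin_swap}, \ref{relation:fin_coequalizer} interact correctly with the new formal inverses of the cofaces — and, correspondingly, that applying $i^{-1}$ is compatible with the canonical isomorphisms so that it defines a well‑defined map of cliques. Both points are handled by the same bookkeeping already used for $(\underline{n} \downarrow \mathbf\Delta)[\sI^{-1}]$ in \cref{subsec:normal_coherence}, so no genuinely new idea is required; the content of the proof is entirely a reindexing of the one for \cref{symmetric_not_coherence}.
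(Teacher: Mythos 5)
Your proposal is correct and follows essentially the same route as the paper, which proves this theorem precisely by rerunning the lax symmetric monoidal argument with the unit maps $i$ promoted to invertible generators and the indexing category replaced by $(\underline{n} \downarrow \mathbf{Fin})[\sI^{-1}]$. The only thing the paper adds beyond your outline is the observation (analogous to \cref{localized_comma_cat_is_thin}) that this localization is thin, which is not needed for the proof but shows that the condition on {\underlying} set maps in the definition of {\blacktie} is automatic once the {\punderlying} permutations agree.
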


In fact, by the same proof as in \cref{localized_comma_cat_is_thin},
\begin{lem}\label{sym_localized_comma_cat_is_thin}
	The localization $(\underline{n} \downarrow \mathbf{Fin})[\sI^{-1}]$ is a thin category.
\end{lem}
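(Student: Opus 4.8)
The plan is to reuse the argument of \cref{localized_comma_cat_is_thin} essentially unchanged, the only new point being to check that its use of the surjection/injection factorization still makes sense over $\mathbf{Fin}$. Any map of finite sets $f\colon\underline n\to\underline k$ factors canonically as a surjection $\bar f\colon\underline n\to\underline m$ onto its image followed by the unique \emph{totally ordered} injection $\iota_f\colon\underline m\to\underline k$ whose image is $\mathrm{im}(f)$, where $\underline m$ carries the order induced from $\underline k$. Since $\iota_f$ lies in $\sI$, it is a morphism $(\underline n\xto{\bar f}\underline m)\to(\underline n\xto{f}\underline k)$ of the comma category that becomes invertible in the localization. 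Hence every object of $(\underline n\downarrow\mathbf{Fin})[\sI^{-1}]$ is isomorphic to one whose structure map is surjective; let $\mathcal S\subseteq(\underline n\downarrow\mathbf{Fin})$ be the full subcategory of such objects.

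I would then promote this to an equivalence $\mathcal S\hookrightarrow(\underline n\downarrow\mathbf{Fin})[\sI^{-1}]$. The assignment $f\mapsto\bar f$ extends to a functor $r\colon(\underline n\downarrow\mathbf{Fin})\to\mathcal S$: a morphism $g$ with $g\circ f=f'$ sends $\mathrm{im}(f)$ onto $\mathrm{im}(f')$, so restricting $g$ and transporting along $\iota_f,\iota_{f'}$ gives a surjection $\bar g\colon\underline m\to\underline{m'}$ with $\bar g\circ\bar f=\bar{f'}$. One checks that $r$ sends every map of $\sI$ to an identity, so it descends to $(\underline n\downarrow\mathbf{Fin})[\sI^{-1}]\to\mathcal S$; that $r$ restricts to the identity on $\mathcal S$; and that the family $\{\iota_f\}$ is a natural isomorphism between the composite $(\underline n\downarrow\mathbf{Fin})[\sI^{-1}]\to\mathcal S\hookrightarrow(\underline n\downarrow\mathbf{Fin})[\sI^{-1}]$ and the identity. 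These checks are the same computations as in the $\mathbf\Delta$ case. It follows that $\mathcal S\hookrightarrow(\underline n\downarrow\mathbf{Fin})[\sI^{-1}]$ is an equivalence.

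Finally, $\mathcal S$ is thin: if $f\colon\underline n\to\underline k$ is surjective and $g,g'\colon\underline k\to\underline{k'}$ satisfy $g\circ f=g'\circ f$, then $g=g'$, so there is at most one morphism between any two objects of $\mathcal S$. Thinness passes along the equivalence, since an equivalence is fully faithful, which proves the lemma. The only content not already present in the $\mathbf\Delta$ case is the observation that the injective part $\iota_f$ of the factorization can be taken in $\sI$ (using the order induced on the image) and that the induced map $\bar g$, although not order-preserving in general, is still a legitimate morphism of $\mathbf{Fin}$; neither is an obstacle, and the fussiest part is the routine verification that $r$ descends to the localization and is there isomorphic to the identity.
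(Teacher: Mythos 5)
Your proof is correct and takes essentially the same approach as the paper, which simply asserts that the argument of \cref{localized_comma_cat_is_thin} carries over to $\mathbf{Fin}$: both reduce to the full subcategory of objects with surjective structure map via the surjection--(ordered injection) factorization, note that the injective leg lies in $\sI$ and hence is inverted, and conclude thinness from the fact that surjections are epimorphisms. Your packaging of this reduction as an explicit retraction functor with a natural isomorphism is just a more formal rendering of the paper's direct zig-zag simplification; the mathematical content is identical.
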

Therefore it is only necessary to check the {\punderlying} permutation to see if a diagram is {\blacktie}. In summary, \cref{thm:coherence_normal_symmetric} says that any two parallel formal morphisms inducing the same {\punderlying} permutation of the $\slambda_i$ must agree.

With the modifications above, the proof of \cref{thm:coherence_normal_symmetric} is the same as the proof in \cref{sec:proof_coherence_lax_smc_functors}. We also get that the {\underlying} set functor is an equivalence as in \cref{symmetric_coherence} when the $\slambda_i$ are distinct.

For a strong monoidal functor the necessary modification is to replace ${(\underline{n} \downarrow \mathbf{Fin})[\sI^{-1}]}$ by
\[ {(\underline{n} \downarrow \mathbf{Fin})[\mathbf\Delta^{-1}]} = {(\underline{n} \downarrow \mathbf{Fin})[\mathbf{Fin}^{-1}]} \]
since the maps $m$ are also isomorphisms.

\begin{thm}[Coherence for strong symmetric monoidal functors]\label{thm:coherence_strong_symmetric}
Every {\blacktie} diagram of morphisms for a strong symmetric monoidal functor commutes.
\end{thm}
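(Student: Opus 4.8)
The plan is to mirror the proof of \cref{symmetric_not_coherence} in \cref{sec:proof_coherence_lax_smc_functors} essentially line by line, making only the replacement already announced: the indexing category $(\underline n \downarrow \mathbf{Fin})[\sI^{-1}]$ of the normal case becomes $(\underline n \downarrow \mathbf{Fin})[\mathbf{Fin}^{-1}]$. First I would write down the presentation of the free strong symmetric monoidal functor $\frsmc{\sLambda} \to \bD$ on a set $\sLambda$: it is the presentation of \cref{present:lax_symmetric} with \emph{both} the unit maps $i$ (\ref{rmk:explicit_free_lax_5}) and the multiplication maps $m$ (\ref{rmk:explicit_free_lax_6}) now invertible generators. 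Re-running the construction of the {\underlying} set functor, sending an $m$ to the corresponding codegeneracy and an $i$ to the corresponding coface exactly as before, now produces
\[ U\colon \comp{\bD}{W} \to (\underline n \downarrow \mathbf{Fin})[\mathbf{Fin}^{-1}], \]
where we use that inverting $\sI$ together with the codegeneracies is the same as inverting all of $\mathbf\Delta$, hence all of $\mathbf{Fin}$ (the permutations being invertible anyway). The {\punderlying} permutation functor $P\colon \comp{\bD}{W} \to \sB\Sigma_n$ of \eqref{eq:symmetric_permutation_functor} is unchanged.

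Next, exactly as in \cref{built_diagram_of_cliques_symmetric_case}, I would build a diagram of cliques indexed by $(\underline n \downarrow \mathbf{Fin})[\mathbf{Fin}^{-1}]$ whose value at $\alpha\colon \underline n \to \underline k$ is the clique of \cref{building_block_clique_symmetric_monoidal_functor}. The only change is that the coface and codegeneracy generators of the (further localized) comma category are invertible; but they are sent to $i$ and $m$, which are isomorphisms for a strong functor, so the assignments still define maps of cliques, and the relation checks (\ref{faceface}--\ref{degface3}, \ref{squaretozero}--\ref{threecycle}, \ref{relation:fin_swap}, \ref{relation:fin_coequalizer}) go through verbatim using \cref{cor:sym_coherence_iso}. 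Applying \cref{clique_bijection} gives a functor
\[ \int_{(\underline n \downarrow \mathbf{Fin})[\mathbf{Fin}^{-1}]} \doc \to \comp{\bD}{W}, \]
and when the $\slambda_i$ are distinct the argument of \cref{same_presentations_symmetric} applies without change: this functor is a bijection on objects, surjective on morphisms (it hits every generator from \cref{groth_presentation,fin_presentation}), and faithful because its composite with $U$ is the projection $\pi$ of \cref{lem:consequences_of_projection_1}, which is an equivalence. Hence, for distinct $\slambda_i$, the {\underlying} set functor $U$ is an equivalence onto $(\underline n \downarrow \mathbf{Fin})[\mathbf{Fin}^{-1}]$.

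It remains to note that $(\underline n \downarrow \mathbf{Fin})[\mathbf{Fin}^{-1}]$ is an abstract clique, and this is the one place the strong case really differs from the normal one (which produced only a thin, non-groupoid category). Since every morphism of $(\underline n \downarrow \mathbf{Fin})$ is inverted, the localization is a connected groupoid, and it retains the initial object $\id_{\underline n}$; therefore it is a clique, exactly as was argued for $\sI[\sI^{-1}]$. Consequently $\comp{\bD}{W}$ is a clique whenever the $\slambda_i$ are distinct, i.e. every formal diagram for a strong symmetric monoidal functor on distinct generators commutes. For the general {\blacktie} statement I would then invoke the relabeling reduction used in \cref{cor:sym_coherence_iso} (and summarized after \cref{thm:coherence_normal_symmetric}): for possibly repeated $\slambda_i$, $\comp{\bD}{W}$ is the quotient of the distinct-generator clique by the free action of $\prod_i \Sigma_{k_i}$ permuting the repeats, so it is equivalent to $\prod_i \sB\Sigma_{k_i}$ via the {\punderlying} permutation. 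In particular, because $(\underline n \downarrow \mathbf{Fin})[\mathbf{Fin}^{-1}]$ is a clique the {\underlying}-map half of the {\blacktie} condition is automatic, so for a strong functor {\blacktie} just says the two composites have the same {\punderlying} permutation, and the diagram then commutes.

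I do not expect a real obstacle: virtually everything is a transcription of \cref{sec:proof_coherence_lax_smc_functors}. The only genuinely new checks are that inverting the maps $m$ in addition to the maps $i$ collapses the indexing category all the way down to a clique (the initial-object argument above), and --- for the bookkeeping in the statement --- that the outer symmetry $\gamma_o$ is not killed by this, so the {\blacktie} hypothesis is still needed and amounts precisely to matching the {\punderlying} permutations.
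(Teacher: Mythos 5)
Your proposal is correct and follows the paper's own route: the paper proves the strong case exactly by rerunning the lax argument of \cref{sec:proof_coherence_lax_smc_functors} with the indexing category replaced by $(\underline{n} \downarrow \mathbf{Fin})[\mathbf\Delta^{-1}] = (\underline{n} \downarrow \mathbf{Fin})[\mathbf{Fin}^{-1}]$, which is a clique because it is the localization of a category with an initial object (the same observation the paper makes for $\sI[\sI^{-1}]$ and for pseudofunctors), and then reduces the {\blacktie} condition to matching {\punderlying} permutations via the relabeling/quotient argument of \cref{cor:sym_coherence_iso}. Your write-up simply makes explicit the steps the paper leaves as ``the necessary modification.''
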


So any two parallel formal morphisms that induce the same permutation on the $\slambda_i$ must agree. When the $X_i$ are distinct, all formal diagrams commute.

\subsection{Lax shadow functors}\label{sec:lax_shadow}

Let $(\sC,\sC_{Sh})$ and $(\sD,\sD_{Sh})$ be bicategories with shadow. A {\bf lax shadow functor} consists of 
\begin{itemize}
\item a lax functor $\sC \xto{F} \sD$, 
\item a functor on the shadow categories $\sC_{Sh} \xto{H} \sD_{Sh}$, and 
\item shadow commutation maps $s\colon \sh{F(M)} \to H\sh{M}$ for each endomorphism 1-cell $M$ in $\sC$ 
\end{itemize}
such that the diagram in \ref{shadow_lax_only_relation} below commutes. 
We say that $(F,H)$ is {\bf strict} if $F$ is strict and $s$ is an identity map.

Let $\oneShLax$ be the (1-)category whose objects are lax shadow functors of bicategories $\sC \xto{F} \sD$ and whose morphisms are pairs of strict shadow functors forming a strictly commuting square.

There is a forgetful functor 
\begin{equation}\label{eq:lax_shadow_forget}
\oneShLax\to \Graphcat
\end{equation}
that sends $(\sC,\sC_{Sh}) \xto{(F,H)} (\sD,\sD_{Sh})$ to the underlying graph of $\sC$.
The left adjoint of the functor in \eqref{eq:lax_shadow_forget} applied to a graph $\gLambda$ is a lax shadow functor of bicategories
\[(\frbic{\gLambda},\frsbit{\gLambda}) \xto{(\lff{\gLambda},\shlff{\gLambda})}(\lffc{ \gLambda},\shfrsbit{\gLambda}).\]
Here $\frbic{\gLambda}$ is the free bicategory on $\gLambda$ from \cref{sec:coherence_bicat}, $\frsbit{\gLambda}$ is the target shadow category from \cref{sec:shadowed_bicat}, and $\lffc{\gLambda}$ and $\lff{\gLambda}$ are the bicategory and lax functor from \cref{subsec:lax}. The category $\shfrsbit{\gLambda}$ has the following presentation:

\begin{present}\label{rmk:explicit_free_shadow_lax}
	For a graph $\gLambda$, the objects of $\shfrsbit{\gLambda}$ consist of
	\begin{enumerate}[start=1,label={\bfseries O\arabic{objects}}]
		\item\stepcounter{objects}\label{it:sh_out} The objects of $\frsbit{\gLambda}$ with $\shlff{\gLambda}$ written around them, e.g.
		\[ \shlff{\gLambda} \sh{ \ (X_1 \odot X_2) \odot I \ }. \]
		\item\stepcounter{objects}\label{it:functor_out} The endomorphism 1-cells of $\lffc{\gLambda}$ with $\sh{-}$ around them, e.g.
		\[ \sh{ \ \lff{\gLambda}(X_1 \odot X_2) \odot I \odot \lff{\gLambda}(I) \ }. \]
	\end{enumerate}
	The morphisms of $\shfrsbit{\gLambda}$ are generated by the associators \ref{rmk:explicit_free_bicat_1}, unitors \ref{rmk:explicit_free_bicat_2}, and rotators \ref{gen:rotator} 
for the objects in \ref{it:sh_out}, along with the free lax functor generators \ref{rmk:explicit_free_lax_1} to \ref{rmk:explicit_free_lax_6} and the rotators \ref{gen:rotator} for the objects in \ref{it:functor_out}. In addition we have
	\begin{enumerate}[start=1,label={\bfseries G\arabic{generators}}]
		\item\stepcounter{generators}\label{lax_shadow_s}
		Formal shadow commutator maps $s\colon \sh{ \lff{\gLambda} W } \to \shlff{\gLambda} \sh{ W }$.
	\end{enumerate}

The relations are  \ref{rmk:explicit_free_bicat_4}-\ref{rmk:explicit_free_bicat_6} and \ref{fig:shadow_unit_coherence}-\ref{theta:natural} for the objects in \ref{it:sh_out}, 
the relations \ref{eq:pent_tri_lax_functor_all}-\ref{eq:wisk_nat_lax_functor} for the objects in \ref{it:functor_out}, a second copy of the shadow relations \ref{fig:shadow_unit_coherence}-\ref{theta:natural} for the objects in \ref{it:functor_out},
\begin{enumerate}[start=1,label={\bfseries R\arabic{relations}}]
		\item\stepcounter{relations}\label{shadow_lax_only_relation}
		The coherence condition for the shadow commutator
		\[\xymatrix @R=1.5em{
\sh{\lff{\gLambda}(M) \odot  \lff{\gLambda}(N)}\ar[r]^\theta\ar[d]^{\sh{m}}
& \sh{\lff{\gLambda}(N) \odot  \lff{\gLambda}(M)}\ar[d]^{\sh{m}}
\\
\sh{\lff{\gLambda}(M \odot N)}\ar[d]^s
&\sh{\lff{\gLambda}(N\odot M)}\ar[d]^s
\\
\shlff{\gLambda}\sh{M \odot N}\ar[r]^{\shlff{\gLambda}(\theta)}
&\shlff{\gLambda}\sh{N \odot  M}\ ,
}
\]
and
\item\stepcounter{relations}\label{shadow_lax_naturality}
		naturality of $s$ with respect to associators and unitors applied to the word $W$.
	\end{enumerate}
These relations make $\sh{}$ into a shadow from $\lffc{\gLambda}$, $\shlff{\gLambda}$ into a functor from $\frsbit{\gLambda}$, $s$ into a natural transformation, and $(\lff{\gLambda},\shlff{\gLambda})$ into a lax shadow functor.
\end{present}

\begin{lem}
The construction in \cref{eq:lax_functor_supporting_morphism} extends to define a
 {\bf {\underlying} set} functor
 \begin{equation}\label{eq:shadow_underlying_functor}
 U\colon \comp{\shfrsbit{\gLambda}}{W} \to 
(\underline{n} \downarrow \mathbf\Lambda').
\end{equation}
\end{lem}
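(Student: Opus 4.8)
The plan is to follow the template of \cref{eq:lax_functor_supporting_morphism} (and its proof): define $U$ on generators and check that it carries each defining relation of $\shfrsbit{\gLambda}$ from \cref{rmk:explicit_free_shadow_lax} to a commuting diagram in $(\underline{n} \downarrow \mathbf\Lambda')$. Fix the component determined by a cyclic list of $n$ composable edges $X_1,\dots,X_n$. On objects, to an object of type \ref{it:functor_out}, a word $\sh{W_1\odot\cdots\odot W_\ell}$, delete the formal units among the $W_i$; the surviving $\lff{\gLambda}(\cdots)$ terms number some $k$, and the way the edges $X_1,\dots,X_n$ distribute among them, read off in the linear order of the word (which, taking the left end as the cut point, determines a cyclic map), is a morphism $\underline{n}\to\underline{k}$ in $\mathbf\Lambda'$; this is the assigned object. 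To an object of type \ref{it:sh_out}, $\shlff{\gLambda}\sh{W}$, assign the unique morphism $\underline{n}\to\underline{*}$ to the terminal object. On generators, the associators, unitors, and the rotators \ref{gen:rotator} acting on type-\ref{it:sh_out} objects all go to identities; the unit map (\ref{rmk:explicit_free_lax_5}) goes to the corresponding coface (\ref{gen:coface}) and the composition map (\ref{rmk:explicit_free_lax_6}) to the corresponding codegeneracy (\ref{gen:codegeneracy}), exactly as in \cref{eq:lax_functor_supporting_morphism}; the rotator \ref{gen:rotator} on a type-\ref{it:functor_out} object $\sh{A\odot B}$, which cycles the blocks of $B$ past those of $A$, goes to the appropriate power of the cycle map $\tau_{(k)}$ from \cref{lambda_presentation} (and to the identity when $A$ or $B$ has only formal units); and the shadow commutator (\ref{lax_shadow_s}), applicable precisely when the word is a single term $\sh{\lff{\gLambda}(W)}$, that is, on an object sitting over $\underline{n}\to\underline{1}$, goes to the terminal map $t\colon\underline{1}\to\underline{*}$.

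Next I would dispatch the relations that require no new work. Those involving only type-\ref{it:sh_out} objects (the bicategory relations and the shadow relations \ref{fig:shadow_unit_coherence}--\ref{theta:natural} for those objects) are automatic, since every such object maps to the single object $\underline{n}\to\underline{*}$ and every such generator maps to its identity. The relations internal to the lax-functor structure on the type-\ref{it:functor_out} objects (pentagon and triangle, the lax-functor unit and associativity coherences, whiskering, and naturality) reduce word-for-word to the checks in the proofs of \cref{eq:lax_functor_supporting_morphism} and \cref{built_diagram_of_cliques}, yielding the $\mathbf\Delta$-relations among cofaces and codegeneracies that hold in $\mathbf\Lambda'$ as well. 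What remains are the genuinely new relations, all of which mix the rotator with the rest of the structure.

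The heart of the argument is translating these rotator relations into the cyclic relations of \cref{lambda_presentation}. The shadow unit coherence \ref{fig:shadow_unit_coherence} for type-\ref{it:functor_out} objects only slides a formal unit across the outer tensor, so $U$ sends every arrow in it to an identity. The shadow associativity coherence \ref{fig:shadow_associativity_coherence} fixes the block grouping and changes only the cut; under $U$ both legs become ``rotate by $|Y\odot Z|$ blocks,'' an equality in the cyclic group $C_k$ that follows from $\tau_{(k)}^k=\id$ (\ref{cycletorsion}) and the additivity of powers of $\tau_{(k)}$. Naturality of the rotator (\ref{theta:natural}) against an $m$ or $i$ applied to a subword becomes, after $U$, the commutation of a power of $\tau$ past a codegeneracy or coface, which is exactly an iterate of \ref{cycleface}, \ref{cycleface2}, \ref{cycledeg}, and \ref{cycledeg2}; naturality against an associator or unitor is trivial. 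Finally, the shadow-commutator coherence \ref{shadow_lax_only_relation} lands over $\underline{n}\to\underline{*}$ and becomes the relation $ts^1=ts^1\tau_{(2)}$ (\ref{terminal1}), precisely the relation that makes the map to the terminal object insensitive to the preceding rotation, while naturality of $s$ (\ref{shadow_lax_naturality}) is once more a commutation of $t$ with identities. I expect the main obstacle to be purely bookkeeping: matching the ``rotate by several blocks'' maps $U(\theta)$ against the single-step generator $\tau_{(n)}$ of $\mathbf\Lambda'$, and tracking the cut point around the cyclic word so that the right one of \ref{cycleface}/\ref{cycleface2} (respectively \ref{cycledeg}/\ref{cycledeg2}) is invoked in each naturality square. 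Once $U$ is shown well defined, its surjectivity and the analogue of \cref{eq:lax_functor_underlying} will follow by the Grothendieck-construction argument of \cref{built_diagram_of_cliques} and \cref{same_presentations}, but none of that is needed here.
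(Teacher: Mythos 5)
Your proposal is correct and follows essentially the same route as the paper: define $U$ on objects via the cut point and block structure (terminal map for the $\shlff{\gLambda}\sh{\cdots}$ objects), send $i$ and $m$ to cofaces and codegeneracies, $\theta$ to cyclic permutations, $s$ to the terminal map, and everything else to identities, then verify the relations, with the shadow-commutator coherence \ref{shadow_lax_only_relation} landing on \ref{terminal1}. The paper dismisses the rotator-relation checks as ``straightforward,'' whereas you spell them out against \ref{cycleface}--\ref{cycletorsion}; this is just added detail, not a different argument.
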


\begin{proof}
The category $\mathbf\Lambda'$ is  the bi-augmented cyclic category  from \cref{lambda_presentation}. 

The components of $\shfrsbit{\gLambda}$ correspond to lists of cyclically composable edges $X_1,\ldots,X_n$. For each component $\comp{\shfrsbit{\gLambda}}{W}$, $U$ is defined 
as follows:
\begin{itemize}
\item The image of a word $\sh{\lff{\gLambda}(...)\odot...\odot\lff{\gLambda}(...)}$ is the map $\underline{n} \to \underline{k}$ in $\mathbf\Lambda$  composed of 
\begin{itemize}
\item a cyclic permutation of the terms $X_i$ to put them in the desired order, 
\item followed by the map in $\mathbf\Delta$ encoding the grouping of those terms into $\lff{\gLambda}(...)$ blocks as in \cref{eq:lax_functor_supporting_morphism}.
\end{itemize}
\item The image of an object of the form $\shlff{\gLambda}\sh{...}$ is the terminal map $\underline{n} \to \underline{*}$.
\item 
As in \eqref{eq:lax_functor_underlying}, the images of  $m$ and $i$ are codegeneracy and coface maps. 
\item The image of $\theta$ is a  cyclic permutation.
\item The image of  $s$ is the terminal map. 
\item The images of  all other generators (associators and unitors, and rotators inside $\shlff{\gLambda}\sh{...}$) are identity maps.
\end{itemize}
 We then check that the relations go to commuting maps in $\mathbf\Lambda$. The only checks not covered by previous cases are \ref{fig:shadow_unit_coherence}- \ref{theta:natural} for the $\sh{\lff{\gLambda}(...)\odot...\odot\lff{\gLambda}(...)}$ terms, which are straightforward, and \ref{shadow_lax_only_relation}, which commutes by \ref{terminal1}.
\end{proof}

\begin{lem}
The {\punderlying} permutation functor from \cref{lem:underlying_functor_shadow} extends to an {\bf {\punderlying} permutation} functor
\begin{equation}\label{eq:symmetric_underlying_functor_1}
 P\colon \comp{\shfrsbit{\gLambda}}{W} \to 
\sB C_n.
\end{equation}
\end{lem}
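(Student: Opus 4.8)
The plan is to follow exactly the recipe used for the supporting set functor $U$ of \eqref{eq:shadow_underlying_functor}: define $P$ generator by generator on the presentation of $\shfrsbit{\gLambda}$ given in \cref{rmk:explicit_free_shadow_lax}, and then verify that $P$ carries each relation to an identity in $\sB C_n$. First I would fix a component $\comp{\shfrsbit{\gLambda}}{W}$, which corresponds to a cyclically composable list $X_1,\ldots,X_n$; every object of this component displays these $n$ edges, each once, in some linear representative of the fixed cyclic word $X_1\to\cdots\to X_n\to X_1$, distributed among $\lff{\gLambda}(\cdots)$ groupings, formal units, and (for the objects $\shlff{\gLambda}\sh{\cdots}$) an outer shadow bracket. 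I would send every object to the unique object of $\sB C_n$. On generators: the associator, unitor, and lax functor generators $m$ and $i$, together with the shadow commutator $s$, all preserve the linear order of the $X_i$ and so are sent to the identity of $C_n$; a rotator $\theta$ applied to the outermost $\odot$ of a word $\sh{\lff{\gLambda}(W_1)\odot\cdots}$, or a rotator $\shlff{\gLambda}(\theta)$ applied inside a word $\shlff{\gLambda}\sh{W}$, exchanges the two consecutive runs of edges $X_i$ in its two arguments and hence rotates the list $X_1,\ldots,X_n$ cyclically, so it is sent to the corresponding element of $C_n$. Since each generator lands in $C_n$ and $C_n$ is closed under composition, $P$ is at least well-defined on words; in particular it automatically factors through the cyclic (as opposed to arbitrary) permutations.

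The second step is to check the relations of \cref{rmk:explicit_free_shadow_lax}. The pentagon, triangle, whiskering, and naturality relations \ref{rmk:explicit_free_bicat_4}--\ref{rmk:explicit_free_bicat_6}, the lax functor relations \ref{eq:pent_tri_lax_functor_all}--\ref{eq:wisk_nat_lax_functor}, and the shadow-commutator naturality \ref{shadow_lax_naturality} involve only generators sent to the identity, so both sides are the identity. The shadow unit coherences \ref{fig:shadow_unit_coherence} (both copies) apply $\theta$ across a block containing no edges $X_i$, so $\theta$ is again sent to the identity there, matching the unitors; and the $\theta$-naturality relations \ref{theta:natural} (both copies) hold because the morphisms applied to the two smaller words are among the generators sent to the identity. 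This leaves the two copies of the shadow associativity coherence \ref{fig:shadow_associativity_coherence} and the shadow-commutator coherence \ref{shadow_lax_only_relation}; in each, both composites rotate $X_1,\ldots,X_n$ by cutting the list at the two places indicated and cyclically reassembling the segments, and the agreement of the two orders of doing this in $C_n$ is exactly the computation already carried out for $a_k a_l=a_{k+l}$ in the proof of \cref{built_diagram_of_cliques_shadow_case}. Concluding, $P$ is a well-defined functor.

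Finally I would note that this $P$ genuinely extends the functor of \cref{lem:underlying_functor_shadow}: the functor $\shlff{\gLambda}\colon\frsbit{\gLambda}\to\shfrsbit{\gLambda}$ sends each $W$ to $\shlff{\gLambda}\sh{W}$ and each rotator, associator, and unitor of $\frsbit{\gLambda}$ to the corresponding generator of $\shfrsbit{\gLambda}$, and restricting the newly-built $P$ along it recovers the earlier underlying cyclic permutation functor. The only step that requires more than routine bookkeeping is the relation check for the two shadow associativity coherences and for \ref{shadow_lax_only_relation}, and even there the work is just a restatement of the $a_k a_l=a_{k+l}$ verification for shadowed bicategories; every other relation is dispatched by the single observation that, within a fixed component, the only generators that move the edges $X_i$ are the rotators and they move them by rotations.
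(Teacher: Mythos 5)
Your proposal is correct and takes essentially the same approach as the paper, whose proof is the one-line observation that each instance of $\theta$ (on either type of object) is sent to the corresponding cyclic permutation and all other generators to the identity; you have simply written out the routine relation checks that the paper leaves implicit.
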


\begin{proof}
The functor sends each instance of $\theta$ to the corresponding cyclic permutation of the terms $X_1$, $\ldots$, $X_n$, and all other generators to the identity.
\end{proof}

A {\bf formal diagram} of a lax shadow functor $(\sC,\sC_{Sh}) \xto{(F,H)} (\sD,\sD_{Sh})$ is any diagram in $\sD_{Sh}$ that lifts against the functor $\shfrsbit{\fols{\sC}}\to \sD_{Sh}$.

\begin{defn} 
A formal diagram of morphisms for a lax shadow functor is {\bf \blacktie} if the {\underlying} maps and {\punderlying} permutations for both composites are the same.
\end{defn}

If the terms $\slambda_1$, $\ldots$, $\slambda_n$ are aperiodic then any two formal maps with the same source and target must give the same permutation, so $P$ can be safely ignored.

\begin{thm}[Coherence for (op)lax shadow functors]\label{shadow_not_coherence}
Every {\blacktie} diagram of morphisms for a lax shadow functor commutes.
\end{thm}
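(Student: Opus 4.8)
The plan is to run the template of \cref{rmk:plan_of_proofs} once more, as in the proofs of \cref{lax_not_coherence} and of coherence for shadowed bicategories (\cref{shbicat_coherence_formal}), but with indexing category $(\underline{n}\downarrow\mathbf\Lambda')$. First I would reduce to the case that the edges $X_1,\dots,X_n$ are \emph{distinct}, so that the cyclically composable list is aperiodic (\cref{aperiodic}). A {\blacktie} diagram in $\sD_{Sh}$ lifts along $\shfrsbit{\fols{\sC}}\to\sD_{Sh}$ and then along a map $\shfrsbit{\gLambda'}\to\shfrsbit{\fols{\sC}}$ induced by a graph $\gLambda'$ whose edges are the finitely many slots appearing in the diagram, now taken to be distinct; with distinct edges the {\punderlying} cyclic permutation of a formal composite is forced by its source and target, so the lifted diagram is automatically {\blacktie}, and proving it commutes gives the original by taking images. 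This step is \emph{essential}: if the list has period $k>1$, then by coherence for shadowed bicategories (\cref{shadowed_bicat_coherence_aperiodic}) the relevant subcategory of $\frsbit{\gLambda}$ is equivalent to $\sB C_k$ rather than to a clique, and the construction below would break down.

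So fix aperiodic cyclically composable edges $X_1,\dots,X_n$ in a graph $\gLambda$ and construct a diagram of cliques $\docf\colon(\underline{n}\downarrow\mathbf\Lambda')\to\catcl{\shfrsbit{\gLambda}}$ (\cref{rmk:plan_of_proofs}\ref{rmk:plan_of_proofs_1}). For an object $\alpha\colon\underline{n}\to\underline{k}$ with $k\geq1$, let $\doc(\alpha)$ be the image in $\shfrsbit{\gLambda}$, written inside $\sh{-}$, of the lax-functor clique $\bigodot_{j\in\underline{k}}\lff{\gLambda}\!\left(\bigodot_{i\in\alpha^{-1}(j)}X_i\right)$ of \cref{ex:define_odot_clique_F}, with the $k$ blocks in the linear order prescribed by $\alpha$; this is a clique by \cref{ex:define_odot_clique_F} and \cref{same_presentations}, with canonical isomorphisms the associators and unitors inside and outside $\lff{\gLambda}$. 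For the unique object $\underline{n}\to\underline{*}$, let $\doc(\underline{*})$ be the clique whose models are the objects $\shlff{\gLambda}\sh{\cdots}$ over $X_1,\dots,X_n$ (type \ref{it:sh_out} of \cref{rmk:explicit_free_shadow_lax}) and whose canonical isomorphisms are the associators, unitors, and rotators among them; this is a clique because the list is aperiodic, by coherence for shadowed bicategories (\cref{shadowed_bicat_coherence_aperiodic}). On the generators of \cref{slice_presentation} and \cref{lambda_presentation}: coface maps $d^i$ go to the unit maps $i\colon I\to\lff{\gLambda}(I)$ and codegeneracy maps $s^i$ to the composition maps $m$, exactly as in \cref{built_diagram_of_cliques}; cycle-to-the-left maps $\tau_{(k)}$ go to the rotators $\theta$, as in \cref{built_diagram_of_cliques_shadow_case}; and the terminal map $t$ goes to the shadow commutator $s$, precomposed with the $m$'s and $\theta$'s needed to collapse all blocks into a single $\lff{\gLambda}(\cdots)$ and hence into $\doc(\underline{*})$.

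Next I would verify the relations of $(\underline{n}\downarrow\mathbf\Lambda')$, each on a single admissible model: \ref{faceface}--\ref{degface3} as in \cref{built_diagram_of_cliques}; \ref{cycletorsion} together with the cycle/face and cycle/degeneracy relations \ref{cycleface}--\ref{cycledeg2} as in \cref{built_diagram_of_cliques_shadow_case} (using \ref{fig:shadow_unit_coherence}--\ref{theta:natural}) combined with naturality of $\theta$ against $m$ and $i$; and the terminal relation \ref{terminal1}, which translates into the shadow-commutator coherence \ref{shadow_lax_only_relation} (with \ref{shadow_lax_naturality}) and is precisely what makes the clique map $\doc(\alpha)\to\doc(\underline{*})$ independent of how the blocks are collapsed. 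Then \cref{clique_bijection} gives a functor $f\colon\int_{(\underline{n}\downarrow\mathbf\Lambda')}\doc\to\comp{\shfrsbit{\gLambda}}{W}$ (\cref{rmk:plan_of_proofs}\ref{rmk:plan_of_proofs_2}); it is a bijection on objects by construction and surjective on morphisms because the generators of \cref{groth_presentation} hit every generator of $\shfrsbit{\gLambda}$ from \cref{rmk:explicit_free_shadow_lax} (associators and unitors inside and outside $\lff{\gLambda}$, rotators on objects of both types, and $m$, $i$, $s$). Since $U\circ f$ is the projection of \cref{lem:consequences_of_projection_1}, which is an equivalence, $f$ is faithful, hence an isomorphism of categories (\cref{rmk:plan_of_proofs}\ref{rmk:plan_of_proofs_3}); therefore $U$ is an equivalence, in particular faithful, so parallel formal morphisms with the same {\underlying} map agree, and together with the reduction above this proves every {\blacktie} diagram commutes (\cref{rmk:plan_of_proofs}\ref{rmk:plan_of_proofs_4}). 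The op-lax case is identical, with $m$, $i$, $s$ reversed and $(\underline{n}\downarrow\mathbf\Lambda')$ replaced by its opposite.

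The main obstacle I anticipate is the construction and verification of $\doc(\underline{*})$ and of the clique maps into it. This is the one place where the shadowed-bicategory clique structure of $\frsbit{\gLambda}$, the lax-functor clique structure of $\lffc{\gLambda}$, and the new shadow commutator $s$ all interact: one must show that collapsing the blocks by any sequence of $m$'s and $\theta$'s and then applying $s$ yields a well-defined map on models, which forces simultaneous use of \ref{shadow_lax_only_relation}, \ref{shadow_lax_naturality}, and --- so that $\doc(\underline{*})$ is genuinely a clique --- coherence for shadowed bicategories in the aperiodic case.
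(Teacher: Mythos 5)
Your proposal is correct and follows essentially the same route as the paper: reduce via the underlying cyclic permutation to the aperiodic (in your case, distinct) setting, build the diagram of cliques over $(\underline{n}\downarrow\mathbf\Lambda')$ with the terminal object sent to the $\shlff{\gLambda}\sh{\cdots}$ clique, verify the relations of \cref{lambda_presentation} (with \ref{terminal1} coming from \ref{shadow_lax_only_relation}), and deduce that the resulting functor is an isomorphism by surjectivity on generators plus faithfulness via the projection to the base. The only cosmetic difference is that the paper defines the terminal generator only out of objects $\underline{n}\to\underline{1}$ (applying $s$ alone on admissible one-block models), whereas you describe the full composite from a general $\underline{k}$; these agree.
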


\subsubsection{Proof of coherence for lax shadow functors}\label{sec:proof_coherence_lax_shadow_functors}
As in \cref{sec:proof_coherence_lax_smc_functors}, if the {\punderlying} permutations of two formal composites are the same, then those composites can be interpreted as acting on an aperiodic list. Hence, without loss of generality, we can ignore $P$ and focus on the case where the elements $\slambda_i$ are aperiodic.

For each morphism $\alpha\colon \underline{n} \to \underline{k}$ in $\mathbf\Lambda'$ we follow  \cref{ex:define_odot_clique_F} and define a clique
\begin{equation}\label{define_shadow_F_clique}
\sh{ \ \bigodot_{j \in \underline{k}} \lff{\gLambda} \left( \bigodot_{i \in \alpha^{-1}(j)} X_i \right) \ }
\end{equation}
	with maps generated by associators and unitors on both the inside and the outside of the $\lff{\gLambda}$. (We do not include rotators on the outside.)
	
	The terms in each of the inside products $\bigodot_{i \in \alpha^{-1}(j)} X_i$ are arranged using the total ordering on $\underline{\alpha^{-1}(j)}$ inherited from $\underline{n}$ as a \emph{cyclically} ordered set. This order is either of the form $\{i,i+1,\ldots, i+k\}$ with $i+k\leq n$, or $\{i,i+1,...,n,1,2,...,j\}$. When $k = 1$, some care is needed -- the map $\alpha\colon \underline{n} \to \underline{1}$ is given by the data of a partition of $\underline{n}$ into $\{1,...,\ell\} \cup \{\ell+1,...,n\}$, and for this map the induced ordering on $\alpha^{-1}(1)$ is $\{\ell+1,...,n,1,...,\ell\}$. 

	For the terminal morphism $t\colon \underline{n} \to \underline{*}$ we take the clique
\begin{equation}\label{define_shadow_H_clique}
\shlff{\gLambda} \sh{ \ \left( \bigodot_{i \in \underline{n}} X_i \right) \ }
\end{equation}
	as above, except we use associators, unitors, \emph{and} rotators inside the $\shlff{\gLambda}$. Since the $X_i$ are aperiodic, this is a clique by coherence for shadowed bicategories (\cref{shbicat_coherence_formal}).

\begin{lem}[\cref{rmk:plan_of_proofs}\ref{rmk:plan_of_proofs_1}]\label{built_diagram_of_cliques_shadow_functor}
	The cliques \eqref{define_shadow_F_clique} and \eqref{define_shadow_H_clique} extend to a diagram of cliques from $(\underline{n} \downarrow \mathbf\Lambda')$ to $\shfrsbit{\gLambda}$.
\end{lem}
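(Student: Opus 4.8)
The plan is to follow the now-familiar recipe of \cref{rmk:plan_of_proofs}\ref{rmk:plan_of_proofs_1}, exactly as in \cref{built_diagram_of_cliques} and \cref{built_diagram_of_cliques_shadow_case}: assign a map of cliques to each generator of $(\underline{n} \downarrow \mathbf\Lambda')$, and then verify each relation of \cref{lambda_presentation}, lifted to the slice via \cref{slice_presentation}, on a single admissible model. By \cref{slice_presentation} the generators are the coface, codegeneracy, cycle, and terminal generators of $\mathbf\Lambda'$ acting on the codomain $\underline{k}$ of an object $\alpha\colon \underline{n} \to \underline{k}$. First I would assign them maps of cliques: the coface $d^i$ goes to an instance of the unit map $i\colon I \to \lff{\gLambda}(I)$ inserting a new block $\lff{\gLambda}(I)$ on the outside of the $\sh{\cdots}$, as in \cref{built_diagram_of_cliques:lem_coherence_bicat} and \cref{built_diagram_of_cliques}; the codegeneracy $s^i$ goes to an instance of $m$ merging two adjacent blocks, again as in \cref{built_diagram_of_cliques}; the cycle $\tau_{(k)}$ goes to a single instance of the rotator $\theta$ on the outermost $\odot$, on the model whose outer parenthesization isolates the block to be moved, exactly as the cyclic generators were handled in \cref{built_diagram_of_cliques_shadow_case}, one level up; and the terminal map $t\colon \underline{1} \to \underline{*}$ goes to the shadow commutator $s\colon \sh{\lff{\gLambda}(W)} \to \shlff{\gLambda}\sh{W}$, taking as admissible the model of \eqref{define_shadow_F_clique} with $\alpha\colon \underline{n} \to \underline{1}$ whose outer word is literally $\sh{\lff{\gLambda}(W)}$ with no stray units. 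Naturality of $s$ (\ref{shadow_lax_naturality}) together with coherence for shadowed bicategories (\cref{shbicat_coherence_formal}) shows this last assignment extends along all canonical isomorphisms of \eqref{define_shadow_F_clique} to a well-defined map of cliques into \eqref{define_shadow_H_clique}.

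Next I would check the relations. The simplicial relations \ref{faceface}--\ref{degface3} among the $d^i$ and $s^i$ go through verbatim as in the proof of \cref{built_diagram_of_cliques}, now carried out inside $\sh{\cdots}$, using coherence for bicategories (\cref{bicat_coherence}) to select the canonical isomorphisms between admissible models and invoking \ref{eq:unit_lax_functor_all} and \ref{eq:hex_lax_functor} for the cases where the inserted unit is absorbed or where two codegeneracies are adjacent. The cycle--face relations \ref{cycleface}, \ref{cycleface2} and the cyclic torsion \ref{cycletorsion} follow from whiskering and naturality of $\theta$ (\ref{rmk:explicit_free_bicat_5}, \ref{theta:natural}) together with the shadow unit coherence \ref{fig:shadow_unit_coherence}, precisely as in \cref{built_diagram_of_cliques_shadow_case}; for \ref{cycletorsion} the $k$-fold composite of $\theta$ returns to the literal same outer word and induces the trivial cyclic permutation of the $k$ blocks, hence equals the identity by coherence for shadowed bicategories. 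The cycle--degeneracy relation \ref{cycledeg} is again whiskering and naturality, while \ref{cycledeg2} is the case in which $m$ is applied across the cyclic seam; restricting to the model in which the last two tensors join the relevant segments, this becomes the shadow associativity diagram \ref{fig:shadow_associativity_coherence}, as in \cref{built_diagram_of_cliques_shadow_case}. The genuinely new check is the terminal relation \ref{terminal1}, $ts^1 = ts^1\tau_{(2)}$: on the model $\sh{\lff{\gLambda}(M) \odot \lff{\gLambda}(N)}$ the left-hand composite is $s \circ \sh{m}$ and the right-hand composite is $s \circ \sh{m} \circ \theta$, landing respectively in the models $\shlff{\gLambda}\sh{M \odot N}$ and $\shlff{\gLambda}\sh{N \odot M}$ of \eqref{define_shadow_H_clique}, which differ by the canonical isomorphism $\shlff{\gLambda}(\theta)$; that these two composites agree along this isomorphism is exactly the shadow-commutator coherence condition \ref{shadow_lax_only_relation}.

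I expect the main obstacle to be the cycle--degeneracy relation \ref{cycledeg2}, and to a lesser extent \ref{cycletorsion}: in each case one must choose admissible models carefully at the cyclic seam so that a single instance of \ref{fig:shadow_associativity_coherence} (respectively, a single clean cyclic string of rotators) suffices, and one must confirm that periodicity of the $X_i$ does not cause trouble --- for \ref{cycletorsion} this works out because the composite returns to the literal same word and so the claim reduces, via the equivalence $\comp{\frsbit{\gLambda}}{W} \to \sB C_m$ of \cref{sec:shadowed_bicat}, to the statement that a morphism inducing the trivial cyclic permutation is the identity, but this needs to be argued rather than asserted. Everything else is a routine transcription of \cref{built_diagram_of_cliques} and \cref{built_diagram_of_cliques_shadow_case} into the present setting.
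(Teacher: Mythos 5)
Your proposal is correct and follows essentially the same route as the paper: the same assignments of cofaces, codegeneracies, cycle maps, and the terminal map to instances of $i$, $m$, the rotator $\theta$, and the shadow commutator $s$, with the relations checked on admissible models via the simplicial argument of \cref{built_diagram_of_cliques}, coherence for shadowed bicategories, naturality of $\theta$, and \ref{shadow_lax_only_relation} for \ref{terminal1}. Your slightly more explicit attributions (e.g.\ routing \ref{cycledeg2} through \ref{fig:shadow_associativity_coherence} and reducing \ref{cycletorsion} to the triviality of the induced cyclic permutation in the aperiodic case) are consistent with, and if anything more careful than, the paper's one-line citations.
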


\begin{proof}

Recall from \cref{lambda_presentation} that $\mathbf\Lambda'$ is generated by cofaces, codegeneracies, cycle maps, and a map $t\colon \underline{1} \to \underline{*}$. The coface and codegeneracy maps from $\mathbf\Delta$ give clique maps described in \cref{built_diagram_of_cliques:lem_coherence_bicat}. The cycle maps are assigned to the clique maps that rotate the $\lff{\gLambda}$ terms by one position, using associators, unitors, and rotator maps outside the copies of $\lff{\gLambda}$. Any fixed formula for doing this commutes with the associators and unitors inside the $\lff{\gLambda}$ by naturality (\ref{eq:wisk_nat_lax_functor}), and different formulas agree by coherence for shadowed bicategories (\cref{shbicat_coherence_formal}). 

Finally, composing with the map $t$ applies the shadow commutation $s$. The admissible models are those in the one-fold tensor product $\bigodot_{j \in \underline{1}} \lff{\gLambda}(...)$ that have only the $\lff{\gLambda}(...)$ term and no extra units. This gives a clique map by \ref{shadow_lax_naturality}.

	For the relations from \cref{lambda_presentation} we check that each word in the relation gives the same map of cliques. For \ref{faceface} to \ref{degface3}, this is by the proof of \cref{built_diagram_of_cliques}. For \ref{cycleface}, \ref{cycleface2}, and \ref{cycletorsion} this follows from coherence in a shadowed bicategory (\cref{shbicat_coherence_formal}). \ref{cycledeg} and \ref{cycledeg2} are by naturality of $\theta$ in a shadowed bicategory (\ref{theta:natural}). Finally, the terminal relation \ref{terminal1} follows directly from the coherence \ref{shadow_lax_only_relation}.
\end{proof}

Let $D$ be the diagram of cliques in \cref{built_diagram_of_cliques_shadow_functor}.  
\cref{clique_bijection} defines a functor (\cref{rmk:plan_of_proofs}\ref{rmk:plan_of_proofs_2})
\begin{equation}\label{eq:shadow_lax_iso}  \int_{(\underline{n} \downarrow \mathbf\Lambda')} D \to \comp{\shfrsbit{\gLambda}}{W}.
\end{equation}

\begin{thm}[\cref{rmk:plan_of_proofs}\ref{rmk:plan_of_proofs_3}]\label{same_presentations_shadow}
 	The functor in \eqref{eq:shadow_lax_iso} is an isomorphism of categories.
\end{thm}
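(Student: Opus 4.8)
The plan is to follow verbatim the template used in \cref{same_presentations} (and repeated in \cref{same_presentations_symmetric}): show that \eqref{eq:shadow_lax_iso} is a bijection on objects, then full, then faithful, and conclude it is an isomorphism of categories.

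First, by construction the diagram of cliques $D$ from \cref{built_diagram_of_cliques_shadow_functor} has, for each morphism $\alpha$ of $(\underline{n} \downarrow \mathbf\Lambda')$, the clique \eqref{define_shadow_F_clique} or \eqref{define_shadow_H_clique}, whose objects are exactly the 1-cells (resp. shadow objects) of $\comp{\shfrsbit{\gLambda}}{W}$ in the corresponding pattern. Taking the union over $\alpha$ recovers all objects of $\comp{\shfrsbit{\gLambda}}{W}$ exactly once, so \eqref{eq:shadow_lax_iso} is a bijection on objects. Next I would check fullness by writing out the generators of the Grothendieck construction using \cref{groth_presentation}, \cref{slice_presentation}, and \cref{lambda_presentation}, and matching them against the generators of $\comp{\shfrsbit{\gLambda}}{W}$ from \cref{rmk:explicit_free_shadow_lax}. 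The vertical generators account for every expanded associator, unitor, and rotator applied inside the $\lff{\gLambda}(\cdots)$ terms, on the outside word of a type \ref{it:sh_out} object (from the clique \eqref{define_shadow_F_clique}), and inside the $\shlff{\gLambda}\sh{\cdots}$ of a type \ref{it:functor_out} object (from the clique \eqref{define_shadow_H_clique}, which by construction includes rotators inside $\shlff{\gLambda}$). The horizontal generators coming from coface, codegeneracy, cycle, and terminal maps of $\mathbf\Lambda'$ account for $i$, $m$, the outside rotators $\theta$ on type \ref{it:functor_out} objects, and the shadow commutator $s$ (\ref{lax_shadow_s}), respectively. Hence every generator of $\comp{\shfrsbit{\gLambda}}{W}$ lies in the image, so \eqref{eq:shadow_lax_iso} is full.

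For faithfulness, I would use that the composite
\[ \int_{(\underline{n} \downarrow \mathbf\Lambda')} D \xto{\eqref{eq:shadow_lax_iso}} \comp{\shfrsbit{\gLambda}}{W} \xto{\eqref{eq:shadow_underlying_functor}} (\underline{n} \downarrow \mathbf\Lambda') \]
is the projection $\pi$ to the base of \cref{lem:consequences_of_projection_1}: indeed $U$ was defined so as to send $m$, $i$, the outside $\theta$, and $s$ to codegeneracy, coface, cycle, and terminal maps, and all purely ``inside'' associators/unitors/rotators to identities, which is exactly how $D$ assigns clique maps to the generators of $\mathbf\Lambda'$. Since $\pi$ is an equivalence of categories by \cref{lem:consequences_of_projection_1}, it is faithful, and therefore \eqref{eq:shadow_lax_iso} is faithful. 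Being full, faithful, and a bijection on objects, \eqref{eq:shadow_lax_iso} is an isomorphism of categories.

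I expect the main obstacle to be the fullness step — specifically bookkeeping the three ``flavors'' of rotator ($\theta$ inside $\frsbit{\gLambda}$ on type \ref{it:sh_out} objects, $\theta$ on the outside word of type \ref{it:functor_out} objects arising from cycle maps of $\mathbf\Lambda'$, and rotators inside the $\shlff{\gLambda}$ which are vertical generators living in the single clique \eqref{define_shadow_H_clique}) and confirming that coherence for shadowed bicategories (\cref{shbicat_coherence_formal}) is what legitimizes treating \eqref{define_shadow_H_clique} as a clique in the first place. The identification of the composite with $\pi$ is routine once $U$ and $D$ are unwound on generators, just as in \cref{same_presentations}.
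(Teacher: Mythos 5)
Your proof matches the paper's argument essentially verbatim: a bijection on objects by construction (relying on aperiodicity of the $X_i$), fullness by matching the vertical and horizontal generators of the Grothendieck construction from \cref{groth_presentation,lambda_presentation} against the generators of $\comp{\shfrsbit{\gLambda}}{W}$ in \cref{rmk:explicit_free_shadow_lax}, and faithfulness via the observation that the composite with the underlying set functor \eqref{eq:shadow_underlying_functor} is the projection $\pi$ of \cref{lem:consequences_of_projection_1}. The only slip is cosmetic: you have swapped the labels \ref{it:sh_out} and \ref{it:functor_out} (the clique \eqref{define_shadow_F_clique} consists of type \ref{it:functor_out} objects $\sh{\lff{\gLambda}(\cdots)\odot\cdots}$, while \eqref{define_shadow_H_clique} consists of type \ref{it:sh_out} objects $\shlff{\gLambda}\sh{\cdots}$), but your accounting of which generators are vertical and which are horizontal is correct.
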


\begin{proof}
	By construction, \eqref{eq:shadow_lax_iso}  is a bijection onto the objects of $\comp{\shfrsbit{\gLambda}}{W}$. (If the elements $\slambda_i$ were not distinct then this claim would fail.)

The generators of $\int_{(\underline{n} \downarrow \mathbf\Lambda')} D$ given by \cref{groth_presentation,lambda_presentation} are the generators  in each clique, together with the cofaces, codegeneracies, cycles, and terminal map. These correspond to the generators of $\comp{\shfrsbit{\gLambda}}{W}$ (the cliques giving all expanded instances of $\alpha$, $l$, $r$, and $\theta$, save for $\theta$ on the outside of $\sh{\lff{\gLambda}(...)\odot...\odot\lff{\gLambda}(...)}$, and the horizontal generators giving $i$, $m$, $s$ and the remaining instances of $\theta$).  	Therefore this functor is  surjective on morphisms.
	
The composite functor
\[ \int_{(\underline{n} \downarrow \mathbf\Lambda')} D \xto{\eqref{eq:shadow_lax_iso}}  \comp{\shfrsbit{\gLambda}}{W}
\xto{\eqref{eq:shadow_underlying_functor}} (\underline{n} \downarrow \mathbf\Lambda')\]
is the projection $\pi$ to the base category from \cref{lem:consequences_of_projection_1}.  By that result, $\pi$ is an equivalence of categories and so \eqref{eq:shadow_lax_iso} is faithful.

Since \eqref{eq:shadow_lax_iso} is an isomorphism on objects and full and faithful it is an isomorphism.
\end{proof}

\begin{cor}[\cref{rmk:plan_of_proofs}\ref{rmk:plan_of_proofs_4}]\label{shadow_coherence}
When the elements $\glambda_i$ are aperiodic, the {\underlying} set functor \eqref{eq:shadow_underlying_functor} is an equivalence of categories.
 \end{cor}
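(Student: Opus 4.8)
The plan is to run the now-familiar argument of \cref{rmk:plan_of_proofs}\ref{rmk:plan_of_proofs_4}, exactly as in the proofs of \cref{lax_coherence_normal} and \cref{symmetric_coherence}. First I would recall, from the proof of \cref{same_presentations_shadow}, that the composite
\[ \int_{(\underline{n} \downarrow \mathbf\Lambda')} D \xto{\eqref{eq:shadow_lax_iso}} \comp{\shfrsbit{\gLambda}}{W} \xto{\eqref{eq:shadow_underlying_functor}} (\underline{n} \downarrow \mathbf\Lambda') \]
is precisely the projection functor $\pi$ of \cref{lem:consequences_of_projection_1}, which is an equivalence of categories because each clique in the diagram $D$ has nonempty object set. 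Then I would invoke \cref{same_presentations_shadow}, which asserts that the first arrow \eqref{eq:shadow_lax_iso} is an isomorphism of categories, hence in particular an equivalence.

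Combining these via the two-out-of-three property of equivalences applied to the triangle above, the second arrow \eqref{eq:shadow_underlying_functor} — that is, the {\underlying} set functor $U$ — is an equivalence of categories, which is exactly the claim. Concretely, $U$ is the composite of $\pi$ with an inverse of \eqref{eq:shadow_lax_iso}, each an equivalence.

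I do not expect a genuine obstacle here: all the substantive work (building the diagram of cliques, verifying the relations of $\mathbf\Lambda'$, and showing \eqref{eq:shadow_lax_iso} is an isomorphism) has already been carried out in \cref{built_diagram_of_cliques_shadow_functor} and \cref{same_presentations_shadow}. The only point worth flagging is that the aperiodicity hypothesis on the $\glambda_i$ is essential and enters precisely through those earlier results: it is needed for \eqref{define_shadow_F_clique} and, above all, for \eqref{define_shadow_H_clique} to be cliques, via coherence for shadowed bicategories (\cref{shbicat_coherence_formal}). Under that hypothesis the corollary is an immediate translation of the isomorphism of \cref{same_presentations_shadow} into the language of equivalences.
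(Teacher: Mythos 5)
Your proposal is correct and matches the paper's (implicit) argument exactly: the corollary is obtained by combining \cref{same_presentations_shadow} with the fact that the composite of \eqref{eq:shadow_lax_iso} and \eqref{eq:shadow_underlying_functor} is the projection $\pi$ of \cref{lem:consequences_of_projection_1}, then applying two-out-of-three. Your remark on where aperiodicity enters is also accurate (it is used both for \eqref{define_shadow_H_clique} to be a clique and for the object bijection in \cref{same_presentations_shadow}).
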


This finishes the proof of coherence for lax shadow functors (\cref{shadow_not_coherence}).

\subsubsection{Normal and strong shadow functors}
A lax shadow functor is {\bf normal} if its unit maps $i$ are isomorphisms, and {\bf strong} if it is normal and  the compositions $m$ and shadow commutators $s$ are isomorphisms. As in \cref{subsec:lax}, the coherence theorems for these are proven in the same way as for lax shadow functors.

For normal functors, replace the category $\oneShLax$ with the corresponding category for lax normal functors with a forgetful functor 
\[
\oneNSh\to \Set\]
and let
\[ (\frbic{\gLambda},\frsbit{\gLambda}) \xto{(\nlff{\gLambda},\nshlff{\gLambda})}(\nlffc{ \gLambda},\nshfrsbit{\gLambda}) \]
be the result of applying the free functor to a graph $\gLambda$.   Then the presentation for $\nshfrsbit{\gLambda}$ is as in 
\cref{rmk:explicit_free_shadow_lax} except that the unit maps $i$ are invertible generators. The {\underlying} set functor  goes from a component of $\nshfrsbit{\gLambda}$ to $(\underline{n} \downarrow \mathbf\Lambda')[\sI^{-1}]$, the comma category of the bi-augmented cyclic category in which the injective totally ordered maps (and therefore all injective maps) have been inverted. The definitions of a formal diagram and a {\blacktie} diagram are the same as above.

\begin{thm}[Coherence for normal (op)lax shadow functors]\label{thm:coherence_normal_shadow}
Every {\blacktie} diagram of morphisms for a normal lax shadow functor commutes.
\end{thm}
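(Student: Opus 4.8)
The plan is to run the argument of \cref{sec:proof_coherence_lax_shadow_functors} essentially verbatim, incorporating the single modification that carried us from lax functors to normal lax functors in \cref{subsec:normal_coherence}: the lax unit generator $i$ (\ref{rmk:explicit_free_lax_5}) becomes invertible, and the indexing category is replaced by its localization at $\sI$. Concretely, I would begin exactly as in \cref{sec:proof_coherence_lax_shadow_functors}: given a {\blacktie} formal diagram, its parallel composites induce the same {\punderlying} cyclic permutation under \eqref{eq:symmetric_underlying_functor_1}, so after relabelling we may assume the $\glambda_i$ are aperiodic and ignore $P$ henceforth. The presentation of $\nshfrsbit{\gLambda}$ is that of \cref{rmk:explicit_free_shadow_lax} with the unit map $i$ promoted to an invertible generator, so the {\underlying} set functor now takes values in $(\underline{n} \downarrow \mathbf\Lambda')[\sI^{-1}]$, and it suffices to show that a formal diagram whose parallel composites agree there commutes.

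Next I would reconstruct the diagram of cliques. Its domain is $(\underline{n} \downarrow \mathbf\Lambda')[\sI^{-1}]$, whose presentation via \cref{slice_presentation,lambda_presentation} is the one used in \cref{built_diagram_of_cliques_shadow_functor} except that the coface maps \ref{gen:coface} are now invertible generators. The cliques \eqref{define_shadow_F_clique} and \eqref{define_shadow_H_clique} and the clique maps attached to codegeneracies, cycle maps, and the terminal map $t$ are unchanged; the coface maps are still sent to instances of $i$, which are now isomorphisms because $F$ is normal, so these clique maps are invertible and the localization is respected. All relations of $\mathbf\Lambda'$ were already checked in \cref{built_diagram_of_cliques_shadow_functor} (using \cref{bicat_coherence} and coherence for shadowed bicategories, \cref{shbicat_coherence_formal}, to move between admissible models), and the only genuinely new relations, asserting that each coface generator is inverse to its formal inverse, hold because $i$ is invertible; this is the same point as in \cref{built_diagram_of_cliques_normal}. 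This produces a diagram of cliques $D$ on $(\underline{n} \downarrow \mathbf\Lambda')[\sI^{-1}]$.

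Then \cref{clique_bijection} yields a functor $\int_{(\underline{n} \downarrow \mathbf\Lambda')[\sI^{-1}]} D \to \comp{\nshfrsbit{\gLambda}}{W}$, and I would show it is an isomorphism of categories by repeating the proof of \cref{same_presentations_shadow}: it is a bijection on objects (here aperiodicity is used); it is surjective on morphisms because the generators of the Grothendieck construction from \cref{groth_presentation} exhaust those of $\nshfrsbit{\gLambda}$ listed in \cref{rmk:explicit_free_shadow_lax} (now with $i$ invertible); and it is faithful because composing with the {\underlying} set functor \eqref{eq:shadow_underlying_functor} recovers the projection $\pi$ of \cref{lem:consequences_of_projection_1}, which is an equivalence. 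Hence, for aperiodic $\glambda_i$, the {\underlying} set functor \eqref{eq:shadow_underlying_functor} is an equivalence onto $(\underline{n} \downarrow \mathbf\Lambda')[\sI^{-1}]$, in particular faithful, so any two parallel formal composites with equal {\underlying} set images coincide; combined with the reduction of the first paragraph this proves the theorem. One may additionally record, by the argument of \cref{localized_comma_cat_is_thin}, that $(\underline{n} \downarrow \mathbf\Lambda')[\sI^{-1}]$ is thin, so in practice only the {\punderlying} cyclic permutation must be checked.

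I do not expect a substantively new obstacle, since every nonroutine verification already appears in \cref{built_diagram_of_cliques_shadow_functor,same_presentations_shadow} or in \cref{built_diagram_of_cliques_normal,localized_comma_cat_is_thin}. The step requiring the most care is the compatibility of the construction with the localization at $\sI$: one must confirm that the clique maps assigned to the now invertible coface generators are genuinely inverse to one another, which holds because each such clique map is given on admissible models by a single invertible $2$-cell $i$, and \cref{shbicat_coherence_formal} together with \cref{bicat_coherence} lets one pass freely between admissible models without disturbing that $2$-cell.
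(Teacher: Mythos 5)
Your proposal is correct and follows the paper's own route essentially verbatim: the paper likewise proves this theorem by repeating the lax shadow functor argument of \cref{sec:proof_coherence_lax_shadow_functors} with the unit generator $i$ made invertible and the indexing category replaced by $(\underline{n} \downarrow \mathbf\Lambda')[\sI^{-1}]$, exactly as in the passage from lax to normal lax functors, and then records the thinness of the localized comma category as a supplementary simplification of the {\blacktie} condition. The only point you gloss over is that the paper flags the thinness verification as ``slightly trickier'' in the cyclic case, but since the theorem itself already follows from faithfulness of the {\underlying} set functor, this does not affect your argument.
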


The proof of \cref{localized_comma_cat_is_thin} is slightly trickier to verify in this case, but it gives
\begin{lem}\label{shad_localized_comma_cat_is_thin}
	The localization $(\underline{n} \downarrow \mathbf\Lambda')[\sI^{-1}]$ is a thin category.
\end{lem}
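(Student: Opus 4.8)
The plan is to transcribe the proof of \cref{localized_comma_cat_is_thin}, adding the two ingredients needed to accommodate the cyclic automorphisms and the terminal object of $\mathbf\Lambda'$. First I would extract from \cref{lambda_presentation} the following factorization: pushing the cycle generators $\tau_{(n)}$ leftward via the relations \ref{cycleface}--\ref{cycledeg2}, every morphism $\underline{n}\to\underline{k}$ with $\underline{k}\neq\underline{*}$ has the form $\sigma\circ\beta$ where $\beta\in\mathbf\Delta(\underline{n},\underline{k})$ and $\sigma$ is a cyclic automorphism of $\underline{k}$ (a power of $\tau_{(k)}$, hence an isomorphism since $\tau_{(k)}^{k}=\id$ by \ref{cycletorsion}); and there is exactly one morphism $\underline{n}\to\underline{*}$. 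Taking the epi--mono factorization $\beta=\iota g$ in $\mathbf\Delta$, with $g\colon\underline{n}\to\underline{m}$ surjective and $\iota$ an injective totally ordered map (so $\iota\in\sI$), we obtain $f=\sigma\iota g$, and the comma-morphism $\sigma\iota$ exhibits $f$ as isomorphic, \emph{in the localization}, to the \textbf{reduced} object $g\colon\underline{n}\to\underline{m}$ (or, when $\underline{k}=\underline{*}$, we declare $f$ itself reduced).

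The ingredient genuinely absent from the $\mathbf\Delta$-case is that a surjective totally ordered map $g\colon\underline{n}\to\underline{m}$ remains an epimorphism in $\mathbf\Lambda'$: it has a section $e(j)=\min g^{-1}(j)$, which is an injective totally ordered map and hence a morphism of $\mathbf\Delta\subseteq\mathbf\Lambda'$, and $ge=\id$ holds in $\mathbf\Lambda'$ because it is among the relations \ref{faceface}--\ref{degface3}; dually, injective totally ordered maps are split monomorphisms in $\mathbf\Lambda'$. With this in hand the argument of \cref{localized_comma_cat_is_thin} runs: a zig-zag representing a morphism $[f]\to[f']$ in the localization (forward comma-morphisms arbitrary, backward ones in $\sI$) is conjugated by the isomorphisms above into a zig-zag between reduced objects, where each conjugated forward (resp. backward) arrow is again the image of a comma-morphism (resp. of an $\sI$-comma-morphism) between reduced objects -- here one uses that cyclic rotations are invertible and that injective order maps are split monos, so that one can ``divide'' past the cyclic twists. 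Between reduced objects $g\colon\underline{n}\to\underline{m}$ and $g'\colon\underline{n}\to\underline{l}$, any comma-morphism $h$ with $hg=g'$ that lies in $\sI$ must be a bijection (its image contains that of $g'$, which is all of $\underline{l}$), hence the identity; so the backward arrows collapse and the zig-zag becomes a single comma-morphism $g\to g'$, of which there is at most one because $g$ is epi. The reduced object $\underline{n}\to\underline{*}$ is isolated (nothing maps out of $\underline{*}$, and every object maps to it uniquely), so it causes no difficulty. Hence every hom-set of $(\underline{n}\downarrow\mathbf\Lambda')[\sI^{-1}]$ has at most one element.

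I expect the main obstacle to be the middle step: checking that conjugating a general zig-zag by the cyclic-rotation-twisted isomorphisms genuinely lands inside the subcategory of reduced objects and comma-morphisms, rather than spawning new zig-zags. The proof of \cref{localized_comma_cat_is_thin} is terse about the analogous point precisely because there the connecting isomorphisms are themselves $\sI$-morphisms; in $\mathbf\Lambda'$ they carry a cyclic rotation, and one must invoke the split-monomorphism property of cofaces in $\mathbf\Lambda'$ -- to factor a comma-morphism whose image lies in a sub-interval of $\underline{k}$ through the corresponding injective map -- in order to see the collapse go through. Everything else is a routine transcription of the argument for \cref{localized_comma_cat_is_thin}.
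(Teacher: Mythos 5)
There is a genuine error in your first step, and it is load-bearing. The normal form you claim for morphisms of $\mathbf\Lambda'$ is backwards. The relations \ref{cycleface}--\ref{cycledeg2} have the shape $\tau_{(n)}\circ(\text{generator}) = (\text{generator})\circ\tau^{a}$, so they push the cycle maps toward the \emph{source}: the resulting normal form is $f = \beta\circ\tau_{(n)}^{j}$ with $\beta\in\mathbf\Delta(\underline{n},\underline{k})$ and $\tau_{(n)}^{j}$ a cyclic automorphism of $\underline{n}$, and this decomposition is the one that gives $|\mathbf\Lambda(\underline{n},\underline{k})| = n\cdot|\mathbf\Delta(\underline{n},\underline{k})|$. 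Your decomposition $f=\sigma\circ\beta$ with $\sigma$ an automorphism of the \emph{target} $\underline{k}$ would bound this hom-set by $k\cdot|\mathbf\Delta(\underline{n},\underline{k})|$, which is too small whenever $n>k\geq 1$. Concretely, $\mathbf\Lambda(\underline{2},\underline{1})$ has the two elements $s^1$ and $s^1\tau_{(2)}$ (this is exactly the phenomenon the paper flags after \cref{lambda_presentation}: ``there are $n$ different cyclically ordered maps $\underline{n}\to\underline{1}$''), while $\mathrm{Aut}(\underline{1})$ is trivial and $\mathbf\Delta(\underline{2},\underline{1})=\{s^1\}$; so $s^1\tau_{(2)}$ admits no factorization of your form. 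Consequently your reduction of every object to a surjection \emph{in $\mathbf\Delta$} fails: $s^1$ and $s^1\tau_{(2)}$ are non-isomorphic objects of $(\underline{2}\downarrow\mathbf\Lambda')[\sI^{-1}]$ (the only candidate comparison map is $\mathrm{id}_{\underline{1}}$, which does not intertwine them), yet only one of them is a $\mathbf\Delta$-surjection.

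The repair is not difficult, and then your argument rejoins the intended adaptation of \cref{localized_comma_cat_is_thin}: from $f=\beta\tau^{j}$ and the epi--mono factorization $\beta=\iota g$ in $\mathbf\Delta$ one gets $f=\iota\circ(g\tau^{j})$, so the correct class of reduced objects is the \emph{surjections of $\mathbf\Lambda'$} (maps $g\tau^{j}$ with $g$ a $\mathbf\Delta$-surjection), with the $\sI$-morphism $\iota$ providing the isomorphism in the localization. Your two good observations then still apply: such a $g\tau^{j}$ is a split epimorphism in $\mathbf\Lambda'$ (precompose the $\mathbf\Delta$-section of $g$ with $\tau^{-j}$, using \ref{cycletorsion}), so there is at most one comma-morphism out of a reduced object; and any $\sI$-comma-morphism between reduced objects is a surjective order-preserving injection, hence the identity. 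The collapsing of a general zig-zag still requires the factorization point you flag at the end (a comma-morphism into $\iota g'\tau^{j'}$ lands in the image of $\iota$ and must be shown to factor through $\iota$ in $\mathbf\Lambda'$, not just in sets), which is where the paper's ``slightly trickier'' lives; but as written your proof asserts a false normal form and reduces to the wrong set of representatives, so it does not yet establish the lemma.
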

Therefore it is only necessary to check the {\punderlying} permutation to see if a diagram is {\blacktie}. In summary, \cref{thm:coherence_normal_shadow} says that any two parallel formal morphisms inducing the same {\punderlying} permutation of the $\glambda_i$ must agree.

With the modifications above, the proof is the same as the proof in \cref{sec:proof_coherence_lax_shadow_functors}. We also get that the {\underlying} set functor is an equivalence as in \cref{symmetric_coherence} when the $\slambda_i$ are distinct.

For a strong shadow functor the necessary modification is to replace ${(\underline{n} \downarrow \mathbf\Lambda')[\sI^{-1}]}$ by
\[ {(\underline{n} \downarrow \mathbf\Lambda')[\mathbf\Delta^{-1},t^{-1}]} = {(\underline{n} \downarrow \mathbf\Lambda')[\mathbf\Lambda'^{-1}]} \]
since the maps $m$ and $s$ are also isomorphisms. This category is also thin.

\begin{thm}[Coherence for strong shadow functors]\label{thm:coherence_strong_shadow}
Every {\blacktie} diagram of morphisms for a strong shadow functor commutes.
\end{thm}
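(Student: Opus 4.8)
The plan is to rerun the argument for normal lax shadow functors (\cref{thm:coherence_normal_shadow}) essentially verbatim, tracking the single place where strongness enlarges the class of inverted maps, and then to prove that the resulting indexing category is thin. As in \cref{sec:proof_coherence_lax_shadow_functors}, I would first reduce to the aperiodic case: two parallel formal composites with the same {\punderlying} cyclic permutation (via \cref{eq:symmetric_underlying_functor_1}) may be regarded as acting on an aperiodic list $X_1,\ldots,X_n$, so it suffices to prove the relevant component is thin when the $X_i$ are aperiodic. Next, replace $\oneShLax$ by the category of strong shadow functors and strict maps; its left adjoint applied to a graph $\gLambda$ gives $(\frbic{\gLambda},\frsbit{\gLambda})\to(\lffc{\gLambda},\shfrsbit{\gLambda})$, where the presentation of $\shfrsbit{\gLambda}$ is that of \cref{rmk:explicit_free_shadow_lax} but with the unit maps $i$, the composition maps $m$, and the shadow commutators $s$ all taken as invertible generators. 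Since the {\underlying} set functor sends $m$ and $s$ to codegeneracies and to the terminal map $t$ of $\mathbf\Lambda'$, it must be localized, and lands in $(\underline{n}\downarrow\mathbf\Lambda')[\mathbf\Delta^{-1},t^{-1}]=(\underline{n}\downarrow\mathbf\Lambda')[\mathbf\Lambda'^{-1}]$.

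Then I would build the diagram of cliques indexed by $(\underline{n}\downarrow\mathbf\Lambda')[\mathbf\Lambda'^{-1}]$ exactly as in \cref{built_diagram_of_cliques_shadow_functor}: the cliques \eqref{define_shadow_F_clique} and \eqref{define_shadow_H_clique} are unchanged, and the verification that the defining clique maps respect the relations of \cref{lambda_presentation} is identical, the only change being that cofaces, codegeneracies, and $t$ are now invertible generators of the indexing category while their images $i$, $m$, $s$ are now isomorphisms as well. Applying \cref{clique_bijection} gives a functor $\int_{(\underline{n}\downarrow\mathbf\Lambda')[\mathbf\Lambda'^{-1}]}\doc\to\comp{\shfrsbit{\gLambda}}{W}$, and by the argument of \cref{same_presentations_shadow} — bijective on objects, surjective on morphisms by \cref{groth_presentation}, and faithful because composing with the {\underlying} set functor recovers the projection $\pi$ of \cref{lem:consequences_of_projection_1} — it is an isomorphism of categories. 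Hence $\comp{\shfrsbit{\gLambda}}{W}$ is equivalent to $(\underline{n}\downarrow\mathbf\Lambda')[\mathbf\Lambda'^{-1}]$.

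The main obstacle, and the only genuinely new point, is showing that $(\underline{n}\downarrow\mathbf\Lambda')[\mathbf\Lambda'^{-1}]$ is thin; note this does \emph{not} follow formally from \cref{shad_localized_comma_cat_is_thin}, since a localization of a thin category need not be thin. The key observation is that $(\underline{n}\downarrow\mathbf\Lambda')$ has a \emph{terminal} object, namely the unique map $\underline{n}\to\underline{*}$, which exists and is unique by relation \ref{terminal1} (see the discussion following \cref{lambda_presentation}). For any category $\bC$ with terminal object $\omega$ and localization $L\colon\bC\to\bC[\bC^{-1}]$ at all morphisms, each $u_X:=L(X\to\omega)\colon LX\to L\omega$ is an isomorphism, and for every morphism $\phi\colon X\to Y$ of $\bC$ the identity $(X\to\omega)=(Y\to\omega)\circ\phi$ (forced by terminality) yields $L\phi=u_Y^{-1}\circ u_X$. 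Thus both $L\phi$ and $(L\phi)^{-1}$ are of this telescoping shape, so every composite in the localization between $LX$ and $LY$ collapses to $u_Y^{-1}\circ u_X$, and $\bC[\bC^{-1}]$ is thin. Applying this with $\bC=(\underline{n}\downarrow\mathbf\Lambda')$ shows the indexing category is thin.

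Combining the last two steps, $\comp{\shfrsbit{\gLambda}}{W}$ is thin whenever the $X_i$ are aperiodic, so all its diagrams of $2$-cells commute; together with the aperiodic reduction of the first step, this shows every {\blacktie} formal diagram for a strong shadow functor commutes, completing the proof. (As a byproduct one records, in parallel with \cref{shad_localized_comma_cat_is_thin}, that $(\underline{n}\downarrow\mathbf\Lambda')[\mathbf\Lambda'^{-1}]$ is thin, and that the {\underlying} set functor is an equivalence onto it.)
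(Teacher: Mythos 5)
Your proposal is correct and follows essentially the same route as the paper: replace the indexing category by the full localization $(\underline{n}\downarrow\mathbf\Lambda')[\mathbf\Lambda'^{-1}]$, rerun the diagram-of-cliques argument of \cref{built_diagram_of_cliques_shadow_functor,same_presentations_shadow} with $i$, $m$, $s$ invertible, and conclude by thinness of that localization. The one place you go beyond the paper is welcome: the paper merely asserts that $(\underline{n}\downarrow\mathbf\Lambda')[\mathbf\Lambda'^{-1}]$ is thin, whereas you prove it via the terminal object $\underline{n}\to\underline{*}$ (whose existence and uniqueness rest on \ref{terminal1}) and the telescoping identity $L\phi=u_Y^{-1}\circ u_X$; this is the exact mirror of the initial-object argument the paper uses for pseudofunctors in \cref{lax_coherence_pseudo}, and your observation that thinness of $(\underline{n}\downarrow\mathbf\Lambda')[\sI^{-1}]$ does not formally transfer to the further localization is well taken.
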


So any two parallel formal morphisms that rotate the $\glambda_i$ by the same amount must agree. When the $\glambda_i$ are distinct, or at least aperiodic, all formal diagrams commute.

\bibliographystyle{amsalpha2}
\bibliography{references}%

\vspace{1em}

\end{document}